\def\@url#1{{\tt\def~{\lower3.5pt\hbox{\char'176}}\def\_{\char'137}#1}}
\def\makeautorefname#1#2{\expandafter\def\csname#1autorefname\endcsname{#2}}
                   \let\c@lemma\c@theorem
\newtheorem{thm}{Theorem}[section]
\newtheorem{cor}{Corollary}[section]
\newtheorem{prop}{Proposition}[section]
\newtheorem{lem}{Lemma}[section]
\newtheorem{notation}{Notation}[section]
\theoremstyle{definition}
\newtheorem{defn}{Definition}[section]
\newtheorem{exmp}{Example}[section]
\newtheorem{rem}{Remark}[section]
\newtheorem{rmk}{Remark}[section]
\let\c@lem=\c@thm
\let\c@cor=\c@thm
\let\c@prop=\c@thm
\let\c@lem=\c@thm
\let\c@defn=\c@thm
\let\c@exmps=\c@thm
\let\c@exmp=\c@thm
\let\c@rem=\c@thm
\let\c@warn=\c@thm
\let\c@claim=\c@thm
\let\c@quest=\c@thm
\let\c@rmk=\c@thm
\let\c@rem=\c@thm
\let\c@notation=\c@thm
\let\c@equation=\c@thm
\numberwithin{equation}{section}
\newcommand{\Z}{\mathbb{Z}}
\newcommand{\R}{\mathbb{R}}
\newcommand{\cC}{\mathcal{C}}
\newcommand{\cD}{\mathcal{D}}
\newcommand{\cM}{\mathcal{M}}
\newcommand{\cN}{\mathcal{N}}
\DeclareSymbolFontAlphabet{\scr}{rsfs}
\newcommand{\Ncyc}{\mathrm{N}^{\mathrm{cyc}}}
\newcommand{\speccat}{\mathrm{SpecCat}}
\newcommand{\eTHH}{\mathrm{eTHH}}
\newcommand{\EG}{\mathcal{E}G}
\newcommand{\Set}{\mathrm{Set}}
\def\quickop#1{\expandafter\newcommand\csname #1\endcsname{\operatorname{#1}}}
\DeclareMathOperator{\Mod}{Mod}
\DeclareMathOperator{\Ho}{Ho}
\DeclareMathOperator{\op}{op}
\DeclareMathOperator{\perf}{Perf}
\DeclareMathOperator{\stperf}{stPerf}
\DeclareMathOperator{\SpecCat}{SpecCat}
\DeclareMathOperator{\Sp}{Sp}
\DeclareMathOperator{\Fun}{Fun}
\DeclareMathOperator{\bimod}{Bimod}
\newcommand{\cyc}{\mathrm{cyc}}
\DeclareMathOperator{\THH}{THH}
\DeclareMathOperator{\ETHH}{ETHH}
\newcommand{\Mack}{\mathrm{Mack}_G}
\renewcommand\Id{\textrm{Id}}
\definecolor{darkspringgreen}{rgb}{0.09, 0.45, 0.27}
\definecolor{darkterracotta}{rgb}{0.8, 0.31, 0.36}
	\definecolor{darkcoral}{rgb}{0.8, 0.36, 0.27}
	\definecolor{indiagreen}{rgb}{0.07, 0.53, 0.03}
	\definecolor{mountainmeadow}{rgb}{0.19, 0.73, 0.56}
	\definecolor{mountbattenpink}{rgb}{0.6, 0.48, 0.55}
	\definecolor{palatinatepurple}{rgb}{0.41, 0.16, 0.38}
	\definecolor{cinnamon}{rgb}{0.82, 0.41, 0.12}
	\definecolor{chocolate}{rgb}{0.82, 0.41, 0.12}
\newcommand{\sm}{\wedge}
\newcommand{\TC}{\textnormal{TC}}
\newcommand{\abs}[1]{\lvert #1\rvert}
\newcommand{\Spec}{\textnormal{Sp}}
\title{Trace methods for equivariant algebraic $K$-theory}
\author[Chan]{David Chan}
\address[Chan]{Department of Mathematics, Michigan State University, East Lansing, MI 48824 }
\author[Gerhardt]{Teena Gerhardt}
\address[Gerhardt]{Department of Mathematics, Michigan State University, East Lansing, MI 48824 }
\author[Klang]{Inbar Klang}
\address[Klang]{Department of Mathematics,
Vrije Universiteit Amsterdam - Faculty of Science,
De Boelelaan 1111,
1081 HV Amsterdam,
The Netherlands}
\begin{document}

\begin{abstract}
In the past decades, one of the most fruitful approaches to the study of algebraic $K$-theory has been trace methods, which construct and study trace maps from algebraic $K$-theory to topological Hochschild homology and related invariants. In recent years, theories of equivariant algebraic $K$-theory have emerged, but thus far few tools are available for the study and computation of these theories. In this paper, we lay the foundations for a trace methods approach to equivariant algebraic $K$-theory. For $G$ a finite group, we construct a Dennis trace map from equivariant algebraic $K$-theory to a $G$-equivariant version of topological Hochschild homology; for $G$ the trivial group this recovers the ordinary Dennis trace map. We show that upon taking fixed points, this recovers the trace map of Adamyk--Gerhardt--Hess--Klang--Kong, and gives a trace map from the fixed points of coarse equivariant $A$-theory to the free loop space. We also establish important properties of equivariant topological Hochschild homology, such as Morita invariance, and explain why it can be considered as a multiplicative norm.
\end{abstract}

\maketitle
\markboth{CHAN, GERHARDT, AND KLANG}{Trace methods for equivariant algebraic $K$-theory}  
\renewcommand*\contentsname{}
\tableofcontents

\section{Introduction}
  
Algebraic $K$-theory is a way of extracting interesting invariants from rings.  While these invariants can be difficult to compute in general, they carry a tremendous amount of information. For instance, when $R$ is a Dedekind domain the zeroth (reduced) algebraic $K$-group is isomorphic to the ideal class group of $R$, and the higher algebraic $K$-groups encode even more number theoretic information.  In topology, initial interest in algebraic $K$-theory grew out of its relationship with geometric invariants of manifolds. If $M$ is a connected manifold the zeroth and first algebraic $K$-groups of the group ring $\mathbb{Z}[\pi_1(M)]$ are the homes of geometric invariants of $M$, namely Wall's finiteness obstruction and Whitehead torsion.
  
  In the 1990's, new techniques from stable homotopy theory allowed for the definition of algebraic $K$-theory for ring spectra. Let $\Omega M$ denote the based loop space of $M$; taking the suspension spectrum yields a ring spectrum $\mathbb{S}[\Omega M]=\Sigma^{\infty}_+(\Omega M)$, and the algebraic $K$-theory of this ring spectrum recovers the $A$-theory of $M$, denoted by $A(M)$.  The spectrum $A(M)$ was first constructed by Waldhausen \cite{Waldhausen}, and plays a central role in the stable parametrized h-cobordism theorem \cite{WaldhausenJahrenRognes}.  One can think of the spectrum $\mathbb{S}[\Omega M]$ as a topological enrichment of the group ring $\mathbb{Z}[\pi_1(M)]$, and this explains the presence of earlier geometric invariants in the algebraic $K$-groups of the group ring. 

A very successful approach to the study and computation of algebraic $K$-theory is through trace methods, in which algebraic $K$-theory is approximated by topological versions of classical invariants from homological algebra. One key such invariant is topological Hochschild homology (THH), a topological analogue of the classical theory of Hochschild homology for algebras. For a ring spectrum $R$ there is a Dennis trace map 
\[
    tr\colon K(R) \mapsto \THH(R)
\]
relating algebraic $K$-theory and topological Hochschild homology. From THH one can define topological cyclic homology (TC) \cite{BHM, NS}. The Dennis trace factors through topological cyclic homology, $ K(R) \to \TC(R)$. This cyclotomic trace
 is close to an equivalence in many situations (see, for instance, \cite{DGM, GeHe06, CMM21}), giving an approach to algebraic $K$-theory which has been extremely fruitful. 

Recent years have seen an explosion of interest in equivariant homotopy theory and equivariant algebra.  Roughly speaking, these fields study the homotopical invariants and structures of spaces and spectra with an action by a compact Lie group.  Given the central role of $K$-theory in much of modern stable homotopy theory, a natural question to consider is the extent to which $K$-theoretic tools can be applied to analyze equivariant multiplicative structures.  Foundational work of Merling, Barwick, Barwick--Glasman--Shah, and Malkiewich--Merling provides constructions of equivariant algebraic $K$-theory $G$-spectra which serve this purpose \cite{Merling,Barwick,BGS,MM1}. 

These $G$-spectra have many interesting applications.  If $F\subset L$ is a Galois extension with Galois group $G$, then we can consider the $G$-equivariant algebraic $K$-groups of $L$. These groups tie together the algebraic $K$-groups of the fixed fields of $L$, and provide a collection of interesting operations which compare the $K$-groups of different fixed fields \cite{Merling}.  When $L$ is a number field, work of Elmanto--Zhang relates the sizes of the equivariant algebraic $K$-groups to special values of certain Artin L-functions \cite{ElmantoZhang}.  

In another direction, Malkiewich--Merling produce two versions of $A$-theory associated to a space $X$ with action by a finite group $G$. Both of these constructions produce a genuine $G$-spectrum whose underlying spectrum is Waldhausen's $A(X)$. The first is denoted by $A_G(X)$, known as genuine $A$-theory, and is expected to fit into the equivariant parametrized $h$-cobordism theorem \cite{MM1,MM2,GoodwillieIgusaMalkiewichMerling}.  The second version of equivariant $A$-theory is called \emph{coarse} $A$-theory, denoted $A_G^{\mathrm{coarse}}(X)$.  The fixed points of this spectrum can be interpreted using the \emph{bivariant $A$-theory}  of Williams \cite{Williams,MM1}.  

One obstacle to further understanding equivariant algebraic $K$-theory is limited computational tools. Recent work of Elmanto--Zhang \cite{ElmantoZhang} and Chan--Vogeli \cite{ChanVogeli} makes significant computational advances in this area, however these approaches have only been applied to the case of field extensions. In a more general setting, it would be advantageous to have a trace methods approach to the study of equivariant algebraic $K$-theory. 

In recent years, several equivariant versions of topological Hochschild homology have emerged, including Real topological Hochschild homology (THR), twisted topological Hochschild homology ($\THH_{C_n}$), and quaternionic topological Hochschild homology (THQ) \cite{DMPR, AnBlGeHiLaMa, AKMP}. Twisted topological Hochschild homology, constructed by Angeltveit, Blumberg, Gerhardt, Hill, Lawson, and Mandell \cite{AnBlGeHiLaMa}, takes as input a $C_{n}$-ring spectrum. In \cite{aghkk:shadows}, the authors show that for a $C_n$-ring spectrum $R$ there is a non-equivariant map from the fixed points of equivariant algebraic $K$-theory to twisted THH:
\[
K_{C_n}(R)^{C_n} \to \THH_{C_n}(R).
\]
It is conjectured in \cite{aghkk:shadows} that this map arises as the fixed points of an equivariant map from the equivariant algebraic $K$-theory $K_{C_n}(R)$ to another equivariant form of topological Hochschild homology. In this work we prove a version of this conjecture, constructing the appropriate equivariant version of THH, as well as an equivariant Dennis trace map. 

Our approach to this conjecture is motivated by considering Hochschild constructions as equivariant norms. In the course of their work on the Kervaire invariant one problem, Hill, Hopkins, and Ravenel \cite{HHR} define multiplicative norm functors $N_H^G$ from $H$-spectra to $G$-spectra, for $H \leq G$ finite groups. These functors now play a central role in equivariant homotopy theory and its applications. While the work in \cite{HHR} is restricted to finite groups, norms to compact Lie groups have been constructed in special cases in, for instance, \cite{AnBlGeHiLaMa, AKGH}. A more general framework for norms in the compact Lie group setting is considered in \cite{BlumbergHillMandellNorms}.

In \cite{AnBlGeHiLaMa}, it is shown that the ordinary THH of a ring spectrum $A$ can be viewed as the norm from the trivial group to $S^1$, $\THH(A) = N_e^{S^1}(A)$. Further, twisted THH for $C_n$-ring spectra also arises as an equivariant norm. Indeed, for a $C_n$-ring spectrum $R$, $\THH_{C_n}(R) = N_{C_n}^{S^1}(R).$ In the current work we define a norm from $G$-ring spectra to $G \times S^1$-spectra for a finite group $G$. We take this norm as a definition of a new equivariant form of THH: \[
\ETHH(R) := N_G^{G \times S^1}(R). 
\]

For finite groups, the computability of the norm functors is aided by their compatibility with the geometric fixed points functor $\Phi^K$ for subgroups $K\leq G$ \cite{HHR,AnBlGeHiLaMa}. An analogous relationship between fixed points and norms is expected to hold in the setting of compact Lie groups \cite{BlumbergHillMandellNorms}. We prove a special case, giving a relationship between ETHH and twisted THH.
 \begin{thm}[{\autoref{thm: geometric fixed points of ETHH is twisted THH for rings}}]\label{theorem: intro theorem twisted THH is fixed points of ETHH}
    Let $R$ be a cofibrant $C_{n}$-ring spectrum, and let $\Delta_{C_n} \leq C_n \times S^1$ denote the diagonal subgroup isomorphic to $C_n$. 
 There is an isomorphism of orthogonal $C_n$-spectra
    \[
        \Phi^{\Delta_{C_n}}\ETHH(R) \cong \THH_{C_n}(R).
    \]
\end{thm}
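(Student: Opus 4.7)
The plan is to compare explicit simplicial models for both sides using the $n$-fold edgewise subdivision. First, I would describe $\ETHH(R) = N_{C_n}^{C_n \times S^1}(R)$ as the geometric realization of a cyclic orthogonal spectrum whose $k$-simplices are $R^{\wedge (k+1)}$, carrying the diagonal $C_n$-action induced from the $C_n$-structure on $R$ together with the standard cyclic $S^1$-action that permutes the factors. On the other side, recall from Angeltveit--Blumberg--Gerhardt--Hill--Lawson--Mandell that $\THH_{C_n}(R) = N_{C_n}^{S^1}(R)$ is the realization of a twisted cyclic nerve with $k$-simplices $R^{\wedge n(k+1)}$, in which the last face map is twisted by the action of the generator of $C_n$ on $R$.

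Since $\Delta_{C_n}$ is the central subgroup $\{(c,c) : c\in C_n\}\leq C_n\times S^1$ under the inclusion $C_n \hookrightarrow S^1$ as $n$-th roots of unity, and the quotient $(C_n\times S^1)/\Delta_{C_n}\cong S^1$ via $(c,z)\mapsto c^{-1}z$, I would apply the $n$-fold edgewise subdivision $\mathrm{sd}_n$ to the cyclic model for $\ETHH(R)$. The effect of $\mathrm{sd}_n$ is to identify the realization with that of a simplicial orthogonal spectrum whose $k$-simplices are $R^{\wedge n(k+1)}$, on which the $\Delta_{C_n}$-action becomes the cyclic block-permutation of the $n$ copies of $R^{\wedge (k+1)}$ composed with the diagonal $C_n$-action on $R$. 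This is the standard technique for accessing finite-subgroup fixed points of cyclic bar constructions.

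Next I would take $\Phi^{\Delta_{C_n}}$, exploiting the fact that geometric fixed points commute with geometric realization of (suitably cofibrant) simplicial orthogonal spectra. In each simplicial degree, I compute $\Phi^{\Delta_{C_n}}(R^{\wedge n(k+1)})$ with respect to the combined block-permutation-plus-diagonal action, using the HHR-style formula $\Phi^{C_n}\bigl(N_e^{C_n}X\bigr)\cong X$ twisted by the $C_n$-action on $R$. This produces a factor of $R^{\wedge (k+1)}$ in each degree whose face maps acquire a twist by the generator of $C_n$ in the last position, arising from the block-cyclic shift under subdivision; this is precisely the twisted cyclic nerve defining $\THH_{C_n}(R)$. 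The residual $S^1$-action, identified with $(C_n\times S^1)/\Delta_{C_n}$ via the quotient map above, matches the standard $S^1$-action on $\THH_{C_n}(R)$, and restricting to $C_n\subset S^1$ yields the desired isomorphism of orthogonal $C_n$-spectra.

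The main obstacle will be establishing the precise point-set compatibility of $\Phi^{\Delta_{C_n}}$ with both the compact Lie group norm $N_{C_n}^{C_n\times S^1}$ and the geometric realization at the level of orthogonal spectra, since this norm is not a classical HHR norm. This is why the cofibrancy hypothesis on $R$ is essential: it ensures the norm is homotopically correct and that the geometric fixed points formula holds on the nose rather than only up to equivalence. A secondary technical point will be checking that the face-map and degeneracy-map identifications at each simplicial degree assemble coherently under the subdivision-and-fixed-points procedure, which should follow by naturality once the identification in a single simplicial degree is pinned down.
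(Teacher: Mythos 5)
Your proposal follows essentially the same route as the paper: $n$-fold edgewise subdivision to rewrite $\ETHH(R)$ as the realization of a simplicial spectrum with $k$-simplices $R^{\wedge n(k+1)}$ (the paper packages this as the cyclic bar construction on $N_{C_n\times 1}^{C_n\times C_n}(R)$ with coefficients in a twisted bimodule), commuting $\Phi^{\Delta_{C_n}}$ past geometric realization, applying the norm diagonal $\Phi^{\Delta_{C_n}}N_{C_n\times 1}^{C_n\times C_n}(R)\cong R$ levelwise using strong monoidality on cofibrant objects, and tracking the resulting twist in $d_0$. The one convention to pin down is your isomorphism $(C_n\times S^1)/\Delta_{C_n}\cong S^1$, $(c,z)\mapsto c^{-1}z$, which is inverse to the paper's $(c,z)\mapsto cz^{-1}$ and would flip whether $d_0$ is twisted by $\sigma$ or $\sigma^{-1}$; this is a bookkeeping choice, not a gap.
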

We also prove some foundational properties for ETHH. 

\begin{thm}[{\autoref{theorem: Morita invariance} and \autoref{cor-ETHH-additivity}}]
    Equivariant topological Hochschild homology is Morita invariant and satisfies additivity.
\end{thm}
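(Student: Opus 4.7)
The plan is to prove both properties by first extending $\ETHH$ from $G$-ring spectra to $G$-equivariant spectrally enriched categories, and then arguing at this categorical level. Specifically, for a $G$-spectral category $\mathcal{C}$, I would define an equivariant cyclic bar object whose $k$-simplices are built from the usual Hochschild $k$-simplices for $\mathcal{C}$ indexed over $G$ via the HHR-style norm construction, with $S^1$ rotating simplicial degree and $G$ acting diagonally through its given action on $\mathcal{C}$. This produces a functor $\ETHH$ on $G$-spectral categories that recovers $N_G^{G\times S^1}(R)$ when $\mathcal{C}$ has a single object with endomorphism ring spectrum $R$. Having such a functor is the natural setting in which to formulate and prove the two statements.

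For Morita invariance, the plan is to follow the classical trace-map pattern. Given $G$-ring spectra $A$ and $B$ Morita equivalent via an invertible $G$-equivariant bimodule $P$, I would realize both $A$ and $B$ as endomorphism rings of generating objects in a common $G$-spectral category $\mathcal{C}$, such as the full subcategory of $G$-equivariant perfect $A$-modules on the objects $A$ and $P$. Then I would show that the inclusion of either endomorphism ring into $\mathcal{C}$ induces an equivalence on $\ETHH$. The essential calculation reduces to an equivariant version of the matrix algebra identification $\THH(A) \simeq \THH(M_n(A))$, which should follow from the multiplicativity of the norm on finite products together with the compatibility of $\ETHH$ with the underlying non-equivariant $\THH$.

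For additivity, my approach is to reduce to the classical additivity theorem for $\THH$. Given a split exact sequence $\mathcal{C}' \to \mathcal{C} \to \mathcal{C}''$ of $G$-spectral categories, the splitting produces retractions on each morphism spectrum, which in turn induce a direct sum decomposition on each simplicial level of the equivariant cyclic bar construction. Because the $G$- and $S^1$-actions are defined simplicial-degreewise and compatibly with these retractions, realizing the simplicial object yields a splitting $\ETHH(\mathcal{C}) \simeq \ETHH(\mathcal{C}') \vee \ETHH(\mathcal{C}'')$ in $G \times S^1$-spectra. In particular, additivity should emerge as a formal corollary of the categorical construction, which justifies its statement as a corollary in the theorem numbering.

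The main obstacle will be setting up $\ETHH$ on $G$-spectral categories so that the norm-theoretic arguments carry through rigorously. The HHR-style norm is notoriously sensitive to point-set cofibrancy, and one must check that the indexed smash products over $G$ defining the equivariant cyclic bar interact correctly with the simplicial and cyclic structure. This is analogous to the subtleties encountered in extending non-equivariant $\THH$ to spectral categories, but with the added complication of tracking the $G$-action through the norm. Once this framework is in place, both Morita invariance and additivity should follow from their non-equivariant analogues combined with the functoriality of $N_G^{G\times S^1}$, so the bulk of the work lies in the categorical foundations rather than in new homotopy-theoretic input.
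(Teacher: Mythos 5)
Your high-level plan --- extend $\ETHH$ to $G$-spectral categories via a cyclic bar construction and then deduce both properties at that level --- matches the paper's strategy, but two of your key steps would fail as described.

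First, additivity. You claim that a split exact sequence induces a levelwise direct sum decomposition of the cyclic bar construction, so that the splitting of $\ETHH$ is ``a formal corollary of the categorical construction.'' This is not how additivity works even non-equivariantly: the $k$-simplices of $\Ncyc_\bullet(S_n\mathcal{C})$ are wedges of smash products of morphism spectra of $S_n\mathcal{C}$, and these do \emph{not} split levelwise into the pieces coming from $\mathcal{C}'$ and $\mathcal{C}''$; the equivalence $\bigvee_i|\Ncyc_\bullet(\mathcal{C})|\xrightarrow{\simeq}|\Ncyc_\bullet(w_{k_0}S^{(n)}_{k_1,\dots,k_n}\mathcal{C})|$ only holds after realization and is a genuine theorem (cf.\ \cite[Theorem 7.6]{CLMPZ2}). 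More importantly, you never address fixed points, which is the entire content of the equivariant upgrade: an equivalence of underlying spectra does not give an equivalence of spectra with $G$-action in the sense used here. The paper's proof of \autoref{thm-additivity-bar} works by commuting $H$-fixed points through the cyclic nerve via $\Ncyc_\bullet(\mathcal{C})^H\cong\Ncyc_\bullet(\mathcal{C}^H)$ (\autoref{proposition: fixed points of cyclic nerve}) and through the $S_\bullet$-construction, and then invoking the non-equivariant additivity theorem for each $\mathcal{C}^H$ separately. Without that reduction your argument only controls the underlying spectrum.

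Second, Morita invariance. Your plan to embed $A$ and $B$ as endomorphism rings of two generators in a common $G$-spectral category of modules hits an obstruction the paper explicitly flags: the object $R$ is not $G$-fixed in $\Mod_R$, since $g\cdot R=R^{g^{-1}}$, so the inclusion of the one-object category $\mathcal{C}_R$ into $\perf_R$ (or any full subcategory on generators) is \emph{not} a $G$-spectral functor. This is why the paper abandons the functor-level approach and instead works with equivariant bimodules: it defines Morita adjunctions $(\mathcal{N}\dashv\mathcal{M})$ via unit and counit in the homotopy categories of bimodules, proves a two-sided bar lemma and the Dennis--Waldhausen--Morita isomorphism $|\Ncyc_\bullet(\mathcal{C};B(\mathcal{N};\mathcal{D};\mathcal{M}))|\cong|\Ncyc_\bullet(\mathcal{D};B(\mathcal{M};\mathcal{C};\mathcal{N}))|$, and deduces invariance from there under pointwise-cofibrancy hypotheses. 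Your reduction to the non-equivariant matrix-algebra identification again only sees the underlying spectrum; the equivariant statement requires the unit and counit to be weak equivalences on all $H$-fixed mapping spectra, a hypothesis your setup does not formulate. (A smaller point: the $G$-indexed smash products of the HHR norm do not enter the definition of the cyclic bar construction for $G$-spectral categories --- the $G$-action there is simply the diagonal action on smash factors and permutation of object tuples; the indexed smash $\mathcal{C}^{\wedge n}$ appears only in the comparison with twisted THH.)
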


Having established these properties, we then turn to our main objective: developing the foundations of trace methods for equivariant algebraic $K$-theory. The first step in this program is to define a trace map from equivariant algebraic $K$-theory to an equivariant form of topological Hochschild homology, lifting the non-equivariant trace map in \cite{aghkk:shadows}
\[
    K_{C_n}(R)^{C_n}\to \THH_{C_n}(R).
\]
 We use \autoref{theorem: intro theorem twisted THH is fixed points of ETHH} to show that this trace map can be lifted to a map of spectra with $G$-action. We denote by $\eTHH$ a version of $\ETHH$ indexed on a trivial universe, rather than a complete one. 
\begin{thm}[{\autoref{cor-trace-R}}]\label{tracethm}
 Let $R$ be a cofibrant ring $G$-spectrum.  Then the ordinary Dennis trace refines to an equivariant Dennis trace in the homotopy category of spectra with $G$-action 
 \[
 tr: \mathcal{I}_{\mathcal{V}} ^{\R^\infty} K_G(R) \to \eTHH(R).
 \]
 where $\mathcal{V}$ denotes a complete $G$-universe.
\end{thm}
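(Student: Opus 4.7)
My plan is to construct the equivariant Dennis trace by observing that, in the category of spectra with $G$-action, a morphism is just a $G$-equivariant map on underlying spectra, so naturality of the classical Dennis trace should force equivariance once the $G$-actions on both sides are compared.

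First, I would identify the underlying non-equivariant spectra. After change of universe, $\mathcal{I}_{\mathcal{V}}^{\R^\infty}K_G(R)$ has underlying spectrum the classical algebraic $K$-theory $K(R)$ of the underlying ring spectrum of $R$. On the other side, $\eTHH(R) = N_G^{G \times S^1}(R)$ has underlying spectrum $\THH(R)$, extending the identification $\THH = N_e^{S^1}$ used in the construction of twisted THH. The classical Dennis trace then supplies the desired map on underlying spectra, and the task reduces to exhibiting it as a $G$-equivariant map.

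Next, I would compare the $G$-actions. On the $\eTHH$ side, the $G$-action is visible from the norm construction as a cyclic bar construction on which $G$ acts diagonally through its action on $R$; equivalently, this is the $G$-action on $\THH(R)$ obtained by functoriality of $\THH$ applied to $R$ as a $G$-ring spectrum. On the $K$-theory side, change of universe from the complete to the trivial universe retains the $G$-action on $K(R)$ induced by the ring $G$-spectrum structure on $R$, which is again the functorial action. Naturality of the classical Dennis trace with respect to the maps $g : R \to R$ for $g \in G$ then forces $G$-equivariance, producing a morphism in the homotopy category of spectra with $G$-action whose underlying non-equivariant map is the ordinary Dennis trace, as the statement requires.

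The main obstacle I anticipate is the precise identification of the $G$-action on $\mathcal{I}_{\mathcal{V}}^{\R^\infty}K_G(R)$ with the functorial action on $K(R)$, since $K_G(R)$ is constructed as a genuine $G$-spectrum via Merling's framework, and extracting the underlying $G$-action through the change of universe requires careful handling. For a consistency check, I would pass to geometric fixed points: by \autoref{theorem: intro theorem twisted THH is fixed points of ETHH}, the $\Delta_{C_n}$-geometric fixed points of $\eTHH(R)$ are identified with $\THH_{C_n}(R)$, so the induced map on fixed points should recover the trace $K_{C_n}(R)^{C_n} \to \THH_{C_n}(R)$ of \cite{aghkk:shadows}, confirming that the construction is the equivariant refinement conjectured there.
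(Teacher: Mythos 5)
Your approach has a genuine gap, and it is the central difficulty the paper is designed to overcome. The phrase ``naturality of the classical Dennis trace forces equivariance'' would at best produce a map in the \emph{Borel} homotopy category of $\Fun(BG,\Sp)$, where weak equivalences are underlying equivalences. But the statement asks for a map in the homotopy category of spectra with $G$-action in the sense used throughout the paper, where weak equivalences are maps inducing equivalences on $X^H$ for \emph{all} $H\leq G$. The classical Dennis trace is not a single natural map but a zigzag whose wrong-way arrow is the additivity equivalence $\bigvee |\Ncyc_\bullet(\cC)| \xrightarrow{\simeq} |\Ncyc_\bullet(w_{k_0}S^{(n)}_{k_1,\dots,k_n}\cC)|$; to invert that arrow in the genuine homotopy category you must know it is an equivalence on all fixed points, which is precisely the equivariant additivity theorem (\autoref{thm-additivity-bar}) proved via \autoref{proposition: fixed points of cyclic nerve}. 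This does not follow from naturality in $g\colon R\to R$, and without it your construction cannot be promoted past the Borel category --- in particular it could not then be used to recover the fixed-point trace $K_{C_n}(R)^{C_n}\to\THH_{C_n}(R)$, which is the whole point.

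A second, related omission: the source $K_G(R)$ is built from $\Fun(\EG,\perf_R)$, whose $H$-fixed points are the $K$-theory of $H$-twisted modules, not from $R$ itself. Connecting $|\Ncyc_\bullet(Q\Fun(\EG,Q\stperf_R))|$ to $|\Ncyc_\bullet(R)| = \eTHH(R)$ requires the Morita adjunctions of \autoref{sec: Morita} (\autoref{proposition: Morita adjunction one}, \autoref{proposition: Morita adjunction two}, \autoref{prop-morita-adj-R}), which equivariantly are only adjunctions, not equivalences, and which again must be checked to induce maps on all fixed points of the cyclic nerves. Your proposal identifies underlying spectra correctly and your consistency check via $\Delta_{C_n}$-fixed points matches \autoref{thm-trace-to-twisted}, but the actual construction must run through the spectral Waldhausen $G$-category machinery, the equivariant additivity theorem, and the Morita maps; none of these is supplied by functoriality of the non-equivariant trace.
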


Forthcoming work of Marc Gotliboym addresses cyclotomic structures for equivariant topological Hochschild homology, and the development of equivariant topological cyclic homology. 
 
 When $G=C_n$, this equivariant Dennis trace yields the desired lift of the trace map from \cite{aghkk:shadows}.

\begin{thm}[{\autoref{thm-trace-to-twisted}}]\label{thm-trace-to-twisted-intro}
    Let $R$ be a cofibrant $C_n$-ring spectrum. Upon taking fixed points of the equivariant Dennis trace map of \autoref{tracethm}, we obtain the trace map 
    $$K_{C_n}(R)^{C_n} \to \THH_{C_n}(R)$$
    of \cite{aghkk:shadows}.
\end{thm}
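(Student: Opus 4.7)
My plan is to take $C_n$-fixed points of the equivariant Dennis trace map produced by \autoref{tracethm} (specialized to $G = C_n$), then separately identify the source and the target of the resulting map with $K_{C_n}(R)^{C_n}$ and $\THH_{C_n}(R)$, and finally check that the induced map agrees with the construction of \cite{aghkk:shadows}.

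For the target, $\eTHH(R)$ is $\ETHH(R) = N_{C_n}^{C_n \times S^1}(R)$ indexed on a trivial universe, carrying a $C_n$-action that comes from the diagonal inclusion $\Delta_{C_n} \leq C_n \times S^1$. \autoref{theorem: intro theorem twisted THH is fixed points of ETHH} identifies the geometric fixed points $\Phi^{\Delta_{C_n}}\ETHH(R)$ with $\THH_{C_n}(R)$. Because $\eTHH$ is indexed on a trivial universe, its categorical $C_n$-fixed points agree with these geometric fixed points, so $\eTHH(R)^{C_n} \simeq \THH_{C_n}(R)$. For the source, the categorical $C_n$-fixed points of $\mathcal{I}^{\R^\infty}_{\mathcal{V}} K_{C_n}(R)$ are equivalent to the genuine $C_n$-fixed points $K_{C_n}(R)^{C_n}$, by the standard interaction of change of universe with fixed points for genuine equivariant spectra.

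To complete the proof, I would verify that the induced map $K_{C_n}(R)^{C_n} \to \THH_{C_n}(R)$ coincides with the trace of \cite{aghkk:shadows}. I would proceed by unpacking the construction of the equivariant Dennis trace from \autoref{cor-trace-R}: both it and the trace of \cite{aghkk:shadows} ultimately arise from the same bicategorical trace/shadow data (in the appropriate equivariant setting), so after applying the identifications above, naturality should force the two maps to agree.

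The main obstacle I anticipate is this last comparison. The construction of the equivariant Dennis trace takes place in a specific model for the bicategory of equivariant bimodules, and showing that taking fixed points intertwines this model with the one used in \cite{aghkk:shadows} requires careful bookkeeping: cofibrant replacements, change-of-universe functors, and the translation between categorical and geometric fixed points all need to be tracked precisely through the construction. A cleaner alternative would be to establish a uniqueness statement for equivariant Dennis traces in the target ambient category, but such a uniqueness result may itself require additional foundational work.
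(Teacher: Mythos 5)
Your overall strategy (take fixed points, identify source and target, then compare with the trace of \cite{aghkk:shadows}) is the same as the paper's, and your identifications of the source ($K_{C_n}(R)^{C_n} = (\mathcal{I}_{\mathcal{V}}^{\R^\infty}K_{C_n}(R))^{C_n}$ via right-adjointness of change of universe) and of the target (via $\Delta_{C_n}$-fixed points, noting that on the source the trivial $S^1$-action makes $C_n\times 1$- and $\Delta_{C_n}$-fixed points agree) are correct in substance. One caveat on the target: what you actually need is not \autoref{theorem: intro theorem twisted THH is fixed points of ETHH} plus ``categorical $=$ geometric on the trivial universe,'' but the levelwise identification $|\Ncyc_\bullet(\cC)|^{\Delta_{C_n}}\cong |\Ncyc_\bullet(\cC,\cC^{\sigma^{-1}})|$ of \autoref{prop: twisted fixed points of cyclic nerve}, applied to the spectral categories appearing in the defining zigzag, not just to $\cC_R$.

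The genuine gap is the final comparison, which you defer to ``naturality should force the two maps to agree.'' That step is where all the content of the theorem lives, and the appeal to a common bicategorical/shadow origin does not go through as stated: the trace of \cite{aghkk:shadows}, as used here, is the twisted Dennis trace of \cite{CLMPZ} (Definition 6.16 there), defined by an explicit zigzag through the twisted cyclic bar construction and the twisted additivity theorem, not by an abstract universal property one could invoke naturality against. The paper's proof instead unwinds the defining zigzag of \autoref{cor-trace-R} term by term after taking $\Delta_{C_n}$-fixed points: it commutes fixed points past prolongation, suspension spectra, and the $w_\bullet S^{(m)}_\bullet$-construction; converts each cyclic nerve into a twisted cyclic nerve via \autoref{prop: twisted fixed points of cyclic nerve}; identifies $\Fun(\mathcal{E}C_n,\perf_R)^{C_n}$ with the category of $C_n$-twisted perfect $R$-modules via \autoref{lem-hfp-modules}; and checks that the Morita map remains an equivalence on twisted cyclic nerves. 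Only after these concrete identifications does the zigzag visibly coincide with the one defining the twisted Dennis trace. Your proposal correctly locates this as the main obstacle but does not supply the argument, so as written it is incomplete precisely at the point the theorem is asserting something nontrivial.
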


It is natural to wonder about whether the trace map of \autoref{tracethm} refines to a map of genuine $G$-spectra $K_G(R)\to \ETHH(R)$.  Changing universes to genuine $G$-spectra, this is a question about whether the derived counit 
\[
     \mathcal{I}^{\mathcal{V}}_{\mathbb{R}^{\infty}}(\mathcal{I}_{\mathcal{V}} ^{\R^\infty} K_G(R))^c\xrightarrow{\epsilon} K_G(R)
\]
admits a section, where $(-)^c$ denotes cofibrant replacement in the category of spectra with $G$-action.  After taking fixed points, the map $\epsilon$ admits a canonical section, and thus on fixed point there exist trace maps
\[
    K_G(R)^H\to \ETHH(R)^H
\]
for any subgroup $H\leq G$; see \autoref{remark: trace on genuine fixed points}.  On the other hand, in \autoref{example: no section} we show that that the map $\epsilon$ need not admit a section as genuine $G$-spectra; indeed it almost never will.  To summarize, while we can construct trace maps between the fixed points of equivariant algebraic $K$-theory and the fixed points of $\ETHH(R)$, this approach does not refine all the way to a map of genuine $G$-spectra. Since the equivariant algebraic $K$-theory of a $G$-ring spectrum depends only on the category of perfect modules over the underlying ring with $G$-action, it is not surprising that it does not naturally compare to $\ETHH$, which is a construction that takes place in genuine $G$-spectra. 

As an application, we show that the equivariant Dennis trace map yields trace maps from the fixed points of both forms of equivariant $A$-theory to the free loop space. 
\begin{prop}[\autoref{prop-trace-A-loop}]
    Let $H \leq G$. The equivariant Dennis trace map gives trace maps
\[
    (A^{\mathrm{coarse}}_G(X))^H \to \Sigma^{\infty}(\mathcal{L}X^H)_+ \hspace{.5cm} \textrm{and} \hspace{.5cm} A_G(X)^H\to \Sigma^{\infty}(\mathcal{L}X^H)_+.
\] 
in the homotopy category of spectra. 
\end{prop}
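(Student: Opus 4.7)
The plan is to specialize the equivariant Dennis trace of \autoref{tracethm} to the appropriate ring $G$-spectrum models of $A_G^{\mathrm{coarse}}(X)$ and $A_G(X)$, take $H$-fixed points, and identify the target with $\Sigma^{\infty}(\mathcal{L}X^H)_+$. Both versions of equivariant $A$-theory arise from the spherical group ring $\mathbb{S}[\Omega X]$ equipped with a suitable $G$-action, so applying \autoref{tracethm} produces, for each $H\leq G$, maps
\[
    (A_G^{\mathrm{coarse}}(X))^H \to \eTHH(\mathbb{S}[\Omega X])^H
    \quad\text{and}\quad
    (A_G(X))^H \to \eTHH(\mathbb{S}[\Omega X])^H.
\]

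To identify the target, I use that $\eTHH$ is indexed on the trivial universe, so its underlying non-equivariant spectrum is ordinary $\THH$. Bökstedt's equivalence $\THH(\mathbb{S}[\Omega X])\simeq\Sigma^{\infty}(\mathcal{L}X)_+$ is natural in $X$ and hence $G$-equivariant when $X$ is a $G$-space, yielding an equivalence
\[
    \eTHH(\mathbb{S}[\Omega X]) \simeq \Sigma^{\infty}(\mathcal{L}X)_+
\]
of spectra with $G\times S^1$-action, where $G$ acts on $\mathcal{L}X=\mathrm{Map}(S^1,X)$ via its action on $X$ and $S^1$ acts by loop rotation. Taking $H$-fixed points and using that $(\Sigma^{\infty}Y_+)^H=\Sigma^{\infty}(Y^H)_+$ on the trivial universe, combined with $(\mathcal{L}X)^H=\mathrm{Map}(S^1,X^H)=\mathcal{L}(X^H)$ (since $H$ acts trivially on $S^1$), completes the identification of the target as $\Sigma^\infty(\mathcal{L}X^H)_+$.

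The main obstacle is upgrading Bökstedt's identification to a $G\times S^1$-equivariant equivalence. This should follow from the description $\ETHH(R)=N_G^{G\times S^1}(R)$ together with the fact that the norm commutes with $\Sigma^\infty_+$, reducing the claim to a natural identification of $N_G^{G\times S^1}(\Omega X)$ with $\mathcal{L}X$ as $G\times S^1$-spaces; this spatial analogue of Bökstedt's theorem should be compatible with the geometric fixed point calculation in \autoref{theorem: intro theorem twisted THH is fixed points of ETHH}. A secondary subtlety is that the genuine $A$-theory $A_G(X)$ need not literally be $K_G$ of a ring $G$-spectrum; here one either passes through a natural comparison $A_G(X)\to A_G^{\mathrm{coarse}}(X)$ and applies the coarse trace, or extends \autoref{tracethm} to equivariant $K$-theory of appropriate categories of $\mathbb{S}[\Omega X]$-modules to which the Dennis trace construction applies directly.
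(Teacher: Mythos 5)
Your proposal is correct and follows essentially the same route as the paper: identify $A_G^{\mathrm{coarse}}(X)$ with $K_G(\Sigma^\infty_G(\Omega X)_+)$, apply the equivariant Dennis trace of \autoref{cor-trace-R}, use strong monoidality of the suspension spectrum (as in \autoref{example: free loop space}) to identify $\eTHH(\Sigma^\infty(\Omega X)_+)$ with $\Sigma^\infty(\mathcal{L}X)_+$ as a spectrum with $G\times S^1$-action, take $H$-fixed points, and obtain the genuine $A$-theory statement by precomposing with the comparison map $A_G(X)\to A_G^{\mathrm{coarse}}(X)$. The two subtleties you flag (upgrading the B\"okstedt identification to an equivariant equivalence, and the fact that $A_G(X)$ is not literally $K_G$ of a ring spectrum) are resolved exactly as you suggest, matching the paper's argument.
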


\subsection{Organization}
In \autoref{sec:background} we recall necessary background, reviewing some foundations for equivariant homotopy theory, equivariant algebraic $K$-theory, twisted topological Hochschild homology, and spectral categories. In \autoref{sec:ETHH} we define equivariant topological Hochschild homology for $G$-ring spectra, and show that for a $C_n$-ring spectrum, twisted THH can be recovered as the geometric fixed points of ETHH. We define $G$-spectral categories in \autoref{sec-GSp}, and define ETHH of a $G$-spectral category. In \autoref{sec:SpecWaldG} we develop a theory of spectral Waldhausen $G$-categories. In \autoref{sec: Morita} we prove that ETHH is Morita invariant, and establish a Morita adjunction which will be essential to the definition of our equivariant trace map. Another key ingredient for the equivariant trace is the additivity of equivariant topological Hochschild homology, which is established in \autoref{sec:Additivity}. Finally, in \autoref{sec:DennisTrace}, we define the equivariant Dennis trace, and prove that it lifts the non-equivariant trace map from \cite{aghkk:shadows}. We also consider applications of the equivariant Dennis trace to equivariant $A$-theory.

\subsection{Acknowledgments} The authors would like to thank Mike Hill, Cary Malkiewich, Michael Mandell, Andres Mejia, Mona Merling, and Maximilien P\'eroux for helpful conversations. The first author was supported by NSF grant DMS-2135960. The second author was supported by NSF grants DMS-2104233 and DMS-2404932. This work was also partially supported by a grant from the Simons Foundation. The authors would like to thank the Isaac Newton Institute for Mathematical Sciences, Cambridge, for support and hospitality during the programme ``Equivariant homotopy theory in context" where work on this paper was undertaken. This work was supported by EPSRC grant no EP/R014604/1. This material is also in part based on work supported by
the National Science Foundation under DMS-1928930, while the authors were in residence at the Simons Laufer Mathematical Sciences Institute (previously
known as MSRI) in Berkeley, California, during the Fall 2022 semester.

\section{Background}\label{sec:background}

\subsection{Equivariant homotopy theory and norms}\label{sec:norms}

In this section we recall the basic homotopy theory of orthogonal $G$-spectra.  We refer the reader to \cite{mandell-may} or \cite[Section 2]{AnBlGeHiLaMa} for more details.  Throughout we fix a compact Lie group $G$.

\begin{defn}
    A \emph{$G$-universe} is a countably infinite dimensional real orthogonal $G$-representation $U$ such that
    \begin{enumerate}
        \item the $G$-fixed points of $U$ are infinite dimensional,
        \item every irreducible real orthogonal representation of $G$ which is contained in $U$ is contained infinitely many times.
    \end{enumerate}
    We call a $G$-universe $U$ \emph{trivial} if the $G$-action is trivial.  We call a $G$-universe $U$ \emph{complete} if it contains at least one copy of every irreducible real orthogonal $G$-representation.
\end{defn}

We write $\mathcal{T}_G$ for the category of based $G$-spaces and based equivariant maps.  This is a closed symmetric monoidal category with the smash product and internal mapping spaces. A map in $\mathcal{T}_G$ is a $G$-weak equivalence if the induced map on fixed points is a weak equivalence for all closed subgroups $H\leq G$.

We write $\mathcal{J}^U_G$ for the $\mathcal{T}_G$-enriched category whose objects are finite dimensional representations $V\subset U$ and morphisms $\mathcal{J}^U_G(V,W)$ given by the $G$-space of (possibly non-equivariant) linear isomorphisms $V\xrightarrow{\cong} W$, where the $G$-action is by conjugation.  To make these based $G$-spaces we freely add a disjoint base point to all morphism sets.

\begin{defn}
    An orthogonal $G$-spectrum, indexed on a $G$-universe $U$, is a functor of $\mathcal{T}_G$-enriched categories $F\colon \mathcal{J}_G^U\to \mathcal{T}_G$ together which choices of maps
    \[
        \sigma_{V,W}\colon F(V)\wedge S^W\to F(V\oplus W)
    \]
    which are natural, associative, and unital.  A morphism of orthogonal $G$-spectra is a natural transformation of functors which commutes the maps $\sigma_{V,W}$.  An orthogonal $G$-spectrum is an $\Omega$-$G$-spectrum if the maps
    \[
        F(V)\to \Omega^WF(V\oplus W)
    \]
    adjoint to the maps $\sigma_{V,W}$ are all $G$-weak equivalences.
    
\end{defn}

We write $\Sp^G_U$ for the category of orthogonal $G$-spectra indexed on the universe $U$.  When the universe $U$ is complete we will often omit it from the notation and write simply $\Sp^G$. When $G=e$ is the trivial group we have the category of orthogonal spectra which we denote by $\Sp$.  Note when $U = \mathbb{R}^{\infty}$ is the trivial $G$-universe that $\Sp^G_{\mathbb{R}^{\infty}}$ is isomorphic to the category of $G$-objects in $\Sp$. We refer to these as spectra with $G$-action.

For any universe $U$ the category $\Sp^G_U$ is closed symmetric monoidal.  We write $\wedge$ for the smash product, $F(-,-)$ for the internal hom, and $\mathbb{S}$ for the equivariant sphere spectrum which acts as the unit.  There is a suspension functor $\Sigma^{\infty}_U\colon \mathcal{T}_G\to \Sp^G_U$ which sends a $G$-space $X$ to the orthogonal spectrum $\Sigma^{\infty}_U(X)(V) = S^V\wedge X$.  This functor is strong monoidal; in particular $\mathbb{S} = \Sigma^{\infty}_U(S^0)$.

For two $G$-universes $U$ and $U'$ we translate between $U$-spectra and $U'$-spectra by means of the \emph{change of universe} functors 
\[
    \mathcal{I}_{U}^{U'}\colon \Sp^G_U\to \Sp^{G}_{U'}
\]
defined on objects by
\[
    \mathcal{I}_{U}^{U'}(F)(W) = \mathcal{J}^{U'}_G(\mathbb{R}^{n},W)\wedge_{O(n)} F(\mathbb{R}^{n}).
\]
where $n$ is the dimension of $W$.  The next lemma records the necessary facts about these functors.

\begin{lem}[{\cite[Theorem 1.5]{mandell-may}}]\label{lemma: change of universe facts}
    Let $U$, $V$, and $W$ be $G$-universes.  The change of universe functors satisfy the following:
    \begin{enumerate}
        \item there is a natural isomorphism $\mathcal{I}^V_U\circ \Sigma^{\infty}_U\cong \Sigma^{\infty}_V$,
        \item there is a natural isomorphism $\mathcal{I}^W_V\circ \mathcal{I}^V_U\cong \mathcal{I}^W_U$,
        \item $\mathcal{I}^U_U$ is isomorphic to the identity functor,
        \item the functor $\mathcal{I}_U^V$ is strong monoidal.
    \end{enumerate}
\end{lem}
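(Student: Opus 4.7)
The plan is to verify each claim directly from the coend formula
\[
\mathcal{I}_U^{U'}(F)(W) = \mathcal{J}^{U'}_G(\mathbb{R}^n, W) \wedge_{O(n)} F(\mathbb{R}^n),
\]
recognizing $\mathcal{I}_U^{U'}$ as a point-set model for an enriched left Kan extension. The key tool throughout is the observation that the finite-dimensional trivial representations $\mathbb{R}^n$ sit inside every $G$-universe and carry a universal $O(n)$-action via $\mathcal{J}^U_G(\mathbb{R}^n,\mathbb{R}^n) \cong O(n)_+$, so the formulas do not actually depend on any containment between $U$ and $U'$.

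For (3), I would apply the enriched "co-Yoneda" principle. The functoriality of $F\colon \mathcal{J}^U_G \to \mathcal{T}_G$ gives an assembly map $\mathcal{J}^U_G(\mathbb{R}^n, W) \wedge F(\mathbb{R}^n) \to F(W)$ which coequalizes the two $O(n)$-actions and descends to an isomorphism from the coend, natural in $W$ and in $F$. For (2), I would expand $(\mathcal{I}^W_V \mathcal{I}^V_U F)(Y)$ as a nested coend
\[
\mathcal{J}^W_G(\mathbb{R}^n,Y) \wedge_{O(n)} \bigl( \mathcal{J}^V_G(\mathbb{R}^n,\mathbb{R}^n) \wedge_{O(n)} F(\mathbb{R}^n) \bigr),
\]
collapse the inner coend using (3) applied to $\mathcal{J}^V_G(\mathbb{R}^n,-)$-factor (more carefully, via Fubini for coends combined with a cofinality argument on representing objects), and match it to $\mathcal{I}^W_U(F)(Y)$.

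For (1), I would compute
\[
\mathcal{I}^V_U(\Sigma^\infty_U X)(W) = \mathcal{J}^V_G(\mathbb{R}^n, W) \wedge_{O(n)} (S^{\mathbb{R}^n} \wedge X),
\]
and use the standard "untwisting" homeomorphism $\mathcal{J}^V_G(\mathbb{R}^n,W)_+ \wedge_{O(n)} S^n \cong S^W$ (realizing $S^W$ as the Thom space over the point $W$ of the tautological isomorphism bundle) to recover $S^W \wedge X = \Sigma^\infty_V(X)(W)$; naturality in $X$ and in $W$ is immediate from the construction. For (4), the symmetric monoidal structure on $\Sp^G_U$ is a Day convolution along the direct sum functor on $\mathcal{J}^U_G$, so I would assemble a canonical comparison map $\mathcal{I}^V_U(F) \wedge \mathcal{I}^V_U(F') \to \mathcal{I}^V_U(F \wedge F')$ from the bi-coend description of the smash product and check it is an isomorphism levelwise, together with the unit isomorphism $\mathcal{I}^V_U(\mathbb{S}_U) \cong \mathbb{S}_V$ which is a special case of (1).

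The main obstacle I expect is (4): bookkeeping the $O(n_1) \times O(n_2) \to O(n_1+n_2)$-equivariance of the bi-coend and matching it against the single coend computing $\mathcal{I}^V_U(F\wedge F')$ requires careful manipulation of the indexing categories and symmetry constraints. Since all four statements are assembled as Theorem 1.5 of \cite{mandell-may}, the proof ultimately reduces to citing their verification.
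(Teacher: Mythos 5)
The paper offers no proof of this lemma at all --- it is stated as a direct citation of Theorem 1.5 of Mandell--May --- and your proposal correctly identifies that the argument ultimately reduces to that reference, while your expanded sketch (co-Yoneda for (3), Fubini for coends for (2), the untwisting homeomorphism $\mathcal{J}^V_G(\mathbb{R}^n,W)\wedge_{O(n)}S^n\cong S^W$ for (1), and Day convolution for (4)) is a faithful outline of how the verification goes in that source. The only caveat is cosmetic: with the paper's stated convention that morphisms in $\mathcal{J}^U_G$ are arbitrary linear isomorphisms, the endomorphism space of $\mathbb{R}^n$ is $GL(n)_+$ rather than $O(n)_+$; this mismatch is inherited from the paper's loose recollection of the definition (Mandell--May use isometric isomorphisms) and does not affect your argument.
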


\begin{rem}
    Properties (2) and (3) above imply the change of universe functors are equivalences of categories.  Nevertheless, each category of orthogonal $G$-spectra has a model structure for which the change of universe functors are \emph{not} Quillen equivalences.
\end{rem}

Suppose that $H\leq G$ is a closed subgroup.  For any $G$-universe $U$ we can forget the $G$-action to an $H$-action and obtain an $H$-universe $i^*_HU$.  Note that even when $U$ is complete $i^*_HU$ may fail to be complete.

\begin{defn}
    The \emph{restriction functor}
    \[
        i^*_H\colon \Sp^G_U\to \Sp^H_{i^*_HU}
    \]
    is given by \[i^*_H(F)(V) = \mathcal{J}_H^{i^*_HU}(\mathbb{R}^n,V)\wedge_{O(n)}i^*_HF(\mathbb{R}^n)\]
    where $n$ is the dimension of $V$.
\end{defn}

The restriction functor is strong monoidal and has both left and right adjoints.  In particular, restriction preserves both limits and colimits.  Often it will be convenient to assume that $i_H^*$ is landing in the category of $H$-spectra indexed on a complete universe.  Following \cite{mandell-may} we can do this by post composition with a change of universe, although we will omit it from the notation.

\subsubsection{Fixed points}  We will consider two kinds of fixed points functors in equivariant stable homotopy theory.  In both cases we will be particularly interested in fixed points with respect to a normal subgroup $H\trianglelefteq G$ so that $G/H$ is a group. In this case, for any $G$-universe $U$, the fixed points $U^H$ have an action by $G/H$ which makes $U^H$ into a $G/H$-universe.  Again, this may fail to be complete even if $U$ is complete.

To produce our fixed point functors we introduce an intermediate category $\mathcal{J}_G^H$ whose objects are sub-representations of a $G$-universe $U$ and whose mappings spaces are the $H$-fixed points of $\mathcal{J}_G(V,W)$.  The category $\mathcal{J}_G^H$ is naturally enriched in $\mathcal{T}^{G/H}$.  There are two enriched functors 
\begin{align*}
    \mathrm{Fix}^H & \colon \mathcal{J}^H_G\to \mathcal{J}_{G/H}^{U^H}\\
    \rho & \colon \mathcal{J}_{G/H}\to \mathcal{J}^H_G
\end{align*}
where $\mathrm{Fix}^H(V) = V^H$ and $\rho(W) = W$, treated as a $G$-space by pulling back along the quotient $G\twoheadrightarrow G/H$.

For any $F\in \Sp^G_U$, the collection $F(V)^H$ gives the objects of a functor $\mathcal{J}^H_G\to \mathcal{T}^{G/H}$.  We briefly denote this functor by $\widetilde{F}$.

\begin{defn}
    For a normal subgroup $H\trianglelefteq G$ the \emph{categorical fixed points} functor 
    \[
        (-)^H\colon \Sp^G_U\to \Sp^{G/H}_{U^H}
    \]
    sends $F\in \Sp^G_U$ to $\rho^*(\widetilde{F})$.  In particular, we have $F^H(V) = F(V)^H$.
\end{defn}

The categorical fixed points are a natural construction which are used to define equivariant homotopy groups.  On the other hand, when the universe $U$ is not trivial the categorical fixed points do not have the property of extending fixed points of $G$-spaces.  That is, $(\Sigma^{\infty}X)^H\ncong \Sigma^{\infty}(X^H)$ for a based $G$-space $X$.  The desire for this property is the motivation for the \emph{geometric fixed points}.

\begin{defn}
    For a normal subgroup $H\trianglelefteq G$ the \emph{geometric fixed points} functor
    \[
        \Phi^H\colon \Sp^G_{U}\to \Sp^{G/H}_{U^H}
    \]
    is given by composing $F\mapsto \widetilde{F}$ with left Kan extension along $\mathrm{Fix}^H$.
\end{defn}

\begin{prop}[{\cite[Proposition V.4.7]{mandell-may}, and \cite[Theorem A.1]{blumberg-mandell:cyclotomic}}]\label{prop: fixed points are monoidal}
    For $H$ a normal subgroup of $G$ both $(-)^H$ and $\Phi^H$ are lax monoidal functors. When either $X$ or $Y$ is a cofibrant $G$-spectrum (in the stable model structure, see below) the map $\Phi^H(X)\wedge \Phi^H(Y)\to \Phi^H(X\wedge Y)$ is an isomorphism.
\end{prop}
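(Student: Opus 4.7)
The plan is to unpack the definitions of $(-)^H$ and $\Phi^H$, reduce the lax monoidal structure to the corresponding structure at the level of based $G$-spaces, then upgrade to a strong monoidal map on cofibrant spectra by a cellular induction argument.

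First, at the level of based $G$-spaces, there is a natural inclusion $X^H \wedge Y^H \hookrightarrow (X \wedge Y)^H$ sending a pair of $H$-fixed points to their smash, together with the identity $S^0 \to (S^0)^H$. These equip $(-)^H \colon \mathcal{T}_G \to \mathcal{T}_{G/H}$ with a lax monoidal structure. Recall that the smash product on $\Sp^G_U$ is defined via left Kan extension along $\oplus \colon \mathcal{J}_G^U \times \mathcal{J}_G^U \to \mathcal{J}_G^U$ of the external smash product of functors. Because $\mathrm{Fix}^H(V \oplus W) = V^H \oplus W^H$, the enriched functor $\mathrm{Fix}^H \colon \mathcal{J}^H_G \to \mathcal{J}_{G/H}^{U^H}$ is strong monoidal with respect to $\oplus$. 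Combining this with the levelwise lax structure on fixed points of $G$-spaces and applying the universal property of Day convolution promotes $(-)^H$ and $\Phi^H$ to lax monoidal functors on orthogonal $G$-spectra, with structure maps $X^H \wedge Y^H \to (X \wedge Y)^H$ and $\Phi^H(X) \wedge \Phi^H(Y) \to \Phi^H(X \wedge Y)$.

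Second, to show that the latter map is an isomorphism when $X$ or $Y$ is cofibrant, I would argue by cellular induction. The stable model structure on $\Sp^G_U$ has generating cofibrations built from $G/K_+ \wedge F_V(S^{n-1}_+) \to G/K_+ \wedge F_V(D^n_+)$, where $F_V$ denotes the shift desuspension spectrum associated to a finite-dimensional sub-representation $V \subset U$. On such free cells one computes directly
\[
    \Phi^H\bigl(G/K_+ \wedge F_V X\bigr) \;\cong\; (G/K)^H_+ \wedge F_{V^H}(X^H),
\]
using both $\mathrm{Fix}^H(V) = V^H$ and the classical fact that geometric fixed points of a suspension spectrum of a $G$-CW complex is the suspension spectrum of its fixed points. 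Combined with $F_V(X) \wedge F_W(Y) \cong F_{V \oplus W}(X \wedge Y)$ and $(V \oplus W)^H = V^H \oplus W^H$, this proves the isomorphism on generators. The induction step then uses that both smash product (with any object) and $\Phi^H$ preserve the pushouts, sequential colimits, and retracts used to build cell spectra; for smash product this is because it is a left Quillen bifunctor, and for $\Phi^H$ it is because, as a left Kan extension, it is a left adjoint on the point-set level.

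The main obstacle is keeping track of the interplay between the left Kan extension defining $\Phi^H$ and the enriched coend defining $\wedge$: one must verify that the natural comparison map built abstractly from the monoidality of $\mathrm{Fix}^H$ really agrees with the lax structure constructed levelwise, and that the pushout-preservation step in the cellular induction is compatible with the smash of two independently-built cells (so that one of $X$, $Y$ cofibrant suffices, rather than requiring both). These are exactly the technical verifications carried out in \cite[Proposition V.4.7]{mandell-may} and \cite[Theorem A.1]{blumberg-mandell:cyclotomic}, so my proof would ultimately defer to those references for the most delicate coherence checks.
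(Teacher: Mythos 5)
The paper does not prove this proposition; it is quoted verbatim from \cite[Proposition V.4.7]{mandell-may} and \cite[Theorem A.1]{blumberg-mandell:cyclotomic}, and your outline --- levelwise lax structure on space-level fixed points, strong monoidality of $\mathrm{Fix}^H$ with respect to $\oplus$, Day convolution to promote to spectra, and cellular induction over free cells $G/K_+\wedge F_V X$ for the strong monoidality statement --- is essentially the argument given in those references. So the approach matches the (cited) proof.

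One inaccuracy in your justification of the induction step: $\Phi^H$ is \emph{not} a point-set left adjoint. It is the composite of the levelwise $H$-fixed-points functor $F\mapsto\widetilde{F}$ (which is built from right adjoints at the space level) with left Kan extension along $\mathrm{Fix}^H$, and the composite preserves neither arbitrary colimits nor, in particular, general pushouts. What makes the cellular induction work is the weaker but sufficient fact that space-level fixed points preserve pushouts along closed inclusions and sequential colimits along closed inclusions, so $\Phi^H$ preserves exactly the colimits appearing in the cell-attachment filtration of a cofibrant spectrum. This is one of the delicate verifications carried out in \cite{mandell-may}, so your deferral to the references covers it, but the stated reason (``it is a left adjoint on the point-set level'') is not the correct one.
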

\begin{rem}\label{rem: geom=cat fixed points for trivial universe}
When $U = \mathbb{R}^{\infty}$ is the trivial $G$-universe note that $\mathcal{J}^H_G$, $\mathcal{J}_{G/H}$,  and $\mathcal{J}^{U^H}_{G/H}$ are the same category and the functors $\rho$ and $\mathrm{Fix}^H$ are both the identity.  In particular, the geometric and categorical fixed point functors agree for the trivial $G$-universe and are both strong monoidal always.
\end{rem}

\subsubsection{The norm}

In this subsection we restrict to $G$ a finite group.  In \cite{HHR} the authors develop the \emph{norm functor}, a powerful tool for understanding equivariant spectra.  We write
\[
    \wedge^G_H\colon \Sp^{H}_{\mathbb{R}^{\infty}}\to \Sp^{G}_{\mathbb{R}^{\infty}}
\]
for the functor which sends $X$ to 
\[
    \wedge_H^G(X) = \bigwedge\limits_{\gamma\in G/H} X
\]
where the product on the right is indexed by a choice of coset representatives.  The group $G$ acts first on the cosets $G/H$ and then has a residual $H$-action on the smash product terms.

\begin{defn}
    Let $G$ be a finite group and $H$ a subgroup.  Let $U$ be a $G$-universe and $V$ an $H$-universe.  The \emph{point-set norm functor} $N_H^G\colon \Sp^H_{V}\to \Sp^G_U$ is the composite
\[\Sp^H_{V}\xrightarrow{\mathcal{I}_V^{\mathbb{R}^{\infty}}}\Sp^{H}_{\mathbb{R}^{\infty}}\xrightarrow{\wedge_H^G} \Sp^{G}_{\mathbb{R}^{\infty}}\xrightarrow{\mathcal{I}_{\mathbb{R}^{\infty}}^U}\Sp^G_{U}.
    \]
\end{defn}

The norm functor is strong monoidal and, in particular, it restricts to a functor on commutative and associative algebras.  In the commutative case the norm is left adjoint to the forgetful functor.

\begin{prop}[{\cite[Appendix B]{HHR}}]\label{prop: norm is left adjoint finite}
    For any finite group $G$ and subgroup $H\leq G$ there is an adjunction 
    \[
            \begin{tikzcd}
                \mathrm{Comm}^H \ar[r,shift left,"N^G_H"] &  \ar[l,shift left,"i_H^*"]\mathrm{Comm}^G.
            \end{tikzcd}
        \]
\end{prop}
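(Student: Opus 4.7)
The plan is to prove the adjunction by first reducing to a ``smash-power versus restriction'' adjunction on the trivial universe, and then invoking the general principle that the coproduct of commutative algebras in a symmetric monoidal category is computed by the monoidal product.

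For the reduction, \autoref{lemma: change of universe facts} shows that the change-of-universe functors $\mathcal{I}_V^W$ are strong monoidal and, since $\mathcal{I}^W_V\circ\mathcal{I}^V_W\cong \mathcal{I}^V_V\cong \mathrm{id}$, are equivalences of underlying categories. Consequently they induce equivalences on categories of commutative algebras and participate in adjunctions on both sides. Since $N_H^G$ is obtained by conjugating the smash-power $\wedge_H^G$ by change-of-universe functors, and $i_H^*$ is a composition of the underlying restriction of group action with a change-of-universe, it suffices to establish the adjunction between $\wedge_H^G\colon \mathrm{Comm}^H_{\mathbb{R}^\infty}\to\mathrm{Comm}^G_{\mathbb{R}^\infty}$ and its restriction.

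Next, I would construct the unit and counit explicitly. For $A\in\mathrm{Comm}^H_{\mathbb{R}^\infty}$, the unit $\eta_A\colon A\to i_H^*(\wedge_H^G A)$ is the inclusion of $A$ into the wedge factor indexed by the identity coset $eH\in G/H$, which is $H$-equivariant because $H$ stabilizes $eH$ and acts on that factor precisely through its action on $A$. For $B\in\mathrm{Comm}^G_{\mathbb{R}^\infty}$, the counit $\varepsilon_B\colon \wedge_H^G(i_H^*B)\to B$ applies the $G$-action map $\gamma\cdot\colon B\to B$ on the factor indexed by $\gamma H$, and then iteratively multiplies the resulting copies of $B$ using the commutative multiplication. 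Commutativity guarantees that this is well-defined independent of an ordering of $G/H$, and independent of the choice of coset representatives: replacing $\gamma$ by $\gamma h$ with $h\in H$ is absorbed by the $H$-equivariance of the given map $A\to B$.

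The main obstacle is verifying that $\varepsilon_B$ is $G$-equivariant and that $(\eta,\varepsilon)$ satisfy the triangle identities. Equivariance holds because the $G$-action on $\wedge_H^G(i_H^*B)$ permutes wedge factors via the left action on $G/H$, and this permutation commutes with the multiplication step precisely because $B$ is commutative and the $G$-action on $B$ is by algebra maps. The identity $i_H^*\varepsilon_{\wedge_H^G A}\circ\eta_{\wedge_H^G A}=\mathrm{id}$ is immediate from the fact that the $eH$-factor is untouched, and $\varepsilon_B\circ\wedge_H^G(\eta_{i_H^*B})=\mathrm{id}$ follows from unitality of the multiplication on $B$. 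Taken together this is the equivariant avatar of the classical identification of coproducts in commutative algebras with tensor products, and is essentially the verification carried out in \cite[Appendix B]{HHR}.
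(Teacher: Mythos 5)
The paper gives no proof of this proposition — it is quoted verbatim from \cite[Appendix B]{HHR} — and your argument is essentially the one given there: after conjugating away the change-of-universe equivalences, the indexed smash product $\wedge_H^G$ is the indexed coproduct in commutative algebras, and the ``include the $eH$-factor'' unit together with the ``act by coset representatives and multiply'' counit exhibit the adjunction. Two small slips worth fixing. First, the factor of $\wedge_H^G A$ indexed by $eH$ is a smash factor, not a wedge factor, so ``inclusion into that factor'' is not a map until you smash $\mathrm{id}_A$ with the unit maps $\mathbb{S}\to A$ on all the remaining factors; this is exactly the coproduct inclusion in $\mathrm{Comm}^H$, and it is what makes your verification of the triangle identities (``the other factors contribute only units'') go through. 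Second, the triangle identities as you have written them do not typecheck; they should read $\varepsilon_{\wedge_H^G A}\circ \wedge_H^G(\eta_A)=\mathrm{id}$ and $i_H^*(\varepsilon_B)\circ \eta_{i_H^*B}=\mathrm{id}$, with unitality of $B$ handling the second and unitality plus the coset bookkeeping you describe handling the first. Neither point is a gap in the idea, only in the bookkeeping.
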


\subsubsection{Homotopy theory of orthogonal $G$-spectra}

In this subsection we outline the model structures at play in this paper.  To do so, we need to first define equivariant homotopy groups.  Throughout this section we fix a compact Lie group $G$ and a $G$-universe $U$.

\begin{defn}
    Let $X\in \Sp^G_U$.  For any $H\leq G$ and any integer $n$ we define 
    \[
        \pi^H_n(X) = \begin{cases}
            \colim\limits_{V< U} \pi_n((\Omega^VX(V))^H) & n\geq 0 \\
            \colim\limits_{\mathbb{R}^{-n}< V <U} \pi_0((\Omega^{V - \mathbb{R}^{-n}}X(V))^H) & n\leq 0 
        \end{cases}
    \]
    where in either case the colimit is over finite dimensional sub-representations $V<U$.
\end{defn}

As usual, a map of $G$-spectra induces a map on $\pi_n^H$ for all $n$ and $H$.  We call a map $X\to Y$ a \emph{stable equivalence} if it induces an isomorphism on $\pi_n^H$ for all $n$ and $H$.  

\begin{prop}[{\cite[Theorem III.4.2]{mandell-may}}]
    There is a symmetric monoidal compactly generated model structure, called the \emph{stable model structure}, on $\Sp^G_U$ with weak equivalences the stable equivalences. The fibrant objects are the $\Omega$-$G$-spectra.
\end{prop}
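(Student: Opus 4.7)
The plan is to build the stable model structure as a left Bousfield localization of a more straightforward level model structure, following the standard Mandell--May strategy.

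First I would construct the \emph{level model structure} on $\Sp^G_U$. A map $f\colon X\to Y$ is a level equivalence (respectively level fibration) if, for every finite dimensional $V\subset U$, the map $f(V)\colon X(V)\to Y(V)$ is a $G$-weak equivalence (resp.\ $G$-Serre fibration) in $\cT_G$, where $G$-Serre fibrations and $G$-weak equivalences in $\cT_G$ are detected on fixed points for all closed $H\le G$. The generating (acyclic) cofibrations are obtained by smashing boundary inclusions $(G/H\times S^{n-1})_+\to (G/H\times D^n)_+$ (resp.\ horn inclusions) with the free functors $\cJ^U_G(V,-)$ for $V\subset U$. Compactness of spheres and orbits together with the small object argument produce this level structure; the pushout-product axiom reduces to the corresponding axiom in $\cT_G$ via the fact that $\cJ^U_G(V,-)\wedge \cJ^U_G(W,-)\cong \cJ^U_G(V\oplus W,-)$.

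Next I would localize to obtain the stable structure. The idea is to force the adjoints $\sigma^\sharp_{V,W}\colon F(V)\to \Om^W F(V\oplus W)$ to become weak equivalences. Concretely, one enlarges the generating acyclic cofibrations by mapping cylinders of the structure maps
\[
\lambda_{V,W}\colon \cJ^U_G(V\oplus W,-)\wedge S^W \longrightarrow \cJ^U_G(V,-),
\]
smashed with $(G/H\times D^n)_+$, ranging over $V,W\subset U$ and $H\le G$. A fibrant object in the localized structure is then characterized by lifting against these new acyclic cofibrations, which translates exactly into the $\Om$-$G$-spectrum condition; hence the fibrant objects are the $\Om$-$G$-spectra, as claimed. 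The identification of the localized weak equivalences with the stable equivalences (isomorphisms on $\pi^H_*$ for all closed $H\le G$) proceeds by showing that $\pi^H_n$ is representable in the homotopy category by shifts of $\Sn/H_+$ and that a map between $\Om$-$G$-spectra is a stable equivalence iff it is a level equivalence.

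For the symmetric monoidal and compact generation statements I would verify the pushout-product axiom and the monoid axiom. The generating cofibrations $\cJ^U_G(V,-)\wedge(G/H\times S^{n-1})_+ \to \cJ^U_G(V,-)\wedge(G/H\times D^n)_+$ are closed under $\wedge$ up to isomorphism since $\cJ^U_G(V,-)\wedge \cJ^U_G(W,-)\cong \cJ^U_G(V\oplus W,-)$, and the analogous calculation for the generating acyclic cofibrations in the localized structure reduces to showing $\lambda_{V,W}\wedge \id$ remains a stable equivalence; this follows from the cofiber sequence computation which identifies the cofiber of $\lambda_{V,W}$ as stably contractible after smashing with any object. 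Compact generation is inherited from the level structure since all domains and codomains of the generators are small with respect to the whole category. The main obstacle, in my view, is the last step: proving that the localized weak equivalences coincide with the stable equivalences and that the smash product on stably cofibrant objects is homotopically well-behaved. This requires the technical input that smashing with a cofibrant orthogonal $G$-spectrum preserves stable equivalences, which ultimately reduces to an equivariant refinement of the fact that $S^V\wedge(-)$ shifts homotopy groups by $|V|$ and to careful use of the change of universe isomorphism $\cI^U_V$ being compatible with $\wedge$ from \autoref{lemma: change of universe facts}.
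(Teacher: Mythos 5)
The paper offers no proof of this proposition---it is quoted directly from Mandell--May \cite[Theorem III.4.2]{mandell-may}---and your sketch is essentially a summary of their argument (level model structure first, then stabilization by adjoining acyclic cofibrations built from the mapping cylinders of the maps $\lambda_{V,W}$, with fibrant objects the $\Omega$-$G$-spectra and a pushout-product verification for the monoidal structure), so it matches the approach the paper relies on. The one imprecision is that the added generating acyclic cofibrations must be the pushout-products of the mapping-cylinder inclusions of $\lambda_{V,W}$ with the cells $(G/H\times S^{n-1})_+\to (G/H\times D^n)_+$, rather than plain smashes with $(G/H\times D^n)_+$, since only the pushout-product form makes the lifting property encode both the correct homotopy type and the acyclicity in the $\Omega$-$G$-spectrum condition; this detail is handled in the cited reference.
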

\begin{rem}
    When the universe $U = \mathbb{R}^{\infty}$ is a trivial $G$-universe recall that $\Sp^{G}_{\mathbb{R}^{\infty}}$ is isomorphic to the category $\Fun(BG,\Sp)$ of orthogonal spectra with $G$-action.  This category is often given the projective model structure, where weak equivalences are equivariant maps of spectra which are underlying equivalences.  With the projective model structure this is called the category of Borel $G$-spectra.  We will not make any use of Borel $G$-spectra in this paper.  In particular, when we say ``a weak equivalence of spectra with $G$-action'' we mean a map $X\to Y$ which induces equivalences of spectra $X^H\to Y^H$ for all $H\leq G$.
\end{rem}

The stable equivalences are also the weak equivalences in the \emph{positive complete stable model structure} for $\Sp^G_U$ \cite[\S B.4]{HHR}.  This model structure is convenient for working with multiplicative structures and especially the norm.  

\begin{prop}[{\cite[Proposition B.63]{HHR}}]
    There is a symmetric monoidal cofibrantly generated model structure, called the \emph{positive complete stable model structure}, on $\Sp^G_U$ with weak equivalences the stable equivalences.  All $\Omega$-$G$-spectra are fibrant.  Additionally, the positive complete stable model structure lifts to model structures on associative and commutative algebras in $\Sp^G_U$ with the same weak equivalences and fibrations.  
\end{prop}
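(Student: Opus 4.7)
The plan is to proceed in four stages, following the template of \cite{HHR}. First, I would construct a \emph{positive complete level model structure} on $\Sp^G_U$ by right-transferring from a product of based $G$-space categories indexed by positive sub-representations $V\subset U$ (those with $V^G\neq 0$); the completeness of $U$ supplies enough irreducibles to detect all $G$-equivariant weak equivalences, while the positivity restricts which free spectra generate cofibrations. Explicitly, the generating cofibrations are the maps $F_V((G/H)_+\wedge S^{n-1}_+)\to F_V((G/H)_+\wedge D^n_+)$ for positive $V\subset U$, closed $H\leq G$, and $n\geq 0$, where $F_V$ denotes the left adjoint to evaluation at $V$.

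Second, I would Bousfield localize this level model structure at the adjoint-structure maps $\lambda_{V,W}\colon F_{V\oplus W}(S^W\wedge X)\to F_V(X)$ to obtain the desired stable model structure. A levelwise fibrant object becomes stably fibrant precisely when it is an $\Omega$-$G$-spectrum, which yields the stated characterization of fibrant objects. Restricting to positive $V$ does not alter the class of stable equivalences, since $\pi_n^H$ is computed as a colimit over all finite-dimensional $V<U$, but it removes $\mathbb{S}$ from being cofibrant in a manner that would later obstruct the commutative algebra lift.

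Third, I would check the pushout-product axiom. Using the natural isomorphism $F_V(A)\wedge F_W(B)\cong F_{V\oplus W}(A\wedge B)$ and the fact that $V\oplus W$ is positive whenever either summand is, this reduces to the pushout-product axiom for based $G$-spaces, which is standard. Combined with a verification of the monoid axiom in the sense of Schwede--Shipley, this immediately lifts the model structure to the category of associative algebras in $\Sp^G_U$ with weak equivalences and fibrations created by the forgetful functor.

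Finally, the main obstacle is the lift to commutative algebras. This requires that for cofibrant $X$, the canonical map $E\Sigma_n{}_+\wedge_{\Sigma_n}X^{\wedge n}\to X^{\wedge n}/\Sigma_n$ be a weak equivalence, equivalently that the symmetric-power endofunctors preserve weak equivalences between cofibrant objects. This is precisely where positivity is essential: the generating cofibrations are designed so that $\Sigma_n$ acts freely enough on iterated smash powers of cofibrant objects that the homotopy orbits and strict orbits agree, which is the technical core of \cite[Proposition B.63]{HHR}. Given this freeness, a standard transfer along the free commutative algebra monad, together with the small object argument, produces the model structure on commutative algebras; the agreement of weak equivalences and fibrations with those on the underlying category follows from the transfer.
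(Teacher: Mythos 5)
The paper does not prove this proposition; it is quoted verbatim from \cite[Proposition B.63]{HHR}, so the only comparison available is with the argument in that appendix. Your four-stage outline (positive level structure, Bousfield localization to the stable structure, pushout-product and monoid axioms for associative algebras, and the symmetric-power/freeness analysis for commutative algebras) is exactly the template of HHR's proof, and stages two through four are described accurately at the level of detail you give.

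There is, however, one concrete discrepancy in stage one: your generating cofibrations
$F_V((G/H)_+\wedge S^{n-1}_+)\to F_V((G/H)_+\wedge D^n_+)$, with $V$ ranging over positive sub-$G$-representations of $U$, generate the \emph{positive} stable model structure, not the positive \emph{complete} one. The word ``complete'' in HHR refers to enlarging the family of cells to $G\ltimes_H F_V(S^{n-1}_+)\to G\ltimes_H F_V(D^n_+)$ where $H\leq G$ and $V$ now ranges over \emph{arbitrary} $H$-representations with $V^H\neq 0$, not merely restrictions of $G$-representations; this is a strictly larger class of cofibrations. Both structures exist, have the same weak equivalences, and lift to associative and commutative algebras, so your argument proves a true statement --- but not the one named in the proposition, and the distinction is not cosmetic for this paper: the completeness of the cell family is what makes the norm $N^G_H$ preserve cofibrations (as invoked in \autoref{prop: norm is left quillen adjoint finite}), which fails for the non-complete positive structure. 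You should replace your generating set by the induced cells above; the rest of your outline then goes through unchanged, since $G\ltimes_H F_V(A)\wedge G\ltimes_K F_W(B)$ still decomposes into cells of the same form via the double coset formula, so the pushout-product and symmetric-power arguments are unaffected.
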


When $X$ is a fibrant $G$-spectrum indexed on a complete $G$-universe we can also compute it's homotopy groups by taking fixed points.
\begin{prop}[{\cite[V.3.2]{mandell-may}}]
    Let $X$ be a $G$-spectrum indexed on a complete universe which is fibrant in either the stable or positive complete stable model structure.  Then there is an isomorphism $\pi_n^H(X)\cong \pi_n(X^H)$.
\end{prop}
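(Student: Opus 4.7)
The plan is to unpack both sides and show that the colimit defining $\pi_n^H(X)$ is cofinally computed by a subsystem that visibly equals $\pi_n(X^H)$. On the right, the underlying orthogonal spectrum of $X^H$ is indexed on $U^H$, so
\[
    \pi_n(X^H) = \colim_{W \subset U^H} \pi_n(\Omega^W X^H(W)) = \colim_{W \subset U^H} \pi_n((\Omega^W X(W))^H),
\]
where the second equality uses that the $H$-action on $W \subset U^H$ is trivial, so loops over $W$ commute with taking $H$-fixed points on based $G$-spaces. On the left, $\pi_n^H(X)$ is by definition the colimit of $\pi_n((\Omega^V X(V))^H)$ over all finite dimensional $G$-subrepresentations $V \subset U$. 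So it suffices to show that the inclusion of posets $\{W \subset U^H\} \hookrightarrow \{V \subset U\}$ induces an isomorphism of colimits.

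The key input is the $\Omega$-$G$-spectrum condition. Given an arbitrary $V \subset U$, write $V = V^H \oplus V^\perp$ where $V^\perp$ is the orthogonal complement of $V^H$ inside $V$, which has no $H$-fixed vectors besides $0$. Since $X$ is fibrant, the adjoint structure map
\[
    X(V^H) \longrightarrow \Omega^{V^\perp} X(V)
\]
is a $G$-weak equivalence between fibrant $G$-spaces, hence its $H$-fixed points give a weak equivalence of (non-equivariant) spaces. Looping both sides by $V^H$ and passing to $\pi_n$ yields an isomorphism
\[
    \pi_n\bigl( \Omega^{V^H} (X(V^H))^H \bigr) \xrightarrow{\;\cong\;} \pi_n\bigl( (\Omega^V X(V))^H \bigr),
\]
natural in the directed system. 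Since $U$ is complete, the subspaces of the form $V^H$ as $V$ ranges over finite-dimensional $G$-subrepresentations of $U$ are cofinal among finite-dimensional subspaces of $U^H$, so this identifies the two colimits.

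The part requiring care is the compatibility of these equivalences with the transition maps of the respective directed systems, i.e.\ verifying naturality of the decomposition $V = V^H \oplus V^\perp$ under inclusions $V \subset V'$. This is a bookkeeping argument, handled by restricting to an exhausting nested sequence $V_0 \subset V_1 \subset \cdots$ of $G$-subrepresentations of $U$ chosen so that $V_i^H \subset V_{i+1}^H$ and the orthogonal complements nest compatibly; such a sequence exists because $U$ is complete and $U^H$ is infinite dimensional. Once this cofinal chain is fixed the structure maps of $X$ assemble the isomorphisms above into a map of sequential diagrams, and the argument applies verbatim to both the stable and positive complete stable model structures since fibrant objects in either are $\Omega$-$G$-spectra.
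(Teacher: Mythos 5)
The paper does not prove this proposition; it is quoted verbatim from Mandell--May \cite[V.3.2]{mandell-may}, so there is no internal proof to compare against. Your argument is essentially the standard proof of that result and is sound: the $\Omega$-$G$-spectrum condition makes the adjoint structure map $X(V^H)\to\Omega^{V^\perp}X(V)$ an equivalence on $H$-fixed points, which identifies the $V$-term of the colimit computing $\pi_n^H(X)$ with the $V^H$-term of the colimit computing $\pi_n(X^H)$; moreover this identification \emph{is} a transition map of the larger directed system, so compatibility with the colimit structure is automatic (your nested-sequence device is one way to see it, but you can shortcut the whole bookkeeping by noting that fibrancy forces \emph{every} transition map in both systems to be an isomorphism, so for $n\ge 0$ both colimits collapse to $\pi_n(X(0)^H)$). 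Two small points to tighten. First, when $H$ is not normal in $G$ the subspaces $V^H$ and $V^\perp$ are only $H$-subrepresentations, not $G$-subrepresentations, so you should first replace $X$ by $i_H^*X$ (which does not change $\pi_n^H$) before decomposing $V$ and invoking the structure map; as written, calling $X(V^H)\to\Omega^{V^\perp}X(V)$ a ``$G$-weak equivalence'' does not typecheck for non-normal $H$. Second, your argument only treats $n\ge 0$; the definition of $\pi_n^H$ for $n<0$ uses $\pi_0\bigl((\Omega^{V-\mathbb{R}^{-n}}X(V))^H\bigr)$ and requires the same decomposition applied in that setting. Neither issue is structural, and the completeness hypothesis plays no essential role in your argument beyond fixing the ambient context.
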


Whenever we say that a ring orthogonal $G$-spectrum is cofibrant we mean that is cofibrant in the positive complete stable model structure. This implies that the underlying $G$-spectrum is cofibrant in the stable model structure.

The following propositions summarize all the necessary facts we need about the interplay between the change of universe, fixed points, and norm functors and the homotopy theory of equivariant spectra. The first proposition tells us that the adjunction of \autoref{prop: norm is left adjoint finite} is a Quillen adjunction.

\begin{prop}\cite[Theorem 10.2.4]{HHR:Book}\label{prop: norm is left quillen adjoint finite} 
    Let $G$ be a finite group, $H\leq G$ a subgroup, $U$ a complete $G$-universe, and $V$ a complete $H$-universe. Then the norm $N_H^G\colon \Sp^H_V\to \Sp^G_U$ preserves cofibrations and weak equivalences between cofibrant objects.  There is Quillen adjunction 
        \[
            \begin{tikzcd}
                \mathrm{Comm}^H \ar[r,shift left,"N^G_H"] &  \ar[l,shift left,"i_H^*"]\mathrm{Comm}^G.
            \end{tikzcd}
        \]
\end{prop}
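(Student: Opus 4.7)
The plan is to analyze the norm as the composite
\[
    N_H^G \;=\; \mathcal{I}_{\R^\infty}^U \;\circ\; \wedge_H^G \;\circ\; \mathcal{I}_V^{\R^\infty}
\]
and verify that each factor preserves cofibrations and weak equivalences between cofibrant objects in the positive complete stable model structures. For the change of universe functors $\mathcal{I}_V^{\R^\infty}$ and $\mathcal{I}_{\R^\infty}^U$, I would check this on the generating (acyclic) cofibrations of the positive complete stable model structure, which are built from cell attachments on $G$-spaces smashed with shifted spheres indexed by positive sub-representations of the ambient universe. Since change of universe commutes with suspension by \autoref{lemma: change of universe facts}(1) and the morphism spaces $\mathcal{J}_G^U(W_1,W_2)$ appearing in the explicit formula for $\mathcal{I}_U^{U'}$ are cofibrant as $O(n)$-spaces, these functors carry generating cofibrations to cofibrations and generating acyclic cofibrations to acyclic cofibrations.

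The middle functor $\wedge_H^G$ is the key step. It is an iterated smash product indexed by the finite set of cosets $G/H$, with an $H$-action on each factor and a shuffle $G$-action permuting the factors. The underlying non-equivariant cofibrancy follows from the pushout product axiom applied $|G/H|$-many times. The essential additional input, and the whole reason for the ``complete'' adjective in ``positive complete'', is that the resulting object equipped with its shuffle $G$-action is cofibrant in $\Sp^G_{\R^\infty}$, not merely in $\Sp_{\R^\infty}$. This is the content of \cite[Appendix B]{HHR}: the generating cofibrations of the positive complete model structure are chosen precisely so that their $\wedge_H^G$-images factor through free $G$-cells of the correct isotropy type. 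Preservation of weak equivalences between cofibrant objects then follows from the pushout product/monoid axioms together with a cofibration-by-cofibration induction.

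For the Quillen adjunction on commutative algebras, I would use that the positive complete model structure on $\Sp^G_U$ lifts to a model structure on $\mathrm{Comm}^G$ whose fibrations and weak equivalences are created by the forgetful functor to underlying spectra, and similarly for $H$. The underlying adjunction $N_H^G\dashv i_H^*$ on commutative algebras was already recorded as \autoref{prop: norm is left adjoint finite}, so it suffices to observe that restriction $i_H^*$ preserves fibrations and acyclic fibrations: both are detected on underlying spectra, and on underlying spectra $i_H^*$ preserves stable equivalences (stable equivalences are detected on $K$-fixed points for $K\le G$, which in particular include all $K\le H$) and level fibrations (by the explicit formula for $i_H^*$). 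Combined with the fact that cofibrant commutative algebras have underlying cofibrant spectra in the positive complete structure, the preservation of cofibrations from the first part of the statement is enough to conclude.

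The main obstacle is the middle step concerning $\wedge_H^G$: while the pushout product axiom cleanly yields non-equivariant cofibrancy of iterated smash products, verifying that the shuffle $G$-action endows the result with the structure of a cofibrant object in $\Sp^G_{\R^\infty}$ is genuinely subtle and is the technical engine of \cite[\S B]{HHR}. A self-contained proof would require recalling the explicit generating sets for the positive complete model structure and performing a direct cell-by-cell verification; for our purposes it is more efficient to cite \cite[Theorem 10.2.4]{HHR:Book}.
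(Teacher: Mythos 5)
The paper offers no proof of this statement: it is quoted verbatim from \cite[Theorem 10.2.4]{HHR:Book}, and since your sketch also ends by deferring to that citation, the citation itself is unobjectionable. However, the reduction you propose — factoring $N_H^G$ as $\mathcal{I}_{\R^{\infty}}^U\circ\wedge_H^G\circ\mathcal{I}_V^{\R^{\infty}}$ and checking that \emph{each} factor preserves cofibrations and weak equivalences between cofibrant objects — fails at the first factor. The functor $\mathcal{I}_V^{\R^{\infty}}$ passes from a complete universe down to the trivial one; in the Quillen adjunction recorded in the paper it is the \emph{right} adjoint, so there is no reason for it to preserve cofibrations, and indeed it does not: a generating positive cofibration of $\Sp^H_V$ involves a desuspension $S^{-W}$ by a nontrivial $H$-representation $W\subset V$, whose image in $\Sp^H_{\R^{\infty}}$ is not cofibrant there (the trivial-universe generating cofibrations only involve trivial representations). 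Worse, $\mathcal{I}_V^{\R^{\infty}}$ does not carry $V$-stable equivalences to $\R^{\infty}$-stable equivalences — the two classes of weak equivalences genuinely differ, which is precisely why the paper remarks that these equivalences of categories are not Quillen equivalences. So the assertion that ``these functors carry generating cofibrations to cofibrations and generating acyclic cofibrations to acyclic cofibrations'' is false, and the factor-by-factor strategy cannot be repaired.

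The argument in \cite[Appendix B]{HHR} instead analyzes the image of a generating cofibration under the \emph{entire} composite: one computes $N_H^G(S^{-W}\wedge X)\cong S^{-\mathrm{Ind}_H^G W}\wedge N_H^G(X)$ and observes that the induced representation $\mathrm{Ind}_H^G W$ lives in the complete $G$-universe $U$, so the composite lands on (a retract of cell complexes built from) generating positive complete cofibrations of $\Sp^G_U$, even though the intermediate object in $\Sp^G_{\R^{\infty}}$ is not cofibrant. This is also the actual role of the word ``complete'': the target model structure must have generators desuspended by all induced representations, not just the isotropy condition you attribute to $\wedge_H^G$. Your final paragraph on the commutative-algebra Quillen adjunction (right adjoint $i_H^*$ preserves fibrations and acyclic fibrations, both detected on underlying spectra) is correct and standard.
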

\begin{prop}[{\cite{mandell-may}}]
    For any $G$-universes $U'\leq U$ there is a Quillen adjunction
        \[
            \begin{tikzcd}
                \Sp^G_{U'} \ar[r,shift left,"\mathcal{I}^U_{U'}"] &  \ar[l,shift left,"\mathcal{I}^{U'}_U"]\Sp^G_U
            \end{tikzcd}.
        \]
\end{prop}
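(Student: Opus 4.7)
The plan is to verify the two standard conditions for a Quillen adjunction: existence of the adjoint pair and that the left adjoint preserves cofibrations and acyclic cofibrations. The adjunction itself is essentially free: by parts (2) and (3) of the change of universe lemma, $\mathcal{I}^U_{U'}$ and $\mathcal{I}^{U'}_U$ are inverse equivalences of categories, so each is canonically both a left and right adjoint of the other. Thus all of the content is in checking the Quillen condition on $\mathcal{I}^U_{U'}$.

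Since the stable (and positive complete) model structure on $\Sp^G_U$ is cofibrantly generated, it suffices to verify that $\mathcal{I}^U_{U'}$ sends generating cofibrations and generating acyclic cofibrations of $\Sp^G_{U'}$ into cofibrations and acyclic cofibrations of $\Sp^G_U$. The generators of $\Sp^G_{U'}$ have the form $F_V^{U'}(G/H_+ \wedge X) \to F_V^{U'}(G/H_+ \wedge Y)$, where $V \subset U'$ is a finite dimensional subrepresentation, $X \to Y$ is a standard generating (acyclic) cofibration of based spaces, and $F_V^{U'}$ denotes the free orthogonal $G$-spectrum indexed on $U'$, corepresented by $V$ in $\mathcal{J}^{U'}_G$. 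The key observation, obtained from the coend formula $\mathcal{I}^U_{U'}(F)(W) = \mathcal{J}_G^U(\mathbb{R}^n, W) \wedge_{O(n)} F(\mathbb{R}^n)$ together with the enriched Yoneda lemma, is that $\mathcal{I}^U_{U'} F_V^{U'}(A) \cong F_V^U(A)$ for every $V \subset U' \subset U$ and every based $G$-space $A$. Hence $\mathcal{I}^U_{U'}$ carries generating cells of $\Sp^G_{U'}$ to a subcollection of generating cells of $\Sp^G_U$.

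Since $\mathcal{I}^U_{U'}$ is a left adjoint, it preserves the small colimits and transfinite compositions of pushouts of generators out of which every cofibration is built, so this identification on generators propagates to preservation of all (acyclic) cofibrations. The step I expect to be the main technical obstacle is the handling of the generating acyclic cofibrations in the positive complete stable model structure: these are not simply free spheres but involve mapping cylinder replacements of the structure maps $F_{V \oplus W}(S^W \wedge A) \to F_V(A)$ that force the resulting fibrant objects to be $\Omega$-$G$-spectra. The reason this causes no actual trouble is that $\mathcal{I}^U_{U'}$ is strong monoidal (part (4) of the change of universe lemma), so it commutes with smashing against $S^W$ for $W \subset U'$, and therefore it sends each structure map in $\Sp^G_{U'}$ to the analogous structure map in $\Sp^G_U$. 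The mapping cylinders are then preserved as well, and the resulting Quillen adjunction is exactly as stated.
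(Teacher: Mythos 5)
Your proof is correct. The paper gives no argument for this proposition, citing it directly to Mandell--May; your verification via the identification $\mathcal{I}^U_{U'}F_V^{U'}(A)\cong F_V^U(A)$ on generating (acyclic) cofibrations, which uses that every $V\subset U'$ is also a subrepresentation of $U$, is exactly the standard argument in that reference, so it matches the intended proof.
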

The following proposition captures the interplay between geometric fixed points and equivariant norms. 
\begin{prop}[{\cite[Appendix B]{HHR}}]\label{lem: norm and fixed point lemmas}
    Let $G$ be a finite group, $H\leq G$ a subgroup, $U$ a $G$-universe, and $V$ an $H$-universe.  With the positive complete stable model structures, we have:
    \begin{enumerate}
        \item when $H$ is normal, $\Phi^H\colon \Sp^G_U\to \Sp^{G/H}_{U^H}$ preserves weak equivalences and cofibrations between cofibrant objects,
        \item for a cofibrant $H$-spectrum $X$, there is an isomorphism $\Phi^HX\cong \Phi^GN^G_HX$,
        \item if $X$ is a cofibrant commutative orthogonal ring spectrum then there is an isomorphism $X\cong \Phi^GN^G_eX$.
       \item \cite[Proposition 9.11.11]{HHR:Book} if $f\colon X\to Y$ is a map of $G$-spectra such that $\Phi^H(f)\colon \Phi^H(X)\to \Phi^H(Y)$ is a stable equivalence of orthogonal spectra for all $H\leq G$ then $f$ is a stable equivalence of orthogonal $G$-spectra,
                \end{enumerate}
        \end{prop}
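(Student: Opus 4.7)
The four parts have somewhat different flavors, though all of them rest on understanding how $\Phi^H$ interacts with the concrete generators of the positive complete stable model structure. For part (1), I would unpack $\Phi^H$ as the left Kan extension along $\mathrm{Fix}^H$ of the levelwise $H$-fixed points and check preservation directly on a set of generating cofibrations. In the positive complete model structure these have the form $G/K_+ \wedge S^V \wedge (S^{n-1}_+ \hookrightarrow D^n_+)$ with $V$ a positive-dimensional $G$-representation; applying $\Phi^H$ to such a cell produces an explicit cell in $\Sp^{G/H}_{U^H}$ because the $H$-fixed points of $G/K_+ \wedge S^V$ have a controlled cellular structure coming from the representation of $W_H K$ on $V^H$. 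Preservation of weak equivalences between cofibrant objects then follows by a standard cellular induction together with the observation that on each cell one has an explicit identification of $\Phi^H$, and colimits of cofibrations commute with the fixed points functor on spaces.

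For parts (2) and (3), the main input is the ``diagonal'' description of $\Phi^G$ on norms. By definition $N_H^G X = \bigwedge_{\gamma\in G/H}\gamma^*X$ with $G$ permuting the smash factors and $H$ acting on the distinguished copy; the $G$-fixed-point structure picks out the diagonal, collapsing all factors and leaving an $H$-action on a single copy of $X$. To make this rigorous I would combine two ingredients: the strong monoidality of $\Phi^G$ on cofibrant objects (\autoref{prop: fixed points are monoidal}), and the explicit identification $\Phi^G(\bigwedge_{\gamma \in G/H} \gamma^* Y) \cong \Phi^H Y$ at the level of a representing cell. One extends from cells to all cofibrant $X$ using the cellular machinery of part (1), which also guarantees that the change-of-universe functors appearing in the definition of $N_H^G$ do not disturb the diagonal identification. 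Part (3) is the special case $H = e$, using that cofibrant commutative ring orthogonal $G$-spectra are underlying cofibrant, so that the formula $\Phi^G N_e^G X \cong X$ follows directly from (2).

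For part (4), I would argue by contrapositive via isotropy separation. Suppose $f$ is not a stable equivalence; pick a subgroup $H$ minimal (with respect to inclusion) among those for which $\pi^H_*(f)$ is not an isomorphism. After fibrant replacement of $X$ and $Y$, smash the cofiber sequence $E\mathcal{F}[H]_+ \to S^0 \to \widetilde{E\mathcal{F}[H]}$ with the cofiber of $f$, where $\mathcal{F}[H]$ is the family of proper subgroups of $H$. By minimality, the cofiber of $f$ smashed with $E\mathcal{F}[H]_+$ is equivariantly contractible after restriction to $H$, so the failure localizes to the $\widetilde{E\mathcal{F}[H]}$-piece, which is exactly what $\Phi^H$ detects. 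The main technical obstacle in the whole proposition is the compatibility in part (2): one must track carefully that the composite $\mathcal{I}^{\mathbb{R}^\infty}_U \circ \wedge^G_H \circ \mathcal{I}^V_{\mathbb{R}^\infty}$ defining the norm interacts correctly with the left Kan extension defining $\Phi$, and cofibrancy is exactly what lets these point-set constructions commute with fixed points in a homotopically meaningful way.
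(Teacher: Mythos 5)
The paper offers no proof of this proposition at all: it is recorded as background and attributed wholesale to \cite[Appendix B]{HHR} and \cite[Proposition 9.11.11]{HHR:Book}. Measured against the proofs in those sources, your outline follows the standard route for each part: parts (1) and (2) via cellular analysis of $\Phi^H$ on generating cofibrations together with the norm diagonal and strong monoidality of geometric fixed points on cofibrant objects, and part (4) via isotropy separation with the cofiber sequence $E\mathcal{F}[H]_+\to S^0\to\widetilde{E\mathcal{F}[H]}$ applied to a minimal subgroup where $\pi^H_*$ fails. That is exactly how HHR argue, so at the level of strategy there is nothing to object to, and part (4) in particular is a complete and correct argument once one knows that $\pi_*^H(\widetilde{E\mathcal{F}[H]}\wedge Z)\cong\pi_*(\Phi^H Z)$.

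Two points need repair. First, a small one: the generating cofibrations of the positive complete stable model structure are of the form $G_+\wedge_K F_V(S^{n-1}_+\to D^n_+)$, built from shift-desuspension (free) spectra $F_V S^0\simeq S^{-V}$, not from $G/K_+\wedge S^V$; the computation of $\Phi^H$ on these uses the double coset formula and the identification $\Phi^H F_V S^0\cong F_{V^H}S^0$, but your cellular induction goes through unchanged. Second, and more substantively, your justification of part (3) is wrong as stated: cofibrant commutative ring spectra in the positive (complete) model structure are \emph{not} cofibrant as underlying spectra, so (3) is not literally the $H=e$ case of (2). This is precisely the delicate point that occupies most of HHR's Appendix B: one must filter a cofibrant commutative ring by cells built from symmetric powers $X^{\wedge n}/\Sigma_n$ of positive cofibrant spectra and show that the norm and geometric fixed points remain homotopical through that filtration (their analysis of $E\Sigma_n$-free cells and ``very flat'' maps). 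Without invoking that analysis, part (3) does not follow from part (2).
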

       
        We also recall the following result of Lewis. 
        \begin{prop}[{\cite[Corollary 2.5(b)]{Lewis:Splitting}}]\label{prop:LewisSplit} If $X$ is a cofibrant orthogonal $G$-spectrum indexed on a trivial universe $\mathbb{R}^{\infty}$ and $V$ is a complete $G$-universe, then there is a stable equivalence of non-equivariant spectra
        \[
            (\mathcal{I}_{\mathbb{R}^{\infty}}^V(X))^G \simeq  \bigvee_{(H)\leq G} (X^H)_{hW_G(H)}
        \]
        where $(H)$ runs over the conjugacy classes of subgroups of $G$ and $W_G(H) = N_G(H)/H$ is the Weyl group.
   \end{prop}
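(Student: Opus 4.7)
The plan is to establish this via the classical tom Dieck splitting, reduced through a cellular induction to the case of a suspension spectrum, where the change of universe and fixed-point functors are transparent.

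First I would reduce to the case $X = \Sigma^{\infty}_{\mathbb{R}^{\infty}} Y$ for $Y$ a based $G$-CW complex. The category $\Sp^G_{\mathbb{R}^{\infty}}$ with its positive complete stable model structure has generating cofibrations built from shifted suspension spectra of $G$-cells $G/H_+ \wedge S^n$. Both functors
\[
    X \longmapsto (\mathcal{I}_{\mathbb{R}^{\infty}}^{V} X)^G \quad \text{and} \quad X \longmapsto \bigvee_{(H)} (X^H)_{hW_G(H)}
\]
preserve stable equivalences between cofibrant objects and commute up to weak equivalence with pushouts along cofibrations and sequential colimits: the left side because $\mathcal{I}_{\mathbb{R}^{\infty}}^V$ is a left Quillen functor and categorical fixed points of cofibrant genuine $G$-spectra behave well with respect to cellular filtrations, and the right side because in the trivial universe, categorical fixed points coincide with geometric fixed points (\autoref{rem: geom=cat fixed points for trivial universe}) and thus commute with the relevant colimits on cofibrant input. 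A cellular induction then lets us bootstrap from the suspension-spectrum case.

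Next, for the suspension-spectrum case, \autoref{lemma: change of universe facts}(1) gives $\mathcal{I}_{\mathbb{R}^{\infty}}^{V}\Sigma^{\infty}_{\mathbb{R}^{\infty}} Y \cong \Sigma^{\infty}_V Y$, the genuine suspension spectrum of $Y$. The classical tom Dieck splitting theorem then yields
\[
    (\Sigma^{\infty}_V Y)^G \simeq \bigvee_{(H)\leq G} \Sigma^{\infty}\bigl(EW_G(H)_+ \wedge_{W_G(H)} Y^H\bigr).
\]
To match with the statement, note that in the trivial universe $X^H = (\Sigma^{\infty}_{\mathbb{R}^{\infty}} Y)^H \cong \Sigma^{\infty}(Y^H)$ (again using \autoref{rem: geom=cat fixed points for trivial universe} together with the fact that geometric fixed points of a suspension spectrum are a suspension spectrum of fixed points). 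Cofibrancy of $X$ ensures that $W_G(H)$ acts sufficiently freely on $X^H$ so that the genuine orbits realize the homotopy orbits up to equivalence, giving
\[
    (X^H)_{hW_G(H)} \simeq \Sigma^{\infty}\bigl(EW_G(H)_+ \wedge_{W_G(H)} Y^H\bigr),
\]
matching each wedge summand.

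The principal obstacle is the cellular reduction: one must verify that the categorical fixed points of $\mathcal{I}_{\mathbb{R}^{\infty}}^{V}(-)$, applied to cofibrant inputs, are genuinely homotopical and preserve the filtered colimits and pushouts along cofibrations used to assemble a general cofibrant $X$ from cells. The subtlety is that $(-)^G$ on genuine $G$-spectra is only known to be a right Quillen functor, and wedging up tom Dieck summands on the other side involves smashing with free $W_G(H)$-spaces. One way around this is to avoid the cellular induction and instead give a direct argument via the isotropy-separation filtration of the universal $G$-space $E\mathcal{F}$ for families of subgroups, identifying associated graded pieces with the $(H)$-summand directly; this is the approach taken in Lewis's original treatment.
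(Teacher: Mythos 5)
The paper does not actually prove this proposition: it is quoted verbatim as Corollary 2.5(b) of Lewis's splitting paper, so there is no internal argument to compare yours against. Judged on its own terms, your outline follows the standard route (cellular induction down to suspension spectra plus the classical tom Dieck splitting), but it has a genuine gap at the reduction step. A cellular induction comparing two homotopy-invariant, colimit-compatible functors requires a \emph{natural} comparison map between them that one then checks is an equivalence on cells; knowing only that both functors are homotopical and that their values on each cell are abstractly equivalent does not let the induction close. The natural splitting map $\bigvee_{(H)}(X^H)_{hW_G(H)} \to (\mathcal{I}_{\mathbb{R}^{\infty}}^V X)^G$ is assembled from the canonical maps $X^H \to (\mathcal{I}_{\mathbb{R}^{\infty}}^V X)^H$ together with transfer maps along $G/H$ (equivalently, the Adams isomorphism identifying homotopy orbits of a free object with genuine fixed points of its induction); constructing these transfers and verifying the resulting map is an equivalence on the generating cells $F_n(G/K_+\wedge S^m)$ is the actual content of Lewis's theorem, and your sketch omits it entirely. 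Your closing paragraph in effect concedes this by deferring to ``Lewis's original treatment.''

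Two smaller points. The claim that cofibrancy of $X$ ensures $W_G(H)$ ``acts sufficiently freely on $X^H$ so that the genuine orbits realize the homotopy orbits'' is off target: both sides of the identification involve the homotopy orbits $EW_G(H)_+\wedge_{W_G(H)} X^H$ by definition, no freeness is needed, and none is available in general (the $W_G(H)$-action on $Y^H$ is typically not free). Cofibrancy is used only so that the point-set fixed points $X^H$ are homotopically meaningful and commute with the cellular filtration. Finally, in the suspension-spectrum case you should check that the residual $W_G(H)$-action on $X^H\cong\Sigma^\infty(Y^H)$ is the one appearing in the tom Dieck summand; it is, so the wedge summands do match once the comparison map is in place.
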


\begin{rem}\label{remark: fixed points are retract of change universe fixed points}
    In \autoref{prop:LewisSplit} above, note that the summand corresponding to $H=G$ is precisely $X^G$.  Hence, in the stable homotopy category, $X^G$ is always a retract of $(\mathcal{I}_{\mathbb{R}^{\infty}}^V(X))^G$.
\end{rem}
Finally we relate change of universe and fixed point constructions.
\begin{prop}\label{proposition: geometric fixed points and change of universe}
    Let $H\leq G$ a normal group, and let $U$ be a complete $G$-universe.  If $X$ is any connected spectrum with $G$-action there is an isomorphism of $G/H$-spectra
    \[
        \Phi^H(\mathcal{I}^{U}_{\mathbb{R}^{\infty}}(X)) \cong \mathcal{I}^{U^H}_{\mathbb{R}^{\infty}}(X^H)
    \]
    indexed on the universe $U^H$.
\end{prop}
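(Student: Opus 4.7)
The plan is to reduce the claimed isomorphism to the case of suspension spectra and then extend to general connected $X$ by a cellular argument. The key point is that on the trivial universe $\mathbb{R}^{\infty}$, the categorical fixed points $(-)^H$ agree with the geometric fixed points $\Phi^H$ by \autoref{rem: geom=cat fixed points for trivial universe}, so both sides of the desired isomorphism are ultimately built from $\Sigma^{\infty}$, $\Phi^H$, and change-of-universe functors, all of which interact in a controlled way.

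First, for $X = \Sigma^{\infty}_{\mathbb{R}^{\infty}} Y$ with $Y$ a pointed $G$-space, I would chain together the following natural isomorphisms:
\[
\Phi^H \mathcal{I}^{U}_{\mathbb{R}^{\infty}} \Sigma^{\infty}_{\mathbb{R}^{\infty}} Y \;\cong\; \Phi^H \Sigma^{\infty}_U Y \;\cong\; \Sigma^{\infty}_{U^H} Y^H \;\cong\; \mathcal{I}^{U^H}_{\mathbb{R}^{\infty}} \Sigma^{\infty}_{\mathbb{R}^{\infty}} Y^H \;\cong\; \mathcal{I}^{U^H}_{\mathbb{R}^{\infty}} (\Sigma^{\infty}_{\mathbb{R}^{\infty}} Y)^H.
\]
The first and third isomorphisms are applications of \autoref{lemma: change of universe facts}(1), the second is the defining property $\Phi^H \Sigma^{\infty}_U \cong \Sigma^{\infty}_{U^H}(-)^H$ of the geometric fixed points, and the fourth uses \autoref{rem: geom=cat fixed points for trivial universe} together with \autoref{prop: fixed points are monoidal} to identify $(\Sigma^{\infty}_{\mathbb{R}^{\infty}} Y)^H$ with $\Sigma^{\infty}_{\mathbb{R}^{\infty}} Y^H$. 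An analogous computation handles the shifted suspension spectra $F_V(Y_+)$ for $V \subseteq \mathbb{R}^{\infty}$ a trivial $G$-representation, which serve as the cofibrant cells of $\Sp^G_{\mathbb{R}^{\infty}}$; here the point is that $V^H = V$ since $V$ is already $H$-trivial, so both sides behave identically under the shift.

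Next, I would extend to a general connected $X$ by a cell-induction argument. Both composites $\Phi^H \circ \mathcal{I}^U_{\mathbb{R}^{\infty}}$ and $\mathcal{I}^{U^H}_{\mathbb{R}^{\infty}} \circ (-)^H$ preserve the relevant cofibrations and filtered colimits: the change-of-universe functors are left adjoints, $\Phi^H$ is a left Kan extension and preserves cofibrations between cofibrant objects by \autoref{lem: norm and fixed point lemmas}(1), and on the right side $(-)^H = \Phi^H$ on the trivial universe so inherits the same compatibilities. Choosing a cofibrant connected cellular model of $X$ built from shifted suspension spectra of pointed $G$-CW complexes, the isomorphism of the previous step propagates from cells to pushouts to transfinite compositions, producing a natural isomorphism for all connected $X$.

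The hard part is making the third isomorphism in Step 1 truly a point-set identification and not just a zigzag, since $\Phi^H$ is defined via a non-trivial left Kan extension along $\mathrm{Fix}^H$. Concretely, one must verify that the coend
\[
\int^{V \in \mathcal{J}^H_G} \mathcal{J}^{U^H}_{G/H}(V^H, W) \wedge \bigl(\mathcal{J}^U_G(\mathbb{R}^n, V) \wedge_{O(n)} X(\mathbb{R}^n)\bigr)^H
\]
collapses to $\mathcal{J}^{U^H}_{G/H}(\mathbb{R}^n, W) \wedge_{O(n)} X(\mathbb{R}^n)^H$ for each $W \subseteq U^H$. The essential simplification is that the source $\mathbb{R}^n$ is $H$-trivial, so the conjugation $H$-action on $\mathcal{J}^U_G(\mathbb{R}^n, V)$ depends only on $V$, and the $H$-fixed points restrict the target from $V$ to $V^H$; this makes the $H$-fixed points commute with the twisted half-smash product up to the expected reindexing. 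The connectedness hypothesis is then used to ensure the cellular induction of the second step produces a genuine isomorphism rather than only a weak equivalence, by guaranteeing a cofibrant cell model built entirely from positively-shifted free cells on which Step 1 applies levelwise.
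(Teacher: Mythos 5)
Your proof follows the same route as the paper's: first establish the isomorphism for suspension spectra, where it follows from the compatibility of $\Phi^H$ with $\Sigma^\infty$ and the change-of-universe functors, and then extend to a general connected $X$ by writing it as a colimit of suspension spectra and using that all functors involved commute with the relevant colimits. Your version supplies more point-set detail (the explicit coend collapse and the cell induction over shifted free cells), but the decomposition and the key identities are the same as in the paper's argument.
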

\begin{proof}
    First, note that this result is true when $X$ is a suspension spectrum with $G$-action, because geometric fixed points commute with suspension spectra.  Note also that in the case of suspension spectra this isomorphism is natural in maps of $G$-spaces.  The general case follows because every connected spectrum is a colimit of suspension spectra, and change of universe and fixed points commute with these colimits.
\end{proof}

We end with a brief discussion of homotopy groups and the change of universe functor.  Let $\Set^G$ denote the category of finite $G$-sets for a finite group $G$.
\begin{defn}
    A \emph{$G$-coefficient system} is a product preserving functor $F\colon (\Set^G)^{\op}\to \mathrm{Ab}$.  We write $\mathrm{Coeff}_G$ for the category of $G$-coefficient systems and natural transformations.
\end{defn}
Since the category $\Set^G$ is generated under coproducts by the orbits $G/H$, the category $(\Set^G)^{\op}$ is generated by the orbits $G/H$ under products.  Thus a coefficient system is entirely determined, up to isomorphism, by its values on the orbits $G/H$ and the maps between them.

\begin{prop}
    If $X$ is an orthogonal $G$-spectrum indexed on a trivial $G$-universe then the assignment $G/H\mapsto \pi_n^H(X)$ admits the structure of a coefficient system which we denote by $\underline{\pi}_n(X)$.
\end{prop}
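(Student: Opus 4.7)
The plan is to reduce to the orbit category and then extend by products. Every finite $G$-set $S$ decomposes canonically as a disjoint union of orbits $S = \coprod_i G/H_i$, and any $G$-equivariant map of finite $G$-sets is determined by its effect on orbits. Since the sought coefficient system must be product-preserving, its value on $S$ is forced to be $\prod_i \pi_n^{H_i}(X)$. Thus it suffices to produce a contravariant functor from the full subcategory of orbits to abelian groups sending $G/H$ to $\pi_n^H(X)$; the extension to all finite $G$-sets is then automatic.

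Next I would reinterpret $\pi_n^H(X)$. Since $X$ is indexed on the trivial $G$-universe $\mathbb{R}^{\infty}$, each structure space $X(V)$ has the $G$-action trivial on $V$, so $(\Omega^V X(V))^H = \Omega^V(X(V)^H)$ and hence $\pi_n^H(X) \cong \pi_n(X^H)$, the $n$-th homotopy group of the underlying orthogonal spectrum $X^H$. It therefore suffices to promote $H \mapsto X^H$ to a contravariant functor on the orbit category valued in orthogonal spectra, and then take $\pi_n$. Given a $G$-equivariant $f\colon G/K \to G/H$ with $f(eK) = gH$, equivariance forces $g^{-1}Kg \subseteq H$, and I would define
\[
    f^*\colon X^H \hookrightarrow X^{g^{-1}Kg} \xrightarrow{g\cdot (-)} X^K,
\]
the inclusion of fixed points composed with the isomorphism given by acting by $g$.

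The verifications required are: (i) independence of the choice of representative $g$ of the coset $gH$, which follows because replacing $g$ by $gh$ with $h \in H$ alters the composite by the action of $h$ on $X^H$, which is trivial; (ii) functoriality $f^* \circ f'^* = (f' \circ f)^*$, which is a direct check using associativity of the $G$-action together with the compatibility between inclusions of fixed-point subspectra under iterated conjugation; and (iii) that applying $\pi_n$ preserves all of this structure, which is automatic. The main obstacle is not conceptual but organizational: the bookkeeping for (i) and (ii) is the only content, and it is routine once one exploits the fact that elements of $H$ act trivially on $X^H$ and the description of $\pi_n^H(X)$ as $\pi_n(X^H)$ afforded by the trivial universe hypothesis.
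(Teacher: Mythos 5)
Your proof is correct; the paper states this proposition without proof, and your argument supplies exactly the standard verification it leaves implicit: reduce to the orbit category by product-preservation, use the trivial-universe hypothesis to identify $\pi_n^H(X)$ with $\pi_n(X^H)$, and define contravariant functoriality on orbits by $X^H\hookrightarrow X^{g^{-1}Kg}\xrightarrow{g\cdot(-)}X^K$, with well-definedness following from the triviality of the $H$-action on $X^H$. No gaps.
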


We obtain more structure when the universe is complete.  Let $\mathcal{A}^G = \mathrm{Span}(\Set^G)$ be the category of spans in $\Set^G$.  It has the same objects as $\Set^G$, but morphisms are given by isomorphism classes of spans $[X\leftarrow A\rightarrow Y]$.  Composition is given by pullback of spans. The Burnside category has bi-products, given on objects by disjoint union of finite $G$-sets.

\begin{defn}
    A Mackey functor is a product preserving functor $F\colon \mathcal{A}^G\to \mathrm{Ab}$.  We write $\Mack$ for the category of $G$-Mackey functors and natural transformations.
\end{defn}
\begin{prop}
    If $X$ is an orthogonal $G$-spectrum indexed on a complete $G$-universe then the assignment $G/H\mapsto \pi^H_n(X)$ is a $G$-Mackey functor which we denote by $\underline{\pi}_n(X)$.
\end{prop}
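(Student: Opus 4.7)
The plan is to promote the coefficient system structure on $G/H \mapsto \pi_n^H(X)$ to a Mackey functor by constructing, for each subgroup inclusion $K\le H\le G$, a transfer map $\tau^H_K \colon \pi_n^K(X)\to \pi_n^H(X)$, and then verifying that these transfers together with the already-established restrictions and conjugations satisfy the Mackey (double coset) formula. Once that is done, the fact that the Burnside category $\mathcal{A}^G$ is freely generated as a category with biproducts by the orbit category together with restrictions, transfers, and conjugations subject to the Mackey relations will package the data into the desired product-preserving functor $\mathcal{A}^G \to \mathrm{Ab}$.

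The first substantive step, and the place where completeness of $U$ is essential, is the construction of the transfers. For $K \le H \le G$, the restricted universe $i_H^* U$ is a complete $H$-universe, so the Wirthm\"uller isomorphism is available: there is a stable ``umkehr'' map $\Sigma^\infty_+(H/K) \to \mathbb{S}$ in $H$-spectra together with a natural equivalence $(\Sigma^\infty_+(H/K) \wedge Y)^H \simeq Y^K$ for any fibrant $H$-spectrum $Y$. Applying this with $Y = i_H^* X$ and smashing the umkehr map with $X$ produces an $H$-equivariant map $\Sigma^\infty_+(H/K)\wedge X \to X$; taking $\pi_n^H$ and using the Wirthm\"uller identification $\pi_n^H(\Sigma^\infty_+(H/K)\wedge X) \cong \pi_n^K(X)$ yields $\tau^H_K$. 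Naturality in $X$ and compatibility with the conjugation isomorphisms relating $H$ to $gHg^{-1}$ are straightforward diagram chases, using functoriality of Wirthm\"uller duality in the group.

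The second step is verifying the Mackey axioms. Preservation of biproducts follows because $\pi_n^H$ sends finite wedge sums of spectra to direct sums. The only substantive axiom is the double coset formula: for $K, L \le H$ and a double coset decomposition $H = \bigsqcup_i K g_i L$, the composite
\[
    \pi_n^L(X) \xrightarrow{\tau^H_L} \pi_n^H(X) \xrightarrow{\mathrm{res}^H_K} \pi_n^K(X)
\]
must equal the sum $\sum_i \tau^K_{K\cap g_i L g_i^{-1}} \circ c_{g_i} \circ \mathrm{res}^L_{g_i^{-1} K g_i \cap L}$. This reduces to decomposing the $H$-set $H/L$ as a $K$-set according to the double cosets and tracking how the stable pretransfer for $H/L$ restricts along $K \subseteq H$; on each summand the restricted pretransfer is the pretransfer for the corresponding orbit $K/(K\cap g_i L g_i^{-1})$, twisted by $c_{g_i}$.

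The main obstacle I expect is the double coset verification: one must track the Wirthm\"uller pretransfer for $H/K$ carefully under restriction to a subgroup and recombine it via conjugation isomorphisms, which is bookkeeping-heavy but classical. Apart from that the argument is a recollection of standard equivariant stable homotopy theory, and one can appeal directly to the foundational treatment of Wirthm\"uller duality in the genuine stable category. It is worth emphasizing that completeness of $U$ is indispensable: in its absence the pretransfer does not exist, and indeed the conclusion of the proposition fails, consistent with the previous proposition producing only a coefficient system in the trivial-universe case.
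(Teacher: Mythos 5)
The paper states this proposition as standard background and gives no proof, so there is nothing to compare against line by line; your argument is the classical one (transfers from Wirthm\"uller duality, then the double coset formula, then Lindner's identification of Mackey functors with product-preserving functors on the span category) and it is correct. One terminological slip worth fixing: the map $\Sigma^\infty_+(H/K)\to\mathbb{S}$ you call the ``umkehr'' map is just the projection and exists over any universe; the genuinely wrong-way input that requires completeness is the Wirthm\"uller equivalence $\Sigma^\infty_+(H/K)\wedge Y\simeq F_K(H_+,Y)$ (equivalently the pretransfer $\mathbb{S}\to\Sigma^\infty_+(H/K)$), which is where your construction correctly locates the transfer. Also note that restricting a complete $G$-universe to $H$ is complete because $G$ is finite here, even though the paper's earlier caveat about restricted universes failing to be complete applies in the compact Lie setting.
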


There is an embedding $j\colon (\Set^G)^{\op}\to \mathcal{A}^G$ which is the identity on objects and sends a map $f\colon X\to Y $ in $\Set^G$ to the span $[Y\xleftarrow{f} X\xrightarrow{=} X]$.  Since this functor preserves products, we see that any Mackey functor $M$ determines a coefficient system $j^*(M)$.  The functor $j^*\colon \Mack\to \mathrm{Coeff}$ has a left adjoint $j_!\colon \mathrm{Coeff}\to \Mack$ given by left Kan extension.
\begin{exmp}\label{example: free Mackey functor}
    Let $M$ be the $C_2$-coefficient system which has $M(C_2/C_2) = M(C_2/e)=\mathbb{Z}$, with trivial conjugation and identity restriction map.  Using the universal mapping property of $j_!$, one sees that $j_!(M)$ can be depicted by the diagram
\[\begin{tikzcd}
	{\Z^2} \\
	\Z
	\arrow["{(1,2)}"', shift right, from=1-1, to=2-1]
	\arrow["{i_2}"', shift right, from=2-1, to=1-1]
\end{tikzcd}\]
where $j_!(M)(C_2/C_2) = \mathbb{Z}^2$,  $j_!(M)(C_2/e) =\mathbb{Z}$, and $i_2\colon \mathbb{Z}\to \mathbb{Z}^2$ is the inclusion of the second component.  
\end{exmp}

\begin{prop}\label{proposition: computing change of universe on pi zero}
    If $X$ is cofibrant connective $G$-spectrum indexed on a trivial $G$-universe $\R^\infty$ then
    \[
        j_!(\underline{\pi}_0(X))\cong \underline{\pi}_0(\mathcal{I}^{V}_{\mathbb{R}^{\infty}}(X)).
    \]
    If $Y$ is any fibrant connective $G$-spectrum indexed on a complete $G$-universe $V$ then 
    \[
        j^*(\underline{\pi}_n(X))\cong \underline{\pi}_n(\mathcal{I}^{\mathbb{R}^{\infty}}_V(X))
    \]
    for all $n\geq 0$.
\end{prop}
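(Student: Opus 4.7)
I would establish the two isomorphisms in order. For the second, the key observation is that $\mathcal{I}^{\R^\infty}_V$ acts as the identity on levels indexed by trivial representations: for each $n$, $\mathcal{I}^{\R^\infty}_V(X)(\R^n) = \mathcal{J}^{\R^\infty}_G(\R^n,\R^n)_+\wedge_{O(n)}X(\R^n) \cong X(\R^n)$ as based $G$-spaces, since $\mathcal{J}^{\R^\infty}_G(\R^n,\R^n)_+ = O(n)_+$ carries a trivial $G$-action. Thus $\pi_n^H(\mathcal{I}^{\R^\infty}_V(X)) = \colim_m \pi_n((\Omega^m X(\R^m))^H)$. For fibrant $X$ on the complete universe $V$, all structure maps $X(W) \to \Omega^{W'}X(W\oplus W')$ are equivariant weak equivalences, so every map in the colimit system computing $\pi_n^H(X)$ is an isomorphism, and picking any sufficiently large trivial sub-representation $\R^m$ returns the same value. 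Hence the underlying abelian groups agree, and since restriction and conjugation maps are defined levelwise on $H$-fixed points, they agree as well. The transfer maps on the complete-universe side, which require genuine representations to define, are forgotten in the passage to the trivial universe, giving the identification $\underline{\pi}_n(\mathcal{I}^{\R^\infty}_V(X)) \cong j^*(\underline{\pi}_n(X))$.

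For the first isomorphism, I would construct a natural comparison map $\Phi_X\colon j_!\underline{\pi}_0(X)\to \underline{\pi}_0(\mathcal{I}^V_{\R^\infty}(X))$ using the adjunctions in play: the unit $X\to \mathcal{I}^{\R^\infty}_V\mathcal{I}^V_{\R^\infty}(X)$, combined with a fibrant replacement of $\mathcal{I}^V_{\R^\infty}(X)$ and part (2), yields a coefficient-system map $\underline{\pi}_0(X)\to j^*\underline{\pi}_0(\mathcal{I}^V_{\R^\infty}(X))$, whose $j_!\dashv j^*$ adjunct is $\Phi_X$. Both sides of $\Phi_X$ are functorial in $X$ and preserve homotopy colimits: $\mathcal{I}^V_{\R^\infty}$ is left Quillen, $\pi_0$ commutes with sequential homotopy colimits of connective spectra, and $j_!$ is a left adjoint. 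Every cofibrant connective $G$-spectrum on $\R^\infty$ admits a cellular filtration whose cells are suspension spectra of $G$-orbits, so verifying that $\Phi_X$ is an isomorphism reduces to the base case $X = \Sigma^\infty_{\R^\infty}(G/H)_+$.

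In this base case, $\mathcal{I}^V_{\R^\infty}(\Sigma^\infty_{\R^\infty}(G/H)_+) = \Sigma^\infty_V(G/H)_+$, and its $\underline{\pi}_0$ is the representable Burnside Mackey functor $\mathcal{A}^G(-,G/H)$ (by tom Dieck's description of stable maps between orbits). The source coefficient system assigns $G/K\mapsto \Z[(G/H)^K] = \Z[\Hom_{\Set^G}(G/K,G/H)]$, which is exactly $j^*$ of the same representable Mackey functor; by the universal property of Kan extensions, its $j_!$ is $\mathcal{A}^G(-,G/H)$ itself. The main obstacle will be verifying that the abstractly-defined $\Phi_X$ realizes this identification, i.e., that the transfer maps produced formally by $j_!$ correspond under $\Phi_X$ to the geometric transfers on $\underline{\pi}_0(\Sigma^\infty_V(G/H)_+)$ arising from the genuine equivariant suspension spectrum structure. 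The computation in \autoref{example: free Mackey functor} is exactly this comparison in the case $G = C_2$, $H = C_2$, and provides the template for the general check.
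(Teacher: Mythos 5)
Your argument is correct in outline but proceeds along a genuinely different route from the paper's. The paper proves the first isomorphism abstractly: it invokes the presentation of genuine $G$-spectra as spectral Mackey functors $\Fun(\mathcal{A}^G_{\mathrm{eff}},\Sp)$ and of naive $G$-spectra as $\Fun((\Set^G)^{\op},\Sp)$, identifies the derived change of universe $\mathcal{I}^V_{\R^\infty}$ with left Kan extension along the refinement $J$ of $j$, and then observes that the zero-truncation of this Kan extension is $j_!$. Your proof instead builds a comparison map from the adjunction unit and runs a cell induction, reducing to $X=\Sigma^{\infty}_{\R^\infty}(G/H)_+$, where both sides are the representable object at $G/H$ (the coefficient system $\Z[(G/H)^{(-)}]$, whose $j_!$ is the represented Burnside Mackey functor, versus $\underline{\pi}_0(\Sigma^{\infty}_V(G/H)_+)$ via the Segal--tom Dieck identification). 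What the paper's route buys is that it never has to check anything on cells, at the cost of importing the nontrivial identification of change of universe with Kan extension of spectral Mackey functors; what yours buys is an elementary, self-contained argument whose only external input is the computation of $\underline{\pi}_0$ of suspension spectra of orbits.

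Two steps in your plan need slightly more care than you give them. First, the reduction to cells: attaching cells is a pushout, not a sequential colimit, so you need that both $\underline{\pi}_0$ (on connective spectra) and $j_!$ are right exact, turning a cofiber sequence into a right exact sequence of $\pi_0$'s; sequential-colimit preservation alone does not suffice. Second, the base-case identification you flag as "the main obstacle" is in fact handled by Yoneda: once you know $\Phi_{\Sigma^{\infty}(G/H)_+}$ is a map of Mackey functors out of a representable, it is determined by the image of the identity class, and one checks this is the unit of $\underline{\pi}_0(\Sigma^{\infty}_V(G/H)_+)$; compatibility with transfers then comes for free rather than requiring a separate geometric verification. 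Your treatment of the second isomorphism (trivial levels are unchanged by $\mathcal{I}^{\R^\infty}_V$, and for a fibrant spectrum every transition map in the defining colimit is an isomorphism) is a direct-computation version of the paper's one-line appeal to $\mathcal{I}^{\R^\infty}_V$ being a right Quillen adjoint, and is fine.
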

\begin{proof}
    The homotopy category of $G$-spectra indexed on a complete $G$-universe can be presented as the homotopy category of the $\infty$-category of functors $\Fun(\mathcal{A}^{G}_{\mathrm{eff}},\Sp)$, where $\mathcal{A}^G_{\mathrm{eff}}$ is the \emph{effective} Burnside $\infty$-category of \cite{Barwick}; see also \cite{Guilloumay,CMNN}. Similarly, the homotopy category of $G$-spectra indexed on a trivial $G$-universe can be presented as the homotopy category of the $\infty$-category of functors $\Fun((\Set^G)^{\op},\Sp)$. Both of these identifications restrict to identifications on connective objects.
    
    The effective Burnside $\infty$-category has the property that its homotopy category is equivalent to the ordinary category $\mathcal{A}^G$ \cite[3.8]{Barwick}. The functor $j\colon (\Set^G)^{\op}\to \mathcal{A}^G$ can be refined to a map of $\infty$-categories $J\colon N_{\bullet}((\Set^G)^{\op})\to \mathcal{A}_{\mathrm{eff}}^G$, where $N_{\bullet}$ denotes the nerve, such that $J$ induces $j$ on homotopy categories. 

    The derived change of universe functor is identified with the map induced on homotopy categories by left Kan extension along $J$.  Thus its zero truncation can be identified with left Kan extension on homotopy categories, which is left Kan extension along $j$, which is precisely the first claim.  The second statement follows from the fact that categorical fixed points and change of universe $\mathcal{I}^{\mathbb{R}^{\infty}}_\mathcal{V}$ commute on fibrant spectra since $\mathcal{I}^{\mathbb{R}^{\infty}}_\mathcal{V}$ is a right Quillen adjoint.
\end{proof}

\subsection{Equivariant algebraic $K$-theory}

We begin by reviewing Waldhausen's construction of algebraic $K$-theory.  Recall that a Waldhausen category consists of a category $\cC$ with a distinguished zero object and two chosen classes of morphisms, called weak equivalences and cofibrations.  These collections of morphisms must both be closed under composition and contain all isomorphisms. We require that the unique map $0\hookrightarrow A$ is always a cofibration. Finally, given a span $X\xleftarrow{r} A\xrightarrow{t} Y$ where $t$ is a cofibration we must have that the pushout $X\cup_A Y$ exists and the map $X\to X\cup_A Y$ is a cofibration.  

A functor $F\colon \cC\to \cD$ between Waldhausen categories is \emph{exact} if it preserves the zero object, cofibrations, weak equivalences, and pushouts along cofibrations. We write $\mathrm{Wald}$ for the category of small Waldhausen categories and exact functors.

Given a Waldhausen category $\cC$, the $S_{\bullet}$ construction of $\cC$ is the simplicial set whose $k$-simplices are composites
\[
    A_{0,1}\hookrightarrow A_{0,2}\hookrightarrow\dots A_{0,k}
\]
in $\cC$ where each map is a cofibration, together with choices of pushout squares
\[
\begin{tikzcd}
    A_{0,i}\ar[hook, r] \ar[d] & A_{0,j} \ar[d]\\
    0 \ar[r] & A_{i,j}
    \end{tikzcd}
\]
For instance, we can visualize an element in $S_{3}\cC$ as a grid
\[\begin{tikzcd}
	0 & {A_{0,1}} & {A_{0,2}} & {A_{0,3}} \\
	& 0 & {A_{1,2}} & {A_{1,3}} \\
	&& 0 & {A_{2,3}} \\
	&&& 0
	\arrow[hook, from=1-1, to=1-2]
	\arrow[hook, from=1-2, to=1-3]
	\arrow[from=1-2, to=2-2]
	\arrow[hook, from=1-3, to=1-4]
	\arrow[from=1-3, to=2-3]
	\arrow[from=1-4, to=2-4]
	\arrow[from=2-2, to=2-3]
	\arrow[hook, from=2-3, to=2-4]
	\arrow[from=2-3, to=3-3]
	\arrow[from=2-4, to=3-4]
	\arrow[from=3-3, to=3-4]
	\arrow[from=3-4, to=4-4]
\end{tikzcd}\]
where any rectangle with top face in the first row and bottom left equal to $0$ is a pushout.  The face maps $d_i\colon S_{n}(\cC)\to S_{n-1}(\cC)$ for $i>0$ are given by deleting the appropriate column and composing horizontal maps. The zeroth face map deletes the first row. 
 The degeneracy maps are given by inserting an extra column which is equal to the one which precedes it.  

For any fixed $k$, $S_k(\cC)$ is not just a set but a category. The morphisms are given by collections of map $A_{i,j}\to A_{i,j}'$ which makes the evident diagram of grids commute.  In fact, $S_{k}(\cC)$ is itself a Waldhausen category where the weak equivalences and cofibrations are just collections of maps $A_{i,j}\to A_{i,j}'$ where each constituent map is either a weak equivalence or a cofibration.  We write $wS_{\bullet}(\cC)$ for the simplicial category where $\mathrm{ob}(wS_k(\cC)) = \mathrm{ob}(S_k(\cC))$ but with only weak equivalences as morphisms.  
\begin{defn}
    The algebraic $K$-theory of a Waldhausen category $\cC$ is $K(\cC) = \Omega |wS_{\bullet}(\cC)|$.
\end{defn}
The algebraic $K$-theory of a Waldhausen category is a group-like $E_{\infty}$-space, hence determines a connective orthogonal spectrum via one of the many equivalent delooping machines \cite{MayThomason}.

 The $S_{\bullet}$ construction is functorial in exact functors, hence an exact functor induces a map of $K$-theory spectra $K(F)\colon K(\cC)\to K(\cD)$.

We will also need a variant of the $S_{\bullet}$ construction which produces a symmetric spectrum instead of an orthogonal spectrum.  Since for any fixed $k$ the category $S_{k}(\cC)$ is a Waldhausen category, we can apply the $S_{\bullet}$-construction to obtain a simplicial Waldhausen category $S_{\bullet}S_{k}(\cC)$.  Allowing $k$ to vary as well, we obtain a bisimplicial category $S_{\bullet,\bullet}(\cC)$.  One can repeat this procedure, and for all $n$ we obtain an $n$-simplicial category $S^{(n)}_{\bullet,\dots,\bullet}(\cC)$ called the iterated $S_{\bullet}$-construction. For any $n$, $S^{(n)}_{\bullet,\dots,\bullet}(\cC)$ has an action from the symmetric group $\Sigma_n$ obtained via permuting the simplicial indices.

If $\mathcal{C}$ is any Waldhausen category we can also define a simplicial Waldhausen category $w_{\bullet}\cC$ whose $k$-simplices are the $k$-chains of weak equivalences in $\cC$; morphisms in this category are isomorphisms of chains. 
\begin{defn}\label{definition: iterated S dot}
    The symmetric spectrum $K$-theory of a Waldhausen category $\cC$ is the symmetric spectrum whose $n$-space is $K^{\mathrm{sym}}(\cC) = \Omega|\mathrm{diag}(w_{\bullet}S^{(n)}_{\bullet,\dots,\bullet}(\cC))|$.
\end{defn}
The categories of orthogonal and symmetric spectra, with their respective stable model structures, are related by a Quillen equivalence
\[
    \begin{tikzcd}
        \Sp^{\Sigma} \ar[r,shift left,"\mathbb{P}"] & \Sp \ar[l,shift left,"\mathbb{U}"]
    \end{tikzcd}
\]
and these can be used to compare the two models of $K$-theory of a Waldhausen category.
\begin{prop}[{\cite[Theorem 3.11]{Malkiewich:Coassembly}}]
    Let $\cC$ be any Waldhausen category.  There is a zig-zag of stable equivalences between $K(\cC)$ and $\mathbb{P}K^{\mathrm{sym}}(\cC)$.
\end{prop}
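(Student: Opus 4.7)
The plan is to realize both $K(\cC)$ and $K^{\mathrm{sym}}(\cC)$ as outputs of the same underlying simplicial data, extracted via two different delooping machines, and then to produce a zig-zag of comparison maps between these machines. The starting observation is that for every $n$, the iterated $S_\bullet$-construction $S^{(n)}_{\bullet,\dots,\bullet}(\cC)$ provides, after taking the nerve of weak equivalences and geometric realization, a genuine $n$-fold delooping of $K(\cC)$; this is essentially an application of Waldhausen's additivity theorem. Consequently, $K^{\mathrm{sym}}(\cC)$ is a positive $\Omega$-symmetric spectrum whose level-$1$ infinite loop space is canonically equivalent to $\Omega|wS_\bullet(\cC)|$.

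First I would establish that, at each level $n$, the diagonal $\mathrm{diag}(w_\bullet S^{(n)}_{\bullet,\dots,\bullet}(\cC))$ carries the expected $\Sigma_n$-action by permuting the simplicial coordinates, and that the structure maps
\[
S^1 \wedge \Omega|\mathrm{diag}(w_\bullet S^{(n)}_{\bullet,\dots,\bullet}(\cC))| \to \Omega|\mathrm{diag}(w_\bullet S^{(n+1)}_{\bullet,\dots,\bullet}(\cC))|
\]
are $\Sigma_n$-equivariant, so that $K^{\mathrm{sym}}(\cC)$ genuinely assembles into a symmetric spectrum. Second, I would recall that the orthogonal spectrum $K(\cC)$ is produced by applying an infinite loop space machine (a $\Gamma$-space machine in the sense of Segal, or an $E_\infty$-operadic machine in the sense of May--Thomason) to the same group-like $E_\infty$-space $\Omega|wS_\bullet(\cC)|$, and that such machines can be enhanced to land in orthogonal spectra with $O(n)$-actions coming from the bar construction on $\Sigma_n$.

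The heart of the proof is then a comparison of infinite loop space machines. Concretely, I would construct an intermediate bi-indexed object whose realization maps to both $\mathbb{U}K(\cC)$ (the underlying symmetric spectrum of $K(\cC)$ via the forgetful functor $\mathbb{U}$) and $K^{\mathrm{sym}}(\cC)$ by forgetting one kind of structure or another, and then verify that both maps are levelwise equivalences in positive degrees; this uses the uniqueness of infinite loop space machines on group-like $E_\infty$-spaces. Passing through the Quillen equivalence $(\mathbb{P},\mathbb{U})$ turns this into the desired zig-zag of stable equivalences between $K(\cC)$ and $\mathbb{P}K^{\mathrm{sym}}(\cC)$.

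The main obstacle is the bookkeeping required to go between the $\Sigma_n$-actions intrinsic to the iterated $S_\bullet$-construction and the $O(n)$-actions intrinsic to the orthogonal model, while preserving stable equivalences. One must be careful about cofibrancy, since $\mathbb{P}$ only preserves stable equivalences between cofibrant objects, so a cofibrant replacement of $K^{\mathrm{sym}}(\cC)$ should be introduced before applying $\mathbb{P}$, and the comparison map should be checked after this replacement. The remaining steps are then either formal (naturality and two-out-of-three) or consequences of the standard uniqueness results for connective spectra associated to a fixed group-like $E_\infty$-space.
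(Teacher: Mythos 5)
The paper does not actually prove this proposition—it is imported verbatim from \cite[Theorem 3.11]{Malkiewich:Coassembly}—but your proposal follows essentially the same route as that reference (and as the bispectrum argument the present paper redoes equivariantly in \autoref{sec:DennisTrace}): use additivity to see that the iterated $S_{\bullet}$-construction makes $K^{\mathrm{sym}}(\cC)$ a positive $\Omega$-spectrum, then interpolate between it and the machine-built orthogonal spectrum through an intermediate bi-indexed object and a May--Thomason-type uniqueness statement. The one place where the real work is hidden is the appeal to ``uniqueness of infinite loop space machines'': to invoke May--Thomason you must first exhibit the $S^{(n)}_{\bullet}$-deloopings as the output of a machine applied to the \emph{same} $\Gamma$-space/$E_\infty$-structure on $|wS_{\bullet}\cC|$ (this is where additivity enters), and you must handle the fact that $\mathbb{P}$ is only derived-correct after cofibrant replacement, so the zig-zag to $\mathbb{P}K^{\mathrm{sym}}(\cC)$ itself needs the semistability of $K^{\mathrm{sym}}(\cC)$; both points are standard and are exactly what the cited proof supplies.
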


We now recall Malkiewich--Merling's definition of equivariant algebraic $K$-theory from \cite{MM1}. For a finite group $G$ we let $BG$ denote the one object groupoid of $G$, and let $\mathrm{Wald}$ denote the category of Waldhausen categories and exact functors.
\begin{defn}
    A \emph{Waldhausen $G$-category} is a a functor $F\colon BG\to \mathrm{Wald}$.
\end{defn}
Explicitly, a Waldhausen $G$-category consists of a Waldhausen category $\cC$ together with exact functors $g\colon \cC\to \cC$ for all $g\in G$ such that $h\circ g = hg$ and the map associated to the unit in $G$ is the identity functor.

\begin{defn}\label{defn: EG}
    Let $\mathcal{E}G$ denote the category with object set $G$ and a unique morphism
    \[
        h_g\colon g\to hg
    \]
    for all $g,h\in G$.  This category has a natural right action by $G$ given by $g\cdot k = gk$.
\end{defn}

If $\cC$ is a Waldhausen $G$-category then the category of all functors and natural transformations $\Fun(\EG,\mathcal{C})$ is a Waldhausen $G$-category where the $G$-action is via conjugation of functors: $(g\cdot F)(x) = gF(g^{-1}x)$. Since this is still a Waldhausen category, we can apply the $wS_{\bullet}$-construction to obtain a $G$-space.

\begin{defn}
    The equivariant algebraic $K$-theory space of a Waldhausen $G$-category $\mathcal{C}$ is the $G$-space 
    \[
        K_G(\mathcal{C}) := \Omega |wS_{\bullet} \Fun(\EG,\mathcal{C})|.
    \]
\end{defn}
Malkiewich and Merling show this space is an infinite loop $G$-space, meaning that is admits deloopings for all real orthogonal $G$-representations.

\begin{thm}[{\cite[Theorem 2.21]{MM1}}]
    For any Waldhausen $G$-category $\mathcal{C}$, the equivariant algebraic $K$-theory $K_G(\mathcal{C})$ is an infinite loop $G$-space.
\end{thm}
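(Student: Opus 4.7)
The plan is to produce deloopings of $K_G(\mathcal{C})$ first for trivial representations via the iterated $S_\bullet$-construction and then for arbitrary real orthogonal $G$-representations via an equivariant infinite loop space machine applied to $\mathrm{Fun}(\EG,\mathcal{C})$.

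First I would observe that the conjugation $G$-action on $\mathrm{Fun}(\EG,\mathcal{C})$ is through exact functors, so $\mathrm{Fun}(\EG,\mathcal{C})$ is itself a Waldhausen $G$-category. Since the $S_\bullet$-construction is functorial in exact functors and the iterated construction $S^{(n)}_{\bullet,\dots,\bullet}$ carries a natural $\Sigma_n$-action by permuting the simplicial coordinates, and since this $\Sigma_n$-action commutes with the pointwise $G$-action (the two actions act on independent parts of the structure), we obtain a multisimplicial object in Waldhausen $(G\times\Sigma_n)$-categories. Passing to $w_\bullet$, taking the diagonal, and geometrically realizing then produces a sequence of based $G$-spaces
\[
 X_n := |\mathrm{diag}(w_\bullet S^{(n)}_{\bullet,\dots,\bullet}\mathrm{Fun}(\EG,\mathcal{C}))|
\]
equipped with compatible $\Sigma_n$-actions and based structure maps $S^1\wedge X_n\to X_{n+1}$, and such that $X_0 = \ast$ and $\Omega X_1\simeq K_G(\mathcal{C})$. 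Since the non-equivariant additivity theorem and the standard argument identifying $\Omega |wS_\bullet(-)|$ with a group completion go through pointwise on fixed points (using that the $H$-fixed points $\mathrm{Fun}(\EG,\mathcal{C})^H$ are themselves Waldhausen categories), the adjoint structure maps $X_n\to \Omega X_{n+1}$ are $G$-weak equivalences for $n\geq 1$. This yields deloopings of $K_G(\mathcal{C})$ by the integer spheres $S^n$ with trivial $G$-action.

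Next I would promote these $S^n$-deloopings to $S^V$-deloopings for every finite dimensional real orthogonal $G$-representation $V$. The idea is to feed the Waldhausen $G$-category $\mathrm{Fun}(\EG,\mathcal{C})$ into an equivariant infinite loop space machine (in the style of Shimakawa, Guillou--May, or May--Merling--Osorno). The input for such a machine is a permutative category with $G$-action, and the output is a genuine orthogonal $G$-spectrum whose underlying non-equivariant spectrum agrees with the classical $K$-theory spectrum. The coproduct $\vee$ on a Waldhausen category, together with the choices of pushout squares encoded in $S_2$, endows $\mathrm{Fun}(\EG,\mathcal{C})$ with the structure of a permutative category with $G$-action (up to coherent weak equivalence), and thus we obtain an orthogonal $G$-spectrum $\mathbf{K}_G(\mathcal{C})$ such that $\mathbf{K}_G(\mathcal{C})(V)^H\simeq \Omega^V(X_1)^H$ for all $H\leq G$ and all representations $V$.

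Finally I would check that the zeroth space of this orthogonal $G$-spectrum is $G$-weakly equivalent to $K_G(\mathcal{C}) = \Omega|wS_\bullet\mathrm{Fun}(\EG,\mathcal{C})|$, which amounts to comparing the output of the machine with the iterated $S_\bullet$-construction after forgetting to trivial representations; this is a standard identification following from the universal property of the machine or, alternatively, from a Segal-style argument comparing the two constructions on fixed points. The main obstacle is to ensure that the equivariant infinite loop space machine actually produces \emph{genuine} deloopings by representation spheres rather than merely Borel ones: one must verify that the permutative $G$-category structure on $\mathrm{Fun}(\EG,\mathcal{C})$ is sufficiently rich that its $H$-fixed points compute the ``correct'' $H$-equivariant $K$-theory, rather than just a homotopy fixed points version. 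Here the crucial input is that the free $G$-orbit structure of $\EG$ arranges that $\mathrm{Fun}(\EG,\mathcal{C})^H$ consists of $H$-equivariant objects of $\mathcal{C}$, which is exactly the category whose ordinary $K$-theory should appear as the $H$-fixed points of $\mathbf{K}_G(\mathcal{C})$, thereby validating the genuine $G$-spectrum output.
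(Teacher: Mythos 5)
Note first that the paper does not prove this statement at all: it is quoted verbatim from \cite[Theorem 2.21]{MM1}, so the only meaningful comparison is with Malkiewich--Merling's argument. Your overall architecture matches theirs --- iterated $S_\bullet$ plus additivity for deloopings by trivial spheres, an equivariant infinite loop space machine for deloopings by representation spheres, and the observation that $\Fun(\EG,\mathcal{C})^H$ consists of genuinely $H$-equivariant objects so that the machine's output is a genuine rather than Borel $G$-spectrum. That last point is indeed the crux and you have identified it correctly.

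There is, however, a genuine gap in how you deploy the machine. You feed the permutative $G$-category $(\Fun(\EG,\mathcal{C}),\vee)$ into the machine and assert that its output deloops $\Omega X_1=\Omega|wS_\bullet\Fun(\EG,\mathcal{C})|$, with the comparison dismissed as ``a standard identification.'' But the machine applied to $(\Fun(\EG,\mathcal{C}),\vee)$ produces the \emph{direct-sum} (split) $K$-theory --- the group completion of the classifying space of weak equivalences under $\vee$ --- and for a general Waldhausen category this is \emph{not} equivalent to $\Omega|wS_\bullet(-)|$; the comparison map is an equivalence only under extra hypotheses (e.g.\ that every cofibration sequence splits up to weak equivalence), which fail for the examples this theorem is meant to cover, such as retractive spaces in $A$-theory. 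The correct assembly, and the one in \cite{MM1}, is to apply the machine \emph{levelwise} to the simplicial symmetric monoidal $G$-category $k\mapsto wS_k\Fun(\EG,\mathcal{C})$ (each level carries the sum $A\cup_0 B$), realize in the $S_\bullet$ direction, and use that $|wS_\bullet(-)|$ is $G$-connected so that a single loop already yields the group-like space $K_G(\mathcal{C})$; the $S_\bullet$ direction, not the machine, is what imposes the Waldhausen relations. As written, your step (3) compares two spectra that genuinely differ, so the argument does not close without this restructuring.
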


In particular, the equivariant algebraic $K$-theory determines a genuine $G$-spectrum.  

\begin{defn}[Malkiewich--Merling]
    The equivariant algebraic $K$-theory of a $G$-Waldhausen category $\cC$ is the genuine orthogonal $G$-spectrum determined, using the Guillou--May machine \cite{GuillouMay:IteratedLoopSpaceTheory}, by the infinite loop $G$-space $K_G(\cC)$.  We will denote this spectrum also by $K_G(\cC)$, it will be clear from context whether we are discussing a space or a genuine orthogonal $G$-spectrum.
\end{defn}

\subsection{Twisted topological Hochschild homology}

Classically, topological Hochschild homology plays a key role in the trace method approach to algebraic $K$-theory. In this section we recall the definition of topological Hochschild homology (THH) for a ring spectrum, as well as an equivariant variant: twisted topological Hochschild homology for $C_n$-equivariant ring spectra, $\THH_{C_n}(R)$. In Section \ref{sec:ETHH} we will develop another equivariant version of THH, denoted ETHH, which will also be crucial to our trace method approach in the equivariant setting. 

Classically, topological Hochschild homology is defined via a cyclic bar construction. 

\begin{defn}
 Let $R$ be an associative orthogonal ring spectrum, and $M$ an $(R,R)$-bimodule. The \emph{cyclic bar construction with coefficients} $\Ncyc_{\bullet}(R;M)$, is a simplicial spectrum which in degree $q$ is $R^{\wedge q} \wedge M.$ 

 Let $\rho\colon R\wedge M \to M$ denote the left module structure map for $M$, and $\psi: M\wedge R \to M$ denote the right module structure map. Then the face and degeneracy maps of $\Ncyc_{\bullet}(R;M)$ are given as follows: 
 $$
d_i = \left\{\begin{array}{ll} 
(\Id^{\wedge (q-1)} \wedge \psi) \circ \tau &: i=0 \\
\Id^{\wedge i} \wedge \mu \wedge \Id^{\wedge (q-i-1)} &: 0< i<q \\ \Id^{\wedge (q-1)} \wedge \rho &: i=q,
\end{array}\right.
$$
and 
$$
s_i =  \Id^{\wedge (i+1)} \wedge \eta \wedge \Id^{\wedge (q-i)} \hspace{1cm}:  0\leq i \leq q.
$$
Here, $\tau$ rotates the first factor of $R$ to the end. This yields a simplicial orthogonal spectrum $\Ncyc_{\bullet}(R;M)$. 
\end{defn}

\begin{rmk}
The usual convention for the cyclic bar construction is to place the $R$-bimodule $M$ on the far left i.e.\ the $q$ simplices are $M\wedge R^{\wedge q}$.  Our choice to put the bimodule on the right instead of the left is to keep our notation consistent with that of \cite{BlumbergMandell:localizationLongOne}.  There, Blumberg--Mandell consider an extension of the cyclic bar construction to spectral categories where the multiplication maps used in the definition of the face maps are replaced by the categorical composition maps.  In this setting, the choice to put the bimodule on the right is in line with the usual convention that function composition is read right-to-left instead of left-to-right.
\end{rmk}

When $M=R$, considered as an $(R,R)$-bimodule, the cyclic bar construction is denoted $\Ncyc_{\bullet}(R)$. Note that in this case the simplicial spectrum $\Ncyc_{\bullet}(R)$ is further a \emph{cyclic} spectrum, and thus upon realization has an action of $S^1$. 

\begin{defn}
For an associative ring spectrum $R$ and an $(R, R)$-bimodule $M$, the \emph{topological Hochschild homology} of $R$ with coefficients in $M$ is
\[
\THH(R;M) := |\Ncyc_{\bullet}(R;M)|.  
\]
When $M=R$ we write 
\[
\THH(R) : = |\Ncyc_{\bullet}(R)|.
\]

\end{defn}

There is a also perspective on topological Hochschild homology via equivariant norms. As discussed in Section \ref{sec:norms}, Hill, Hopkins, and Ravenel developed multiplicative norm functors $N_H^G$ from $H$-spectra to $G$-spectra for $H\leq G$ finite groups \cite{HHR}. In \cite{AnBlGeHiLaMa}, the authors extend this norm construction from finite groups to $S^1$, defining a functor $N_e^{S^1}$ and showing that the cyclic bar construction is a model for this equivariant norm. In particular for a ring spectrum $R$ and a complete $S^1$-universe $U$,
\[ N_e^{S^1}(R):=\mathcal{I}_{\mathbb{R}^{\infty}}^U|\Ncyc_{\bullet}(R)|.
\]
Thus, topological Hochschild homology is an equivariant norm. This norm model of THH lends itself to generalizations. Indeed, in \cite{AnBlGeHiLaMa} the authors define a notion of $C_n$-twisted topological Hochschild homology of $C_n$-ring spectra. For $R$ a $C_n$-ring spectrum, 
\[
\THH_{C_n}(R):= N_{C_n}^{S^1}(R). 
\]
There is a model for the norm $N_{C_n}^{S^1}$ as a twisted cyclic bar construction. 

\begin{defn}
Let $R$ be an associative orthogonal $C_n$-ring spectrum indexed on the trivial universe $\mathbb{R}^{\infty}$ and let $\sigma$ denote the generator $e^{2\pi i/n}$ of $C_n$. The $C_n$-\emph{twisted cyclic bar construction} $\mathrm{N}^{\cyc, C_n}_{\bullet}(R)$, is a simplicial $C_n$-spectrum which in degree $q$ is $R^{\wedge (q+1)}.$ 

 Let $\alpha_q: R^{\wedge (q+1)}\to R^{\wedge (q+1)}$ be the map which rotates the first factor to the end, and acts on the new last factor by $\sigma^{-1}$. Then the face and degeneracy maps of $\mathrm{N}^{\cyc, C_n}_{\bullet}(R)$ are given as follows: 
 $$
d_i = \left\{\begin{array}{ll} 
(\Id^{\wedge (q-1)} \wedge \mu) \circ \alpha_q &: i=0 \\
\Id^{\wedge i} \wedge \mu \wedge \Id^{\wedge (q-i-1)} &: 0< i\leq q,
\end{array}\right.
$$
and 
$$
s_i =  \Id^{\wedge (i+1)} \wedge \eta \wedge \Id^{\wedge (q-i)} \hspace{1cm}:  0\leq i \leq q.
$$
This yields a simplicial object $\Ncyc_{\bullet}(R)$. While this is not a cyclic object, it is a $\Lambda_n^{op}$-object, in the sense of B\"okstedt-Hsiang-Madsen \cite{BHM}. Thus, upon realization, it has an $S^1$-action. 
\end{defn}

\begin{defn}For $U$ a complete $S^1$-universe, and $R$ a $C_n$-ring spectrum indexed on $\widetilde{U}= i^*_{C_n}U$, 
\[
\THH_{C_n}(R):= N_{C_n}^{S^1}(R):= \mathcal{I}_{\mathbb{R}^{\infty}}^U|\mathrm{N}^{\cyc, C_n}_{\bullet}(\mathcal{I}_{\widetilde{U}}^{\mathbb{R}^{\infty}}R)|.
\]
\end{defn}
For more on twisted THH, see, for example, \cite{AnBlGeHiLaMa}, \cite{aghkk}, or \cite{BlGeHiLa}. 

\subsection{Spectral categories}\label{sec:SpecCat}

In this section we briefly recall the definition of spectral categories, as well as the cyclic bar construction in this setting. See \cite{BlumbergMandell:localizationLongOne}, \cite{CLMPZ}, or \cite{CLMPZ2} for more details. 

\begin{defn}\label{def-speccat}
A spectral category $\mathcal{C}$ is a category enriched in orthogonal spectra. In more detail, this consists of the data of 
\begin{itemize}
    \item for any two objects $a,b \in \mathrm{ob}(\mathcal{C})$, a spectrum $\mathcal{C}(a,b)$,
    \item for any object $a \in \mathrm{ob}(\mathcal{C})$, a unit map from the sphere spectrum to $\mathcal{C}(a,a)$, and
    \item for any three objects $a,b,c \in \mathrm{ob}(\mathcal{C})$, a strictly associative and unital composition map $\mathcal{C}(b,c) \sm \mathcal{C}(a,b) \to \mathcal{C}(a,c)$.
\end{itemize}
\end{defn}

\begin{exmp}
    Let $R$ be an orthogonal ring spectrum. Then there is a spectral category $\mathcal{C}_R$ with one object $\bullet$ such that $\mathcal{C}_R(\bullet, \bullet) = R$. The unit and composition maps are given by the unit and multiplication maps of $R$, respectively.
\end{exmp}
\begin{exmp}\label{ex:SpecCatMods}
    Let $R$ be an orthogonal ring spectrum.  There is a spectral category $\Mod_R$ whose objects are right $R$-modules $M$ and morphism spectra given by the internal hom $F_R(M,M')$.  The unit and composition are given by usual spectral enrichment of identity and composition.
\end{exmp}

\begin{defn}\label{def-ptwise-cof}
    We say that a spectral category $\mathcal{C}$ is pointwise cofibrant if all its mapping spectra are cofibrant in the stable model structure on orthogonal spectra.
\end{defn}

For example, if $R$ is cofibrant as a spectrum, then $\mathcal{C}_R$ is pointwise cofibrant.

\begin{defn}\label{def-spectral-functor}
Let $\mathcal{C}, \mathcal{D}$ be spectral categories. A functor of spectral categories associates to each object $a \in \mathrm{ob}(\mathcal{C})$ an object $F(a) \in \mathrm{ob}(\mathcal{D})$, and to every $a,b \in \mathrm{ob}(\mathcal{C})$ a map of spectra $F_{a,b}: \mathcal{C}(a,b) \to \mathcal{D}(F(a), F(b))$ which respects composition and units.
\end{defn}

We denote by $\SpecCat$ the category whose objects are small spectral categories and whose morphisms are spectral functors.

\begin{defn}\label{def-op-sp}
For a spectral category $\mathcal{C}$, let $\mathcal{C}^{\mathrm{op}}$ denote the spectral category with the same objects as $\mathcal{C}$, mapping spectra given by $\mathcal{C}^{\mathrm{op}}(x,y) = \mathcal{C}(y,x)$, and composition given by: 
\[
    \mathcal{C}^{\op}(y,z)\wedge \mathcal{C}^{\op}(x,y) = \mathcal{C}(z,y)\wedge \mathcal{C}(y,x) \xrightarrow{\sigma} \mathcal{C}(y,x)\wedge\mathcal{C}(z,y) \xrightarrow{\circ} \mathcal{C}(z,x) = \mathcal{C}^{\op}(x,z)  
\]
where $\sigma$ is the swap map and $\circ$ is the composition map of $\mathcal{C}$.
\end{defn}

If $\mathcal{C}$ and $\mathcal{D}$ are two spectral categories, we can define a new spectral category $\mathcal{D}^{\op}\wedge \mathcal{C}$ which has objects $\mathrm{ob}(\mathcal{D})\times \mathrm{ob}(\mathcal{C})$.  The morphisms are given by
\[
    (\mathcal{D}^{\op}\wedge \mathcal{C})((d_1,c_1),(d_2,c_2)) =\mathcal{D}^{\op}(d_1,d_2)\wedge \mathcal{C}(c_1,c_2) =\mathcal{D}(d_2,d_1)\wedge \mathcal{C}(c_1,c_2),
\]

\begin{defn}
    For $\mathcal{C}$ and $\mathcal{D}$ two spectral categories, a $(\mathcal{C},\mathcal{D})$-bimodule $M$ is a spectral functor $M\colon \mathcal{D}^{\mathrm{op}} \wedge \mathcal{C} \to \Spec$.
\end{defn}

Explicitly, a $(\cC,\cD)$-bimodule consists of a collection of spectra $\cM(d,c)$ for all objects $(d,c)\in \mathrm{ob}(\cD)\times \mathrm{ob}(\cC)$, together with structure maps
\[
    \alpha\colon \mathcal{C}(c,e)\wedge M(d,c)\wedge \mathcal{D}(f,d)\to M(f,e)
\]
which are associative and unital with respect to the composition and units in $\cC$ and $\cD$.

\begin{defn}
For a small spectral category $\mathcal{C}$ and a $(\mathcal{C}, \mathcal{C})$-bimodule $\mathcal{M}$, the cyclic bar construction $\mathrm{N}^{cyc}_{\bullet}(\mathcal{C};\mathcal{M})$ is the simplicial spectrum with $q$-simplices
\[
    \Ncyc_{q}(\mathcal{C};\mathcal{M}) = \bigvee\limits_{(c_0,c_1,\dots, c_q)} \mathcal{C}(c_{q-1},c_q)\wedge \mathcal{C} (c_{q-2},c_{q-1})\wedge\dots\wedge \mathcal{C}(c_0,c_1)\wedge \mathcal{M}(c_q,c_0)
\]
where the wedge is over all $(q+1)$-tuples of objects $c_i\in \mathcal{C}$. The  zeroth face map cycles the leftmost smash summand to the far right and then uses the right-module structure of $\mathcal{M}$. The remaining face maps are given by the composition in $\mathcal{C}$ and the left module structure of $\mathcal{M}$.  
\end{defn}

When the bimodule $\mathcal{M}$ is $\mathcal{C}$ we will write $\Ncyc_{\bullet}(\mathcal{C})$.  Note that in this case the cyclic bar construction is a cyclic object in the sense of Connes. When $\mathcal{M} = \mathcal{C}$, the geometric realization of the cyclic bar construction has an $S^1$-action. Let $U$ be a complete $S^1$-universe.

\begin{defn}
For a small spectral category $\mathcal{C}$,
    $$\THH(\mathcal{C}) = \mathcal{I}_{\mathbb{R}^{\infty}}^{U}|\Ncyc_{\bullet}(\mathcal{C};\mathcal{C})|.$$
\end{defn}

\section{Equivariant topological Hochschild homology}\label{sec:ETHH}

In this section introduce a version of topological Hochschild homology for genuine $G$-ring spectra, for a finite group $G$.  We call this theory equivariant THH, denoted ETHH, and it will play a central role in our trace methods approach to equivariant algebraic $K$-theory constructed later in the paper.  

Aside from its place in the theory of trace maps, ETHH is also notable because it is a model for a norm from $G$-spectra to $G\times S^1$-spectra.  Since the introduction of norms for finite groups in \cite{HHR}, norms have played a central role in equivariant homotopy theory.  Despite this, the construction of norms for compact Lie groups is in active development; see, for instance,  \cite{BlumbergHillMandellNorms}.  

Let $\mathcal{U}$ be a complete $(G\times S^1)$-universe, and let $\mathcal{V} = \iota_G^*\mathcal{U}$.  

\begin{defn}
Let $G$ be a finite group. The equivariant THH of a genuine orthogonal ring $G$-spectrum $R$ is the $G\times S^1$-spectrum $\ETHH(R) :=N^{G\times S^1}_G(R):=\mathcal{I}^{\mathcal{U}}_{\mathcal{V}}|\mathrm{N}^{\cyc}_{\bullet}(R)|$.
\end{defn}

\begin{exmp}\label{example: free loop space}
    Let $X$ be a based $G$-space such that $X^H$ is connected for all $H\leq G$.  Then the genuine equivariant suspension spectrum $\Sigma^{\infty}_G (\Omega X_+)$ is an associative $G$-ring spectrum and there is a weak equivalence \[\ETHH(\Sigma^{\infty}_G \Omega X_+)\simeq \Sigma^{\infty}_{G\times S^1} \mathcal{L}(X)_+,\] where $\mathcal{L}$ denotes the free loop space. This equivalence follows from the fact that suspension spectrum is strong monoidal, and therefore commutes with the cyclic bar construction. The space $\mathcal{L}(X)$ is considered as a $G\times S^1$-space via the action of $S^1$ on itself and the action of $G$ on $X$.  Explicitly, if $f\colon S^1\to X$ is a free loop then $((g,t)\cdot f)(s) = g\cdot f(t^{-1}s).$
\end{exmp}

Let $R\in \Spec^G_{\mathcal{V}}$ be a $G$-ring spectrum so that $\mathcal{I}_\mathcal{V}^{\mathbb{R}^{\infty}}(R)$ is a ring object in $\Spec^G_{\mathbb{R}^{\infty}}$.  The cyclic bar construction of $\mathcal{I}_\mathcal{V}^{\mathbb{R}^{\infty}}(R)$ is then an $S^1$-object in $\Spec^G_{\mathbb{R}^{\infty}}\cong \Sp^{BG}:= \Fun(BG,\Sp)$ where $BG$ is the one object groupoid of $G$.  In particular, this yields an object in $(\Spec^{BG})^{BS^1}\cong \Spec^{B(G\times S^1)}\cong \Spec^{G\times S^1}_{\mathbb{R}^{\infty}}$.

\begin{lem}
    If $R$ is a genuine orthogonal $G$-ring spectrum then there is an isomorphism of genuine $G\times S^1$-spectra $\ETHH(R)\cong \mathcal{I}_{\mathbb{R}^{\infty}}^{\mathcal{U}}|\mathrm{N}^{\cyc}_{\bullet}(\mathcal{I}^{\mathbb{R}^{\infty}}_{\mathcal{V}}(R))|.$ 
\end{lem}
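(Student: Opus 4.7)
The plan is to reduce the statement to a single compatibility between change of universe and the cyclic bar construction. By part (2) of \autoref{lemma: change of universe facts}, there is a natural isomorphism $\mathcal{I}^{\mathcal{U}}_{\mathcal{V}}\cong \mathcal{I}^{\mathcal{U}}_{\mathbb{R}^{\infty}}\circ \mathcal{I}^{\mathbb{R}^{\infty}}_{\mathcal{V}}$. Applying this to $|\mathrm{N}^{\cyc}_{\bullet}(R)|$, it suffices to produce a natural isomorphism of $G\times S^1$-spectra indexed on the trivial universe
\[
\mathcal{I}^{\mathbb{R}^{\infty}}_{\mathcal{V}}\,|\mathrm{N}^{\cyc}_{\bullet}(R)| \;\cong\; |\mathrm{N}^{\cyc}_{\bullet}(\mathcal{I}^{\mathbb{R}^{\infty}}_{\mathcal{V}}(R))|
\]
and then apply $\mathcal{I}^{\mathcal{U}}_{\mathbb{R}^{\infty}}$ to both sides.

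I would construct this isomorphism in two steps. First, at the simplicial level, the $q$-simplices of $\mathrm{N}^{\cyc}_\bullet(R)$ are $R^{\wedge(q+1)}$, and the strong monoidality of $\mathcal{I}^{\mathbb{R}^{\infty}}_{\mathcal{V}}$ (part (4) of \autoref{lemma: change of universe facts}) furnishes natural isomorphisms $\mathcal{I}^{\mathbb{R}^{\infty}}_{\mathcal{V}}(R^{\wedge(q+1)})\cong(\mathcal{I}^{\mathbb{R}^{\infty}}_{\mathcal{V}} R)^{\wedge(q+1)}$. Because every face, degeneracy, and cyclic operator of $\mathrm{N}^{\cyc}_{\bullet}$ is built from the smash product, the unit $\eta\colon \mathbb{S}\to R$, the multiplication $\mu\colon R\wedge R\to R$, and the cyclic twist, all of which are preserved by a strong monoidal functor, these level-wise isomorphisms assemble into an isomorphism of cyclic objects in $\Sp^G_{\mathbb{R}^{\infty}}$. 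Second, since change of universe is an equivalence of categories (by parts (2) and (3) of \autoref{lemma: change of universe facts}), it preserves all colimits and in particular commutes with geometric realization, so applying $|\cdot|$ yields the desired isomorphism of underlying spectra. The $S^1$-action on either side is induced from the cyclic structure we have just shown is preserved on the nose, so the isomorphism is automatically $S^1$-equivariant and hence an isomorphism of $G\times S^1$-spectra.

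The main obstacle is bookkeeping: one must check that the change of universe functor on $G$-spectra extends compatibly when $\mathcal{V}$ and $\mathbb{R}^{\infty}$ are viewed as $G\times S^1$-universes with trivial $S^1$-action, and that this extension respects both the cyclic structure and the $S^1$-action that emerges from geometric realization. No new conceptual input beyond monoidality and colimit preservation of change of universe should be required.
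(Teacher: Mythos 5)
Your proposal is correct and follows essentially the same route as the paper: the paper's (very terse) proof also commutes $\mathcal{I}^{\mathbb{R}^{\infty}}_{\mathcal{V}}$ past the cyclic bar construction using that change of universe is a strong monoidal equivalence of categories, and then composes $\mathcal{I}_{\mathbb{R}^{\infty}}^{\mathcal{U}}\circ\mathcal{I}^{\mathbb{R}^{\infty}}_{\mathcal{V}}\cong\mathcal{I}^{\mathcal{U}}_{\mathcal{V}}$ via the transitivity of change of universe. Your write-up simply makes explicit the levelwise and colimit-preservation details that the paper leaves implicit.
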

\begin{proof}
    Since change of universe is a strong monoidal equivalence of categories we have 
    \[  \mathcal{I}_{\mathbb{R}^{\infty}}^{\mathcal{U}}|\mathrm{N}^{\cyc}_{\bullet}(\mathcal{I}^{\mathbb{R}^{\infty}}_{\mathcal{V}}(R))|  \cong \mathcal{I}_{\mathbb{R}^{\infty}}^{\mathcal{U}}\mathcal{I}^{\mathbb{R}^{\infty}}_{\mathcal{V}}|\mathrm{N}^{\cyc}_{\bullet}(R)| \cong \mathcal{I}^{\mathcal{U}}_{\mathcal{V}}|\mathrm{N}^{\cyc}_{\bullet}(R)|
    \]
    where the second isomorphism uses \autoref{lemma: change of universe facts}.
\end{proof}

\begin{rem}\label{rem-eTHH-rings}
    We will sometimes find it convenient to refer to the cyclic bar construction as a spectrum with $G \times S^1$-action. We will denote this by $\eTHH$. That is,
    $$\eTHH(R) = |\mathrm{N}^{\cyc}_{\bullet}(\mathcal{I}^{\mathbb{R}^{\infty}}_{\mathcal{V}}(R))|.$$
\end{rem}

We next show that $N_G^{G\times S^1}$ has the expected adjoint property of a norm, as in \autoref{prop: norm is left quillen adjoint finite}.
\begin{prop}
    The functor $N_G^{G\times S^1}$ restricts to a functor
    \[
        N^{G\times S^1}_G\colon \mathrm{Comm}_\mathcal{V}^{G}\to\mathrm{Comm}^{G\times S^1}_\mathcal{U}.
    \]
    Moreover, this functor is left adjoint to the forgetful functor $i_G^*\colon \mathrm{Comm}_\mathcal{U}^{G\times S^1}\to\mathrm{Comm}_\mathcal{V}^G$, and this is a Quillen adjunction, where the categories of commutative algebras are given the positive complete stable model structures.
\end{prop}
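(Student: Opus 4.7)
The plan is to factor the norm as $N_G^{G\times S^1} = \mathcal{I}^{\mathcal{U}}_{\mathcal{V}} \circ L$, where $L(R) := |\mathrm{N}^{\cyc}_{\bullet}(R)|$ is viewed as a functor $\mathrm{Sp}^G_{\mathcal{V}} \to \mathrm{Sp}^{G\times S^1}_{\mathcal{V}}$, regarding $\mathcal{V}$ as a $G\times S^1$-universe with trivial $S^1$-action. I will verify that each factor takes commutative algebras to commutative algebras, is a left adjoint whose right adjoint matches up with restriction, and is a Quillen left adjoint for the positive complete stable structures.

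For the commutative structure: $\mathcal{I}^{\mathcal{U}}_{\mathcal{V}}$ is strong monoidal by \autoref{lemma: change of universe facts}(4) and hence restricts to commutative algebras. For $L$, when $R$ is a commutative $G$-ring spectrum the smash powers $R^{\wedge(q+1)} = \mathrm{N}^{\cyc}_{q}(R)$ are coproducts in commutative $G$-algebras, and the simplicial structure identifies $\mathrm{N}^{\cyc}_{\bullet}(R)$ with the simplicial tensor $S^1_{\bullet}\otimes R$ computed inside commutative $G$-algebras, where $S^1_{\bullet}$ is the standard simplicial circle. Hence $L(R)$ is naturally a commutative $G$-algebra, and the rotation action on $S^1_{\bullet}$ promotes it to a commutative $G\times S^1$-algebra.

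For the adjunction: since $\mathcal{V}$ has trivial $S^1$-action as a $G\times S^1$-universe, $\mathrm{Comm}^{G\times S^1}_{\mathcal{V}}$ is canonically equivalent to $S^1$-objects in $\mathrm{Comm}^G_{\mathcal{V}}$. The universal property of the simplicial tensor (a standard fact in any cocomplete symmetric monoidal category) then gives, for any $S\in\mathrm{Comm}^{G\times S^1}_{\mathcal{V}}$,
\[
\mathrm{Hom}_{\mathrm{Comm}^{G\times S^1}_{\mathcal{V}}}(L(R),\, S)\;\cong\;\mathrm{Hom}_{\mathrm{Comm}^G_{\mathcal{V}}}(R,\, i^{*}S),
\]
with $i^{*}$ forgetting the $S^1$-action. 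Combined with the change-of-universe adjunction $\mathcal{I}^{\mathcal{U}}_{\mathcal{V}}\dashv\mathcal{I}^{\mathcal{V}}_{\mathcal{U}}$, this produces the adjunction $N_G^{G\times S^1}\dashv(i^{*}\circ\mathcal{I}^{\mathcal{V}}_{\mathcal{U}})$, and unwinding definitions shows that $i^{*}\circ\mathcal{I}^{\mathcal{V}}_{\mathcal{U}}$ is canonically isomorphic to $i^*_G$.

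For the Quillen property: $\mathcal{I}^{\mathcal{U}}_{\mathcal{V}}$ is left Quillen on underlying $G\times S^1$-spectra (since $\mathcal{V}$ embeds as a sub-$G\times S^1$-universe of $\mathcal{U}$ via the $S^1$-trivial part of $\mathcal{U}$) and transfers to commutative algebras because the positive complete stable structure is set up precisely so that monoidal Quillen adjunctions lift. For $L$, the positive complete stable structure on $\mathrm{Comm}^G_{\mathcal{V}}$ is a simplicial model category, so tensoring with the cofibrant simplicial set $S^1_{\bullet}$ is automatically left Quillen. The main technical obstacle will be reconciling the bar realization $|\mathrm{N}^{\cyc}_{\bullet}(R)|$ with the derived simplicial tensor $S^1_{\bullet}\otimes R$ for cofibrant $R$; this reduces to checking Reedy cofibrancy of $\mathrm{N}^{\cyc}_{\bullet}(R)$ in the positive complete stable structure on commutative algebras, which follows from the positive cofibrancy hypothesis ensuring that iterated smash products of cofibrant commutative algebras remain cofibrant.
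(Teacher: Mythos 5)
Your core argument is the same as the paper's: both identify $|\mathrm{N}^{\cyc}_{\bullet}(A)|$ with the tensor $S^1\otimes A$ of $A$ with the simplicial circle in commutative algebras (so that the adjunction is the universal property of the tensoring), and both splice in the change-of-universe adjunction. Where you diverge is the verification of the Quillen condition. The paper checks it on the right adjoint in one line: the forgetful functor $i_G^*$ preserves positive fibrations and positive acyclic fibrations (these are detected on underlying spectra), so the adjunction is Quillen; no cofibrancy or realization analysis is needed. You instead try to show the left adjoint preserves (acyclic) cofibrations, which leads you to flag ``reconciling the bar realization with the derived simplicial tensor'' and Reedy cofibrancy as the main technical obstacle. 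That obstacle is illusory: as you yourself note earlier in the argument, $|\mathrm{N}^{\cyc}_{\bullet}(A)|\cong S^1_{\bullet}\otimes A$ is a point-set isomorphism (in degree $q$ the tensor is $\coprod_{S^1_q}A = A^{\wedge(q+1)}$, and tensoring with a simplicial set is by definition the realization of the levelwise tensor), and the Quillen property of an adjunction requires no derived comparison and no cofibrancy hypothesis on objects --- the proposition is a statement about the model categories, not about cofibrant inputs. So your proof is correct once that last paragraph is replaced by the right-adjoint criterion (or by the observation that tensoring with a simplicial set is left Quillen in a simplicial model category, full stop), but as written it manufactures and then defers a difficulty that does not exist.
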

\begin{proof}
    Since the change of universe functors are equivalences of categories it suffices to prove that the functor which sends $A\in \mathrm{Comm}^G_{\mathbb{R}^{\infty}}$ to $|\Ncyc_{\bullet}(A)|$ in $\mathrm{Comm}^{G\times S^1}_{\mathbb{R}^{\infty}}$ is left adjoint to the forgetful functor. As in \cite[Proposition 4.3]{AnBlGeHiLaMa}, this follows from the observation that, as $S^1$-objects in $\mathrm{Comm}^{G}_{\mathbb{R}^{\infty}}$, we can identify $|\Ncyc_{\bullet}(A)|$  with  $S^1\otimes A$, the tensoring of $A$ with the simplicial circle $S^1$ considered as a $G$-simplicial set with trivial action. The fact that this is a Quillen adjunction is immediate from the fact that the forgetful functor preserves positive fibrations and positive acyclic fibrations.
\end{proof}

While $\ETHH$ is not a left adjoint on associative ring spectra, we note that it still preserve weak equivalences between cofibrant objects.

\begin{prop}
    If $f\colon R\xrightarrow{\sim}S$ is a weak equivalence of ring orthogonal $G$-spectra which are positive cofibrant then the induced map $\ETHH(f)\colon \ETHH(R)\to \ETHH(S)$ is a weak equivalence of $G\times C_n$-spectra for all $n$.
\end{prop}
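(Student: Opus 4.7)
\emph{Proof plan.} The plan is to show that $\ETHH(f)$ is a stable equivalence of $G\times S^1$-spectra; since restriction along each inclusion $G\times C_n\hookrightarrow G\times S^1$ preserves stable equivalences, this will yield the claim for every $n$. I would use the defining formula $\ETHH(R)=\mathcal{I}^{\mathcal{U}}_{\mathcal{V}}|\Ncyc_\bullet(R)|$ and proceed in two stages.

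First, I would establish that $|\Ncyc_\bullet(f)|$ is a weak equivalence of $G\times S^1$-objects in $\Sp^G_\mathcal{V}$. This requires two ingredients. (a) Because $R$ and $S$ are positive cofibrant as ring $G$-spectra, their underlying $G$-spectra are cofibrant in the positive complete stable model structure, as noted at the end of \autoref{sec:background}. Since the smash product is a left Quillen bifunctor, the iterated smash products $f^{\wedge(q+1)}\colon R^{\wedge(q+1)}\to S^{\wedge(q+1)}$ are weak equivalences of $G$-spectra for every $q\geq 0$, so $\Ncyc_\bullet(f)$ is a levelwise weak equivalence. (b) The simplicial $G$-spectrum $\Ncyc_\bullet(R)$ is Reedy cofibrant (``proper''), since its latching maps are built from iterated pushout-products of the unit map $\mathbb{S}\to R$, which is a cofibration for positive cofibrant ring $G$-spectra. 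Realization of a levelwise weak equivalence between Reedy cofibrant simplicial spectra therefore produces the desired weak equivalence.

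Second, I would transport this weak equivalence to the complete $G\times S^1$-universe $\mathcal{U}$ via $\mathcal{I}^{\mathcal{U}}_{\mathcal{V}}$. Regarding $\mathcal{V}$ as a $G\times S^1$-universe with trivial $S^1$-action on its representations, we have $\mathcal{V}\subseteq \mathcal{U}$ and hence $\mathcal{I}^{\mathcal{U}}_{\mathcal{V}}$ is a left Quillen functor. A left Quillen functor preserves weak equivalences between cofibrant objects, and $|\Ncyc_\bullet(R)|$ is cofibrant as the realization of a Reedy cofibrant simplicial object with cofibrant levels. This yields the desired weak equivalence $\ETHH(f)\colon \ETHH(R)\to \ETHH(S)$ in $\Sp^{G\times S^1}_\mathcal{U}$.

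\emph{Main obstacle.} The principal technical point is verifying Reedy cofibrancy of $\Ncyc_\bullet(R)$ in the equivariant positive complete stable setting: one must check that the unit $\mathbb{S}\to R$ is a cofibration of underlying $G$-spectra for positive cofibrant associative ring $G$-spectra, and that iterated pushout-products of such cofibrations remain cofibrations between cofibrant objects. These are standard but nontrivial facts about the positive complete stable model structure; once they are in hand, the rest of the argument proceeds formally from the left Quillen properties of the smash product, realization, and change of universe.
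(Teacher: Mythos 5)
There is a genuine gap in the first stage of your argument. Establishing that each $f^{\wedge(q+1)}$ is a weak equivalence of $G$-spectra, and that the realization of the resulting levelwise equivalence is again a weak equivalence of $G$-spectra, only controls the homotopy groups $\pi_*^{\Gamma}$ for subgroups of the form $\Gamma = H\times 1\leq G\times S^1$. To invoke the left Quillen property of $\mathcal{I}^{\mathcal{U}}_{\mathcal{V}}$ you need $|\Ncyc_\bullet(f)|$ to be a stable equivalence in $\Sp^{G\times S^1}_{\mathcal{V}}$, i.e.\ an isomorphism on $\pi_*^{\Gamma}$ for \emph{all} closed $\Gamma\leq G\times S^1$ — in particular for $1\times C_m$ and for subgroups meeting the circle factor diagonally. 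These are exactly the subgroups that make the statement nontrivial: their fixed points on the realization are computed via cyclic subdivision and land on diagonal/norm terms, so a levelwise underlying $G$-equivalence does not by itself give an equivalence there. Your restriction step at the end cannot rescue this, since the $G\times S^1$-equivalence it restricts was never established.

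This is precisely where the paper's proof does its work: it checks the map on all geometric fixed points $\Phi^K$ for $K\leq G\times C_n$ and concludes by the detection criterion of \autoref{lem: norm and fixed point lemmas}(4). To do so it replaces the cyclic bar construction by its $n$-fold subdivision, identifies the $q$-simplices as $(N^{G\times C_n}_{G\times 1}(R))^{\wedge(q+1)}$, and uses that the norm preserves weak equivalences between cofibrant objects (\autoref{prop: norm is left quillen adjoint finite}) together with the strong monoidality of $\Phi^K$ on cofibrant spectra. Some version of this norm input is unavoidable; your Reedy-cofibrancy analysis, while reasonable in itself, addresses a secondary point and does not substitute for it. If you want to salvage your outline, the first stage must be upgraded to produce, for each $n$, a levelwise equivalence of levelwise cofibrant simplicial $G\times C_n$-spectra after subdivision — which is essentially the paper's argument.
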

\begin{proof}
        We will show that for all $K\leq G\times C_n$ we have a weak equivalence
        \[
            \Phi^K(\ETHH(R))\to \Phi^K(\ETHH(S))
        \]
        on all geometric fixed points, hence $f$ is a weak equivalence by \autoref{lem: norm and fixed point lemmas}(4).

        Since we are only interested in the $G\times C_n$-structure we may replace $\ETHH(R)$ with the spectrum obtained by geometric realization of the $n$-th cyclic subdivision of $N^{\mathrm{cyc}}(R)$.  After passing the change of universe inside the geometric realization the $q$-simplices of this simplicial $G$-spectrum are
        \[
            [q]\mapsto (N^{G\times C_n}_{G\times 1}(R))^{\wedge(q+1)}
        \]
        and the map $\ETHH(R)\to \ETHH(S)$ is induced by the simplicial map given on $q$-simplices by the smash product of the maps
        \[
            N^{G\times C_n}_{G\times 1}(f)\colon N^{G\times C_n}_{G\times 1}(R)\to N^{G\times C_n}_{G\times 1}(S).
        \]
        These maps are weak equivalences between cofibrant objects, by \autoref{prop: norm is left quillen adjoint finite}, and thus the smash product of these maps is also a weak equivalence between cofibrant objects.  Applying any geometric fixed points, and using \autoref{lem: norm and fixed point lemmas}(2), we see that the map $\ETHH(R)\to \ETHH(S)$ is the realization of a levelwise equivalence between levelwise cofibrant simplicial $G\times C_n$-spectra which proves the claim.
\end{proof}

If $G = C_n = \langle \sigma| \sigma^n\rangle$, we fix the inclusion of $C_n\hookrightarrow S^1$, which sends $\sigma$ to $e^{2\pi i/n}$.  Let $\Delta_{C_n}$ be the diagonal subgroup of $C_n\times S^1$ isomorphic to $C_n$. The goal for the remainder of this section is to prove there is an equivalence of $C_n$-spectra:
\[\THH_{C_n}(R)\simeq \Phi^{\Delta_{C_n}} \ETHH(R).\]
The spectrum on the right is a $(C_n\times S^1)/(\Delta_{C_n})$-spectrum.  To make sense of this equivalence we need to pick an isomorphism $(C_n\times S^1)/\Delta_{C_n}\xrightarrow{\cong}S^1$; we pick the map $(e^{2k\pi i/n},e^{\theta i})\mapsto e^{(2k\pi/n-\theta)i}$. This choice makes the inclusion 
\[C_n\cong C_n\times 1 \cong ((C_n\times 1)\Delta_{C_n})/\Delta_{C_n}\to (C_n\times S^1)/\Delta_{C_n}\cong S^1\]
the usual embedding $C_n\to S^1$ and sends $(1,\sigma)$ to $\sigma^{-1}$.  

Given a $G$-ring spectrum $R$, an $(R,R)$-bimodule $M$, and an element $g\in G$ we define the right-twisted bimodule $M^{g}$ to be the $(R,R)$-bimodule with the same left $R$-action as $M$ but with right $R$-action given by
\[  
    M\wedge R\xrightarrow{1\wedge g} M\wedge R\to M
\]
where the unlabeled map is the original right $R$-action.  The following lemma is helpful for computing the geometric fixed points of $\ETHH(R)$. One can define the left-twisted bimodule ${^{g}M}$ analogously.

\begin{lem}[{c.f.\ \cite[Theorem 4.4]{AnBlGeHiLaMa}}]\label{lemma:TCviaNorm4.4}
    Let $R$ be an orthogonal $C_n$-ring spectrum.  There is an isomorphism of genuine $(C_n\times C_n)$-spectra
    \[
        i_{C_n\times C_n}^* N^{C_n\times S^1}_{C_n}(R)\cong |\Ncyc_{\bullet}(N_{C_n\times 1}^{C_n\times C_n}(R),N_{C_n\times 1}^{C_n\times C_n}(R)^{(1,\sigma)})|
    \]
    where the right term is the cyclic bar construction with coefficients in a twisted bimodule. 
\end{lem}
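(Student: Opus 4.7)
The strategy is to apply the classical $n$-fold edgewise subdivision $\mathrm{sd}_n$ to the cyclic bar construction $\Ncyc_\bullet(R)$. The key fact is that for any cyclic object $Y_\bullet$ the realization $|\mathrm{sd}_n Y_\bullet|$ is canonically homeomorphic to $|Y_\bullet|$, and the restriction of the $S^1$-action on $|Y_\bullet|$ to $C_n \subset S^1$ (embedded via $\sigma \mapsto e^{2\pi i/n}$) arises as the realization of a simplicial $C_n$-action on $\mathrm{sd}_n Y_\bullet$, generated in each degree by the $(q+1)$-st power of the cyclic operator. Since change of universe, subgroup restriction, and geometric realization are mutually compatible, it suffices to construct the desired isomorphism at the level of simplicial $C_n\times C_n$-spectra in $\Sp^{C_n\times C_n}_{\R^\infty}$ and then reapply $\mathcal{I}^{\mathcal{U}}_{\R^\infty}$.

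The heart of the argument is to exhibit an isomorphism of simplicial $C_n\times C_n$-spectra
\[
    \mathrm{sd}_n \Ncyc_\bullet(R) \;\cong\; \Ncyc_\bullet\bigl(N^{C_n\times C_n}_{C_n\times 1}(R),\;N^{C_n\times C_n}_{C_n\times 1}(R)^{(1,\sigma)}\bigr).
\]
In each simplicial degree $q$ both sides equal $R^{\wedge n(q+1)}$, and I would use the re-indexing identifying position $k\in\{0,\dots,n(q+1)-1\}$ on the left with a pair $(i,j)$, where $0\leq i\leq q$, $0\leq j\leq n-1$, and $k = i + j(q+1) \pmod{n(q+1)}$. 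Grouping the $n$ factors with a given value of $i$ yields a copy of $R^{\wedge n}$, and the $q+1$ resulting copies of $R^{\wedge n}$ are matched with the $q+1$ smash factors of the target $\Ncyc_q$. Under this bijection, the first-coordinate $C_n$-action (diagonal on every $R$-factor on the left) matches the $(C_n\times 1)$-action on the norm, while the second-coordinate $C_n$-action, generated by the $(q+1)$-st power of the cyclic operator $t_{n(q+1)-1}$ and acting on re-indexed coordinates as $(i,j)\mapsto(i,j+1\bmod n)$, matches the $(1\times C_n)$-action on the norm which permutes the $n$ factors within each $R^{\wedge n}$.

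The main obstacle, and the source of the $(1,\sigma)$-twist on the coefficient bimodule, is checking that the simplicial structures agree. The inner face maps $d^{\mathrm{sd}_n}_i$ for $0 < i < q$ unfold, under the re-indexing, into multiplications of adjacent $R^{\wedge n}$-blocks via the ring structure of $N^{C_n\times C_n}_{C_n\times 1}(R)$, which is routine. The crucial check is the cyclic operator $t^{\mathrm{sd}}_q = t^n_{n(q+1)-1}$ of the subdivision, from which the wrap-around face map $d_0^{\mathrm{sd}_n}$ is determined; a direct computation on re-indexed coordinates shows that $t^n$ sends $(i,j)\mapsto (i+n,j)$ when $i+n\leq q$ but introduces an additional shift $(i,j)\mapsto (i+n-q-1,\;j+1\bmod n)$ whenever $i+n > q$. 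This ``carry in $j$ upon wrap-around in $i$'' is precisely the cyclic operator of a cyclic bar construction whose right action is twisted by $(1,\sigma)$, so $d_0^{\mathrm{sd}_n}$ becomes the twisted right-action face map, yielding the $(1,\sigma)$-twist on the bimodule. Reapplying $\mathcal{I}^{\mathcal{U}}_{\R^\infty}$ and geometrically realizing both sides of the simplicial isomorphism then completes the proof; the argument parallels and extends \cite[Theorem 4.4]{AnBlGeHiLaMa}.
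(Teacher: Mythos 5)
Your proposal is correct and follows essentially the same route as the paper: the paper deduces this lemma from the more general spectral-category version (\autoref{lemma:TCviaNorm4.4spectral}), whose proof is exactly your argument --- pass to the $n$-fold edgewise subdivision, reindex the $n(q+1)$ smash factors into $q+1$ blocks of $n$ matching the norm $N^{C_n\times C_n}_{C_n\times 1}(R)$, and observe that the wrap-around in the zeroth face map produces the $(1,\sigma)$-twist on the coefficient bimodule. Your level of detail on the face-map check matches what the paper leaves as a ``straightforward, albeit tedious, exercise,'' so there is nothing further to add.
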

\begin{proof}
    The proof is almost identical that of \cite[Theorem 4.4]{AnBlGeHiLaMa}.  The result also follows from a more general result, \autoref{lemma:TCviaNorm4.4spectral}, proven below.
\end{proof}

\begin{thm}\label{thm: geometric fixed points of ETHH is twisted THH for rings}
    Let $R$ be a positive cofibrant $C_{n}$-ring spectrum. 
 There is an isomorphism of orthogonal $C_n$-spectra 
    \[
        \Phi^{\Delta_{C_n}}\ETHH(R) \cong \THH_{C_n}(R).
    \]
\end{thm}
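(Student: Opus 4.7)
The plan is to reduce to a simplicial calculation using \autoref{lemma:TCviaNorm4.4}, compute geometric fixed points levelwise, and identify the result with the twisted cyclic bar construction that defines $\THH_{C_n}(R)$.

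First, since $\Delta_{C_n}\subseteq C_n\times C_n\subseteq C_n\times S^1$, I restrict $\ETHH(R)$ to a $(C_n\times C_n)$-spectrum, at which point \autoref{lemma:TCviaNorm4.4} presents it as the realization of the cyclic bar $\lvert \mathrm{N}^{\cyc}_{\bullet}(A, A^{(1,\sigma)})\rvert$, where $A := N^{C_n\times C_n}_{C_n\times 1}(R)\cong R^{\wedge n}$ carries the standard $(C_n\times C_n)$-structure: the first factor acts diagonally via the original $C_n$-action on $R$, and the second factor cyclically permutes the $n$ smash copies. Because $R$ is cofibrant, $A$ is cofibrant by \autoref{prop: norm is left quillen adjoint finite}, so $\mathrm{N}^{\cyc}_\bullet(A, A^{(1,\sigma)})$ is levelwise cofibrant; \autoref{prop: fixed points are monoidal} then lets $\Phi^{\Delta_{C_n}}$ pass through both the smash products and the geometric realization.

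Next I compute $\Phi^{\Delta_{C_n}}(A)$. The fixed points of $A = R^{\wedge n}$ under the diagonal $\Delta_{C_n}$-action are identified with $R$ via the embedding $r\mapsto (r, \sigma r, \sigma^2 r, \ldots, \sigma^{n-1}r)$; the analogous statement holds for geometric fixed points because $A$ is cofibrant. The residual action of the Weyl group $(C_n\times C_n)/\Delta_{C_n}$ on $\Phi^{\Delta_{C_n}}(A)$ can be computed coset by coset: the identification $(C_n\times C_n)/\Delta_{C_n}\xrightarrow{\cong} C_n$ (restricted from the chosen $(C_n\times S^1)/\Delta_{C_n}\cong S^1$) sends $(c, 1)\mapsto c$ and $(1,\sigma)\mapsto \sigma^{-1}$, and direct inspection of how these cosets act on the diagonal embedding recovers the original $C_n$-action on $R$. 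Consequently $\Phi^{\Delta_{C_n}}(A^{\wedge(q+1)})\cong R^{\wedge(q+1)}$ as $C_n$-spectra. I then match the induced simplicial operators with those of $\mathrm{N}^{\cyc,C_n}_{\bullet}(R)$: the degeneracies and the face maps $d_i$ for $0<i\le q$ are built from the unit and multiplication of $A$, which under $\Phi^{\Delta_{C_n}}$ become the unit and multiplication of $R$, agreeing with the corresponding operators in the twisted cyclic bar.

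The essential check is $d_0$: in $\mathrm{N}^{\cyc}_{\bullet}(A, A^{(1,\sigma)})$ it rotates the leftmost smash factor to the right, acts on it by $(1,\sigma)$ through the twisted right $A$-module structure, and then multiplies. Since $(1,\sigma)$ is identified with $\sigma^{-1}$ in the quotient $C_n$, the image under $\Phi^{\Delta_{C_n}}$ is rotation of the first factor of $R^{\wedge(q+1)}$ to the end followed by the $\sigma^{-1}$-action on the new last factor, which is precisely the twist $\alpha_q$ defining $d_0$ in $\mathrm{N}^{\cyc,C_n}_\bullet(R)$.

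Finally, I commute $\Phi^{\Delta_{C_n}}$ past the change-of-universe appearing in $\ETHH$ using \autoref{proposition: geometric fixed points and change of universe} together with the fact that change of universe commutes with the colimit defining geometric realization. This yields the claimed isomorphism of orthogonal $C_n$-spectra $\Phi^{\Delta_{C_n}}\ETHH(R)\cong \THH_{C_n}(R)$, indexed on the complete $S^1$-universe $U^{\Delta_{C_n}}$. The main obstacle I anticipate is pinning down the residual $C_n$-action on $\Phi^{\Delta_{C_n}}(A)$ together with the compatibility of $(1,\sigma)\mapsto \sigma^{-1}$ with the $d_0$ twist; once that bookkeeping is in place, the remainder of the argument is routine.
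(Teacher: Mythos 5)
Your proposal is correct and follows essentially the same route as the paper's proof: reduce via the restriction to $C_n\times C_n$ and \autoref{lemma:TCviaNorm4.4} to the twisted cyclic bar construction on $N^{C_n\times C_n}_{C_n\times 1}(R)$, commute $\Phi^{\Delta_{C_n}}$ past realization and smash products using cofibrancy, identify the levels with $R^{\wedge(q+1)}$ via the norm diagonal, and pin down $d_0$ through the identification of $(1,\sigma)$ with $\sigma^{-1}$ in the quotient. The only cosmetic difference is that the paper cites the norm diagonal of Angeltveit et al.\ and a result of Dotto et al.\ for commuting geometric fixed points with realization, where you argue these points directly.
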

\begin{proof}

Using \autoref{lemma:TCviaNorm4.4}, we have
\begin{align*}
    i_{C_n}^*\Phi^{\Delta_{C_n}}\ETHH(R)& \cong \Phi^{\Delta_{C_n}}i^{*}_{C_n\times C_n}\ETHH(R) \\
    & \cong \Phi^{\Delta_{C_n}}|\mathrm{N}^{\cyc}(N_{C_n\times 1}^{C_n\times C_n}(R), (N_{C_n\times 1}^{C_n\times C_n}(R))^{(1,\sigma)})| \\
    &\cong  |\Phi^{\Delta_{C_n}}\mathrm{N}^{\cyc}(N_{C_n\times 1}^{C_n\times C_n}(R), (N_{C_n\times 1}^{C_n\times C_n}(R))^{(1,\sigma)})|
\end{align*}
where the last spectrum is the realization of the simplicial spectrum obtained from the twisted bar construction by applying geometric fixed points levelwise.  The last isomorphism follows from \cite[Lemma 4.7]{DottoMalkPatchSagaveWoo}.  The $k$-simplices of the resulting simplicial $C_n$-spectrum are
\begin{equation}\label{equation:norm diagonal}
    \Phi^{\Delta_{C_n}}\left(N^{C_n\times C_n}_{C_n\times 1}(R) \right)^{\wedge(k+1)}  \cong   \left(\Phi^{\Delta_{C_n}}N^{C_n\times C_n}_{C_n\times 1}(R) \right)^{\wedge(k+1)} \cong R^{\wedge(k+1)}
\end{equation}
where the first isomorphism uses that geometric fixed points are strong monoidal on cofibrant spectra and the second uses the norm diagonal, \cite[Theorem 2.35]{AnBlGeHiLaMa}. 

Thus we have, up to isomorphism, a simplicial $C_n$-spectrum with the same $k$-simplices as $\mathrm{N}^{\cyc,C_n}(R)$ which defines $\THH_{C_n}(R)$. It remains to check that the face and degeneracy maps are correct.  Lax monoidality of geometric fixed points implies that the last $k$ face maps are given by the multiplication of $R$, as expected.  For the face map $d_0$, it suffices to identify 
\[
    \Phi^{\Delta_{C_n}}(1,\sigma)\colon \Phi^{\Delta_{C_n}}(N^{C_n\times C_n}_{C_n\times 1}(R))\to \Phi^{\Delta_{C_n}}(N^{C_n\times C_n}_{C_n\times 1}(R))
\]
with $\sigma^{n-1}\colon R\to R$.  This follows from the observations that the isomorphism \eqref{equation:norm diagonal} is an isomorphism of $(C_n\times C_n)/(\Delta_{C_n})$-spectra, and that the isomorphism $(C_n\times C_n)/(\Delta_{C_n})\cong C_n\times 1$ identifies $(1,\sigma)$ with $\sigma^{n-1}$.
\end{proof}

\begin{exmp}
    Consider the case of $R = \Sigma^{\infty}_G \Omega X_+$ from \autoref{example: free loop space}.  There is an equivalence 
\[
    \ETHH(\Sigma^{\infty}_G \Omega X_+)\simeq \Sigma^{\infty}_{G\times S^1} \mathcal{L}(X)_+
\]
and so when $G=C_n$ we can compute $\THH_{C_n}$ by taking geometric fixed points of the suspension of the free loop space.  Since geometric fixed points commute with equivariant suspension spectra we have
\[
    \THH_{C_n}(\Sigma^{\infty}_{C_n} (\Omega X)_+)\simeq \Sigma^{\infty} (\mathcal{L}(X)^{\Delta_{C_n}}_+).
\]

The $\Delta_{C_n}$-fixed points of $\mathcal{L}(X)$ can be computed as the twisted free loop space $\Lambda^{\sigma}(X)$, the space of paths $\varphi\colon I\to X$ such that $\sigma\varphi(0)  = \varphi(1)$.  This space is of interest because its homology is the home for fixed point invariants such as the Reidemeister trace \cite{MalkiewichPonto}.
\end{exmp}

\section{Hochschild homology theories for $G$-spectral categories}\label{sec-GSp}

Several modern constructions of the Dennis trace require extending the cyclic bar construction from ring spectra to spectral categories, which can be thought of as ring spectra ``with many objects.''  In this section, in order to facilitate the construction of the equivariant Dennis trace,  we introduce the notion of $G$-spectral categories. The notion of equivariant bimodules and ETHH of $G$-spectral categories is studied.  We also study the homotopy theory of $G$-spectral categories by constructing a model structure on the category of $G$-spectral categories.  This model structure allows us to construct cofibrant replacements of $G$-spectral categories, a necessary technical step in constructing a well defined Dennis trace.

Let $\SpecCat$ denote the category of spectral categories, as in \autoref{sec:SpecCat}.
 
\begin{defn}
A $G$-spectral category is a functor
\[
BG \to \mathrm{SpecCat}.
\]
A morphism of $G$-spectral categories is a natural transformation of functors. 
\end{defn}

Explicitly, a $G$-spectral category $\mathcal{C}$ consists of
\begin{enumerate}
\item A collection of objects ob$\mathcal{C}$ together with an assignment
\[
g\colon \textup{ob} \mathcal{C} \to \textup{ob} \mathcal{C}
\]
for each $g\in G$, such that $h \circ g  = hg.$
\item For each pair of objects $x,y \in \textup{ob}\mathcal{C}$, an orthogonal spectrum $\mathcal{C}(x,y)$ and a map of spectra
\[
g\colon \mathcal{C}(x,y)\to \mathcal{C}(gx,gy).
\]
for each $g\in G$, such that $h \circ g  = hg.$
\item A unit map $\eta_x\colon \mathbb{S} \to \mathcal{C}(x,x)$ for each $x \in \textup{ob}\mathcal{C}$, such that $g \circ \eta_x = \eta_{gx}$.
\item For each triple of objects $x,y,z$ a composition map
\[
\circ\colon \mathcal{C}(y,z) \wedge \mathcal{C}(x,y) \to \mathcal{C}(x,z),
\]
which is unital and associative, and such that the diagram
\[
    \begin{tikzcd}
        \mathcal{C}(y,z)\wedge \mathcal{C}(x,y) \ar[r,"g\wedge g"] \ar[d,"\circ"] & \mathcal{C}(gy,gz)\wedge \mathcal{C}(gx,gy) \ar[d,"\circ"] \\
        \mathcal{C}(x,z) \ar[r,"g"]& \mathcal{C}(gx,gz)
    \end{tikzcd}
\]
commutes for all choices of $g,x,y,z$.
\end{enumerate}

\begin{exmp}\label{example: G-ring as G-spec cat}
    Let $R$ be a ring spectrum with $G$-action.  There is a $G$-spectral category $\mathcal{C}_R$ with a single object $\bullet$ and morphism spectrum $\mathcal{C}_R(\bullet, \bullet)=R$.  The composition is given by the multiplication $R\wedge R\to R$ and the maps $g\colon R\to R$ are given by the $G$-action on $R$.
\end{exmp}

\begin{exmp}\label{example: modules as G-spec cat}
    Recall from \autoref{ex:SpecCatMods} that for an orthogonal ring spectrum $R$ the category $\Mod_R$ of right $R$-modules is a spectral category. If $R$ has a $G$-action then $\Mod_R$ is a $G$-spectral category with $G$-action defined as follows.  For a right $R$-module $M$ the $R$-module $gM = M^{g^{-1}}$ is the spectrum $M$ with $R$-action map
    \[
        M\wedge R\xrightarrow{1\wedge g^{-1}}M\wedge R\to M
    \]
    where the second map is the action of $R$ on $M$.  The morphism spectra are given by 
    \[
        \Mod_R(M,N) = F_R(M,N)
    \]
    where the right hand side is the internal mapping spectrum of right $R$-modules. 
    
    For $g\in G$ the maps on morphism spectra
    \[
        g\colon F_R(M,N)\to F_R(M^{g^{-1}},N^{g^{-1}})
    \]
    arise as follows: if $f\colon M\to N$ is a map of spectra which is also a map of right $R$-modules, then a straightforward diagram chase shows that the same map of spectra $f\colon M^{g^{-1}}\to N^{g^{-1}}$ is also a map of right $R$-modules (recall that $M$ and $M^{g^{-1}}$ have the same underlying spectrum). Thus the underlying set of right $R$-modules maps $M\to N$ and $R$-module maps $M^{g^{-1}}\to N^{g^{-1}}$ are the same.  To make this precise, on morphism spectra one translates the above argument into the argument that the canonical map $\phi\colon F_{R}(M,N)\to F(M,N) = F(M^{g^{-1}},N^{g^{-1}})$ equalizes the maps which define $F_{R}(M^{g^{-1}},N^{g^{-1}})$, hence $\phi$ factors as $F_{R}(M,N)\xrightarrow{g} F_{R}(M^{g^{-1}},N^{g^{-1}}) \to F(M,N)$.  We note that if $M=M^{g^{-1}}$ and $N = N^{g^{-1}}$, the map $g: F_{R}(M,N) \to F_{R}(M^{g^{-1}},N^{g^{-1}})$ must be the identity, by the universal property of the equalizer. 
\end{exmp}

\begin{exmp}\label{example: perf as G-spec Cat}
    A right module $M$ over a ring spectrum $R$ is called \emph{perfect} if it is contained in the smallest thick subcategory of $\Mod_R$ containing $R$.  We will write $\perf_R$ for the spectral category of perfect right $R$-modules.  To see that $\perf_R$ is a $G$-spectral category it suffices to observe that if $P$ is a perfect right $R$-module then $gP = P^{g^{-1}}$ is also perfect.  

    To see this, note that if $P$ is perfect, then $gP$ is in the smallest thick subcategory of $\Mod_R$ containing $gR = R^{g^{-1}}$.  But there is an isomorphism of right $R$-modules $gR\cong R$ given by the mutually inverse maps $g^{-1}\colon R\to (gR)$ and $g\colon (gR)\to R$ and so $gP$ must also be perfect.
\end{exmp}

\begin{rem}
    The examples above can, of course, be carried out for left modules and left perfect modules, respectively.  Indeed the $G$-action in this case is actually a bit more natural.  The reason for spelling out the case of right modules instead of left modules is that right modules play a more central role in our study of Morita equivalences in \autoref{sec: Morita}.
\end{rem}

Later we will need to understand the interaction between mapping objects and the action of $G$ on $\Mod_R$.  We record these interactions in a few lemmas.
\begin{lem}\label{lemma: right twisted mapping set}
    Let $R$ be a ring orthogonal spectrum with $G$-action. For any right $R$-modules $M$ and $N$ we have a natural bijection
    \[
        \Mod_R(M^g,N)\leftrightarrow \Mod_R(M,N^{g^{-1}}).
    \]
    Moreover, there is a canonical isomorphism of mapping spectra $F_{R}(M^g,N)\cong F_R(M,N^{g^{-1}})$
\end{lem}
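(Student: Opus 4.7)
The plan is to observe that $M$ and $M^g$ have the same underlying spectrum (and similarly $N$ and $N^{g^{-1}}$), so both hom-sets in question sit inside the set of spectrum maps $M \to N$. The asserted bijection then amounts to showing the two $R$-linearity conditions cut out the same subset, and the same argument, lifted to the equalizer defining $F_R$, gives the isomorphism of mapping spectra.

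First I would unpack the $R$-action on a twisted module using the convention of \autoref{example: modules as G-spec cat}: the action on $M^g$ is the composite
\[
\alpha_{M^g} = \alpha_M \circ (\mathrm{id} \wedge g) \colon M \wedge R \to M,
\]
and similarly $\alpha_{N^{g^{-1}}} = \alpha_N \circ (\mathrm{id} \wedge g^{-1})$. A spectrum map $f \colon M \to N$ is $R$-linear as a map $M^g \to N$ iff $\alpha_N \circ (f \wedge \mathrm{id}) = f \circ \alpha_M \circ (\mathrm{id} \wedge g)$, whereas it is $R$-linear as a map $M \to N^{g^{-1}}$ iff $\alpha_N \circ (\mathrm{id} \wedge g^{-1}) \circ (f \wedge \mathrm{id}) = f \circ \alpha_M$. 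Precomposing the first equation with $\mathrm{id} \wedge g^{-1}$ (an isomorphism, with two-sided inverse $\mathrm{id} \wedge g$) yields the second, and precomposing the second with $\mathrm{id} \wedge g$ yields the first. Hence the two conditions are equivalent, giving the bijection. Naturality is immediate, since the bijection is implemented by the identity on underlying spectrum maps.

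For the spectrum-level isomorphism I would use that $F_R(X, Y)$ is built as the equalizer of a pair of maps $F(X, Y) \rightrightarrows F(X \wedge R, Y)$, one constructed from the $R$-action on $X$ and the other from the $R$-action on $Y$. Specializing to $(X,Y) = (M^g, N)$ and $(M, N^{g^{-1}})$ respectively, the same pre/post-composition trick now applied at the level of these defining diagrams shows that the two parallel pairs are conjugate by the isomorphism $\mathrm{id} \wedge g$ of $F(M \wedge R, N)$, so their equalizers coincide as subspectra of $F(M, N)$. The universal property of the equalizer then delivers the canonical isomorphism $F_R(M^g, N) \cong F_R(M, N^{g^{-1}})$, which restricts on $\pi_0$ to the set-level bijection established above.

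The only real obstacle is bookkeeping: the sign convention $gM = M^{g^{-1}}$ from \autoref{example: modules as G-spec cat} must be threaded carefully through the action formulas so that the powers of $g$ match up, and one must verify that the equalizer description of $F_R(-,-)$ is invariant under replacing an action by an isomorphic one on the same underlying spectrum. Once these are in place the proof is entirely formal.
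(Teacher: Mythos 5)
Your proof is correct and follows essentially the same route as the paper: both arguments identify the two hom-sets inside spectrum maps $M\to N$ and show the two $R$-linearity conditions are interchanged by pre-composing with $\mathrm{id}\wedge g^{\pm 1}$, using that $f\wedge\mathrm{id}$ and $\mathrm{id}\wedge g^{-1}$ commute. The paper leaves the spectrum-level statement to the reader, so your explicit equalizer argument (showing the two parallel pairs defining $F_R$ are conjugate by precomposition with $\mathrm{id}\wedge g$) is a welcome filling-in of that omitted detail.
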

\begin{proof}
    The elements in the first mapping set consist of maps of spectra $f\colon M\to N$ such that the diagram
    \[
        \begin{tikzcd}
            M\wedge R \ar[r,"1\wedge g"] \ar[d,"f\wedge 1"'] & M\wedge R\ar[r,"\alpha_M"] & M\ar[d,"f"]\\
            N\wedge R \ar[rr,"\alpha_N"'] & & N
        \end{tikzcd}
    \]
    commutes, where $\alpha$ denotes a module action map.  Given such a map $f$, the diagram
    \[
        \begin{tikzcd}[column sep = large]
            M\wedge R \ar[dd,"f\wedge 1"'] \ar[rr,"\alpha_M"] & & M \ar[dd,"f"] \\
            & M\wedge R \ar[ul,"1\wedge g"] \ar[d,"f\wedge 1"]&\\
            N\wedge R \ar[r,"1\wedge g^{-1}"'] & N\wedge R \ar[r,"\alpha_N"'] & N
        \end{tikzcd}
    \]
    must also commute; the right side is the first diagram and the left side commutes because the smash product is monoidal.  But the outside of the second diagram is exactly the data of a morphism in $\Mod_{R}(M,N^{g^{-1}})$ so these morphism sets consists of the same maps of spectra.  The result about mapping spectra is essentially the same with the added difficulty of needing to work with the mapping adjunctions.  We leave the details to the reader.
\end{proof}
\begin{lem}\label{lemma: left twisted mapping set}
    Let $R$ be an orthogonal ring spectrum with $G$-action. For any right $R$-module $M$ we consider $F_R(M,R)$ as a left $R$-module via the left $R$-module structure of $R$.  There is an isomorphism of $R$-module spectra $^{g}F_R(M,R)\cong F_R(M,{^{g}R)}$.
\end{lem}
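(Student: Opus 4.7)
The plan is to exhibit the isomorphism as the identity on underlying spectra once the left $R$-actions are correctly unraveled. First I would observe that the twisted bimodule ${^gR}$ has the same underlying \emph{right} $R$-module as $R$, since only the left action is altered; by analogy with \autoref{lemma: right twisted mapping set}, twisting the left action does not interfere with the right structure used to build $F_R(M,-)$. Consequently the mapping spectra $F_R(M,R)$ and $F_R(M,{^g R})$ have canonically isomorphic underlying spectra (indeed are equalizers of identical parallel pairs).

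Next I would describe the two left $R$-actions and check that they agree. The left action on $F_R(M,{^g R})$ is, by the standard spectral enrichment, induced by post-composing maps $f\colon M\to R$ with the left multiplication on ${^g R}$, which factors as
\[
    R\wedge R \xrightarrow{g\wedge 1} R\wedge R \xrightarrow{\mu} R.
\]
Transferring this via the identification above, the action of $r$ on $f\in F_R(M,R)$ sends $f$ to $g(r)\cdot_R f$, where $\cdot_R$ denotes the untwisted left $R$-action on $F_R(M,R)$. On the other hand, by the left-twisted module convention (dual to the definition of $M^g$ given before \autoref{lemma:TCviaNorm4.4}), the left action on ${^g F_R(M,R)}$ factors as
\[
    R\wedge F_R(M,R)\xrightarrow{g\wedge 1} R\wedge F_R(M,R)\to F_R(M,R),
\]
which is precisely $r\wedge f \mapsto g(r)\cdot_R f$. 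The two actions therefore coincide, so the identity on $F_R(M,R)$ promotes to the desired isomorphism of left $R$-modules.

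To make this rigorous at the level of orthogonal spectra rather than elements, I would package the argument via the equalizer definition of $F_R(-,-)$, verifying commutativity of the relevant square of smash products and module action maps; this is formally similar to the mapping-spectrum half of \autoref{lemma: right twisted mapping set} and requires only that smash product is symmetric monoidal and that $g\colon R\to R$ is a map of ring spectra. The only real subtlety is bookkeeping around the fact that $R$ appears simultaneously in three roles (the acting ring, the target, and the source of the twist), so I would fix notation distinguishing the left and right $R$-structures at the outset to avoid confusion. Beyond that the proof is essentially a tautology, and I do not anticipate a genuine obstacle.
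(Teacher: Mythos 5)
Your proposal is correct and follows essentially the same route as the paper: both identify the underlying spectra of $F_R(M,R)$ and $F_R(M,{}^{g}R)$ (which agree since only the left action on $R$ is twisted) and then check that the two left $R$-actions coincide because each factors the untwisted action through $g\wedge 1$ on the $R$-factor. The paper packages this as a single commutative diagram involving the adjunct $R\wedge F_R(M,R)\wedge M\to R$ and the evaluation map, which is exactly the rigorous form of the equalizer-level verification you sketch at the end.
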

\begin{proof}
    Explicitly, the left $R$-module structure on $F_R(M,R)$ is given by the adjunct of the composite
    \[
        R\wedge F_R(M,R)\wedge M\xrightarrow{1\wedge \mathrm{ev}} R\wedge R\xrightarrow{\mu} R
    \]
    where $\mathrm{ev}$ is the evaluation map and $\mu$ is the multiplication on $R$.  Note that the diagram
    \[
        \begin{tikzcd}
            R\wedge F_R(M,R)\wedge M \ar[r,"1\wedge \mathrm{ev}"] \ar[d,"g\wedge 1\wedge 1"]& R\wedge R \ar[d,"g\wedge 1"] & \\
            R\wedge F_R(M,R)\wedge M \ar[r,"1\wedge \mathrm{ev}"] & R\wedge R \ar[r,"\mu"] & R
        \end{tikzcd}
    \]
    commutes.  Since the two ways of going around the diagram represent the adjuncts of the left $R$-module structure maps on $F_{R}(M,^{g}R)$ and $^{g}F_R(M,R)$ respectively these module structure maps are the same.
\end{proof}

To study the category of $G$-spectral categories, we introduce the notion of a $G$-spectral functor.

\begin{defn}
    If $\cC$ and $\cD$ are $G$-spectral categories then a \emph{$G$-spectral functor} $F\colon \cC\to \cD$ is a natural transformation of functors $BG\to \SpecCat$.
\end{defn}

Explicitly, a $G$-spectral functor $F\colon \cC\to \cD$ consists of the following data:
\begin{enumerate}
    \item for every $x\in \mathrm{ob}\cC$, an object $F(x)\in \mathrm{ob}\cD$,
    \item for every pair of objects $x,y\in \mathrm{ob}\cC$, a map of spectra
    \[
        F\colon \cC(x,y)\to \cD(F(x),F(y))
    \]
    which respects composition on the nose,
    \item for every $g\in G$, $F(gc) = gF(c)$ and the diagram
    \[
        \begin{tikzcd}
            \cC(x,y) \ar[r,"g"] \ar[d,"F"] & \cC(gx,gy) \ar[d,"F"]\\
            \cD(F(x),F(y)) \ar[r,"g"] & \cD(F(gx),F(gy))
        \end{tikzcd}
    \]
    commutes for all $x,y\in \mathrm{ob(}C)$.
\end{enumerate}

\begin{defn}
For a $G$-spectral category $\mathcal{C}$, let $\mathcal{C}^{\mathrm{op}}$ denote the $G$-spectral category with the same objects as $\mathcal{C}$, mapping spectra given by $\mathcal{C}^{\mathrm{op}}(x,y) = \mathcal{C}(y,x)$, and composition given by: 
\[
    \mathcal{C}^{\op}(y,z)\wedge \mathcal{C}^{\op}(x,y) = \mathcal{C}(z,y)\wedge \mathcal{C}(y,x) \xrightarrow{\sigma} \mathcal{C}(y,x)\wedge\mathcal{C}(z,y) \xrightarrow{\circ} \mathcal{C}(z,x) = \mathcal{C}^{\op}(x,z)  
\]
where $\sigma$ is the swap map and $\circ$ is the composition map of $\mathcal{C}$.
\end{defn}

If $\mathcal{C}$ and $\mathcal{D}$ are two $G$-spectral categories we can define a new $G$-spectral category $\mathcal{D}^{\op}\wedge \mathcal{C}$ which has objects $\mathrm{ob}(\mathcal{D})\times \mathrm{ob}(\mathcal{C})$, and $g$ acts diagonally.  The morphisms are given by
\[
    (\mathcal{D}^{\op}\wedge \mathcal{C})((d_1,c_1),(d_2,c_2)) =\mathcal{D}^{\op}(d_1,d_2)\wedge \mathcal{C}(c_1,c_2) =\mathcal{D}(d_2,d_1)\wedge \mathcal{C}(c_1,c_2),
\]
and the map 
\[
g:(\mathcal{D}^{\op}\wedge \mathcal{C})((d_1,c_1),(d_2,c_2))  \to (\mathcal{D}^{\op}\wedge \mathcal{C})(g(d_1,c_1),g(d_2,c_2)) 
\]
is the map
\[
g \wedge g: \mathcal{D}(d_2,d_1)\wedge \mathcal{C}(c_1,c_2) \to \mathcal{D}(gd_2,gd_1)\wedge \mathcal{C}(gc_1,gc_2).
\]

We now define equivariant bimodules over $G$-spectral categories $(\cC,\cD)$.  Recall that a non-equivariant $(\cC,\cD)$-bimodule is spectral functor $\cD^{\op}\wedge \cC\to \Sp$.  This entails the existence of maps of spectra
    \[
    \alpha\colon \mathcal{C}(c,e)\wedge M(d,c)\wedge \mathcal{D}(f,d)\to M(f,e)
\]
which are compatible with the units and strictly associative with the compositions of $\mathcal{C}$ and $\mathcal{D}$.

\begin{defn}
    For $\mathcal{C}$ and $\mathcal{D}$ two $G$-spectral categories, an equivariant $(\mathcal{C},\mathcal{D})$-bimodule $M$ is a spectral functor $M: \mathcal{D}^{\mathrm{op}} \wedge \mathcal{C} \to \Spec$, with designated maps
    \[
g: M(x,y) \to M(gx, gy)
    \]
    such that that $h \circ g = hg$ for all $h, g \in G$, and the map $e$ is the identity.  We further require that the diagram
\[
    \begin{tikzcd}
        \mathcal{C}(c,e)\wedge M(d,c)\wedge \mathcal{D}(f,d) \ar[r,"\alpha"] \ar[d,"g\wedge g\wedge g"] & M(f,e) \ar[d,"g"] \\
        \mathcal{C}(gc,ge)\wedge M(gd,gc)\wedge \mathcal{D}(gf,gd)\ar[r,"\alpha"]  & M(gf,ge)
    \end{tikzcd}
\]
commutes for all choices of $a,b,c,d,e,f$ and $g\in G$. A map of equivariant bimodules is a natural transformation of spectral functors.  We denote the category of equivariant $(\mathcal{C},\mathcal{D})$-bimodules by $\mathrm{Bimod}_G(\mathcal{C},\mathcal{D})$.
\end{defn}
\begin{exmp}
    Any $G$-spectral category $\cC$ is an equivariant bimodule over itself via the canonical functor $\cC^{\op}\wedge\cC\to \Sp$ which sends a pair $(x,y)$ to $\cC(x,y)$.
\end{exmp}

\begin{exmp}\label{example: bimodules over one object spec cats}
    Let $R$ be a ring spectrum with $G$-action, let $\mathcal{C}_R$ be the single object $G$-spectral category associated to $R$, and let $\mathcal{D}$ be any $G$-spectral category.  A $(\mathcal{D},\mathcal{C}_R)$-bimodule consists of a spectrum $\mathcal{M}(d) = \mathcal{M}(\bullet,d)$ for all $d\in \mathcal{D}$, together with maps
    \begin{align*}
        & \mathcal{M}(d)  \xrightarrow{g} \mathcal{M}(gd)\\
       &  \mathcal{M}(d)\wedge R  \xrightarrow{\alpha} \mathcal{M}(d)\\
        & \mathcal{D}(d,d')\wedge \mathcal{M}(d)  \xrightarrow{\beta} \mathcal{M}(d')
    \end{align*}
    for all $g\in G$ and $d'\in \mathcal{D}$ such that the diagrams
    \[
        \begin{tikzcd}
            \mathcal{M}(d)\wedge R\ar[r,"g\wedge g"] \ar[d,"\alpha"]& \mathcal{M}(gd)\wedge R  \ar[d,"\alpha"]& \mathcal{D}(d,d')\wedge \mathcal{M}(d)  \ar[r,"g\wedge g"] \ar[d,"\beta"] & \mathcal{D}(gd,gd')\wedge \mathcal{M}(gd)\ \ar[d,"\beta"] \\
            \mathcal{M}(d) \ar[r,"g"] & \mathcal{M}(gd) & \mathcal{M}(d')\ar[r,"g"] & \mathcal{M}(gd')
        \end{tikzcd}
    \]
    commute.  The map $\alpha$ turns $\mathcal{M}(d)$ into a right $R$-module for all $d\in \mathcal{D}$. A quick diagram chase shows that the commutativity of the left diagram is equivalent to the claim that the map $g\colon \mathcal{M}(d)\to \mathcal{M}(gd)^{g}$ is a map of right $R$-modules for all $g\in G$.  Thus we see that a $(\mathcal{D},\mathcal{C}_R)$-bimodule consists of a spectral functor $\mathcal{D}\to \Mod_R$ together which choices of isomorphisms $g\colon \mathcal{M}(d)\to \mathcal{M}(gd)^g$ in $\Mod_R$ for all $g\in G$ such that the right diagram above commutes.  Similarly, a $(\mathcal{C}_R,\mathcal{D})$-bimodule can be described as a spectral functor $\mathcal{N}\colon \cD^{\op}\to {_{R}\Mod}$ together with isomorphisms $g\colon \mathcal{N}(d)\to {^{g}\mathcal{N}(gd)}$ in $_{R}\Mod$ for all $g\in G$.
\end{exmp}

\begin{defn}
For an abelian group $G$ with $g \in G$, $G$-spectral categories $\mathcal{C}$ and $\mathcal{D}$, and $\cM$ a $(\mathcal{C},\mathcal{D})$-bimodule, we define a new $(\mathcal{C},\mathcal{D})$-bimodule $\cM^g$ by the composite
\[
\cM^g: \mathcal{D}^{op} \wedge \mathcal{C} \xrightarrow{g^{\op}\wedge 1} \mathcal{D}^{op} \wedge \mathcal{C} \xrightarrow{M} \Spec.
\]
The designated map
\[
h\colon \cM^g(x,y) \to \cM^g(hx, hy)
\]
is the map
\[
\cM^g(x,y)= \cM(gx,y) \to \cM(hgx,hy) = \cM^g(hx, hy),
\]
where the last equality holds because $G$ is abelian. 
\end{defn}

We now define the cyclic bar construction for a $G$-spectral category.

\begin{defn}
For a small $G$-spectral category $\mathcal{C}$ and a $(\mathcal{C}, \mathcal{C})$-bimodule $\mathcal{M}$, the cyclic bar construction $\Ncyc_{\bullet}(\mathcal{C};\mathcal{M})$ is the simplicial spectrum with $G$-action with $q$-simplices
\[
    \Ncyc_{q}(\mathcal{C};\mathcal{M}) = \bigvee\limits_{(c_0,c_1,\dots, c_q)} \mathcal{C}(c_{q-1},c_q)\wedge \mathcal{C} (c_{q-2},c_{q-1})\wedge\dots\wedge \mathcal{C}(c_0,c_1)\wedge \mathcal{M}(c_q,c_0)
\]
where the wedge is over all $(q+1)$-tuples of objects $c_i\in \mathcal{C}$.  The action of $G$ is given as follows:  the summand corresponding to $(c_0,\dots c_q)$ is sent to the summand $(gc_0,gc_1,\dots gc_q)$ via the map
\[
    \mathcal{C}(c_{q-1},c_q)\wedge\dots\wedge \mathcal{C}(c_0,c_1)\wedge \mathcal{M}(c_q,c_0) \xrightarrow{g\wedge\dots\wedge g} \mathcal{C}(gc_{q-1},gc_q)\wedge \dots\wedge \mathcal{C}(gc_0,gc_1)\wedge \mathcal{M}(gc_q,gc_0).
\]
The  zeroth face map cycles the leftmost smash summand to the far right and then uses the right-module structure of $\mathcal{M}$. The remaining face maps are given by the composition in $\mathcal{C}$ and the left module structure of $\mathcal{M}$.  When the bimodule $\mathcal{M}$ is $\mathcal{C}$ we will write $\Ncyc_{\bullet}(\mathcal{C})$.  Note that in this case the cyclic bar construction is a cyclic object in the sense of Connes.
\end{defn}

Since the cyclic bar construction $\Ncyc_{\bullet}(\cC)$ of a $G$-spectral category $\cC$ is a cyclic object in spectra with $G$-action, its geometric realization is naturally a spectrum with $G\times S^1$-action.  Let $\mathcal{U}$ be a complete $G\times S^1$-universe.

\begin{defn}
    The equivariant THH of a $G$-spectral category $\mathcal{C}$ is the genuine $G\times S^1$-spectrum
    \[
        \ETHH(\mathcal{C}) := \mathcal{I}_{\mathbb{R}^{\infty}}^\mathcal{U}|\Ncyc_{\bullet}(\mathcal{C})|.
    \]
    We will sometimes find it convenient to refer to the cyclic bar construction as a spectrum with $G \times S^1$ action. As in \autoref{rem-eTHH-rings}, we denote it by $\ETHH(\mathcal{C}) : = |\Ncyc_{\bullet}(\mathcal{C})|$.
\end{defn}
Let $\mathcal{V} = i^*_G\mathcal{U}$ be the complete $G$-universe obtained by restricting $\mathcal{U}$.   

\begin{defn}
For $\mathcal{C}$ a $G$-spectral category, and $\mathcal{M}$ a $(\mathcal{C},\mathcal{C})$-bimodule, we define the ETHH of $\mathcal{C}$ with coefficients in $\mathcal{M}$ to be the $G$-spectrum 
\[
\ETHH(\mathcal{C}; \mathcal{M}) = \mathcal{I}_{\mathbb{R}^{\infty}}^\mathcal{V} |\Ncyc_{\bullet}(\mathcal{C};\mathcal{M})|.
\]
\end{defn}
\medskip

When $G = C_n$ we are also able to define a version of twisted $\THH$ for $C_n$-spectral categories.  Let $\Delta_{C_n}\subset C_n\times C_n$ be the diagonal copy of $C_n$, and consider $\Delta_{C_n}$ as a subgroup of $C^n\times S^1$ via the usual embedding of $C_n$ into $S^1$.  As above we use the fixed isomorphism of groups
\[
    (C_n\times S^1)/\Delta_{C_n}\cong S^1
\]
via the map $(e^{2k\pi i/n},e^{\theta i})\mapsto e^{(2k\pi/n-\theta)i}$.  Note that this isomorphism sends $(1,\sigma)$ to $\sigma^{-1}$.

\begin{defn}
For a $C_n$-spectral category $\mathcal{C}$, we define the $C_n$-twisted topological Hochschild homology of $\mathcal{C}$ to be the $S^1$-spectrum
\[
\THH_{C_n}(\mathcal{C}):= \Phi^{\Delta_{C_n}}\ETHH(\mathcal{C}). 
\]
\end{defn}

Note that we have defined $C_n$-twisted THH so that the analog of \autoref{thm: geometric fixed points of ETHH is twisted THH for rings} in the setting of $C_n$-spectral categories is true by definition.  That is, when $\mathcal{C}$ is the one object spectral category of a $C_n$-ring spectrum this agrees with the definition of twisted THH from \cite{AnBlGeHiLaMa}. The advantage of this approach is that $\THH_{C_n}(\mathcal{C})$ is automatically an $S^1$-spectrum. The following reassuring proposition says that twisted THH could have been obtained by a twisted cyclic bar construction.  

\begin{prop}\label{prop: twisted fixed points of cyclic nerve}
   Suppose that $\mathcal{C}$ is a $C_n$-spectral category.  There is an isomorphism of spectra with $C_n$-action
    \[
        i^*_{C_n}|\Ncyc_{\bullet}(\cC)|^{\Delta_{C_n}}\cong |\Ncyc_{\bullet}(\cC,\cC^{\sigma^{-1}})|.
    \]
\end{prop}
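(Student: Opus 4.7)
The plan is to use $n$-fold edgewise subdivision to convert the $S^1$-action on $|N^{\cyc}_{\bullet}(\mathcal{C})|$ into a levelwise $C_n$-action, then compute the categorical fixed points one simplicial degree at a time.

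For any cyclic spectrum $X_{\bullet}$ the $n$-fold edgewise subdivision $\mathrm{sd}_n X_{\bullet}$ is the simplicial spectrum with $(\mathrm{sd}_n X)_q = X_{n(q+1)-1}$; the $(q+1)$-st power of the cyclic operator $t_{n(q+1)}$ provides a levelwise $C_n$-action, and there is a natural homeomorphism $|\mathrm{sd}_n X_{\bullet}| \cong |X_{\bullet}|$ that is $C_n$-equivariant when the right-hand side carries the restriction of the $S^1$-action along the standard inclusion $C_n \hookrightarrow S^1$. Applied to $X_{\bullet} = N^{\cyc}_{\bullet}(\mathcal{C})$, which already carries a levelwise $C_n$-action from the $C_n$-structure on $\mathcal{C}$, this identifies the $\Delta_{C_n}$-action on $|N^{\cyc}_{\bullet}(\mathcal{C})|$ (with the sign dictated by the chosen isomorphism $(C_n \times S^1)/\Delta_{C_n} \cong S^1$ sending $(1,\sigma)$ to $\sigma^{-1}$) with the diagonal $C_n$-action on $\mathrm{sd}_n N^{\cyc}_{\bullet}(\mathcal{C})$ combining the $\mathcal{C}$-action and the subdivision action.

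Since the diagonal action is levelwise, categorical fixed points commute with geometric realization, yielding
\[
    i^*_{C_n} |N^{\cyc}_{\bullet}(\mathcal{C})|^{\Delta_{C_n}} \cong |(\mathrm{sd}_n N^{\cyc}_{\bullet}(\mathcal{C}))^{C_n}|
\]
as spectra with the residual $C_n$-action. The computational core is the identification of fixed points at each simplicial level. Writing $m = n(q+1)$, the wedge summand of $N^{\cyc}_{m-1}(\mathcal{C})$ indexed by $(c_0, \ldots, c_{m-1})$ is sent by the diagonal generator of $C_n$ to the summand indexed by $(\sigma c_{q+1}, \ldots, \sigma c_{m-1}, \sigma c_0, \ldots, \sigma c_q)$. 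Such a summand is fixed precisely when $c_{q+1+i} = \sigma^{-1} c_i$, so the fixed data is determined by $(c_0, \ldots, c_q)$ together with morphisms $f_0, \ldots, f_{q-1}$ in $\mathcal{C}(c_i, c_{i+1})$ and a final morphism in $\mathcal{C}(c_q, \sigma^{-1} c_0)$. The $C_n$-action on $\mathcal{C}$ provides a canonical isomorphism $\mathcal{C}(c_q, \sigma^{-1} c_0) \cong \mathcal{C}(\sigma c_q, c_0) = \mathcal{C}^{\sigma^{-1}}(c_q, c_0)$, and the wedge over $(c_0, \ldots, c_q)$ of this data is exactly $N^{\cyc}_q(\mathcal{C}; \mathcal{C}^{\sigma^{-1}})$.

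It remains to verify that this levelwise identification is compatible with the face and degeneracy operators and with the residual $C_n$-action; the inner face maps correspond on both sides to composition in $\mathcal{C}$ along the cycle, while the outer face $d_0$ on the right uses the right-action of $\mathcal{C}$ on $\mathcal{C}^{\sigma^{-1}}$ and matches the requisite composite boundary in the subdivided bar construction once the $\sigma$-twist is unfolded. The main obstacle is the sign bookkeeping: ensuring that the convention imposed by the identification $(C_n \times S^1)/\Delta_{C_n} \cong S^1$ propagates correctly through edgewise subdivision so that the twist appears as $\mathcal{C}^{\sigma^{-1}}$ rather than $\mathcal{C}^{\sigma}$, and that the simplicial identities hold on the nose. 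The remainder is a routine, if lengthy, unwinding of definitions.
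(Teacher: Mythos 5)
Your proposal is correct and follows essentially the same route as the paper: $n$-fold edgewise subdivision to render the $\Delta_{C_n}$-action simplicial, commuting categorical fixed points past geometric realization, identifying which indexing tuples are fixed, and then computing the fixed points of each permuted-and-twisted smash product via the norm diagonal (the paper packages this last step through an auxiliary $C_n\times C_n$-spectral category $\mathcal{C}^{\wedge n}$ and \autoref{prop:specCat norm diagonal}, but the computation is the same). Two minor cautions: your "fixed data is determined by" phrasing is an element-level stand-in for the genuine input $(\wedge_e^{C_n}X)^{C_n}\cong X$, which should be invoked explicitly for spectra; and your identification $\mathcal{C}(\sigma c_q,c_0)=\mathcal{C}^{\sigma^{-1}}(c_q,c_0)$ does not match the paper's convention $\mathcal{M}^g(x,y)=\mathcal{M}(gx,y)$, though you flag the sign bookkeeping yourself and the paper's own proof is not fully consistent on this point either.
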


Changing universe and applying \autoref{proposition: geometric fixed points and change of universe} yields a version of the above for ETHH.
\begin{cor}\label{cor:twisted THH for spec cats is correct}
Suppose that $\mathcal{C}$ is a $C_n$-spectral category. There is an isomorphism of $C_n$-spectra
\[
    i^*_{C_n}\THH_{C_n}(\mathcal{C}) \cong \ETHH(\mathcal{C};\mathcal{C}^{\sigma^{-1}}). 
\]
where $\sigma$ is the chosen generator of $C_n$.
\end{cor}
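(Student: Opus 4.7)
The plan is to chain together the two immediately preceding results via compatibility of restriction, geometric fixed points, and change of universe. Unwinding definitions,
\[
    i^*_{C_n}\THH_{C_n}(\mathcal{C}) = i^*_{C_n}\Phi^{\Delta_{C_n}}\mathcal{I}^{\mathcal{U}}_{\mathbb{R}^{\infty}}|\Ncyc_{\bullet}(\mathcal{C})|,
\]
where $|\Ncyc_{\bullet}(\mathcal{C})|$ is regarded as a spectrum with $(C_n\times S^1)$-action (the $C_n$-factor from functoriality of the bar construction in the $C_n$-spectral category $\mathcal{C}$, and the $S^1$-factor from the cyclic structure). The first step is to commute $\Phi^{\Delta_{C_n}}$ past the change of universe. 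For this I would invoke \autoref{proposition: geometric fixed points and change of universe} with $G=C_n\times S^1$ and $H=\Delta_{C_n}$, which yields an isomorphism of $(C_n\times S^1)/\Delta_{C_n}$-spectra
\[
    \Phi^{\Delta_{C_n}}\mathcal{I}^{\mathcal{U}}_{\mathbb{R}^{\infty}}|\Ncyc_{\bullet}(\mathcal{C})| \cong \mathcal{I}^{\mathcal{U}^{\Delta_{C_n}}}_{\mathbb{R}^{\infty}}\bigl(|\Ncyc_{\bullet}(\mathcal{C})|^{\Delta_{C_n}}\bigr),
\]
where on the right we take categorical fixed points of the underlying spectrum with $\Delta_{C_n}$-action.

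Next I would restrict along the standard inclusion $C_n\hookrightarrow S^1\cong (C_n\times S^1)/\Delta_{C_n}$. Since restriction commutes with change of universe, this gives
\[
    i^*_{C_n}\mathcal{I}^{\mathcal{U}^{\Delta_{C_n}}}_{\mathbb{R}^{\infty}}\bigl(|\Ncyc_{\bullet}(\mathcal{C})|^{\Delta_{C_n}}\bigr) \cong \mathcal{I}^{i^*_{C_n}\mathcal{U}^{\Delta_{C_n}}}_{\mathbb{R}^{\infty}}\bigl(i^*_{C_n}|\Ncyc_{\bullet}(\mathcal{C})|^{\Delta_{C_n}}\bigr).
\]
Now \autoref{prop: twisted fixed points of cyclic nerve} identifies the bracketed spectrum with $|\Ncyc_{\bullet}(\mathcal{C};\mathcal{C}^{\sigma^{-1}})|$, precisely the cyclic bar construction with coefficients in the twisted bimodule $\mathcal{C}^{\sigma^{-1}}$.

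The remaining task is to identify the indexing universe $i^*_{C_n}\mathcal{U}^{\Delta_{C_n}}$ with $\mathcal{V} = i^*_{C_n}\mathcal{U}$. Since $\mathcal{U}$ is a complete $(C_n\times S^1)$-universe, $\mathcal{U}^{\Delta_{C_n}}$ contains every irreducible representation of $(C_n\times S^1)/\Delta_{C_n}\cong S^1$, and its restriction to $C_n$ is therefore a complete $C_n$-universe; the identification with $\mathcal{V}$ then proceeds by the canonical change-of-universe isomorphism between complete $C_n$-universes. Substituting yields $\mathcal{I}^{\mathcal{V}}_{\mathbb{R}^{\infty}}|\Ncyc_{\bullet}(\mathcal{C};\mathcal{C}^{\sigma^{-1}})| = \ETHH(\mathcal{C};\mathcal{C}^{\sigma^{-1}})$.

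The argument is essentially formal, so the main obstacle is bookkeeping: one must check that the residual $C_n$-action on $i^*_{C_n}\THH_{C_n}(\mathcal{C})$ matches the one on $\ETHH(\mathcal{C};\mathcal{C}^{\sigma^{-1}})$. This is precisely where the fixed identification $(C_n\times S^1)/\Delta_{C_n}\cong S^1$ sending $(1,\sigma)\mapsto \sigma^{-1}$ enters, matching the $\sigma^{-1}$-twist appearing in the bimodule $\mathcal{C}^{\sigma^{-1}}$ produced by \autoref{prop: twisted fixed points of cyclic nerve}. With this compatibility verified, the composite of the displayed isomorphisms gives the desired identification of $C_n$-spectra.
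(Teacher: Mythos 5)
Your argument is correct and is essentially the paper's own: the paper justifies this corollary in one line by "changing universe and applying \autoref{proposition: geometric fixed points and change of universe}" to \autoref{prop: twisted fixed points of cyclic nerve}, which is exactly the chain of identifications you spell out. The extra bookkeeping you supply (completeness of $i^*_{C_n}\mathcal{U}^{\Delta_{C_n}}$ and the matching of residual actions via the chosen isomorphism $(C_n\times S^1)/\Delta_{C_n}\cong S^1$) is consistent with, and a reasonable elaboration of, what the paper leaves implicit.
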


Before proving \autoref{prop: twisted fixed points of cyclic nerve} we need a version of \autoref{lemma:TCviaNorm4.4}.  This requires a brief diversion to set up some notation. For a $C_n$-spectral category $\mathcal{C}$ and any $r>1$ we let $\mathcal{C}^{\wedge r}$ denote the $C_n$-spectral category of \cite[Definition 7.4]{CLMPZ} with objects $r$-tuples $(c_1,\dots,c_r)$ of objects in $\mathcal{C}$ and morphism spectra
\[
    \mathcal{C}^{\wedge r}\left((a_1,\dots, a_r),(b_1,\dots, b_r)\right) =\bigwedge\limits_{i=1}^r \mathcal{C}(a_i,b_i).
\]
Note that in addition to the cyclic action of $C_r$ on the objects of $\mathcal{C}^{\wedge r}$, we have maps of morphism spectra
\[
    \mathcal{C}^{\wedge r}\left((a_1,\dots, a_r),(b_1,\dots, b_r)\right)\to \mathcal{C}^{\wedge r}\left((a_r,\dots, a_{r-1}),(b_r,\dots, b_{r-1})\right)
\] given by rotating smash summands. The following lemma is immediate from checking the definitions. 

\begin{lem}
    The assignment $\mathcal{C}\mapsto \mathcal{C}^{\wedge r}$ is a functor from $C_n$-spectral categories to $C_n\times C_r$-spectral categories.  
\end{lem}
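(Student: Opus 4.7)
The plan is to unpack the definitions and verify three things: that $\mathcal{C}^{\wedge r}$ carries commuting actions of $C_n$ and $C_r$ through $C_n$-spectral functors, and that the assignment $\mathcal{C}\mapsto \mathcal{C}^{\wedge r}$ is itself functorial in $C_n$-spectral functors. First I would recall that the underlying spectral category structure on $\mathcal{C}^{\wedge r}$ is already established in \cite[Definition 7.4]{CLMPZ}: composition is given coordinate-wise by the composition in $\mathcal{C}$, and the unit at $(c_1,\dots,c_r)$ is the smash product of the units at the $c_i$.

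Next I would define the two actions. For $g\in C_n$ set $g\cdot(c_1,\dots,c_r)=(gc_1,\dots,gc_r)$ on objects and let $g$ act on morphism spectra by the smash product of the maps $g\colon \mathcal{C}(a_i,b_i)\to \mathcal{C}(ga_i,gb_i)$; this is a $C_n$-spectral category structure since each factor satisfies the required axioms and smash product is functorial and symmetric monoidal. For $\tau\in C_r$ the cyclic generator, let $\tau\cdot(c_1,\dots,c_r)=(c_r,c_1,\dots,c_{r-1})$ and let $\tau$ act on morphism spectra by the analogous rotation of smash factors; that rotation is a spectral functor follows from the fact that composition in $\mathcal{C}^{\wedge r}$ is defined coordinate-wise, so rotating the components of a composable pair before or after composing gives the same answer.

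The main point to verify is then that these two actions commute strictly, which exhibits $\mathcal{C}^{\wedge r}$ as a functor $B(C_n\times C_r)\to\SpecCat$. On objects, $g\cdot\tau\cdot(c_1,\dots,c_r)=(gc_r,gc_1,\dots,gc_{r-1})=\tau\cdot g\cdot(c_1,\dots,c_r)$, so the actions commute on objects. On morphism spectra, we must check that the diagram formed by $g^{\wedge r}$ (applied factor-wise) and the rotation isomorphism of smash products commutes; this is simply the naturality of the symmetry isomorphisms in the symmetric monoidal category of orthogonal spectra applied to the collection of maps $g\colon \mathcal{C}(a_i,b_i)\to \mathcal{C}(ga_i,gb_i)$.

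Finally, for functoriality, given a $C_n$-spectral functor $F\colon \mathcal{C}\to\mathcal{D}$, define $F^{\wedge r}$ by $(c_1,\dots,c_r)\mapsto (F(c_1),\dots,F(c_r))$ and by $F^{\wedge r}=F^{\wedge r}$ on morphism spectra, using smash product of the $F$ on each factor. Compatibility with composition and units follows from the coordinate-wise definitions, and compatibility with the $C_n$-action is inherited from the $C_n$-equivariance of $F$. Compatibility with the rotation action is automatic because $F^{\wedge r}$ treats each smash factor identically, so it commutes with relabelling of factors. The whole argument is essentially bookkeeping; the only step that could trip one up is keeping the symmetry isomorphisms in the symmetric monoidal structure of orthogonal spectra straight when verifying the two actions commute on morphism spectra, but this is exactly naturality of the symmetry.
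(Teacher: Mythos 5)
Your proposal is correct and matches the paper's (implicit) argument: the paper simply asserts the lemma is ``immediate from checking the definitions,'' and your write-up carries out exactly that check --- factor-wise $C_n$-action, rotation $C_r$-action, commutativity via naturality of the symmetry isomorphism, and coordinate-wise functoriality. Nothing further is needed.
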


For the remainder of this section we focus on the case $r=n$, so that $\mathcal{C}^{\wedge n}$ is a $(C_n \times C_n)$-spectral category.  For any $a\in \mathcal{C}$ let $\widehat{a} = (a,\sigma a,\dots,\sigma^{n-1}a)\in \mathcal{C}^{\wedge n}$. Then for any $a,b\in \mathcal{C}$  we have
\[
    \mathcal{C}^{\wedge n}(\widehat{a},\widehat{b}) = \bigwedge\limits_{i=0}^{n-1} \mathcal{C}(\sigma^ia,\sigma^ib).
\]
The subgroup $\Delta_{C_n} \leq C_n \times C_n$ acts on the spectrum $\mathcal{C}^{\wedge n}(\widehat{a},\widehat{b})$, where the generator $(\sigma,\sigma)$ acts according to
\begin{equation}\label{eq: action of sigma on Cr(a,b)}
    \mathcal{C}^{\wedge n}(\widehat{a},\widehat{b}) = \bigwedge\limits_{i=0}^{n-1} \mathcal{C}(\sigma^ia,\sigma^ib) \to  \bigwedge\limits_{i=0}^{n-1} \mathcal{C}(\sigma^{i-1}a,\sigma^{i-1}b)\xrightarrow{\wedge \sigma} \bigwedge\limits_{i=0}^{n-1} \mathcal{C}(\sigma^ia,\sigma^ib)
\end{equation}
where the first map is a cyclic rotation and the second uses the structure maps
\[
    \sigma\colon \mathcal{C}(\sigma^{i-1}a,\sigma^{i-1}b)\to \mathcal{C}(\sigma^ia,\sigma^ib).
\]

\begin{prop}\label{prop:specCat norm diagonal}
    For any $a,b\in \mathcal{C}$, we have
    \[
        \mathcal{C}^{\wedge n}(\widehat{a},\widehat{b})^{\Delta_{C_n}}\cong \mathcal{C}(a,b).
    \]
\end{prop}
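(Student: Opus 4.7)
The approach is to identify $\mathcal{C}^{\wedge n}(\widehat{a},\widehat{b})$ with the norm $N^{C_n}_e\mathcal{C}(a,b)$ equivariantly, and then invoke the norm diagonal. First, I would define a candidate isomorphism
\[
    \Psi\colon \mathcal{C}^{\wedge n}(\widehat{a},\widehat{b}) \longrightarrow \mathcal{C}(a,b)^{\wedge n} = N^{C_n}_e\mathcal{C}(a,b)
\]
as the smash product $\bigwedge_{i=0}^{n-1}\sigma^{-i}$, using the isomorphisms $\sigma^{-i}\colon \mathcal{C}(\sigma^i a,\sigma^i b)\to\mathcal{C}(a,b)$ inverse to the $C_n$-action maps of $\mathcal{C}$. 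Since each $\sigma^{-i}$ is an isomorphism of orthogonal spectra, so is $\Psi$.

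Next, I would verify that $\Psi$ is $C_n$-equivariant, where the source carries the $\Delta_{C_n}$-action from equation \eqref{eq: action of sigma on Cr(a,b)} (a cyclic rotation of the smash factors composed with an application of $\sigma$ in each slot) and the target carries the cyclic permutation action defining the norm. This is a direct computation: after applying $\Psi$ componentwise, the identity $\sigma^{-i}\circ\sigma = \sigma^{-(i-1)}$ exactly converts the twisted rotation on the source into the unadorned cyclic rotation on the target, with the indices matching up modulo $n$.

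Once $\Psi$ is established, the claim follows by applying the norm diagonal isomorphism of \autoref{lem: norm and fixed point lemmas}(2) (with $H=e$, $G=C_n$, equivalently \cite[Theorem 2.35]{AnBlGeHiLaMa}), which gives $\Phi^{C_n}\bigl(N^{C_n}_e\mathcal{C}(a,b)\bigr)\cong \mathcal{C}(a,b)$ when $\mathcal{C}(a,b)$ is cofibrant. Since all of the spectra involved are indexed on the trivial universe $\mathbb{R}^\infty$, $\Phi^{C_n}$ agrees with the categorical fixed points $(-)^{C_n}$ by \autoref{rem: geom=cat fixed points for trivial universe}, yielding the desired isomorphism $\mathcal{C}^{\wedge n}(\widehat{a},\widehat{b})^{\Delta_{C_n}}\cong \mathcal{C}(a,b)$.

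The main obstacle is the equivariance check for $\Psi$, which requires careful bookkeeping of how the cyclic rotation and the $\sigma$-twist interact with the chosen trivializations $\sigma^{-i}$. A secondary concern is cofibrancy: invoking the norm diagonal as a point-set isomorphism typically requires cofibrancy of $\mathcal{C}(a,b)$, which in our setting amounts to $\mathcal{C}$ being pointwise cofibrant in the sense of \autoref{def-ptwise-cof}. If the proposition is intended to hold without such a hypothesis, one would instead produce the isomorphism by a direct levelwise argument: at each level $V$, the fixed points of the cyclic permutation action on $X^{\wedge n}(V)$ decompose over tuples $V = V_0^{\oplus n}$, and the diagonal map $X(V_0)\hookrightarrow X(V_0)^{\wedge n}$ identifies these fixed points assembly-wise with $X = \mathcal{C}(a,b)$.
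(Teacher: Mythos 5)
Your proposal follows essentially the same route as the paper: both identify $\mathcal{C}^{\wedge n}(\widehat{a},\widehat{b})$ with the $n$-fold smash power $\wedge_e^{C_n}\mathcal{C}(a,b)$ carrying the cyclic permutation action, using the trivializations $\sigma^{-i}$ to absorb the twist in \eqref{eq: action of sigma on Cr(a,b)}, and then conclude by the norm diagonal upon taking $\Delta_{C_n}$-fixed points. Your extra care about the equivariance bookkeeping and the cofibrancy caveat is reasonable but not a substantive departure; the paper treats the identification $(\wedge_e^{C_n}X)^{C_n}\cong X$ as immediate on the trivial universe.
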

\begin{proof}
    The spectra $\mathcal{C}(\sigma^ia,\sigma^ib)$ and $\mathcal{C}(a,b)$ are isomorphic via the map $\sigma^i$.  Thus one can rewrite $\mathcal{C}^{\wedge r}(\widehat{a},\widehat{b})$ as the smash product of $\mathcal{C}(a,b)$ with itself $n$-times.  With this identification the maps \eqref{eq: action of sigma on Cr(a,b)} become just cyclic permutations. Thus there is an isomorphism of $C_n$-spectra
    \[
        \mathcal{C}^{\wedge n}(\widehat{a},\widehat{b})\cong \wedge_{e}^{C_n}\mathcal{C}(a,b)
    \]
    and the result is immediate upon taking fixed points.
\end{proof}

The construction $\mathcal{C}\mapsto \mathcal{C}^{\wedge n}$ is functioning like a norm in this context.   The next lemma is a generalization of \autoref{lemma:TCviaNorm4.4} for $C_n$-spectral categories.

\begin{lem}\label{lemma:TCviaNorm4.4spectral}
    There is an isomorphism
    \[
        i^*_{C_n\times C_n} |\Ncyc _\bullet(\mathcal{C})|\cong |\Ncyc _\bullet(\mathcal{C}^{\wedge n},(\mathcal{C}^{\wedge n}){^{(1,\sigma)}})|.
    \]
\end{lem}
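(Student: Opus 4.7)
The plan is to adapt the $n$-fold edgewise subdivision argument that proves \autoref{lemma:TCviaNorm4.4} in the ring-spectrum case, carrying it out directly at the level of $\Ncyc_\bullet(\mathcal{C})$ for a $C_n$-spectral category $\mathcal{C}$.

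First I would apply B\"okstedt--Hsiang--Madsen's $n$-fold edgewise subdivision $\mathrm{sd}_n$ to the cyclic object $X_\bullet=\Ncyc_\bullet(\mathcal{C})$ in spectra with $C_n$-action. The standard homeomorphism $|\mathrm{sd}_n X_\bullet|\cong|X_\bullet|$ is $(C_n\times C_n)$-equivariant: on both sides, one factor acts by the $C_n$-action on $\mathcal{C}$, while the simplicial $C_n$-action on $\mathrm{sd}_n X_\bullet$ corresponds to the restricted action of $C_n\leq S^1$ on the realization $|X_\bullet|$. This yields an isomorphism of spectra with $C_n\times C_n$-action
\[
    i^*_{C_n\times C_n}|\Ncyc_\bullet(\mathcal{C})|\cong |\mathrm{sd}_n\Ncyc_\bullet(\mathcal{C})|.
\]

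Next I would identify $\mathrm{sd}_n\Ncyc_\bullet(\mathcal{C})$ with $\Ncyc_\bullet(\mathcal{C}^{\wedge n},(\mathcal{C}^{\wedge n})^{(1,\sigma)})$ levelwise. By definition $\mathrm{sd}_n\Ncyc_q(\mathcal{C})=\Ncyc_{n(q+1)-1}(\mathcal{C})$ is the wedge over tuples $(c_0,c_1,\ldots,c_{n(q+1)-1})$ of a smash product of $n(q+1)$ morphism spectra of $\mathcal{C}$. Partition these indices into $q+1$ blocks of size $n$ according to the standard edgewise pattern, and let $\bar{c}_j\in\mathrm{ob}(\mathcal{C}^{\wedge n})$ for $0\leq j\leq q$ denote the resulting $n$-tuples. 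Permuting smash factors then exhibits each summand as $\mathcal{C}^{\wedge n}(\bar{c}_{q-1},\bar{c}_q)\wedge\cdots\wedge\mathcal{C}^{\wedge n}(\bar{c}_0,\bar{c}_1)$ smashed with a final mapping factor which, because the cycle of $\Ncyc_\bullet(\mathcal{C})$ crosses the ``seam'' between the last and first blocks, is of the form $\mathcal{C}^{\wedge n}((1,\sigma)\bar{c}_q,\bar{c}_0)=(\mathcal{C}^{\wedge n})^{(1,\sigma)}(\bar{c}_q,\bar{c}_0)$.

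Finally I would verify that the simplicial operators on the two sides agree. Inner face maps of $\mathrm{sd}_n$ correspond to composing morphisms entirely inside a single block of size $n$ and hence match composition in $\mathcal{C}^{\wedge n}$; degeneracies match by direct inspection. The only subtle operator is the zeroth face map, where cycling past the seam introduces the $(1,\sigma)$-shift on the bimodule factor, consistent with the identification in Step 2. Compatibility of the $C_n\times C_n$-actions is then automatic: the first factor acts by the $C_n$-action on $\mathcal{C}$ on both sides, and the simplicial $C_n$-action on $\mathrm{sd}_n$ matches the cyclic rotation of $n$-tuples on $\mathcal{C}^{\wedge n}$. The main obstacle is the bookkeeping in this last step---tracking the precise form of the twist at the seam---but it parallels exactly the analogous computation in \cite[Theorem 4.4]{AnBlGeHiLaMa} for the ring case, and reduces to direct inspection once the reindexing of Step 2 is fixed.
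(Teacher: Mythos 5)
Your proposal is correct and follows essentially the same route as the paper's proof: pass to the $n$-fold edgewise subdivision, which realizes to the same spectrum with the restricted $C_n\times C_n$-action, then identify $\mathrm{sd}_n\Ncyc_\bullet(\mathcal{C})$ levelwise with $\Ncyc_\bullet(\mathcal{C}^{\wedge n},(\mathcal{C}^{\wedge n})^{(1,\sigma)})$ by partitioning the $n(q+1)$ smash factors into blocks and reordering, with the $(1,\sigma)$-twist appearing at the seam. The paper's argument is the same block-reindexing computation, including the final check that the face maps match under the reordering.
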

\begin{proof}
    Since $\Ncyc_{\bullet}$  $(\mathcal{C})$ is a cyclic object in spectra with $C_n$-action, its realization is  isomorphic, as a spectrum with $C_n\times C_n$-action, to the realization of its subdivision $\mathrm{sd}_n\Ncyc_{\bullet}(\mathcal{C})$.  Hence it suffices to show an isomorphism of simplicial spectra with $C_n\times C_n$-action
    \[
        \mathrm{sd}_n\Ncyc_{\bullet}(\mathcal{C})\cong \Ncyc_{\bullet}(\mathcal{C}^{\wedge n}, (\mathcal{C}^{\wedge n}){^{(1,\sigma)}}).
    \]

    Fix a $k$ and let $k_+ = k+1$.  The $k$-simplices of $\mathrm{sd}_n\Ncyc_{\bullet}(\mathcal{C})$ are given by
    \begin{equation}\label{eq: subdivision summand 1}
        \bigvee\limits_{(c_1,\dots c_{nk_+})} \mathcal{C}(c_{nk_+-1},c_{nk_+})\wedge\dots\wedge \mathcal{C}(c_1,c_2)\wedge\mathcal{C}(c_{nk_+},c_1) 
    \end{equation}
    while the $k$-simplices of $\Ncyc_{\bullet}(\mathcal{C}^{\wedge n}, (\mathcal{C}^{\wedge n}){^{(1,\sigma)}})$ are given by
    \begin{equation}\label{eq: subdivision summand 2}
        \bigvee\limits_{(\vec{a}_0,\dots, \vec{a}_k)} \mathcal{C}(a_{k-1}^1,a_k^1)\wedge\mathcal{C}(a_{k-1}^2,a_k^2)\wedge  \dots \wedge \mathcal{C}(a_0^{n},a_1^n)\wedge \mathcal{C}(a_k^{n},a_0^1)\wedge\mathcal{C}(a_k^{1},a_0^2)\wedge\dots\wedge \mathcal{C}(a_k^{n-1},a_0^n)
    \end{equation}
    where the $\vec{a}_i = (a_i^1,\dots a_i^n)$ are objects in $\mathcal{C}^{\wedge n}$. Note that the last $n$-terms of this smash product are unlike the other terms because the superscripts in the source and target are offset by one; this shift is precisely because we took a bar construction in the twisted bimodule $(\mathcal{C}^{\wedge n}){^{(1,\sigma)}}$. 

    The isomorphism between \eqref{eq: subdivision summand 2} and \eqref{eq: subdivision summand 1} is given by sending the summand corresponding to $(\vec{a}_0,\dots, \vec{a}_k)$ in \eqref{eq: subdivision summand 2} to the summand 
    \[
        (a_0^1,a_1^1,a_2^1,\dots,a_k^1,a_0^2,a_1^2,\dots,a_{k-1}^n,a_k^n)
    \]
    in \eqref{eq: subdivision summand 1}.  With this choice the tensor summands are identical up to a reordering isomorphism which is canonical up to isomorphism because the smash product is symmetric monoidal.  Moreover, it is a straightforward, albeit tedious, exercise to see that the face maps in either simplicial spectrum are identified under this reordering.
\end{proof}

\begin{proof}[proof of \autoref{prop: twisted fixed points of cyclic nerve}]

    After applying \autoref{lemma:TCviaNorm4.4spectral} it suffices to show there is an isomorphism
    \[
        |\Ncyc _\bullet(\mathcal{C}^{\wedge n},(\mathcal{C}^{\wedge n}){^{(1,\sigma)}})|^{\Delta_{C_n}}\cong |\Ncyc _\bullet(\mathcal{C},\mathcal{C}^{\sigma})|.
    \]
To show this, note that we can commute the fixed point functor through geometric realization and so we may instead produce an isomorphism of simplicial spectra with $C_n$-action
    \[
        \Ncyc _\bullet(\mathcal{C}^{\wedge n},(\mathcal{C}^{\wedge n})^{(1,\sigma)})^{\Delta_{C_n}}\cong \Ncyc _\bullet(\mathcal{C},\mathcal{C}^{\sigma}).
    \]
The $k$-simplices on the left are given by 
    \begin{equation}\label{eq:filler name}
        \bigvee\limits_{(\vec{a}_0,\dots, \vec{a}_k)} \mathcal{C}^{\wedge n}(\vec{a}_{k-1},\vec{a}_k)\wedge \dots\wedge \mathcal{C}^{\wedge n}(\vec{a}_{0},\vec{a}_1)\wedge (\mathcal{C}^{\wedge n})^{(1,\sigma)}(\vec{a}_{k},\vec{a}_0).
    \end{equation}
    Since $\Delta_{C_n}$ is permuting the tuples $(\vec{a}_0,\dots, \vec{a}_k)$, our first task is to figure out which tuples are fixed.
    
    The action of $\Delta_{C_n}$ on $(\vec{a}_0,\dots, \vec{a}_k)$ is determined by
    \[
        (\sigma,\sigma)\cdot (\vec{a}_0,\dots, \vec{a}_k) = (\vec{b}_0,\dots, \vec{b}_k)
    \]
    with $\vec{b}_i = (\sigma a_i^n,\sigma a_i^1,\sigma a_i^2,\dots,\sigma a_{i}^{n-1})$.  If a summand is going to be fixed under the $\Delta_{C_n}$-action then it must be a summand corresponding to a tuple $(\vec{a}_0,\dots, \vec{a}_k)$ where for all $i$ \[\vec{a}_i   = \widehat{a_i}:= (a_i,\sigma a_i,\sigma^2a_i,\dots,\sigma^{n-1}a_i)\]
    for some $a_i\in \mathcal{C}$.  It follows that the $k$-simplices of $\Ncyc _\bullet(\mathcal{C}^{\wedge n},(\mathcal{C}^{\wedge n})^{(1,\sigma)})^{\Delta_{C_n}}$ are isomorphic to 
    \[
        \bigvee\limits_{(a_0,\dots, a_k)}\left(\mathcal{C}^{\wedge n}(\widehat{a}_{k-1},\widehat{a}_k)\wedge \dots\wedge \mathcal{C}^{\wedge n}(\widehat{a}_{0},\widehat{a}_1)\wedge (\mathcal{C}^{\wedge n}){}^{(1,\sigma)}(\widehat{a}_{k},\widehat{a}_0)\right)^{\Delta_{C_n}}
    \]
    where the wedge is now over any $(k+1)$-tuples in $\mathcal{C}$. Using the strong monoidality of fixed points, this is isomorphic to 
    \[
        \bigvee\limits_{(a_0,\dots, a_k)}\left(\mathcal{C}^{\wedge n}(\widehat{a}_{k-1},\widehat{a}_k)\right)^{\Delta_{C_n}}\wedge \dots\wedge \left(\mathcal{C}^{\wedge n}(\widehat{a}_{0},\widehat{a}_1)\right)^{\Delta_{C_n}}\wedge \left(\mathcal{C}^{\wedge n}(\widehat{\sigma^{n-1}a}_{k},\widehat{a}_0)\right)^{\Delta_{C_n}}
    \]
    where in the last term we used the observation that $(1,\sigma)\cdot\widehat{a} = \widehat{\sigma^{n-1}a}$ for any $a\in \mathcal{C}$. Observing that the action of $\Delta_{C_n}$ on $\mathcal{C}^{\wedge n}(\widehat{a},\widehat{b})$ is precisely \eqref{eq: action of sigma on Cr(a,b)}, we apply \autoref{prop:specCat norm diagonal} and see that the $k$-simplices are 
    \[
    \bigvee\limits_{(a_0,\dots, a_k)}\mathcal{C}(a_{k-1},a_k)\wedge\dots\wedge  \mathcal{C}(a_0,a_1)\wedge \mathcal{C}(\sigma^{-1} a_k,a_0)
    \]
    as desired.  Since the face maps are given entirely in terms of the composition structures of $\mathcal{C}$, monoidality of fixed points implies that the face maps are those of $\Ncyc_{\bullet}(\mathcal{C},\mathcal{C}^{\sigma^{-1}})$. 
\end{proof}
\subsection{Fixed points of the cyclic nerve}

In this subsection we compute the $H$-fixed points of the cyclic bar construction $\Ncyc_{\bullet}(\mathcal{C};\mathcal{M})$ for a $G$-spectral category $\mathcal{C}$ and $(\mathcal{C},\mathcal{C})$-bimodule $M$.  As a first step, note that if $c,d\in \mathcal{C}$ are $H$-fixed then the structure maps
\[
    h\colon \mathcal{C}(c,d)\to \mathcal{C}(hc,hd) = \mathcal{C}(c,d)
\]
make $\mathcal{C}(c,d)$ a spectrum with $H$-action.  We define a spectral category $\mathcal{C}^H$ whose objects are the $H$-fixed objects of $\mathcal{C}$ and whose morphism spectra are $\mathcal{C}^H(c,d) = \mathcal{C}(c,d)^H$.  The composition laws are obtained using the lax monoidality of categorical fixed points (\autoref{prop: fixed points are monoidal}).

For a $(\mathcal{C},\mathcal{C})$-bimodule $\mathcal{M}$ we also see that $\mathcal{M}(c,d)$ is  spectrum with $H$-action whenever $c$ and $d$ are $H$-fixed.  We obtain a $(\mathcal{C}^H,\mathcal{C}^H)$-bimodule $\mathcal{M}^H$ defined by $\mathcal{M}^H(c,d) = \mathcal{M}(c,d)^H$.

\begin{exmp}\label{example: the fixed points of bimodules over perf}
Let $\mathcal{C}=\Mod_R$ be the category of right modules over a ring orthogonal spectrum $R$ with $G$-action.  This is a $G$-spectral category as described in \autoref{example: modules as G-spec cat}.  We consider $\mathcal{C}$ as a bimodule over itself, i.e. $\mathcal{C}(M,N) = F_{R}(M,N)$ is the internal mapping spectrum.  When $M$ and $N$ are $H$-fixed for some $H\leq G$ we have that $F_R(M,N) = F_{R}(M^h,N^h)$ for all $h\in H$ and the action of $H$ on this spectrum is trivial. In particular, the spectrum $(\Mod_R)^H(M,N)$ is just $F_R(M,N)$ again.  The same remarks carry over verbatim if we replace $\Mod_R$ with $\perf_R$, the $G$-spectral category of perfect right $R$-modules.
\end{exmp}

\begin{prop}\label{proposition: fixed points of cyclic nerve}
    Suppose that $\mathcal{C}$ is a spectral $G$-category.  For $H$ a subgroup of $G$ there is an isomorphism of simplicial spectra
    \[
        \mathrm{N}^{\cyc}_{\bullet}(\mathcal{C};\mathcal{M})^H \cong \mathrm{N}^{\cyc}_{\bullet}(\mathcal{C}^H;\mathcal{M}^H)
    \]
    where the fixed points on the left are computed levelwise.
\end{prop}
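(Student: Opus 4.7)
The plan is to argue levelwise, computing the $H$-fixed points of each simplicial level separately and then checking compatibility with face and degeneracy maps.

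First, I would decompose the $H$-action at level $q$. Recall
\[
\Ncyc_q(\mathcal{C};\mathcal{M}) = \bigvee_{(c_0,\ldots,c_q)} \mathcal{C}(c_{q-1},c_q)\wedge\cdots\wedge\mathcal{C}(c_0,c_1)\wedge \mathcal{M}(c_q,c_0),
\]
and $H$ acts by permuting summands via $(c_0,\ldots,c_q)\mapsto (hc_0,\ldots,hc_q)$, using the structure maps $h\colon \mathcal{C}(c_i,c_{i+1})\to \mathcal{C}(hc_i,hc_{i+1})$ and $h\colon \mathcal{M}(c_q,c_0)\to \mathcal{M}(hc_q,hc_0)$ on each smash factor. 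For a wedge of pointed spectra with an $H$-action permuting summands, any non-basepoint element lies in a unique summand; hence a non-basepoint is $H$-fixed if and only if $H$ preserves that summand and fixes the element within it. Applied level-wise to our orthogonal spectra, this gives
\[
\left(\bigvee_{(c_0,\ldots,c_q)} X_{(c_0,\ldots,c_q)}\right)^{H} \;\cong\; \bigvee_{(c_0,\ldots,c_q)\in (\mathrm{ob}\,\mathcal{C}^H)^{q+1}} X_{(c_0,\ldots,c_q)}^{H},
\]
since a tuple is $H$-fixed exactly when each $c_i$ is $H$-fixed.

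Next I would pass the fixed points through the smash product. Because we are working on a trivial universe throughout the bar construction, \autoref{rem: geom=cat fixed points for trivial universe} identifies categorical and geometric fixed points, and both are strong monoidal without any cofibrancy hypothesis. Thus for an $H$-fixed tuple $(c_0,\ldots,c_q)$,
\[
\bigl(\mathcal{C}(c_{q-1},c_q)\wedge\cdots\wedge\mathcal{C}(c_0,c_1)\wedge \mathcal{M}(c_q,c_0)\bigr)^{H} \;\cong\; \mathcal{C}(c_{q-1},c_q)^{H}\wedge\cdots\wedge \mathcal{C}(c_0,c_1)^{H}\wedge \mathcal{M}(c_q,c_0)^{H},
\]
which by definition equals $\mathcal{C}^H(c_{q-1},c_q)\wedge\cdots\wedge\mathcal{C}^H(c_0,c_1)\wedge \mathcal{M}^H(c_q,c_0)$. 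Combined with the wedge decomposition, this yields an isomorphism of $q$-simplices with $\Ncyc_q(\mathcal{C}^H;\mathcal{M}^H)$.

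Finally I would check naturality in the simplicial structure. The face maps of $\Ncyc_\bullet(\mathcal{C};\mathcal{M})$ are assembled from the composition in $\mathcal{C}$, the bimodule action of $\mathcal{M}$, and the cyclic rotation used for $d_0$, while the degeneracies insert units. The composition, unit, and bimodule action maps of $\mathcal{C}^H$ and $\mathcal{M}^H$ were defined precisely by applying $(-)^H$ to the structure maps of $\mathcal{C}$ and $\mathcal{M}$ and using the monoidality isomorphism of the previous step; naturality of that monoidality isomorphism then shows that the isomorphisms of $q$-simplices assemble into a map of simplicial spectra. The only mild subtlety is verifying compatibility for $d_0$, but this is the same cyclic rotation followed by the bimodule action on both sides, and the rotation is built into the wedge indexing. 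The main obstacle, such as it is, is organizing the bookkeeping between the permutation action on wedge summands and the internal action on each factor; the two pieces become cleanly separated once one restricts to $H$-fixed tuples and applies strong monoidality on the trivial universe.
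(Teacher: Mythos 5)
Your proof is correct and follows essentially the same route as the paper's: decompose the $H$-action on the wedge of $q$-simplices to reduce to $H$-fixed tuples, apply strong monoidality of fixed points (valid without cofibrancy on the trivial universe) to each smash factor, and observe that the structure maps of $\mathcal{C}^H$ and $\mathcal{M}^H$ are defined exactly so that the levelwise isomorphisms commute with the faces and degeneracies. Your treatment is, if anything, slightly more explicit than the paper's about the wedge decomposition of fixed points and the $d_0$ compatibility.
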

\begin{proof}
    Before taking $H$-fixed points the $q$-simplices on the left are given by
    \[
         \mathrm{N}^{\cyc}_{q}(\mathcal{C};\mathcal{M}) = \bigvee\limits_{(c_0,c_1,\dots, c_q)} \mathcal{C}(c_{q-1},c_q)\wedge \mathcal{C} (c_{q-2},c_{q-1})\wedge\dots\wedge \mathcal{C}(c_0,c_1)\wedge \mathcal{M}(c_q,c_0).
    \]
    Since $H$ acts first by permuting tuples $(c_0,\dots,c_q)$, we see that the $H$-fixed points of the $q$-simplices must be
    \[
        (\Ncyc_{q}(\mathcal{C};\mathcal{M}))^H = \bigvee\limits_{(c_0,c_1,\dots, c_q)\in (\mathcal{C}^H)^{q+1}} \left(\mathcal{C}(c_{q-1},c_q)\wedge \mathcal{C} (c_{q-2},c_{q-1})\wedge\dots\wedge \mathcal{C}(c_0,c_1)\wedge \mathcal{M}(c_q,c_0)\right)^H.
    \]
    The result now follows from the fact that $H$-fixed points of spectra with $G$-action are strong monoidal.  That is, these fixed points are isomorphic to
    \[
        \Ncyc_{q}(\mathcal{C}^H;\mathcal{M}^H) = \bigvee\limits_{(c_0,c_1,\dots, c_q)\in (\mathcal{C}^H)^{q+1}} \mathcal{C}^H(c_{q-1},c_q)\wedge \mathcal{C}^H (c_{q-2},c_{q-1})\wedge\dots\wedge \mathcal{C}^H(c_0,c_1)\wedge \mathcal{M}^H(c_q,c_0).
    \]
    Since the face and degeneracy maps in the cyclic nerve come from the composition laws in $\mathcal{C}$ and $\mathcal{C}^H$, and composition in the latter is defined using the monoidality of fixed points, we see that these maps assemble into an isomorphism of simplicial spectra.  
\end{proof}

\medskip

\subsection{A model structure on $G$-spectral categories}

Given a $G$-spectral functor $F\colon \cC\to \cD$ there is an induced map $F^H\colon \cC^H\to \cD^H$ for any subgroup $H\leq G$.  

\begin{defn}\label{definition: weak equivalence of G spec cats}
    We say that $F$ is a \emph{weak equivalence} of $G$-spectral categories if it is a bijection on objects and all the functors $F^H$ induced equivalences on morphism spectra.
\end{defn}
 In this section we observe that these weak equivalences are the weak equivalences of a model structure on the category of $G$-spectral categories.  We use this observation to produce useful examples of $G$-spectral categories which are sufficiently cofibrant for our purposes. We begin by recalling the following useful observation, which follows from a result of Schwede--Shipley \cite[Proposition 6.3]{schwede-shipley:equivmonmodcat}.

\begin{prop}
    The category $\speccat$ of spectral categories and spectral functors forms a cofibrantly generated model category where the weak equivalences are spectral functors which induce a bijection on objects and stable equivalences on morphism spectra. If $\mathcal{C}$ is cofibrant in this model structure, it has cofibrant morphism spectra.
\end{prop}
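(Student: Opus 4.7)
The plan is to directly invoke the result of Schwede--Shipley (\cite[Proposition 6.3]{schwede-shipley:equivmonmodcat}), which produces a cofibrantly generated model structure on the category of small categories enriched over a suitable cofibrantly generated monoidal model category. The first step would be to verify that the stable model structure on orthogonal spectra satisfies the required hypotheses --- principally, that it is a cofibrantly generated symmetric monoidal model category in which the monoid axiom holds, and for which the domains of the generating (acyclic) cofibrations are small with respect to the appropriate subcategories. These are standard features of the stable model structure on orthogonal spectra.

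Applying the Schwede--Shipley theorem then yields the desired model structure on $\speccat$: the weak equivalences are precisely the spectral functors which are bijective on objects and pointwise stable equivalences on morphism spectra, as required. The generating cofibrations come in two flavors --- those which freely adjoin a morphism between a specified pair of existing objects along a generating cofibration of orthogonal spectra, and a small family of object-adjoining cells which freely adjoin new objects with identity maps. An analogous description holds for the generating acyclic cofibrations, and fibrations are characterized as those spectral functors which are pointwise fibrations on morphism spectra and satisfy a lifting condition against the object-adjoining cells.

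To establish the second claim, I would unpack the cellular structure of a cofibrant $\mathcal{C}$: it is a retract of a transfinite composition of pushouts of the generating cofibrations described above. A morphism-adjoining attachment replaces the mapping spectrum of a single pair of objects by a pushout of a cofibration of orthogonal spectra, while object-adjoining attachments introduce new mapping spectra built freely from the cofibrant sphere spectrum. Because cofibrations in orthogonal spectra are closed under pushouts, transfinite compositions, and retracts, induction along the cellular filtration shows that every morphism spectrum of $\mathcal{C}$ is cofibrant. The main point to verify is that object-adjoining cells do not disrupt cofibrancy of previously constructed morphism spectra, which holds because such attachments do not alter the mapping spectra between pre-existing objects. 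The main (modest) obstacle is bookkeeping of the transfinite induction across the two types of generating cells simultaneously; otherwise the proof is a standard consequence of Schwede--Shipley's machine.
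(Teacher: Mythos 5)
Your proposal takes exactly the route the paper does: the paper offers no proof beyond citing Schwede--Shipley's Proposition 6.3, and your verification of its hypotheses for orthogonal spectra together with the cellular analysis of cofibrant objects is the standard way to fill in both claims. One small caveat: freely attaching a morphism cell along a generating cofibration between objects $a$ and $b$ does \emph{not} only modify $\mathcal{C}(a,b)$ --- it changes $\mathcal{C}(x,y)$ for every pair reachable by composing through the new generator, so the correct induction (as carried out in Schwede--Shipley's proof) filters the new mapping spectra by word length and uses that smash products and coproducts of cofibrations are again cofibrations; this repairs rather than invalidates your argument.
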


Since the category of $G$-spectral categories is just $\Fun(BG,\speccat)$ we could obtain a model structure on $G$-spectral categories using the projective model structure, but the weak equivalences would be incorrect. 
 Instead, we need a ``more genuine'' model structure. For a finite group $G$, let $\mathcal{O}_G$ denote the orbit category of $G$. We consider the category of presheaves $\Fun(\mathcal{O}_G^{\op},\speccat)$ with the projective model structure.

Any $G$-spectral category $\cC$ determines a functor $i_*(\mathcal{C})\colon \mathcal{O}_G^{\op}\to \speccat$ which is defined by $i_*(\cC)(G/H) = \cC^H$.  If $K\leq H$ then there is a functor $\cC^H\to \cC^K$ which is the inclusion on objects.  The functor $i_*\colon \Fun(BG,\speccat)\to \Fun(\mathcal{O}_G^{\op},\speccat)$ has a left adjoint which sends a functor $F\colon \mathcal{O}_G^{\op}\to \speccat$ to $F(G/e)$, which has an action by $G$ since $\mathcal{O}_{G}^{\op}(G/e,G/e)\cong G$. To produce the desired model structure on $G$-spectral categories, we use a result of Stephan \cite{Stephan}; see also \cite{BMOOP:modelStructureGCat}.

 \begin{thm}\label{thm-model-str-G-spec-cat}
     The category of $G$-spectral categories $\Fun(BG,\speccat)$ has a cofibrantly generated model structure right induced by $i_*\colon \Fun(BG,\speccat)\to \Fun(\mathcal{O}_G ^{op},\speccat)$.  The weak equivalences are those of \autoref{definition: weak equivalence of G spec cats}.   If $\cC$ is a cofibrant object in this model structure then $\cC^H$ is a pointwise cofibrant spectral category for all $H$.
 \end{thm}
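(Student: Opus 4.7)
The plan is to apply Stephan's theorem (cited in the statement) on right-induced model structures on diagram categories of group actions. The setup is the adjunction
\[
    i^{!}\colon \Fun(\mathcal{O}_G^{\op},\speccat)\rightleftarrows\Fun(BG,\speccat) \colon i_*,
\]
where $i^{!}(F) = F(G/e)$ with its residual $G$-action coming from $\mathrm{Aut}_{\mathcal{O}_G^{\op}}(G/e)\cong G$, and $i_*(\cC)(G/H)=\cC^H$ with restriction maps given by inclusion on objects and the identity on morphism spectra (which are $K$-fixed whenever they are $H$-fixed with $K\leq H$). The presheaf category $\Fun(\mathcal{O}_G^{\op},\speccat)$ carries the projective model structure, which is cofibrantly generated and exists because $\speccat$ is itself cofibrantly generated by the quoted result of Schwede--Shipley.

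The main task is to verify Stephan's ``cellularity'' hypothesis for the family of subgroups of $G$. In the present context this amounts to checking two things: first, that the fixed-point functors $(-)^H\colon \Fun(BG,\speccat)\to \speccat$ preserve the filtered colimits, pushouts along generating cofibrations, and transfinite compositions used to build cell complexes; and second, that the orbit representations $G/H$ (equivalently the representable presheaves $\mathcal{O}_G(-,G/H)$) satisfy the relevant cellularity with respect to these colimits. The first point is the substantive one: because $(\cC^H)(x,y) = \cC(x,y)^H$ is computed by taking fixed points of each morphism spectrum, and because categorical fixed points of orthogonal $G$-spectra commute with coproducts, filtered colimits, and pushouts along the generating cofibrations of $\Sp$, this reduces to the spectrum-level fact. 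For the second, the orbits $G/H$ are discrete, hence cellular, and the generating cofibrations of the transferred structure can be presented as $i^{!}$ applied to free diagrams $\mathcal{O}_G(-,G/H)\cdot j$ for $j$ a generating (trivial) cofibration of $\speccat$.

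Having checked the hypotheses, Stephan's theorem produces a cofibrantly generated model structure on $\Fun(BG,\speccat)$ in which a morphism $F$ is a weak equivalence (respectively fibration) if and only if $i_*F$ is one in the projective structure. Since projective weak equivalences and fibrations are pointwise, this translates to $F^H$ being a weak equivalence of spectral categories for every $H\leq G$, which is exactly \autoref{definition: weak equivalence of G spec cats}.

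For the final claim, a cofibrant object $\cC$ in this model structure is a retract of a transfinite composition of pushouts of generating cofibrations. The generating cofibrations are of the form $i^{!}(\mathcal{O}_G(-,G/H)\cdot j)$ where $j$ is a generating cofibration of $\speccat$, and applying $(-)^K$ to such a generating cofibration produces a coproduct indexed by $\mathcal{O}_G(G/K,G/H)$ of copies of $j$, which is a cofibration in $\speccat$. Because fixed points commute with the colimits used in the cell presentation of $\cC$, the fixed-point spectral category $\cC^K$ is cofibrant in $\speccat$, and hence pointwise cofibrant by the Schwede--Shipley result. The main obstacle throughout is the cellularity verification in the first step, which is where the interaction between fixed points of spectral categories and the specific generating cofibrations of $\speccat$ must be analyzed carefully.
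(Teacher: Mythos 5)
Your proposal follows essentially the same route as the paper: both apply Stephan's Proposition 2.6, verify its hypotheses by noting that fixed points commute with the relevant pushouts and (directed) colimits and that the tensors $G/K\otimes\cC$ are given by indexed wedges, and then deduce the cofibrancy of $\cC^H$ by checking that the fixed-point functors carry the generating cofibrations $G/K\otimes f$ to wedges of cofibrations in $\speccat$. The argument is correct and matches the paper's.
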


 \begin{proof}
     We use Proposition 2.6 of \cite{Stephan}. Fixed points of spectra with $G$-action commute with pushouts and with directed colimits along cofibrations, and therefore the first two conditions of the proposition hold. For a spectral category $\cC$ and a $G$-set $G/K$, the tensor $G/K \otimes \cC \in \Fun(BG, \speccat)$ is defined using an indexed wedge product, which implies that the third condition of the proposition is satisfied as well. Therefore this right-induced model structure exists and is cofibrantly generated as in Proposition 2.6 of \cite{Stephan}. It remains to show that if $\cC$ is a cofibrant object in this model structure, then $\cC^H$ is a pointwise cofibrant spectral category for all $H$. It suffices to show that if $\phi$ is a generating cofibration in $\Fun(BG, \speccat)$, then $\phi^H$ is a cofibration of spectral categories for all $H \leq G$. As in the proof of Proposition 2.6 of \cite{Stephan}, the generating cofibrations in $\Fun(BG, \speccat)$ are of the form $G/K \otimes f$, where $f$ is a generating cofibration in $\speccat$. Thus $(G/K \otimes f)^H = (G/K)^H \otimes f$, which is a wedge of cofibrations and therefore a cofibration.
 \end{proof}
 \begin{notation}\label{notation: cofibrant replacement}
     If $\cC$ is a $G$-spectral category then we write $Q\cC$ for a cofibrant replacement in the model structure of the theorem.
 \end{notation}

It is not clear whether or not the realization of the cyclic nerve of a cofibrant $G$-spectral category is a cofibrant spectrum with $G$-action.  On the other hand, it has the property of being flat, in the sense that smashing with it is homotopical.  There are additional conditions under which $|\Ncyc_{\bullet}(\cC)|$ is a cofibrant spectrum with $G$-action.

\begin{defn}
    We say that a $G$-spectral category $\cC$ is \emph{$G$-pointwise cofibrant} if for all $H\leq G$ and all $H$-fixed pairs of objects $c,d\in \cC$ the mapping $H$-spectrum $\cC(c,d)$ is a cofibrant spectrum with $H$-action.
\end{defn}
\begin{exmp}
    If $R$ is a cofibrant spectrum with $G$-action then $\cC_R$ is $G$-pointwise cofibrant.
\end{exmp}

\begin{prop}
    If $\cC$ is $G$-pointwise cofibrant then $|\Ncyc_{\bullet}(\cC)|$ is a cofibrant spectrum with $G$-action
\end{prop}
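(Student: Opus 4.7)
The plan is to establish cofibrancy of $|\Ncyc_\bullet(\cC)|$ by a two-step Reedy-style argument: first showing that each simplicial level $\Ncyc_q(\cC)$ is a cofibrant spectrum with $G$-action, and then deducing cofibrancy of the geometric realization.

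For the levelwise step, I would decompose $\Ncyc_q(\cC)$ equivariantly according to the $G$-action on the set of $(q+1)$-tuples of objects. Picking a representative $(c_0,\dots,c_q)$ for each $G$-orbit with stabilizer $H \leq G$, the corresponding contribution is isomorphic to the induced spectrum
\[
G_+ \wedge_H \bigl( \cC(c_{q-1},c_q) \wedge \cC(c_{q-2},c_{q-1}) \wedge \cdots \wedge \cC(c_0,c_1) \wedge \cC(c_q,c_0) \bigr).
\]
Because $H$ fixes every $c_i$, the $G$-pointwise cofibrancy hypothesis applied to the subgroup $H$ implies that each mapping spectrum $\cC(c_{i-1},c_i)$ is cofibrant as a spectrum with $H$-action. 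Since the smash product is a left Quillen bifunctor on $\Sp^H_{\mathbb{R}^\infty}$, the indicated smash product is cofibrant with $H$-action, and since induction $G_+ \wedge_H(-) \colon \Sp^H_{\mathbb{R}^\infty} \to \Sp^G_{\mathbb{R}^\infty}$ is left Quillen it carries this to a cofibrant spectrum with $G$-action. Taking the equivariant wedge over all orbit representatives then yields cofibrancy of $\Ncyc_q(\cC)$.

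The second step, passing from levelwise cofibrancy to cofibrancy of the realization, is where I expect the main technical obstacle to lie: we need the simplicial object $\Ncyc_\bullet(\cC)$ to be Reedy cofibrant (equivalently, the latching inclusion $L_q \hookrightarrow \Ncyc_q(\cC)$ is a cofibration for each $q$), so that the standard result identifying $|\Ncyc_\bullet(\cC)|$ with a homotopy colimit of cofibrant pieces applies. Unwinding the latching construction, the inclusion $L_q \hookrightarrow \Ncyc_q(\cC)$ is an equivariant wedge of pushout-product inclusions built from the unit maps $\mathbb{S} \to \cC(c,c)$ inserted in each smash factor, and one wishes to present it as a pushout-product of cofibrations. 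This requires the unit maps to themselves be cofibrations, a property that should follow from the $G$-pointwise cofibrancy hypothesis in the positive complete stable model structure, where cofibrant objects admit a CW decomposition with cells attached starting from the base point (the unit being absorbed into the zero cell via the standard argument for operadic bar constructions). Once Reedy cofibrancy is in hand, the realization is cofibrant by the usual skeletal filtration, since each skeletal attachment is a pushout along $(\partial \Delta^q_+) \wedge (\Ncyc_q(\cC) / L_q) \hookrightarrow \Delta^q_+ \wedge (\Ncyc_q(\cC) / L_q)$, a cofibration of spectra with $G$-action by the analysis above.
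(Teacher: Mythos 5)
Your first step is exactly the paper's proof: decompose the $q$-simplices into $G$-orbits of tuples $(c_0,\dots,c_q)$, write each orbit's contribution as $G_+\wedge_{H}$ applied to the smash product of the mapping spectra (which carry an $H$-action since $H$ stabilizes the tuple), invoke $G$-pointwise cofibrancy and monoidality of the model structure to get cofibrancy with $H$-action, and use that induction preserves cofibrancy. The paper stops there, asserting at the outset that ``it suffices to check that the cyclic nerve is levelwise cofibrant,'' and does not discuss latching objects or the skeletal filtration at all.

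Your second step therefore goes beyond the paper, and you are right that Reedy cofibrancy (not mere levelwise cofibrancy) is what the skeletal filtration of the realization actually requires; but your justification of the latching condition has a genuine gap. You deduce that the unit maps $\mathbb{S}\to\cC(c,c)$ are cofibrations from the hypothesis that the mapping spectra are cofibrant objects. This does not follow: cofibrancy is a property of an object, whereas being a cofibration is a property of a specific map, and a map out of $\mathbb{S}$ into a cofibrant spectrum need not be a cofibration. (This is precisely why properness/goodness of bar constructions is usually secured by a separate hypothesis, e.g.\ that the ring is cofibrant as an associative ring spectrum, which is strictly stronger than cofibrancy of its underlying spectrum; note also that $\mathbb{S}$ is not even cofibrant in the positive model structures, so the unit map cannot be a positive cofibration.) So as written your latching-map argument does not close. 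You have, however, put your finger on exactly the point that the paper's own one-line reduction to levelwise cofibrancy leaves unaddressed.
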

\begin{proof}
    It suffices to check that the cyclic nerve is a levelwise cofibrant spectrum with $G$-action.  The $k$-simplices are 
    \[
        \bigvee_{(c_0,\dots,c_k)} \cC(c_{k-1},c_k)\wedge \dots \wedge \cC(c_{0},c_1)\wedge  \cC(c_{k},c_0)
    \]
    The collection of $(k+1)$-tuples $(c_0,\dots,c_k)$ is a $G$-set, and we may pick representatives $(c_0^i,\dots,c_k^i)$ for all the $G$-orbits.  Let $H_i$ be the stabilizer of the tuple $(c_0^i,\dots,c_{k}^i)$, then we have that the $k$-simplices decompose as
    \[
        \bigvee_{i} G_+\wedge_{H_i} (\cC(c^i_{k-1},c^i_k)\wedge \dots \wedge \cC(c^i_{0},c^i_1)\wedge \cC(c^i_{k},c^i_0))
    \]
    where $\cC(c^i_{k-1},c^i_k)\wedge \dots \wedge \cC(c^i_{0},c^i_1)\wedge\cC(c^i_{k},c^i_0)$ is considered as a spectrum with $H_i$-action as all $c_{j}^i$ must be $H_i$-fixed.  Since $\cC$ is $G$-pointwise cofibrant, this spectrum with $H_i$-action is a cofibrant spectrum with $H_i$-action.  Since induction preserves cofibrancy we observe that the $k$-simplices are a wedge of cofibrant spectra with $G$-action hence are cofibrant. 
\end{proof}
 \begin{cor}
     Suppose $\cC$ is a $G$-spectral category which is either cofibrant or $G$-pointwise cofibrant.  Then for any weak equivalence $X\to Y$ of spectra with $G$-action the map $|\Ncyc_{\bullet}(\cC)|\wedge X\to |\Ncyc_{\bullet}(\cC)|\wedge Y$ is also a weak equivalence.
 \end{cor}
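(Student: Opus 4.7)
The plan is to reduce both cases to the statement that $|\Ncyc_\bullet(\cC)|$ is cofibrant as a spectrum with $G$-action. Once established, the corollary is immediate: $\Sp^G_{\R^\infty}$ with the stable model structure is a monoidal model category, so by Ken Brown's lemma the functor $|\Ncyc_\bullet(\cC)| \wedge (-)$ is left Quillen and preserves weak equivalences between arbitrary (not necessarily cofibrant) objects when applied to a cofibrant source.

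The $G$-pointwise cofibrant case is precisely the conclusion of the preceding proposition. For the cofibrant case, I would reduce to the $G$-pointwise cofibrant case by showing that cofibrancy in the model structure of \autoref{thm-model-str-G-spec-cat} implies $G$-pointwise cofibrancy. The generating cofibrations there are of the form $G/K \otimes f$ for $f$ a generating cofibration in $\speccat$. For such a generator, and an $H$-fixed pair of objects $(c,d)$, the induced map on $\Hom$-spectra decomposes according to the orbit structure of the $H$-action on $G/K$: it is a wedge, indexed over $H$-orbits on $G/K$, of morphisms of the form $H_+ \wedge_{H \cap gKg^{-1}} f_{x,y}$, where $f_{x,y}$ is the component of $f$ at a pair $(x,y)$ of underlying objects of $\mathcal{A}$ (or is a free cell attaching a new object). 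These are precisely (wedges of) generating cofibrations in the stable model structure on $\Sp^H_{\R^\infty}$, and the class of $H$-cofibrations is closed under the pushouts and transfinite compositions used in the small-object argument to build cell objects in $\Fun(BG, \speccat)$.

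The main obstacle is the bookkeeping required to track how pushouts in $\Fun(BG, \speccat)$ interact with mapping spectra at $H$-fixed object pairs. Pushouts of spectral categories can identify objects and enlarge mapping spectra in coupled ways, so one must verify that at every stage of the cellular construction, for every $H \leq G$ and every $H$-fixed pair of objects, the newly attached cells in the mapping spectrum are still of the correct homogeneous form $H_+ \wedge_L (-)$ for some $L \leq H$. Once this closure property is established, every cofibrant object in $\Fun(BG, \speccat)$ is automatically $G$-pointwise cofibrant, the preceding proposition applies, and the corollary follows as explained above.
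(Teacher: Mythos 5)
Your handling of the $G$-pointwise cofibrant case matches the paper: both reduce to the preceding proposition that $|\Ncyc_{\bullet}(\cC)|$ is a cofibrant spectrum with $G$-action and then invoke flatness of cofibrant objects. (One small caveat: Ken Brown's lemma only gives preservation of weak equivalences between \emph{cofibrant} objects, so the statement for arbitrary $X\to Y$ really rests on the separate fact that cofibrant orthogonal $G$-spectra are flat, which is also what the paper uses.)

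The cofibrant case, however, has a genuine gap. You propose to prove that cofibrancy in the model structure of \autoref{thm-model-str-G-spec-cat} implies $G$-pointwise cofibrancy, i.e.\ that for $H$-fixed objects $c,d$ the spectrum $\cC(c,d)$ with its $H$-action is cofibrant in $\Sp^H_{\R^\infty}$. This is strictly stronger than what \autoref{thm-model-str-G-spec-cat} establishes (namely that $\cC^H$ is a pointwise cofibrant spectral category, i.e.\ that the \emph{fixed-point} spectra $\cC(c,d)^H$ are cofibrant nonequivariantly), and you explicitly defer the step that would make it work: the verification that the cells attached to mapping spectra at $H$-fixed pairs of objects remain of the homogeneous form $H_+\wedge_L(-)$ through the pushouts and transfinite compositions of the cellular construction. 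Pushouts of spectral categories are free products that entangle objects and mapping spectra, so this closure property is exactly where the content lies; as written, the argument is conditional on an unproven claim. The paper avoids this entirely: since weak equivalences of spectra with $G$-action are detected on $H$-fixed points and categorical fixed points are strong monoidal on the trivial universe, it suffices to show each $|\Ncyc_{\bullet}(\cC)|^H$ is a cofibrant spectrum; by \autoref{proposition: fixed points of cyclic nerve} this is $|\Ncyc_{\bullet}(\cC^H)|$, and pointwise cofibrancy of $\cC^H$ (already part of \autoref{thm-model-str-G-spec-cat}) finishes the argument. If you want to salvage your route you must either actually prove the cellular closure property or switch to the fixed-point-by-fixed-point argument.
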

 \begin{proof}
    When $\cC$ is $G$-pointwise cofibrant this follows from the fact that $|\Ncyc_{\bullet}(\cC)|$ is cofibrant.  WHen $\cC$ is cofibrant it suffices to check that $|\Ncyc_{\bullet}(\cC)|^H$ is cofibrant for all $H\leq G$.  This follows from \autoref{proposition: fixed points of cyclic nerve} and the fact that $\cC^H$ is a pointwise cofibrant spectral category.
 \end{proof}

 Furthermore, weak equivalences of cofibrant $G$-spectral categories induce weak equivalences on cyclic bar constructions.

 \begin{prop}\label{prop-cof-bar-invariance}
     Let $F\colon \mathcal{C} \to \mathcal{D}$ be a weak equivalence of $G$-spectral categories, where $\mathcal{C}$ and $\mathcal{D}$ are either cofibrant or $G$-pointwise cofibrant.  Then $F$ induces a weak equivalence 
     $$|\Ncyc_{\bullet}(\mathcal{C})|\simeq |\Ncyc_{\bullet}(\mathcal{D})|$$
     of spectra with $G$-action.
 \end{prop}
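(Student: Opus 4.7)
The plan is to reduce the statement to a non-equivariant question about spectral categories by taking fixed points levelwise. Since a map of spectra with $G$-action is a weak equivalence precisely when it induces a weak equivalence on $H$-fixed points for every $H\leq G$, it suffices to show that the induced map $|\Ncyc_\bullet(\cC)|^H \to |\Ncyc_\bullet(\cD)|^H$ is a stable equivalence of orthogonal spectra for each $H$. Because fixed points commute with geometric realization, and by the levelwise formula of \autoref{proposition: fixed points of cyclic nerve}, this map is isomorphic to the one induced on cyclic bar constructions by $F^H\colon \cC^H\to \cD^H$. By hypothesis $F^H$ is a bijection on objects and a pointwise stable equivalence of mapping spectra, so the problem becomes: a bijective-on-objects map of spectral categories that is a pointwise stable equivalence between pointwise cofibrant spectral categories induces a stable equivalence on $|\Ncyc_\bullet(-)|$.

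Next I would verify that for either hypothesis on $\cC$ and $\cD$, the fixed point spectral categories $\cC^H$ and $\cD^H$ are pointwise cofibrant. In the cofibrant case this is exactly the last sentence of \autoref{thm-model-str-G-spec-cat}. In the $G$-pointwise cofibrant case, for any $H$-fixed $c,d$ the mapping spectrum $\cC(c,d)$ is cofibrant as a spectrum with $H$-action; fixed points of such an object (in the genuine model structure on $\Sp^{BH}$, whose generating cofibrations have cofibrant $H$-fixed points) are cofibrant orthogonal spectra, so $\cC^H(c,d) = \cC(c,d)^H$ is cofibrant.

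Granting this, the simplicial spectrum $\Ncyc_\bullet(\cC^H)$ is, in each degree $q$, a wedge over $(q+1)$-tuples of objects of iterated smash products of cofibrant mapping spectra; the levelwise map induced by $F^H$ is, on each wedge summand, a smash product of stable equivalences between cofibrant orthogonal spectra, hence a stable equivalence, and a wedge of equivalences is an equivalence. To pass from a levelwise equivalence to an equivalence of realizations, I would use that the simplicial spectrum is proper: pointwise cofibrancy of the mapping spectra guarantees that the degeneracy maps are cofibrations (they are obtained by smashing in unit maps $\mathbb{S}\to \cC(c,c)$, which are cofibrations into cofibrant objects), so the simplicial spectrum is Reedy cofibrant in the appropriate sense and realization preserves levelwise stable equivalences.

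I expect the genuinely delicate step to be the second one, namely checking that fixed points of a cofibrant spectrum with $H$-action are a cofibrant orthogonal spectrum in the $G$-pointwise cofibrant case; the other two ingredients are essentially formal consequences of \autoref{proposition: fixed points of cyclic nerve} and the standard homotopy theory of the bar construction on cofibrant spectral categories. An alternative route that avoids this point is to replace $\cC$ and $\cD$ by their cofibrant replacements $Q\cC,Q\cD$ of \autoref{notation: cofibrant replacement} and argue that the natural maps $Q\cC\to \cC$ and $Q\cD\to \cD$ induce equivalences on $|\Ncyc_\bullet(-)|$ by the same fixed-point argument using \autoref{thm-model-str-G-spec-cat}, then apply two-out-of-three to the resulting commutative square.
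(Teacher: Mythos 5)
Your proposal is correct and follows essentially the same route as the paper: reduce to $H$-fixed points, identify $|\Ncyc_\bullet(\cC)|^H$ with $|\Ncyc_\bullet(\cC^H)|$ via \autoref{proposition: fixed points of cyclic nerve}, invoke \autoref{thm-model-str-G-spec-cat} for cofibrancy of the fixed-point mapping spectra, and conclude by a levelwise argument on the cyclic bar construction. You supply somewhat more detail than the paper on the realization step and on the $G$-pointwise cofibrant case (which the paper dispatches as ``essentially the same''), but the argument is the same.
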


 \begin{proof}
     We prove the case when $\cC$ is cofibrant, the other case is essentially the same. By Proposition \ref{proposition: fixed points of cyclic nerve}, $|\Ncyc_{\bullet}(\mathcal{C})|^H \cong |\Ncyc_{\bullet}(\mathcal{C}^H)|$. By Theorem \ref{thm-model-str-G-spec-cat}, since $\mathcal{C}$ and $\mathcal{D}$ are cofibrant, for any $H$-fixed objects $c,c' \in \mathcal{C}$ we have that $\mathcal{C}(c,c')^H$ and $\mathcal{D}(F(c), F(c'))^H$ are cofibrant spectra. Furthermore, $F^H$ induces a weak equivalence between these cofibrant spectra. Therefore each level of the cyclic bar constructions $|\Ncyc_{\bullet}(\mathcal{C}^H)|$ and $|\Ncyc_{\bullet}(\mathcal{D}^H)|$ consists of smash products of cofibrant spectra. Thus $F^H$ induces a weak equivalence on the $H$-fixed points of $|\Ncyc_{\bullet}(\mathcal{C})|$ and $ |\Ncyc_{\bullet}(\mathcal{D})|$, and therefore a weak equivalence of spectra with $G$-action.
 \end{proof}

When $\cC$ is cofibrant we can identify maps of bimodules which induce stable equivalences on cyclic nerves.  
\begin{defn}\label{defn:weak equivs of bimodules}
    For $\mathcal{C}$ a $G$-spectral category, a map $f\colon \mathcal{M}\to \mathcal{N}$ of $(\mathcal{C},\mathcal{C})$-bimodules is a \emph{weak equivalence} if for all $H\leq G$ it induces a weak equivalence of spectra $\mathcal{M}^H(c,d)\to \mathcal{N}^H(c,d)$ for all $H$-fixed pairs $(c,d)$ of objects in $\mathcal{C}$.
\end{defn}

\begin{rem}\label{rem: ETHH on homotopy category}
    Let ${_{\cC}\bimod_{\cC}}$ denote the category of equivariant  $(\cC,\cC)$-bimodules. One could proceed as in \cite[Proposition 6.1]{schwede-shipley:equivmonmodcat} to show that ${_{\cC}\bimod_{\cC}}$ is a model category with weak equivalences as defined above. For our purposes, we only need to know what the weak equivalences are and we write $\Ho({_{\cC}\bimod_{\cC}})$ for the associated homotopy category. 
\end{rem}

 \begin{cor}\label{cor: weak equivalences and cyclic nerves}
    Let $\mathcal{C}$ be a $G$-spectral category and let $\alpha\colon \mathcal{M}\to \mathcal{N}$ be a weak equivalence of $(\mathcal{C},\mathcal{C})$-bimodules.  If $\cC$ is cofibrant or $G$-pointwise cofibrant then $\alpha$ induces a weak equivalence  of spectra with $G$-action $|\Ncyc_{\bullet}(\mathcal{C};\mathcal{M})|\simeq |\Ncyc_{\bullet}(\mathcal{C};\mathcal{N})|$.
\end{cor}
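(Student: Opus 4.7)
The plan is to verify the map $|\Ncyc_\bullet(\cC;\cM)|\to|\Ncyc_\bullet(\cC;\cN)|$ is a weak equivalence of spectra with $G$-action by checking the induced map on $H$-fixed points for every $H\leq G$. Since $H$-fixed points commute with geometric realization of simplicial spectra with $G$-action, Proposition \ref{proposition: fixed points of cyclic nerve} identifies this $H$-fixed point map with the induced map $|\Ncyc_\bullet(\cC^H;\cM^H)|\to|\Ncyc_\bullet(\cC^H;\cN^H)|$. By Definition \ref{defn:weak equivs of bimodules}, the map $\alpha^H\colon \cM^H\to \cN^H$ is levelwise a weak equivalence of ordinary $(\cC^H,\cC^H)$-bimodules, so the problem reduces to showing that a levelwise weak equivalence of bimodules over a suitably cofibrant spectral category induces a weak equivalence of cyclic bar constructions.

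Next I would argue the levelwise statement. In the cofibrant case, Theorem \ref{thm-model-str-G-spec-cat} says that $\cC^H$ is pointwise cofibrant, i.e.\ each mapping spectrum $\cC^H(c,d)$ is cofibrant as an orthogonal spectrum. The $q$-simplices of $\Ncyc_\bullet(\cC^H;\cM^H)$ are a wedge, indexed over $(q+1)$-tuples of objects of $\cC^H$, of smash products
\[
\cC^H(c_{q-1},c_q)\wedge\cdots\wedge \cC^H(c_0,c_1)\wedge \cM^H(c_q,c_0),
\]
in which every factor except possibly the last is cofibrant. Smashing a weak equivalence $\cM^H(c_q,c_0)\to \cN^H(c_q,c_0)$ with such cofibrant factors preserves weak equivalences, so the induced map is a wedge of weak equivalences at each simplicial degree. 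In the $G$-pointwise cofibrant case, the same cofibrancy of the mapping spectra (now inherited from the assumption that $\cC(c,d)$ is a cofibrant spectrum with $H$-action, and so the smash products appearing at each level are flat in the bimodule factor, by the same reasoning used earlier to show $|\Ncyc_\bullet(\cC)|$ is cofibrant) yields the analogous levelwise weak equivalence.

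Finally, to upgrade the levelwise equivalence to an equivalence on realizations, I would invoke that in both cases the simplicial spectra involved are levelwise built from wedges of smash products of cofibrant spectra, hence sufficiently Reedy cofibrant (or at least proper) for geometric realization to preserve levelwise weak equivalences. The main technical obstacle I anticipate is the $G$-pointwise cofibrant case, where fixed points of cofibrant spectra with $H$-action need not themselves be cofibrant; the argument there must exploit the wedge-over-orbits decomposition of the simplicial levels (as in the cofibrancy proof for $|\Ncyc_\bullet(\cC)|$) to ensure the smash products have the required flatness in the bimodule variable, rather than relying on direct cofibrancy of $\cC^H(c,d)$.
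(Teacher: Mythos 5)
Your proposal is correct and follows essentially the same route as the paper: reduce to $H$-fixed points, identify the fixed-point map with $|\Ncyc_{\bullet}(\cC^H;\cM^H)|\to|\Ncyc_{\bullet}(\cC^H;\cN^H)|$ via \autoref{proposition: fixed points of cyclic nerve}, and use pointwise cofibrancy of $\cC^H$. The only difference is that where you sketch the levelwise-flatness-plus-realization argument by hand, the paper simply cites the non-equivariant statement (Blumberg--Mandell, Theorem 6.4 of their localization paper) to finish.
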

\begin{proof}
     We prove the case when $\cC$ is cofibrant, the other case is essentially the same.  By definition, it suffices to check that the map induced by $\alpha$ gives a weak equivalence of spectra on $H$-fixed points for all $H\leq G$.  Commuting the fixed points through the geometric realization and applying \autoref{proposition: fixed points of cyclic nerve} we see that the induced map on $H$-fixed points is the map $|\Ncyc_{\bullet}(\cC^H,\cM^H)|\to |\Ncyc_{\bullet}(\cC^H,\cN^H)|$.  Since $\cC$ is cofibrant the spectral category $\cC^H$ is pointwise cofibrant, and this is a weak equivalence by \cite[Theorem 6.4]{blumberg-mandell:localization}.
\end{proof}

\section{Spectral Waldhausen $G$-categories}\label{sec:SpecWaldG}
Our eventual goal is to construct an equivariant version of the Dennis trace connecting algebraic $K$-theory and THH.  In order to construct this comparison map we must address the fact that the inputs to these theories, Waldhausen and spectral categories, respectively, are not the same.  To bridge this gap we follow the approach of \cite{CLMPZ2} and develop a theory of \emph{spectral Waldhausen $G$-categories}, which have enough structure to define both equivariant algebraic $K$-theory and ETHH.

\begin{defn}
    Let $\mathcal{C}_0$ be a pointed category (i.e. a category with a zero object, which implies that it is enriched over pointed sets). We define its suspension category $\Sigma^\infty \mathcal{C}_0$ to be the spectral category with the same objects as $\mathcal{C}_0$, mapping spectra given by the suspension spectra of the mapping sets of $\mathcal{C}_0$, and composition induced by the composition in $\mathcal{C}_0$.
\end{defn}

\begin{defn}
    A base category of a spectral category $\mathcal{C}$ is a pair $(\mathcal{C}_0, F: \Sigma^\infty \mathcal{C}_0 \to \mathcal{C})$, where $\mathcal{C}_0$ is a pointed category and $F: \Sigma^\infty \mathcal{C}_0 \to \mathcal{C}$ is a spectral functor which is the identity on objects.
\end{defn}

\begin{exmp}\label{ex-can}
    Let $\mathcal{C}$ be a spectral category. Take $\mathcal{C}_0$ to have the same objects as $\mathcal{C}$. For objects $a,b$, take the mappings set $\mathcal{C}_0(a,b)$ to be the 0-space of the mapping spectrum $\mathcal{C}(a,b)$ given the discrete topology. Then the maps $X^{\mathrm{discrete}} \to X$ for any space $X$ and $\Sigma^\infty Y(0) \to Y$ for any spectrum $Y$ give $\mathcal{C}_0$ the structure of a base category for the spectral category $\mathcal{C}$. We denote this functor $can\colon \Sigma^\infty \mathcal{C}_0 \to \mathcal{C}$.
\end{exmp}

\begin{defn}[\cite{CLMPZ}, Definition 3.9]
    A spectral Waldhausen category is a spectral category $\mathcal{C}$ together with a base category $\mathcal{C}_0$, where $\mathcal{C}_0$ has a Waldhausen structure. This data is subject to the conditions:
    \begin{enumerate}
        \item The zero object of $\mathcal{C}_0$ is also a zero object for $\mathcal{C}$.
        \item Every weak equivalence $c \to c'$ in $\mathcal{C}_0$ induces stable equivalences of orthogonal spectra
        $$\mathcal{C}(c', d) \to \mathcal{C}(c, d),\quad \mathcal{C}(d,c) \to \mathcal{C}(d, c').$$
        \item Let $i: a \to b$ be a cofibration in $\mathcal{C}_0$. Then every pushout square
        $$\xymatrix{
        a \ar[r]^-i \ar[d] & b \ar[d]\\
        c \ar[r] & d
        }$$
        induces homotopy pushouts squares
         $$\xymatrix{
        \mathcal{C}(e,a) \ar[r]^-{i_*} \ar[d] & \mathcal{C}(e,b) \ar[d] & \mathcal{C}(a,e)   & \ar[l]^-{i^*} \mathcal{C}(b,e) \\
        \mathcal{C}(e,c) \ar[r] & \mathcal{C}(e,d)& \mathcal{C}(c,e) \ar[u] &   \ar[l] \mathcal{C}(d,e) \ar[u]
        }$$

    \end{enumerate}
\end{defn} 

\begin{defn}\label{def-spwaldcat-fun}
    A functor of spectral Waldhausen categories $F\colon (\mathcal{C}, \mathcal{C}_0) \to (\mathcal{D}, \mathcal{D}_0)$ consists of an exact functor $F_0: \mathcal{C}_0 \to \mathcal{D}_0$ and a spectral functor $F: \mathcal{C} \to \mathcal{D}$ such that the diagram 
    $$\xymatrix{
    \Sigma^\infty \mathcal{C}_0 \ar[r]^-{\Sigma^\infty F_0} \ar[d] & \Sigma^\infty \mathcal{D}_0 \ar[d] \\
    \mathcal{C} \ar[r]^-F & \mathcal{D}
    }$$
    commutes.
\end{defn}

In \cite{CLMPZ}, the authors construct a Dennis trace map $K(\mathcal{C}_0) \to \THH(\mathcal{C})$.
\medskip

For the purposes of algebraic $K$-theory, it is instrumental to define a spectral Waldhausen category of perfect $R$-modules, for $R$ an orthogonal ring spectrum. Due to the technical requirements placed on spectral Waldhausen categories, it will be helpful to pass through EKMM spectra \cite{ekmm}. Let us write $\mathbb{S}$-Mod for the category of EKMM spectra.  Recall there is a symmetric monoidal Quillen adjunction $(\mathbb{N},\mathbb{N}^{\sharp})$

\[\begin{tikzcd}
	\Sp && {\mathbb{S}\mathrm{-Mod}}
	\arrow[""{name=0, anchor=center, inner sep=0}, "{\mathbb{N}}", shift left=2, from=1-1, to=1-3]
	\arrow[""{name=1, anchor=center, inner sep=0}, "{\mathbb{N}^{\sharp}}", shift left=2, from=1-3, to=1-1]
	\arrow["\dashv"{anchor=center, rotate=-90}, draw=none, from=0, to=1]
\end{tikzcd}\]
relating EKMM spectra to orthogonal spectra.  Note that both $\mathbb{N}$ and $\mathbb{N}^{\sharp}$ are lax monoidal.  

If $R$ is a cofibrant orthogonal ring spectrum and $N$ and $M$ are cofibrant right $R$-modules then the mapping spectrum $F_R(M,N)$ need not be fibrant.   We use the adjunction $\mathbb{N}\dashv \mathbb{N}^{\sharp}$ to produce homotopically well behaved mapping spectra $\mathbb{N}^{\sharp}F_{\mathbb{N}R}(\mathbb{N}M,\mathbb{N}N)$.  This spectrum is always fibrant since all EKMM spectra are fibrant and $\mathbb{N}^{\sharp}$ is a right Quillen adjoint.

\medskip

The following is the key example in this section.

\begin{exmp}[\cite{CLMPZ2} 3.8-3.12, 5.2]\label{ex-stperf}
    If $R$ is an orthogonal ring spectrum whose underlying spectrum is cofibrant, there is a spectral Waldhausen category $\stperf_R$ whose base category is the Waldhausen category $\perf_R$ of perfect cofibrant $R$-module spectra. Take the objects of $\stperf_R$ to be the perfect cofibrant modules over $R$. For objects $P,P'$ of $\stperf_R$, define $\stperf_R(P, P')$ to be the orthogonal spectrum $ \mathbb{N}^{\sharp} F_{\mathbb{N} R}(\mathbb{N}P, \mathbb{N}P')$. The spectral functor $\Sigma^\infty \perf_R \to \stperf_R$ is given by the adjoint of
    \[ \mathbb{N}\Sigma_{\mathrm{orth}}^{\infty}\perf_R(P,P')\xrightarrow{\cong} \Sigma_{\mathrm{EKMM}}^{\infty}\perf_R(P,P')\xrightarrow{\Sigma_{\mathrm{EKMM}}^{\infty}\mathbb{N}} \Sigma_{\mathrm{EKMM}}^{\infty}\perf_{\mathbb{N}R}(\mathbb{N}P,\mathbb{N}P')\xrightarrow{can} F_{\mathbb{N}R}(\mathbb{N}P,\mathbb{N}P')
    \]
    where $can$ is as in \autoref{ex-can}. All EKMM spectra are fibrant, and $\mathbb{N}^\sharp$ is a right Quillen adjoint, so all mapping spectra in this category are fibrant.
    \end{exmp}

    \begin{rem}\label{remark: spectral functor from stperf to Mod}
        Later we will need a comparison of $\stperf_R$ and the more natural spectral category $\perf_{R}$ defined in \autoref{example: perf as G-spec Cat}.  One can give a spectral functor $\perf_R\to \stperf_R$, which is the identity on objects, but for technical reasons we would like a map which goes the other way.  It seems difficult to construct such a functor directly, however it is straightforward to construct a functor
        \[
            \Omega\colon \stperf_R\to \Mod_{\mathbb{N}^{\sharp}\mathbb{N}R}
        \]
        which sends an object $P$ to $\mathbb{N}^{\sharp}\mathbb{N}P$.  On morphisms, we use the map
        \[
            \mathbb{N}^{\sharp}F_{\mathbb{N}R}(\mathbb{N}P,\mathbb{N}Q)\to F_{\mathbb{N}^{\sharp}\mathbb{N}R}(\mathbb{N}^{\sharp}\mathbb{N}P,\mathbb{N}^{\sharp}\mathbb{N}Q)
        \]
        which comes from the fact that $\mathbb{N}^{\sharp}$ is lax monoidal.
    \end{rem}

\begin{exmp}[\cite{CLMPZ}, 3.7 and 3.12]
    If $(\mathcal{C}, \mathcal{C}_0)$ is a spectral Waldhausen category and $I$ is a small category, then there is a spectral Waldhausen structure on the functor categories $(\Fun(I,\mathcal{C}), \Fun(I, \mathcal{C}_0))$.
\end{exmp}

This functor category construction is useful for applying the $S_\bullet$-construction to spectral Waldhausen categories. Denote by $[k]$ the category $\{ 0 \to 1 \to ... \to k \}$. If $(\mathcal{C},\mathcal{C}_0)$ is a spectral Waldhausen category, consider $S_k \mathcal{C}_0$ as a subcategory of the functor category $\Fun([k] \times [k], \mathcal{C}_0)$ as in Definition 5.8 (and the discussion that follows it) of \cite{CLMPZ2}.

\begin{prop}[\cite{CLMPZ2}]\label{prop-Sdot-spectral}
  Let $(\mathcal{C}, \mathcal{C}_0)$ be a spectral Waldhausen category. Take $S_k \mathcal{C}$ to be the full subcategory of $\Fun([k] \times [k], \mathcal{C})$ spanned by the objects of $S_k \mathcal{C}_0$. Then $(S_k \mathcal{C}, S_k \mathcal{C}_0)$ is a spectral Waldhausen category. Furthermore, as $k$ varies, this defines a simplicial object in $\mathrm{SpWaldCat}$, $(S_\bullet \mathcal{C}, S_\bullet \mathcal{C}_0)$.
\end{prop}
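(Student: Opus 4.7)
The strategy is to leverage the fact that functor categories of spectral Waldhausen categories are spectral Waldhausen, and observe that $S_k\mathcal{C}$ is essentially carved out of $\Fun([k]\times[k],\mathcal{C})$ by restricting to the full subcategory on objects of $S_k\mathcal{C}_0$. Since $(\Fun([k]\times[k], \mathcal{C}), \Fun([k]\times[k], \mathcal{C}_0))$ is a spectral Waldhausen category by the previous example, most of the required structure is inherited for free; what needs genuine checking is that passing to the full subcategory on $S_k\mathcal{C}_0$-objects preserves each axiom.

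First I would verify the three axioms of a spectral Waldhausen category for $(S_k\mathcal{C}, S_k\mathcal{C}_0)$. For axiom (1), the zero object of $S_k\mathcal{C}_0$ is the diagram with all $A_{i,j}=0$, and since $0$ is a zero object in $\mathcal{C}$, the mapping spectra in $S_k\mathcal{C}$ in and out of this diagram are products/wedges of trivial mapping spectra, hence trivial. For axiom (2), since $S_k\mathcal{C}$ is a full subcategory of $\Fun([k]\times[k],\mathcal{C})$ (same mapping spectra), a weak equivalence in $S_k\mathcal{C}_0$ is levelwise a weak equivalence in $\mathcal{C}_0$, and the mapping spectrum $\Fun([k]\times[k],\mathcal{C})(X,Y)$ is built as an end indexed over $[k]\times[k]$ from the spectra $\mathcal{C}(X(i,j),Y(i',j'))$; each constituent map is a stable equivalence by axiom (2) for $(\mathcal{C},\mathcal{C}_0)$, and stable equivalences are closed under the finite limits/homotopy limits defining the end on morphisms between pointwise cofibrant/fibrant enough diagrams, giving the desired stable equivalence on mapping spectra.

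For axiom (3), the crucial point is that cofibrations and pushouts in $S_k\mathcal{C}_0$ are defined levelwise: a map $X\to Y$ in $S_k\mathcal{C}_0$ is a cofibration precisely when each $X_{i,j}\to Y_{i,j}$ is a cofibration in $\mathcal{C}_0$ together with compatibility with the pushout squares forced by the $S_\bullet$-construction, and the pushout of a cofibration in $S_k\mathcal{C}_0$ is computed pointwise (the pointwise pushout automatically satisfies the $S_\bullet$ pushout conditions by standard pasting of pushouts). Since the mapping spectra in $S_k\mathcal{C}$ are ends over $[k]\times[k]$ of the mapping spectra in $\mathcal{C}$, and axiom (3) for $(\mathcal{C},\mathcal{C}_0)$ gives homotopy pushout/pullback squares at each index, one assembles these into a homotopy pushout square on the ends. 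This is the step I expect to be the main obstacle, because it requires care that the levelwise homotopy pushout squares assemble through the end into a homotopy pushout, which in practice uses the fact that only finitely many of the comparison maps are non-trivial (since $[k]\times[k]$ is finite) and that filtered/finite homotopy limits of homotopy pushout squares of spectra are homotopy pushout squares.

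Finally, for the simplicial structure, I would note that any order-preserving map $\theta\colon[k]\to[\ell]$ induces a functor $[k]\times[k]\to [\ell]\times[\ell]$ and hence a spectral functor $\Fun([\ell]\times[\ell],\mathcal{C})\to \Fun([k]\times[k],\mathcal{C})$ together with its restriction to the underlying Waldhausen categories. These restrict to the $S_\bullet$-subcategories (faces collapse or delete rows/columns; degeneracies duplicate) and define $\theta^*\colon (S_\ell\mathcal{C}, S_\ell\mathcal{C}_0)\to (S_k\mathcal{C}, S_k\mathcal{C}_0)$ as a morphism of spectral Waldhausen categories in the sense of \autoref{def-spwaldcat-fun}, since both the underlying exact functor on $S_\bullet\mathcal{C}_0$ and the spectral functor on $S_\bullet\mathcal{C}$ are induced by the same $\theta$, making the required square commute on the nose. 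Simplicial identities follow from those of the ordinal category $[k]\mapsto[k]\times[k]$.
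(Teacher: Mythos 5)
Your argument is correct and follows exactly the route the paper intends: the paper offers no proof of this proposition (it is quoted directly from \cite{CLMPZ2}), but the surrounding text sets up precisely your reduction, namely the functor-category example $(\Fun(I,\mathcal{C}),\Fun(I,\mathcal{C}_0))$ followed by passage to the full subcategory spanned by the objects of $S_k\mathcal{C}_0$, with the simplicial structure induced by $[k]\mapsto[k]\times[k]$. The one point you rightly flag as the main obstacle --- that levelwise stable equivalences and levelwise homotopy pushouts must pass through the strict end defining the mapping spectra of the functor category --- is exactly where \cite{CLMPZ2} rely on homotopically well-behaved (fibrant) mapping spectra, as the paper itself emphasizes in the construction of $\stperf_R$.
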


The functor category construction also gives a spectral Waldhausen category of weak equivalences in $(\mathcal{C}, \mathcal{C}_0)$.

\begin{prop}[\cite{CLMPZ2}, Definition 5.6]\label{prop-wdot-spectral} 
    Let $(\mathcal{C}, \mathcal{C}_0)$ be a spectral Waldhausen category. Denote by $w_k \mathcal{C}_0$ the full subcategory of $\Fun([k], \mathcal{C}_0)$ spanned by the functors that take each morphism in $[k]$ to a weak equivalence in $\mathcal{C}_0$. Denote by $w_k \mathcal{C}$ the full subcategory of $\Fun([k], \mathcal{C})$ spanned by the objects of $w_k \mathcal{C}_0$. Then $(w_k \mathcal{C}, w_k \mathcal{C}_0)$ is a spectral Waldhausen category. Furthermore, as $k$ varies, this defines a simplicial object in $\mathrm{SpWaldCat}$, $(w_\bullet \mathcal{C}, w_\bullet \mathcal{C}_0)$.
\end{prop}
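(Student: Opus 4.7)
The plan is to deduce both claims from the spectral Waldhausen structure on $(\Fun([k],\mathcal{C}),\Fun([k],\mathcal{C}_0))$, which is the functor category example recalled just before this proposition. Since $w_k\mathcal{C}_0$ and $w_k\mathcal{C}$ are defined as full subcategories at the Waldhausen and spectral levels respectively, mapping spectra are unchanged in passing from the ambient functor category to the $w_\bullet$-construction. Consequently, the spectral Waldhausen axioms that concern mapping spectra will transfer formally once we have verified that $w_k\mathcal{C}_0$ carries a Waldhausen structure compatible with the ambient one.

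For the Waldhausen structure on $w_k\mathcal{C}_0$, I would declare cofibrations and weak equivalences to be those morphisms in $\Fun([k],\mathcal{C}_0)$ whose components are cofibrations (respectively weak equivalences) at every index; closure under composition, and the fact that $0\hookrightarrow A$ is always a cofibration, are immediate from the corresponding facts in $\mathcal{C}_0$. The constant diagram at $0$ is clearly a zero object and lies in $w_k\mathcal{C}_0$ since identity maps are weak equivalences. The nontrivial axiom is existence of pushouts along cofibrations: given $X\xleftarrow{r}A\xhookrightarrow{t}B$ in $w_k\mathcal{C}_0$, form the levelwise pushout $X\cup_A B$ in $\Fun([k],\mathcal{C}_0)$; the transition maps $X_i\cup_{A_i}B_i\to X_{i+1}\cup_{A_{i+1}}B_{i+1}$ are weak equivalences by the gluing lemma for Waldhausen categories, so $X\cup_A B$ lies in $w_k\mathcal{C}_0$, and $X\to X\cup_A B$ is levelwise a cofibration.

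For the spectral Waldhausen axioms, the zero object of $w_k\mathcal{C}_0$ is the zero object of $\Fun([k],\mathcal{C}_0)$ and hence, by the functor category example, a zero object for $\Fun([k],\mathcal{C})$; since $w_k\mathcal{C}$ is full, it remains a zero object there. Axioms (2) and (3)\,---\,that weak equivalences induce stable equivalences on mapping spectra and that pushouts along cofibrations induce homotopy pushouts on mapping spectra\,---\,reduce immediately to the corresponding assertions for $(\Fun([k],\mathcal{C}),\Fun([k],\mathcal{C}_0))$, since both the mapping spectra and the relevant pushouts are unchanged when restricting to the full subcategory $w_k\mathcal{C}_0$.

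Finally, for simpliciality, each order-preserving map $\varphi\colon[k]\to[\ell]$ induces a pullback functor $\varphi^*\colon\Fun([\ell],\mathcal{C}_0)\to\Fun([k],\mathcal{C}_0)$, and this restricts to a functor $w_\ell\mathcal{C}_0\to w_k\mathcal{C}_0$ because weak equivalences are closed under composition and identities are weak equivalences. The analogous restriction at the spectral level is automatic from fullness, and compatibility with the base category functors $\Sigma^\infty w_k\mathcal{C}_0\to w_k\mathcal{C}$ (\autoref{def-spwaldcat-fun}) is inherited from the ambient functor categories. The only step requiring genuine input is the pushout verification in the second paragraph via the gluing lemma; everything else is formal bookkeeping on top of the functor category example.
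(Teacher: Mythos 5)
Your argument is correct and is essentially the verification that the paper defers to \cite{CLMPZ2}: reduce everything to the functor-category spectral Waldhausen structure on $(\Fun([k],\mathcal{C}),\Fun([k],\mathcal{C}_0))$, check that the levelwise Waldhausen structure on $w_k\mathcal{C}_0$ is closed under pushouts along cofibrations, and observe that restriction along order-preserving maps preserves the $w$-condition. The one point worth flagging is that your pushout step genuinely uses Waldhausen's gluing axiom, which is part of the standard definition (and the one in force in \cite{CLMPZ2}) but is omitted from this paper's abbreviated recollection of what a Waldhausen category is; without it, $w_k\mathcal{C}_0$ need not admit pushouts along cofibrations, so you are right to single this out as the only non-formal input.
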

\begin{rem}\label{remark: iterated S dot for spectral Wald cats}
    We can also iterate the $S_{\bullet}$ construction as is \autoref{definition: iterated S dot}.  Letting $S^{(n)} _{k_1, ..., k_n} \mathcal{C} = S_{k_1} ... S_{k_n} \mathcal{C}$, we therefore obtain a multisimplicial object $(w_\bullet S^{(n)} _\bullet \mathcal{C}, w_\bullet S^{(n)} _\bullet \mathcal{C}_0)$ in  $\mathrm{SpWaldCat}$, whose $(k_0, ..., k_n)$ level is $(w_{k_0}S^{(n)} _{k_1, ..., k_n} \mathcal{C}, w_{k_0} S^{(n)} _{k_1, ..., k_n} \mathcal{C}_0)$.
\end{rem}

We will next define Waldhausen $G$-categories and spectral Waldhausen $G$-categories. Recall that Malkiewich--Merling define a notion of Waldhausen $G$-categories.

\begin{defn}[\cite{MM1}, Section 2.3]\label{defn: Waldhausen G-cat}
Let WaldCat denote the category of Waldhausen categories and exact functors. A \emph{Waldhausen $G$-category} is a functor
\[
\mathrm{B}G \to \mathrm{WaldCat}.
\]  

\begin{exmp}\label{ex-perf-action}
    If $R$ is an orthogonal $G$-ring spectrum whose underlying spectrum is cofibrant, then $\perf_R$ as defined in \autoref{ex-stperf} inherits the structure of a Waldhausen $G$-category. The $G$-action sends a right $R$-module $P$ to the $g$-twisted right $R$-module $P^{g^{-1}}$.
\end{exmp}

\begin{exmp}[\cite{MM1}, Section 2.3]\label{example: Fun(EG,C)}
    If $\mathcal{C}$ is a Waldhausen $G$-category, then $\Fun(\EG, \mathcal{C})$ is a Waldhausen $G$-category with $G$-action given by conjugation.  That is, if $F\colon \EG\to \mathcal{C}$ is any functor then we define $(g\cdot F)(h) = gF(g^{-1}h)$.
\end{exmp}

\end{defn}
We now define spectral Waldhausen $G$-categories. Let $\mathrm{SpWaldCat}$ denote the category of spectral Waldhausen categories.
\begin{defn}
 A \emph{spectral Waldhausen $G$-category} is a functor 
 \[
\mathrm{B}G \to \mathrm{SpWaldCat}.
 \]
\end{defn}

Note that if $\mathcal{C}$ is a spectral Waldhausen $G$-category, then the base category $\mathcal{C}_0$ is a Waldhausen $G$-category and the spectral category $\cC$ is a $G$-spectral category.

\begin{exmp}
    Let $R$ be an orthogonal ring spectrum with $G$-action, and denote $g\colon R \to R$ the action of an element $g \in G$ on $R$. Then $\stperf_R$ obtains a $G$-action which sends a right $R$-module $P$ to the $g$-twisted right $R$-module $P^{g^{-1}}$.  Applying $\mathbb{N}$ to the map $g\colon F_{R}(P,Q)\to F_R(P^{g^{-1}},Q^{g^{-1}})$ from \autoref{example: modules as G-spec cat} yields a map 
    \begin{equation}\label{equation: G aciton on stperf mapping spectra}
        F_{\mathbb{N}R}(\mathbb{N}P,\mathbb{N}Q)\to F_{\mathbb{N}R}((\mathbb{N}P)^{g^{-1}},(\mathbb{N}Q)^{g^{-1}})
    \end{equation} 
    where we abuse notation slightly and write $g\colon \mathbb{N}R\to \mathbb{N}R$ for the map obtained by applying $\mathbb{N}$ to $g\colon R\to R$.  Applying $\mathbb{N}^{\sharp}$ to \eqref{equation: G aciton on stperf mapping spectra} gives the action of $G$ on the mapping spectra of $\stperf_R$.

    This $G$-action is via functors of spectral Waldhausen categories, as in \autoref{def-spwaldcat-fun}. This implies that $\stperf_R$ is a spectral Waldhausen $G$-category. Its base category, $\perf_R$, is then a Waldhausen $G$-category.
\end{exmp}

\begin{exmp}\label{example: fun(EG) spectral cat}
    If $(\mathcal{C}, \mathcal{C}_0)$ is a spectral Waldhausen $G$-category, there is a spectral Waldhausen $G$-category $\Fun(\mathcal{E}G, \mathcal{C})$  with base category $\Fun(\mathcal{E}G, \mathcal{C}_0)$. This is because Sections 4 and 5 of \cite{CLMPZ2} construct a functor
    $$\Fun(-,-): \mathrm{Cat}^{\mathrm{op}} \times \mathrm{SpWaldCat} \to \mathrm{SpWaldCat}.$$
    If the base category of $\mathcal{C}$ is $\mathcal{C}_0$, then the base category of $\Fun(I,\mathcal{C})$ is $\Fun(I, \mathcal{C}_0)$.
    The functoriality of this construction ensures that elements of $G$ act by exact functors, so that $\Fun(\mathcal{E}G, \mathcal{C})$ is a spectral Waldhausen $G$-category.
\end{exmp} 

We therefore obtain a spectral Waldhausen $G$-category $\Fun(\mathcal{E}G, \stperf_R)$, whose base category is $\Fun(\mathcal{E}G, \perf_R)$.

\begin{exmp}
    The $S_\bullet$ and $w_\bullet$ constructions of Proposition \ref{prop-Sdot-spectral} and Proposition \ref{prop-wdot-spectral} respect $G$-actions, giving $S_\bullet$ and $w_\bullet$ constructions for spectral Waldhausen $G$-categories. Applying these constructions to $\Fun(\mathcal{E}G, \mathcal{C})$ will be especially useful in section 8.
\end{exmp}

\section{Morita adjunctions}\label{sec: Morita}

One of the essential properties of topological Hochschild homology is that it is Morita invariant.  In this section we develop a theory of Morita adjunctions and Morita equivalences for $G$-spectral categories and show that $\ETHH$ is Morita invariant.  The results in this section are used in the construction of the Dennis trace to produce maps on $\ETHH$.

Let $\mathcal{C}$, $\mathcal{D}$, and $\mathcal{E}$ be three $G$-spectral categories and let $\mathcal{N}$ and $\mathcal{M}$ be $(\mathcal{C},\mathcal{E})$- and $(\mathcal{D},\mathcal{C})$-bimodules respectively.

\begin{defn}
    The \emph{two-sided bar construction} of $\mathcal{M}$, $\mathcal{C}$, and $\mathcal{N}$, written $B(\mathcal{M};\mathcal{C},\mathcal{N})$, is the $(\mathcal{D},\mathcal{E})$-bimodule which sends $(e,d)\in \mathcal{E}^{\op}\wedge \mathcal{D}$ to the realization of the simplicial spectrum 
    \[
        \bigvee\limits \mathcal{M}(c_q,d)\wedge \mathcal{C}(c_{q-1},c_q)\wedge\dots\wedge \mathcal{C}(c_0,c_1)\wedge \mathcal{N}(e,c_0)
    \]
    where the wedge is over $(q+1)$-tuples $(c_0,\dots,c_q)$  of objects in $\mathcal{C}$.
\end{defn}

One should think about the two-sided bar construction as a way to tensor $(\mathcal{C},\mathcal{E})$- and $(\mathcal{D},\mathcal{C})$-bimodules together over $\mathcal{C}$.  To illustrate this further, suppose $R$ is a ring spectrum with $G$-action which is cofibrant, write $\mathcal{C}_R$ for the one object $G$-spectral category associated to $R$, and suppose $M$ and $N$ are $(\mathcal{C}_R,\mathcal{C}_R)$-bimodules, i.e.\ $(R,R)$-bimodules in spectra.  Then the simplicial object underlying the two-sided bar construction is exactly the usual two-sided bar resolution of $M\wedge_R N$.  In particular, its geometric realization is exactly the left derived smash product of $M$ and $N$ over $R$.

Note that when $N = R$, and $R$ is cofibrant, we get an equivalence $B(M;R;R)\simeq M\wedge_R^{\mathbb{L}} R\simeq M$. The next lemma says that the analogous statement is true for arbitrary $G$-spectral categories.

\begin{lem}[{Two Sided Bar Lemma, \cite[Lemma 1.40]{BlumbergMandell:localizationLongOne}}]
    Let $\mathcal{C}$, $\mathcal{D}$, and $\mathcal{E}$ be $G$-spectral categories and let $\mathcal{M}$ and $\mathcal{N}$ be a $(\mathcal{D},\mathcal{C})$ and $(\mathcal{C},\mathcal{E})$-bimodules respectively.  There are weak equivalences
    \begin{align*}
        B(\mathcal{M};\mathcal{C};\mathcal{C}) & \xrightarrow{\sim} \mathcal{M}\\
        B(\mathcal{C};\mathcal{C};\mathcal{N})& \xrightarrow{\sim} \mathcal{N}
    \end{align*}
    of $(\mathcal{D},\mathcal{C})$ and $(\mathcal{C},\mathcal{E})$-bimodules respectively.
\end{lem}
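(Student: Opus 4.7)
The plan is to give the classical extra-degeneracy argument and check that it respects the $G$-action. I will focus on the first equivalence $B(\mathcal{M};\mathcal{C};\mathcal{C})\to \mathcal{M}$; the second is entirely symmetric, obtained by inserting the unit on the far left and using the left $\mathcal{C}$-action of $\mathcal{N}$.

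First, I will construct the augmentation $\varepsilon\colon B(\mathcal{M};\mathcal{C};\mathcal{C})\to \mathcal{M}$. For each $(c,d)\in \mathcal{C}^{\op}\wedge\mathcal{D}$, the map on the $q$-simplex summand indexed by $(c_0,\dots,c_q)$ is defined by first composing the morphisms in $\mathcal{C}(c,c_0)\wedge \mathcal{C}(c_0,c_1)\wedge \dots \wedge \mathcal{C}(c_{q-1},c_q)$ via the composition in $\mathcal{C}$ to produce an element of $\mathcal{C}(c,c_q)$, and then applying the right $\mathcal{C}$-action of $\mathcal{M}$ to the result smashed with $\mathcal{M}(c_q,d)$. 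Associativity of composition and of the bimodule action makes $\varepsilon$ a well-defined map of $(\mathcal{D},\mathcal{C})$-bimodules; equivariance of composition and of the bimodule action makes $\varepsilon$ commute with the $G$-action.

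Next, I will exhibit the extra degeneracy $s_{-1}\colon B_q(\mathcal{M};\mathcal{C};\mathcal{C})(c,d)\to B_{q+1}(\mathcal{M};\mathcal{C};\mathcal{C})(c,d)$ sending the summand indexed by $(c_0,\dots,c_q)$ into the summand indexed by $(c,c_0,\dots,c_q)$ by smashing on the unit $\eta_c\colon \mathbb{S}\to \mathcal{C}(c,c)$ at the rightmost position. Checking the usual simplicial identities relating $s_{-1}$ with the face and degeneracy maps and with $\varepsilon$ shows that $B(\mathcal{M};\mathcal{C};\mathcal{C})(c,d)$ is an augmented simplicial spectrum with extra degeneracy. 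The standard contracting homotopy built from $s_{-1}$ then realizes to a homotopy equivalence $|B(\mathcal{M};\mathcal{C};\mathcal{C})(c,d)|\xrightarrow{\sim}\mathcal{M}(c,d)$ of spectra.

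Finally, per the analogue of \autoref{defn:weak equivs of bimodules} for $(\mathcal{D},\mathcal{C})$-bimodules, I must verify that the map is a weak equivalence on $H$-fixed points for every $H\leq G$ and every $H$-fixed pair $(c,d)$. Since $c$ is $H$-fixed, $\eta_c$ is $H$-equivariant and hence so is $s_{-1}$. Consequently the entire contracting simplicial homotopy is $H$-equivariant, and since $(-)^H$ commutes with geometric realization we obtain a homotopy equivalence $|B(\mathcal{M};\mathcal{C};\mathcal{C})(c,d)|^H\xrightarrow{\sim}\mathcal{M}(c,d)^H$. No substantive obstacle is expected beyond careful bookkeeping of the simplicial identities and of the compatibilities with the $G$-action; the classical extra-degeneracy proof carries over verbatim because the unit maps of a $G$-spectral category are pointwise $G$-equivariant at every $G$-fixed object.
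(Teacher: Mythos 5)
Your proof is correct and follows essentially the same route as the paper's: the paper also defines the augmentation by iterated composition and the bimodule action, obtains the homotopy inverse by inserting identity (unit) morphisms — citing May's Proposition 9.8 for the resulting extra-degeneracy contraction — and then observes that both maps are $H$-equivariant when the source and target objects are $H$-fixed, so the equivalence descends to fixed points. The only difference is presentational: you spell out the extra degeneracy explicitly where the paper cites the classical reference.
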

\begin{proof}
    The maps
    \[
        \cM(c_q,d)\wedge\cC(c_{q-1},c_q)\wedge\dots\wedge\cC(e,c_0)\to  \cM(e,d)
    \]
    given by composition assemble into a map of simplicial spectra with $G$-action from $B(\cM;\cC;\cC)$ to the constant simplicial object on $\cM(e,d)$. We call this the composition map. Taking realizations gives a map on bimodules, which we immediately see is compatible with the $G$-actions.  It is classical (see, for instance, \cite[Proposition 9.8]{May:GeometryIteratedLoops}) that the composition map admits a section which is essentially given by the inclusion of many identity morphisms.  Moreover, one checks that the composition map and the section form a homotopy equivalence after geometric realization.
    
    If $d$ and $e$ are $H$-fixed for some $H \leq G$, then both the composition map and the section are $H$-equivariant, thus the homotopy equivalence descends to fixed points and this map is a weak equivalence of bimodules.
\end{proof}

\begin{cor}[{Dennis--Waldhausen--Morita argument, \cite[Proposition 1.39]{BlumbergMandell:localizationLongOne}}]\label{cor:Dennis--Wald--Morita}
    Let $\mathcal{C}$ and $\mathcal{D}$ be small $G$-spectral categories and let $\mathcal{N}$ and $\mathcal{M}$ be $(\mathcal{C},\mathcal{D})$ and $(\mathcal{D},\mathcal{C})$-bimodules, respectively.  Then there are natural isomorphisms of spectra with $G$-action
    \[
        |\Ncyc_{\bullet}(\mathcal{C};B(\mathcal{N};\mathcal{D};\mathcal{M}))|\cong |\Ncyc_{\bullet}(\mathcal{D};B(\mathcal{M};\mathcal{C};\mathcal{N}))|
    \]
    and hence isomorphisms 
    \[
    \ETHH(\mathcal{C};B(\mathcal{N};\mathcal{D};\mathcal{M}))\cong \ETHH(\mathcal{D};B(\mathcal{M};\mathcal{C};\mathcal{N}))
    \]
\end{cor}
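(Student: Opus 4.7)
The plan is to realize both sides of the claimed isomorphism as the realization of one and the same bisimplicial spectrum with $G$-action, obtained by running the cyclic bar construction in one direction and the two-sided bar construction in the other.

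First I would unpack the $(q,p)$-simplices of $\Ncyc_{q}(\mathcal{C};B_p(\mathcal{N};\mathcal{D};\mathcal{M}))$. Pulling the bar construction inside the wedge, they are
\[
\bigvee_{(c_i),(d_j)} \mathcal{C}(c_{q-1},c_q)\wedge\dots\wedge\mathcal{C}(c_0,c_1)\wedge \mathcal{N}(d_p,c_0)\wedge \mathcal{D}(d_{p-1},d_p)\wedge\dots\wedge\mathcal{D}(d_0,d_1)\wedge\mathcal{M}(c_q,d_0),
\]
where the wedge runs over $(q+1)$-tuples of objects of $\mathcal{C}$ and $(p+1)$-tuples of objects of $\mathcal{D}$. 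The analogous calculation for $\Ncyc_{p}(\mathcal{D};B_q(\mathcal{M};\mathcal{C};\mathcal{N}))$ gives exactly the same wedge of smash products, but with the smash summands cyclically rotated so that the $\mathcal{D}$-block appears on the left and the $\mathcal{M}$, $\mathcal{C}$-block, $\mathcal{N}$ appear on the right. Using the symmetry of the smash product to cyclically rotate the factors produces a canonical isomorphism between these wedge summands. Since the $G$-action on each smash factor is diagonal and compatible with the indexing on tuples, this isomorphism is equivariant.

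The next step is to check that under this rotation, the horizontal and vertical face maps on one side correspond to vertical and horizontal face maps on the other. The ``interior'' face maps in each direction are composition in $\mathcal{C}$ or $\mathcal{D}$, or a one-sided bimodule action of $\mathcal{C}$ on $\mathcal{N},\mathcal{M}$ (resp.\ of $\mathcal{D}$ on $\mathcal{M},\mathcal{N}$), and these manifestly match up. The only delicate face maps are the zeroth faces of the two cyclic directions: on each side one zeroth face cycles the leftmost summand to the far right and then uses the relevant bimodule structure map. Under the cyclic rotation that identifies the two bisimplicial spectra, the zeroth face map of the outer $\Ncyc_\bullet(\mathcal{C};-)$ on the left is identified with the zeroth face map of the inner bar construction $B_\bullet(\mathcal{M};\mathcal{C};\mathcal{N})$ on the right (both rotate the $\mathcal{C}(c_0,c_1)$ factor past the $\mathcal{N},\mathcal{D},\mathcal{M}$ block and then use the right $\mathcal{C}$-module structure of $\mathcal{M}$), and symmetrically for the other zeroth face map using the right $\mathcal{D}$-module structure of $\mathcal{N}$. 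Degeneracies correspond in the analogous way via insertion of units.

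Having built a $G$-equivariant isomorphism of bisimplicial spectra, I pass to the diagonal, use that realization of a bisimplicial spectrum can be computed by first realizing in either direction, and conclude the desired isomorphism of spectra with $G$-action
\[
|\Ncyc_{\bullet}(\mathcal{C};B(\mathcal{N};\mathcal{D};\mathcal{M}))|\cong |\Ncyc_{\bullet}(\mathcal{D};B(\mathcal{M};\mathcal{C};\mathcal{N}))|.
\]
The corresponding statement for $\ETHH$ follows by applying the (strong monoidal) change of universe functor $\mathcal{I}^{\mathcal{V}}_{\mathbb{R}^\infty}$. The only real obstacle is the bookkeeping around the zeroth face maps of the two cyclic directions; the rest is formal manipulation of wedges and smash products and naturality of the $G$-action.
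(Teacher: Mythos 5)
Your argument is correct and is essentially the paper's proof: the paper simply cites \cite[Proposition 1.39]{BlumbergMandell:localizationLongOne}, whose proof is exactly the bisimplicial interchange you reconstruct (both sides are iterated realizations of one bisimplicial spectrum, identified by cyclically rotating smash factors), together with the observation that everything is compatible with the $G$-action on objects and mapping spectra. The only caveat is a cosmetic one in your face-map bookkeeping: with the paper's conventions the cyclic zeroth face rotates $\mathcal{C}(c_{q-1},c_q)$ (the leftmost factor), not $\mathcal{C}(c_0,c_1)$, but this does not affect the validity of the identification.
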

\begin{proof}
    The proof is identical to that of \cite[Proposition 1.39]{BlumbergMandell:localizationLongOne}.
\end{proof}

The above corollary gives a quick proof of Morita invariance for $\ETHH$.
\begin{defn}
    A \emph{Morita adjunction} between two $G$-spectral categories $\mathcal{C}$ and $\mathcal{D}$ is a pair $(\mathcal{N}\dashv\mathcal{M})$ where   $\mathcal{N}$ is a $(\mathcal{C},\mathcal{D})$-bimodule, $\mathcal{M}$ is a $(\mathcal{D},\mathcal{C})$-bimodule, and morphisms
    \begin{align*}
        \eta & \colon \mathcal{D}  \to B(\mathcal{M};\mathcal{C};\mathcal{N})\\
        \epsilon &  \colon B(\mathcal{N};\mathcal{D};\mathcal{M})\to \mathcal{C}. 
    \end{align*}
 in the homotopy categories $\Ho(_{\cD}\bimod_{\cD})$ and $\Ho(_{\cC}\bimod_{\cC})$ which satisfy the triangle identities. A Morita adjunction is called a \emph{Morita equivalence} if in addition the maps $\eta$ and $\epsilon$ are equivalences.
\end{defn}

Suppose we have a Morita adjunction $(\mathcal{N}\dashv\mathcal{M})$ between $G$-spectral categories $\mathcal{C}$ and $\mathcal{D}$ which are either cofibrant or $G$-pointwise cofibrant.  While $\eta$ and $\epsilon$ are only defined in the homotopy category, the cofibrancy assumptions on $\cC$ and $\cD$ allow us to apply \autoref{cor: weak equivalences and cyclic nerves} to obtain a map $|\Ncyc_{\bullet}(\mathcal{D})|\to |\Ncyc_{\bullet}(\mathcal{C})|$ in the homotopy category of spectra with $G$-action given by the composite
    \[
        \Ncyc_{\bullet}(\cD;\cD)\xrightarrow{\eta_*} \Ncyc_{\bullet}(\cD;B(\mathcal{M};\mathcal{C};\mathcal{N})) \cong \Ncyc_{\bullet}(\cC;B(\mathcal{N};\mathcal{D};\mathcal{M}))\xrightarrow{\epsilon_*} \Ncyc_{\bullet}(\cC;\cC)
    \]
    where the isomorphism comes from \autoref{cor:Dennis--Wald--Morita}.

    \begin{notation}
        We will call a Morita adjunction $(\cN\dashv \cM)$ as above a Morita adjunction \emph{from $\cD$ to $\cC$} to emphasize the direction of the induced map on cyclic nerves.
    \end{notation}
    
    By Remarks 7.6 and 7.9 of \cite{CLMPZ}, using $n$-fold subdivision, this is a map in the homotopy category of spectra with $G \times C_n$-action for every $n$. When $(\mathcal{N}\dashv\mathcal{M})$ is a Morita equivalence the maps $\eta_*$ and $\epsilon_*$ are isomorphisms, hence this map is an isomorphism.  If, in addition, $\cC$ and $\cD$ are $G$-pointwise cofibrant, applying the change of universe functor computes $\ETHH$ and we obtain Morita invariance.
    \begin{thm}\label{theorem: Morita invariance}
        If $\cC$ and $\cD$ are $G$-pointwise cofibrant $G$-spectral categories and $(\mathcal{N}\dashv \mathcal{M})$ is a Morita equivalence then there is a natural isomorphism $\ETHH(\cC)\cong \ETHH(\cD)$ in the homotopy category of spectra with $(G\times C_n)$-action for any $n$.
    \end{thm}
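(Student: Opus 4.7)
The plan is to realize the statement as a direct corollary of the Dennis--Waldhausen--Morita argument (\autoref{cor:Dennis--Wald--Morita}) together with the homotopy invariance of the cyclic bar construction under weak equivalences of bimodules (\autoref{cor: weak equivalences and cyclic nerves}). In fact, essentially all the ingredients have already been assembled in the paragraph preceding the theorem statement, so the task is to verify that each step goes through in the homotopy category of $(G\times C_n)$-spectra after applying the change of universe.

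First I would construct, for any Morita adjunction $(\cN\dashv\cM)$ from $\cD$ to $\cC$, the composite
\[
    \Ncyc_{\bullet}(\cD;\cD)\xrightarrow{\eta_*}\Ncyc_{\bullet}(\cD;B(\cM;\cC;\cN))\cong \Ncyc_{\bullet}(\cC;B(\cN;\cD;\cM))\xrightarrow{\epsilon_*}\Ncyc_{\bullet}(\cC;\cC)
\]
in the homotopy category of spectra with $G$-action, where the middle isomorphism is \autoref{cor:Dennis--Wald--Morita}. The $G$-pointwise cofibrancy of $\cC$ and $\cD$ is exactly what \autoref{cor: weak equivalences and cyclic nerves} requires to promote the unit and counit, which live only in $\Ho({_\cD\bimod_\cD})$ and $\Ho({_\cC\bimod_\cC})$, to well-defined maps on cyclic bar constructions in the homotopy category of spectra with $G$-action.

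Next, when $(\cN\dashv\cM)$ is a Morita \emph{equivalence}, the maps $\eta$ and $\epsilon$ are weak equivalences of bimodules by definition; applying \autoref{cor: weak equivalences and cyclic nerves} again shows that $\eta_*$ and $\epsilon_*$ are weak equivalences of spectra with $G$-action, so the composite above is an isomorphism in the homotopy category. To upgrade this to a $(G\times C_n)$-isomorphism, I would invoke the $n$-fold cyclic subdivision trick (Remarks 7.6 and 7.9 of \cite{CLMPZ}): passing to $\mathrm{sd}_n$ does not change the underlying $G$-spectrum but exhibits a residual $C_n$-action in the simplicial structure, and every map in the composite above is manifestly compatible with this subdivided structure because it is induced by morphisms of bimodules.

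Finally, I would apply the change of universe functor $\mathcal{I}_{\mathbb{R}^\infty}^{\mathcal{U}}$ to the resulting isomorphism; since the $G$-pointwise cofibrancy assumption ensures that $|\Ncyc_{\bullet}(\cC)|$ and $|\Ncyc_{\bullet}(\cD)|$ are cofibrant spectra with $G$-action, change of universe is homotopical on them and yields the claimed isomorphism $\ETHH(\cC)\cong \ETHH(\cD)$ in the homotopy category of $(G\times C_n)$-spectra. Naturality in the Morita equivalence is immediate from naturality of each construction. The only real point requiring care is the bookkeeping for the $C_n$-equivariance via subdivision; everything else is a formal assembly of the lemmas already proved in this section.
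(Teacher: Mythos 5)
Your proposal is correct and follows essentially the same route as the paper: the composite $\eta_*$ followed by the Dennis--Waldhausen--Morita isomorphism and $\epsilon_*$, promoted to the homotopy category via \autoref{cor: weak equivalences and cyclic nerves} using the cofibrancy hypotheses, upgraded to a $(G\times C_n)$-equivariant statement via $n$-fold subdivision as in Remarks 7.6 and 7.9 of \cite{CLMPZ}, and finally transported through the change of universe functor using that $G$-pointwise cofibrancy makes $|\Ncyc_{\bullet}(-)|$ cofibrant. This is precisely the argument given in the discussion preceding the theorem in the paper.
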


Let $R$ be a cofibrant orthogonal ring spectrum with $G$-action.  Our next goal is to construct a Morita adjunction from $\Fun(\EG,\stperf_R)$ to $\cC_R$.  Non-equivariantly this adjunction is a Morita equivalence, but that will not be the case here.  Nevertheless, the adjunction is sufficient to produce the morphism on $\ETHH$ that we need to produce the equivariant Dennis trace.

Recall that $\mathcal{C}_R$ is the one object (call it $\bullet$) $G$-spectral category with morphisms $\mathcal{C}_R(\bullet,\bullet) =R$ and $\stperf_R$ is the $G$-spectral category of perfect \emph{right} $R$-modules from \autoref{ex-stperf}.  Recall that the objects of $\stperf_R$ are cofibrant perfect right $R$-modules and the morphism spectra are given by 
\[
    \stperf_R(P,P') = \mathbb{N}^{\sharp}\Mod_{\mathbb{N}R}(\mathbb{N}P,\mathbb{N}P')
\]  
for any objects $P$ and $P'\in \stperf_R$.  Finally, for any $G$-spectral category $\cC$ we will need the functor $G$-spectral category $\Fun(\EG,\cC)$ constructed in \autoref{example: fun(EG) spectral cat}.

\begin{prop}\label{proposition: Morita adjunction one}
    Let $\cC$ be a small, cofibrant $G$-spectral category. Let $\cD = Q\Fun(\EG,\cC)$ where $Q$ denotes a cofibrant replacement in the model structure of \autoref{thm-model-str-G-spec-cat}. Then there is a Morita adjunction $(\cN \dashv \cM)$ from $Q\Fun(\EG,\cC)$ to $\cC$.
\end{prop}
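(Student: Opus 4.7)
The construction rests on the $G$-spectral functor $\iota\colon \cC \to \Fun(\EG, \cC)$ embedding $\cC$ as the constant functors: $\iota(c) = \underline{c}$, where $\underline{c}(g) = c$ for all $g$ and every morphism of $\EG$ is sent to the identity. This $\iota$ is $G$-equivariant because under the conjugation action from \autoref{example: Fun(EG,C)} we have $(g\cdot \underline{c})(h) = g\,\underline{c}(g^{-1}h) = gc$, hence $g\cdot \underline{c} = \underline{gc} = \iota(gc)$. Lifting through the weak equivalence $Q\Fun(\EG,\cC) \xrightarrow{\sim} \Fun(\EG,\cC)$ of \autoref{notation: cofibrant replacement} yields a $G$-spectral functor $\tilde\iota\colon \cC \to \cD := Q\Fun(\EG,\cC)$, well-defined in the homotopy category of $G$-spectral categories.

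From $\tilde\iota$ we build the two hom-bimodules
\[
    \cM(c, F) := \cD(\tilde\iota(c), F), \qquad \cN(F, c) := \cD(F, \tilde\iota(c)),
\]
where $\cM$ is a $(\cD,\cC)$-bimodule and $\cN$ is a $(\cC,\cD)$-bimodule in the equivariant sense of \autoref{example: bimodules over one object spec cats}, with $G$-actions inherited from the $G$-equivariance of $\tilde\iota$ and of composition in $\cD$.

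For the counit $\epsilon\colon B(\cN;\cD;\cM) \to \cC$, I would use that $\iota$ is fully faithful: a natural transformation $\underline{c}\to \underline{c'}$ in $\Fun(\EG,\cC)$ is determined entirely by its value at $e$, because $\underline{c}$ sends all morphisms of $\EG$ to identities and so naturality forces $\alpha_g = \alpha_e$ for all $g$. This gives an isomorphism $\cC(c,c') \cong \Fun(\EG,\cC)(\underline{c},\underline{c'})$, which in the homotopy category of $(\cC,\cC)$-bimodules lifts to an equivalence $\cC(c,c') \simeq \cD(\tilde\iota(c),\tilde\iota(c'))$. Composing the canonical bar-composition map $B(\cN;\cD;\cM) \to \cD(\tilde\iota(-),\tilde\iota(-))$ with the inverse of this equivalence defines $\epsilon$.

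For the unit $\eta\colon \cD \to B(\cM;\cC;\cN)$, one observes that, non-equivariantly, evaluation at $e$ gives $\cM(c,F) \simeq \cC(c,F(e))$ and $\cN(F,c) \simeq \cC(F(e),c)$. A coend Yoneda argument then yields
\[
    B(\cM;\cC;\cN)(F,F') \;\simeq\; \int^{c\in\cC}\cC(c,F'(e))\wedge\cC(F(e),c) \;\simeq\; \cC(F(e), F'(e)).
\]
Since evaluation at $e$ is also a non-equivariant isomorphism $\Fun(\EG,\cC)(F,F') \xrightarrow{\cong}\cC(F(e),F'(e))$ (a natural transformation is determined by any single value, via the structure isos of $F$ and $F'$), the composite provides $\eta$ in the homotopy category. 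The triangle identities then follow formally because both $\eta$ and $\epsilon$ are ultimately induced by the same fully faithful $\iota$, so the two composites $\cC \to \cC$ and $\cD \to \cD$ reduce to identities upon unwinding the Yoneda reductions.

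The principal obstacle is to verify $G$-equivariance of the unit $\eta$. The natural $G$-action on $B(\cM;\cC;\cN)(F,F') \simeq \cC(F(e),F'(e))$ inherited from the bimodule structure does \emph{not} agree with the naive action on $\cC(F(e),F'(e))$; rather, it involves the twisting isomorphisms $F(g^{-1}) \cong F(e)$ supplied by the functor $F$, as in the analogous discussion in \autoref{lemma: right twisted mapping set}. Matching this with the conjugation action $g\cdot \alpha = g\alpha g^{-1}$ on $\cD(F,F')$ comes down to naturality: $(g\alpha g^{-1})_e = g\alpha_{g^{-1}}$, which is exactly the composite of $g\alpha_e$ with the twisting iso. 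Executing this bookkeeping in the homotopy category is precisely where the pointwise $G$-cofibrant replacement $\cD = Q\Fun(\EG,\cC)$ is needed, since it ensures all the intermediate equivalences of bimodules from \autoref{defn:weak equivs of bimodules} exist at the level of the derived category.
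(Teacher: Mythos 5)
Your construction is correct, but it is genuinely dual to the one in the paper. The paper builds the bimodules out of $\cC$'s morphism spectra via evaluation at the identity, setting $\cN(\Theta,c) = \cC(\Theta(1),c)$ and $\cM(c,\Theta) = \cC(c,\Theta(1))$, with the equivariant structure maps supplied by the chosen isomorphisms $\Theta(1)\cong\Theta(g^{-1})$; you instead build them out of $\cD$'s morphism spectra by restricting the hom-bimodule of $\cD$ along the constant embedding $\tilde\iota$. The payoff of the paper's choice is that both $\eta$ (evaluation at $1$ followed by inclusion of identity morphisms into the bar construction) and $\epsilon$ (evaluation followed by composition) are honest point-set maps of equivariant bimodules, whereas in your version both the unit and the counit require inverting weak equivalences — the full faithfulness of the constant embedding for $\epsilon$, and the co-Yoneda/two-sided-bar identification $B(\cM;\cC;\cN)(F,F')\simeq\cC(F(e),F'(e))$ for $\eta$ — so they exist only in the homotopy categories $\Ho({_{\cC}\bimod_{\cC}})$ and $\Ho({_{\cD}\bimod_{\cD}})$. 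That is still permitted by the paper's definition of a Morita adjunction, and you correctly identify the one genuine subtlety common to both routes: evaluation is not strictly $G$-equivariant, and the action on the target must be twisted by the structure isomorphisms of the functors, exactly as in the paper's structure maps for $\cN$ and $\cM$. One small caution: for the bimodules $\cD(\tilde\iota(-),-)$ and $\cD(-,\tilde\iota(-))$ to be defined on the nose you need $\tilde\iota$ to be an actual $G$-spectral functor, not merely a morphism in the homotopy category of $G$-spectral categories; this is fine because $\cC$ is cofibrant and the cofibrant-replacement map may be taken to be an acyclic fibration, but you should say so explicitly rather than only asserting well-definedness up to homotopy. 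Your treatment of the triangle identities is at the same (brief) level of rigor as the paper's.
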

\begin{proof}
    There is a constant spectral functor $\cC\to \Fun(\EG,\cC)$, and because $\cC$ is cofibrant this lifts to a functor $f\colon \cC\to \cD$ which, on objects, is given by the constant spectral functor.  Given any object $\Theta\in \cD$, we obtain an object $\Theta(1)\in \cC$, though we should note that this assignment is not a $G$-spectral functor since it is not $G$-equivariant on the nose. 

    Define a $(\cC,\cD)$-bimodule $\cN$ by
    \[
        \cN(\Theta,c) = \cC(\Theta(1),c)
    \]
    and a $(\cD,\cC)$-bimodule $\cM$ by
    \[
        \cM(c,\Theta) = \cC(c,\Theta(1)).
    \]
    To see that $\cN$ is a spectral functor observe that it is the composite
    \[
        Q\Fun(\EG,\cC)^{\op}\wedge \cC\to \Fun(\EG,\cC)^{\op}\wedge \cC\xrightarrow{\mathrm{ev}_1\wedge \mathrm{id}} \cC^{\op}\wedge \cC\xrightarrow{\cC(-,-)}\Sp
    \]
    where the unlabeled map is the map which witnesses $Q$ as a cofibrant replacement in $G$-spectral categories and $\mathrm{ev}_1$ is the evaluation at $1$ map.  This map is, by definition, a spectral functor although not a $G$-spectral functor.

    For the structure maps, for any $g\in G$ we need to produce maps
    \[
        \cC(\Theta(1),c) = \cN(\Theta,c)\to\cN(g\cdot \Theta,gc) = \cC(g\Theta(g^{-1}),gc)
    \]
    and 
    \[
        \cC(c,\Theta(1)) = \cM(c,\Theta)\to \cM(gc,g \cdot\Theta) = \cC(gc,g\Theta(g^{-1})).
    \]
    To construct these, first observe that part of the structure of $\Theta$ is a chosen isomorphism $\Theta(1)\cong \Theta(g^{-1})$.  With this, we have maps
    \begin{align*}
      \cC(\Theta(1),c) & \cong \cC(\Theta(g^{-1}),c)\xrightarrow{g} \cC(g\Theta(g^{-1}),gc)  \\
      \cC(c,\Theta(1)) & \cong \cC(c,\Theta(g^{-1}))\xrightarrow{g} \cC(gc,g\Theta(g^{-1})) 
    \end{align*} 
    which give the structure maps.

    The bar construction $B(\cM;\cC;\cN)$ is the $(\cD,\cD)$-bimodule whose value on a pair $(\Theta,\Psi)$ is the realization of the simplicial spectrum with $q$-simplices
    \[
        \bigvee_{(c_0,\dots,c_q)} \cC(c_q,\Psi(1))\wedge \cC(c_{q-1},c_q)\wedge\dots\wedge \cC(c_0,c_1)\wedge \cC(\Theta(1),c_0). 
    \]
    If we pick $c_0=\dots = \Theta(1)$ then we have an evident map
    \[
        \cC(\Theta(1),\Psi(1)) \cong \cC(\Theta(1),\Psi(1))\wedge \mathbb{S}\wedge\dots\wedge \mathbb{S}\to \cC(\Theta(1),\Psi(1))\wedge \cC(\Theta(1),\Theta(1))\wedge\dots\wedge \cC(\Theta(1),\Theta(1))
    \]
    where the unlabeled map is the smash product of the identity on $\cC(\Theta(1),\Psi(1))$ with the unit maps $\mathbb{S}\to \cC(\Theta(1),\Theta(1))$.  This assembles into a map from the constant simplicial spectrum on $\cC(\Theta(1),\Psi(1))$, and taking geometric realization yields a map $\cC(\Theta(1),\Psi(1))\to B(\cM;\cC;\cN)(\Theta,\Psi)$.  The map $\eta$ is, evaluated at the pair $(\Theta,\Psi)$, the composite
    \[
        \cD(\Theta,\Psi)\xrightarrow{ev_1} \cC(\Theta(1),\Psi(1))\to B(\cM;\cC;\cN)(\Theta,\Psi).
    \]

    The bar construction $B(\cN;\cD;\cM)$ is given at the pair $(x,y)\in \mathrm{ob}\cC \times \mathrm{ob}\cC$ by the realization of the simplicial spectrum with $q$-simplices
    \[
        \bigvee_{(\Theta_1,\dots,\Theta_q)} \cC(\Theta_q(1),y)\wedge \cD(\Theta_{q-1},\Theta_q)\wedge\dots\wedge \cC(x,,\Theta_0(1)).
    \]
    Applying the evaluation maps $\cD(\Theta_{i-1},\Theta_i)\to \cC(\Theta_{i-1}(1),\Theta_i(1))$ followed by the composition yields a map to the constant simplicial spectrum on $\cC(x,y)$ and geometrically realizing gives the $(x,y)$-component of the map $\epsilon$.  

    Checking the triangle identities is essentially immediate, since the map $\eta$ is given by including identity morphisms and $\epsilon$ is given by composition.
\end{proof}

\begin{prop}\label{proposition: Morita adjunction two}
    Let $R$ be a cofibrant ring $G$-spectrum, let $\cC_{\mathbb{N}^{\sharp}\mathbb{N}R}$ be the one object $G$-spectral category whose morphism spectrum is $\mathbb{N}^{\sharp}\mathbb{N}R$, and let $\cD_R = Q\mathrm{stPerf}_R$ be a cofibrant replacement of the $G$-spectral category of perfect right $R$-modules.  There is Morita adjunction from $\cD_R$ to $\cC_{\mathbb{N}^{\sharp}\mathbb{N}R}$. 
\end{prop}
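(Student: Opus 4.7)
The plan follows the template of \autoref{proposition: Morita adjunction one}, with bimodules constructed from the functors $\mathbb{N}$ and $\mathbb{N}^{\sharp}$. We first define bimodules over $\stperf_R$ and then pull back along the cofibrant replacement $Q\stperf_R \to \stperf_R$ to obtain bimodules over $\cD_R$. Set
\[
\cN(P, \bullet) = \mathbb{N}^{\sharp}F_{\mathbb{N}R}(\mathbb{N}P, \mathbb{N}R), \qquad \cM(\bullet, P) = \mathbb{N}^{\sharp}\mathbb{N}P.
\]
The left $\mathbb{N}^{\sharp}\mathbb{N}R$-action on $\cN(P, \bullet)$ comes from the left $\mathbb{N}R$-action on the target $\mathbb{N}R$ together with the lax monoidality of $\mathbb{N}^{\sharp}$; the right $\mathbb{N}^{\sharp}\mathbb{N}R$-action on $\cM(\bullet, P)$ arises similarly from the right $\mathbb{N}R$-module structure on $\mathbb{N}P$. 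Functoriality in $P$ is given by precomposition in $\Mod_{\mathbb{N}R}$ and by the spectral functor $\Omega$ of \autoref{remark: spectral functor from stperf to Mod}, respectively. The $G$-equivariance of both bimodules follows from the $G$-action on $\stperf_R$ together with the naturality of $\mathbb{N}$ and $\mathbb{N}^{\sharp}$ with respect to the $G$-action inherited from $R$.

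The counit $\epsilon\colon B(\cN; \cD_R; \cM) \to \cC_{\mathbb{N}^{\sharp}\mathbb{N}R}$ is constructed directly. Each $q$-simplex of the source is a wedge whose summands have the form
\[
\mathbb{N}^{\sharp}F_{\mathbb{N}R}(\mathbb{N}P_q, \mathbb{N}R) \wedge \mathbb{N}^{\sharp}F_{\mathbb{N}R}(\mathbb{N}P_{q-1}, \mathbb{N}P_q) \wedge \cdots \wedge \mathbb{N}^{\sharp}F_{\mathbb{N}R}(\mathbb{N}P_0, \mathbb{N}P_1) \wedge \mathbb{N}^{\sharp}\mathbb{N}P_0,
\]
and we map these to $\mathbb{N}^{\sharp}\mathbb{N}R$ by combining the lax monoidality of $\mathbb{N}^{\sharp}$, iterated composition of maps in $\Mod_{\mathbb{N}R}$, and the evaluation $F_{\mathbb{N}R}(\mathbb{N}P_q, \mathbb{N}R) \wedge \mathbb{N}P_q \to \mathbb{N}R$. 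Associativity of composition guarantees that these assemble into a simplicial map to the constant simplicial spectrum on $\mathbb{N}^{\sharp}\mathbb{N}R$, whose realization is $\epsilon$; this map is a $\cC_{\mathbb{N}^{\sharp}\mathbb{N}R}$-bimodule map and $G$-equivariant by naturality.

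For the unit $\eta\colon \cD_R \to B(\cM; \cC_{\mathbb{N}^{\sharp}\mathbb{N}R}; \cN)$, we exploit the fact that the unit is only required to exist in the homotopy category, and first construct a natural bimodule map $\nu$ in the opposite direction. Before realization, $\nu$ is given by lax monoidality of $\mathbb{N}^{\sharp}$, iterated multiplication in $\mathbb{N}R$, and the composite
\[
\mathbb{N}Q \wedge F_{\mathbb{N}R}(\mathbb{N}P, \mathbb{N}R) \to F_{\mathbb{N}R}(\mathbb{N}P, \mathbb{N}Q)
\]
adjoint to the evaluation smashed with the identity on $\mathbb{N}Q$. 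By the Two Sided Bar Lemma, $B(\cM; \cC_{\mathbb{N}^{\sharp}\mathbb{N}R}; \cN)(P,Q)$ is weakly equivalent to $\mathbb{N}^{\sharp}(\mathbb{N}Q \wedge_{\mathbb{N}R}^{\mathbb{L}} F_{\mathbb{N}R}(\mathbb{N}P, \mathbb{N}R))$, and under this identification $\nu$ becomes the canonical evaluation map, which is a weak equivalence because $\mathbb{N}P$ is a dualizable $\mathbb{N}R$-module for every perfect $P$. Setting $\eta = \nu^{-1}$ in $\Ho({_{\cD_R}\bimod_{\cD_R}})$ produces the unit, and the triangle identities reduce to the standard evaluation-coevaluation identities for dualizable objects in $\Mod_{\mathbb{N}R}$. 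The main obstacle is verifying $G$-equivariance of $\eta$, but this follows because $\nu$ is built from $G$-equivariant maps and thus has a $G$-equivariant inverse in the homotopy category of bimodules.
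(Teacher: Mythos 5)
Your proposal is correct and follows essentially the same route as the paper: the same bimodules $\cM(\bullet,P)=\mathbb{N}^{\sharp}\mathbb{N}P$ and a dual-module $\cN$ (the paper uses $F_{\mathbb{N}^{\sharp}\mathbb{N}R}(\mathbb{N}^{\sharp}\mathbb{N}P,\mathbb{N}^{\sharp}\mathbb{N}R)$ rather than your $\mathbb{N}^{\sharp}F_{\mathbb{N}R}(\mathbb{N}P,\mathbb{N}R)$, an immaterial difference), the same composition/evaluation counit, the same identification of $B(\cM;\cC_{\mathbb{N}^{\sharp}\mathbb{N}R};\cN)(P,P')$ with $\stperf_R(P,P')$ via perfectness to get $\eta$ in the homotopy category, and the same reduction of the triangle identities to the derived tensor--hom adjunction. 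The one place where you are vaguer than the paper is the equivariant structure maps $\cN(P)\to{}^{g}\cN(gP)$ and $\cM(P)\to\cM(gP)^{g}$: since $gP=P^{g^{-1}}$ is a genuinely different module, "naturality" alone does not produce these, and the paper supplies them via the explicit identifications of twisted mapping spectra in \autoref{lemma: right twisted mapping set} and \autoref{lemma: left twisted mapping set}.
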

\begin{proof}
    By \autoref{example: bimodules over one object spec cats}, a $(\cC_{\mathbb{N}^{\sharp}\mathbb{N}R},\cD_R)$-bimodule is a spectral functor $\cN\colon\cD_R^{\op}\to {_{\mathbb{N}^{\sharp}\mathbb{N}R}\Mod}$ together with isomorphisms
    \[
        \cN(P)\to {{^g}\cN(gP)}
    \]
    for all $g\in G$.  Similarly, a $(\cD_R,\cC_{\mathbb{N}^{\sharp}\mathbb{N}R})$-bimodule consists of a spectral functor $\cM\colon \cD\to \Mod_{\mathbb{N}^{\sharp}\mathbb{N}R}$  with isomorphisms
    \[
        \cM(P)\to \cM(gP)^g
    \]
    for all $g\in G$.  In this case, we take $\cN(P) = F_{\mathbb{N}^{\sharp}\mathbb{N}R}(\mathbb{N}^{\sharp}\mathbb{N}P,\mathbb{N}^{\sharp}\mathbb{N}R)$ and $\cM(P) =\mathbb{N}^{\sharp}\mathbb{N}P$. To see these are spectral functors, observe that $\cN$ is the composite
    \[
        Q\stperf_R^{\op}\to \stperf_R^{\op}\xrightarrow{\Omega^{\op}} {_{\mathbb{N}^{\sharp}\mathbb{N}R}\Mod}^{\op}\to {_{\mathbb{N}^{\sharp}\mathbb{N}R}\Mod}
    \]
    where $\Omega$ is the functor from \autoref{remark: spectral functor from stperf to Mod} and the unlabeled functor is given by taking duals.
    
    The functor $\cM$ is the composite
    \[
        Q\stperf_R\to \stperf_R\xrightarrow{\Omega} \Mod_{\mathbb{N}^{\sharp}\mathbb{N}R}.
    \]
    
    By definition, $gP = P^{g^{-1}}$, so $(gP)^g = P$.  Since $\mathbb{N}^{\sharp}\mathbb{N}$ is a functor, we have $g\cdot (\mathbb{N}^{\sharp}\mathbb{N}P)^{g} = \mathbb{N}^{\sharp}\mathbb{N}P$ so the identity provides the structure isomorphism for $\cM$.  For $\cN$, we have 
    \[
        ^{g}F_{\mathbb{N}^{\sharp}\mathbb{N}R}(g\mathbb{N}^{\sharp}\mathbb{N}P,\mathbb{N}^{\sharp}\mathbb{N}R)\cong F_{\mathbb{N}^{\sharp}\mathbb{N}R}(g\mathbb{N}^{\sharp}\mathbb{N}P,{^{g}\mathbb{N}^{\sharp}\mathbb{N}R})\cong F_{\mathbb{N}^{\sharp}\mathbb{N}R}(\mathbb{N}^{\sharp}\mathbb{N}P^{g^{-1}},\mathbb{N}^{\sharp}\mathbb{N}R^{g^{-1}})\cong F_{\mathbb{N}^{\sharp}\mathbb{N}R}(\mathbb{N}^{\sharp}\mathbb{N}P,\mathbb{N}^{\sharp}\mathbb{N}R)
    \]
    by \autoref{lemma: right twisted mapping set} and \autoref{lemma: left twisted mapping set} and we take these as our structure isomorphisms.  

    The bar construction $B(\cM;\cC_{\mathbb{N}^{\sharp}\mathbb{N}R};\cN)$ is the $(\cD_R,\cD_R)$-bimodule whose value on a pair of perfect right $R$-modules $(P,P')$  is the realization of the simplicial spectrum whose $q$-simplices are
    \[
        \mathbb{N}^{\sharp}\mathbb{N}P\wedge \mathbb{N}^{\sharp}\mathbb{N}R\wedge \dots\wedge \mathbb{N}^{\sharp}\mathbb{N}R\wedge F_{\mathbb{N}^{\sharp}\mathbb{N}R}(\mathbb{N}^{\sharp}\mathbb{N}P',\mathbb{N}^{\sharp}\mathbb{N}R).
    \]
    Since $\mathbb{N}^{\sharp}$ is lax monoidal there is a map to the simplicial spectrum with $q$-simplices
    \[
        \mathbb{N}^{\sharp}\left(\mathbb{N}P\wedge\mathbb{N}R\wedge \dots\wedge \mathbb{N}R\wedge F_{\mathbb{N}R}(\mathbb{N}P',\mathbb{N}R)\right)
    \]
    
    This complex is $\mathbb{N}^{\sharp}$ applied to the two-sided bar resolution of $\mathbb{N}P\wedge_{\mathbb{N}R} F_{\mathbb{N}R}(\mathbb{N}P',\mathbb{N}R)$, and because $\mathbb{N}P'$ is cofibrant and perfect the realization is isomorphic, in the homotopy category, to $\mathbb{N}^{\sharp}F_{\mathbb{N}R}(\mathbb{N}P',\mathbb{N}P) = \stperf_R(P,P')\cong \cD_{R}(P,P')$.  This isomorphism gives the map $\eta$.

    The bar construction $B(\cN;\cD_R;\cM)$ is the $(\cC_{\mathbb{N}^{\sharp}\mathbb{N}R},\cC_{\mathbb{N}^{\sharp}\mathbb{N}R})$-bimodule which is the realization of the simplicial spectrum with $q$-simplices
    \[
        \bigvee_{(P_0,\dots,P_q)} F_{\mathbb{N}^{\sharp}\mathbb{N}R}(\mathbb{N}^{\sharp}\mathbb{N}P_q,\mathbb{N}^{\sharp}\mathbb{N}R)\wedge \mathbb{N}^{\sharp}F_{\mathbb{N}R}(\mathbb{N}P_{q-1},\mathbb{N}P_q)\wedge\dots \mathbb{N}^{\sharp}F_{\mathbb{N}R}(\mathbb{N}P_0,\mathbb{N}P_1)\wedge \mathbb{N}^{\sharp}\mathbb{N}P_0. 
    \]
    We can bring the $\mathbb{N}^{\sharp}$ functors inside the function spectra, since $\mathbb{N}^{\sharp}$ is monoidal, and then the composition and evaluation functors provide a map from this simplicial spectrum to the constant simplicial spectrum at $\mathbb{N}^{\sharp}\mathbb{N}R$, and realizing this map gives the map $\epsilon$.

    Checking the triangle identities amounts to the observation that the derived tensor-hom adjunction in the homotopy category of $R$-modules is indeed an adjunction.
\end{proof}
\begin{rem}
    In the non-equivariant story of Morita invariance one can construct a Morita equivalence from $\cC_R$ to $\stperf_R$ via the functor $\cC_R\to \stperf_R$ which sends the single object in $\cC_R$ to $R$.  We note that this is \emph{not} a $G$-spectral functor as $g\cdot R = R^{g^{-1}}\neq R$.  Thus this approach does not work in this setting.  
\end{rem}

Let $QR\xrightarrow{\sim} \mathbb{N}^{\sharp}\mathbb{N}R$ be a cofibrant replacement in the positive complete stable model structure on orthogonal $G$-ring spectra.  By restriction of scalars we have a spectral functor
\[
    \Mod_{\mathbb{N}^{\sharp}\mathbb{N}R}\to \Mod_{QR}
\]
which is a stable equivalence on all morphism spectra.  Using this restriction functor, we can lift the Morita adjunction of \autoref{proposition: Morita adjunction two} to a Morita adjunction from $\cD_R = Q\stperf_R$ to $\cC_{QR}$.  This has the advantage that now the source is cofibrant and the target is $G$-pointwise cofibrant.
\begin{lem}
    If $R$ is a cofibrant orthogonal ring spectrum with $G$-action then there is a weak equivalence of $G$-spectral categories $\cC_R\to \cC_{QR}$.
\end{lem}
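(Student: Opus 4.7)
The plan is to construct a weak equivalence of $G$-ring spectra $\widetilde{\eta} \colon R \to QR$ by a model-theoretic lift, and then verify the fixed-point condition of \autoref{definition: weak equivalence of G spec cats}. The unit $\eta \colon R \to \mathbb{N}^{\sharp}\mathbb{N}R$ of the $(\mathbb{N}, \mathbb{N}^{\sharp})$ Quillen equivalence between orthogonal and EKMM $G$-spectra is a weak equivalence of $G$-ring spectra because $R$ is cofibrant. Choosing, without loss of generality, $q \colon QR \to \mathbb{N}^{\sharp}\mathbb{N}R$ to be an acyclic fibration of $G$-ring spectra, cofibrancy of $R$ produces a lift $\widetilde{\eta} \colon R \to QR$ with $q \circ \widetilde{\eta} = \eta$, and the 2-out-of-3 property makes $\widetilde{\eta}$ a weak equivalence between cofibrant $G$-ring spectra. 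Since $\widetilde{\eta}$ is $G$-equivariant, it induces a $G$-spectral functor $F \colon \cC_R \to \cC_{QR}$ which is automatically a bijection on the (singleton) sets of objects.

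It remains to check that $F^H$ induces a stable equivalence on morphism spectra, i.e., that $R^H \to (QR)^H$ is a stable equivalence of orthogonal spectra for every $H \le G$. Since every EKMM spectrum is fibrant and $\mathbb{N}^{\sharp}$ is right Quillen, $\mathbb{N}^{\sharp}\mathbb{N}R$ is a fibrant $G$-ring spectrum; consequently $QR$, being a cofibrant replacement of a fibrant object via a trivial fibration, is bifibrant, so both $(QR)^H$ and $(\mathbb{N}^{\sharp}\mathbb{N}R)^H$ compute the correct homotopy $H$-fixed points of the respective spectra. In particular $q^H$ is a stable equivalence, and applying 2-out-of-3 to the factorization $\eta^H = q^H \circ \widetilde{\eta}^H$ reduces the claim to verifying that $\eta$ induces stable equivalences $R^H \to (\mathbb{N}^{\sharp}\mathbb{N}R)^H$ on categorical fixed points for all subgroups $H$.

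The main obstacle is this last reduction: the compatibility of the unit of the $(\mathbb{N}, \mathbb{N}^{\sharp})$ Quillen equivalence with categorical fixed points on cofibrant objects. This is a standard, albeit technical, compatibility between EKMM and orthogonal $G$-spectra, which hinges on $\mathbb{N}$ being built from a levelwise construction that interacts well with the passage to $H$-fixed subspaces. With that fact in hand, the three ingredients -- the model-theoretic lift producing $\widetilde{\eta}$, the bifibrancy of $QR$ yielding the equivalence $q^H$, and the fixed-point compatibility of $\eta$ -- combine via 2-out-of-3 to produce the desired weak equivalence of $G$-spectral categories.
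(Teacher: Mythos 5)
Your construction of $\widetilde{\eta}$ is exactly the paper's argument: arrange for the cofibrant replacement $QR\to\mathbb{N}^{\sharp}\mathbb{N}R$ to be an acyclic fibration, lift the unit along it using cofibrancy of $R$, and conclude by two-out-of-three. Where you diverge is in treating the passage from ``$\widetilde{\eta}$ is a weak equivalence of ring spectra with $G$-action'' to ``$\cC_R\to\cC_{QR}$ is a weak equivalence of $G$-spectral categories'' as a separate problem requiring fibrancy. In this setting $R$, $QR$, and the morphism spectra of a $G$-spectral category all live in $\Sp^G_{\mathbb{R}^{\infty}}$, where $\pi_n^H(X)\cong\pi_n(X^H)$ holds unconditionally (no $\Omega$-spectrum hypothesis is needed over the trivial universe), so a stable equivalence is by definition a map inducing stable equivalences $X^H\to Y^H$ for all $H\leq G$. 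Once two-out-of-three gives that $\widetilde{\eta}$ is a weak equivalence in this sense, the condition of \autoref{definition: weak equivalence of G spec cats} is immediate; your bifibrancy discussion of $QR$ is a detour that buys nothing here.

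The genuine gap is the step you yourself flag as ``the main obstacle'': that the unit $\eta\colon R\to\mathbb{N}^{\sharp}\mathbb{N}R$ induces stable equivalences $R^H\to(\mathbb{N}^{\sharp}\mathbb{N}R)^H$ for all $H$. This is precisely the input needed to run two-out-of-three in the fixed-point-wise model structure (however one packages it), and you assert it without proof. Moreover, the heuristic you offer is not right: $\mathbb{N}$ is a left adjoint built from coends/twisted half-smash products, not a levelwise construction (it is $\mathbb{N}^{\sharp}$ that is close to levelwise), and left adjoints do not in general commute with categorical fixed points. The way to obtain the claim is not an ad hoc fixed-point compatibility of $\mathbb{N}$ but the fact that the derived unit of the (equivariant, fixed-point-wise) Quillen equivalence between orthogonal and EKMM spectra with $G$-action is a weak equivalence on cofibrant objects --- which is what the paper's phrase ``since $R$ is cofibrant \dots by two-out-of-three this map is a weak equivalence'' is implicitly invoking. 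As written, your proposal isolates the one substantive claim correctly but then replaces its proof with an appeal to a misattributed ``standard fact,'' so the argument is not complete.
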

\begin{proof}
    We may assume without loss of generality that the cofibrant replacement map $QR\xrightarrow{\sim} \mathbb{N}^{\sharp}\mathbb{N}R$ is an acyclic fibration.  Since $R$ is cofibrant, it follows that the unit map $R\to \mathbb{N}^{\sharp}\mathbb{N}R$ lifts to a ring map $R\to QR$.  By the two-of-three property this map is a weak equivalence, and induces a weak equivalence of one-object $G$-spectral categories.
\end{proof}

\begin{prop}\label{prop-morita-adj-R}
    For any cofibrant ring spectrum with $G$-action $R$, there is a Morita map
    \[
        |\Ncyc_{\bullet}(Q\Fun(\EG,Q\mathrm{stPerf}_R))|\to |\Ncyc_{\bullet}(R)|
    \]
    in the homotopy category of spectrum with $G$-action, which restricts to an isomorphism in the non-equivariant homotopy category.
\end{prop}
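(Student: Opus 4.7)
The plan is to construct the desired map by composing the Morita maps furnished by \autoref{proposition: Morita adjunction one} and \autoref{proposition: Morita adjunction two}, together with the weak equivalence $\cC_R \xrightarrow{\sim} \cC_{QR}$ from the preceding lemma. Specifically, I would first apply \autoref{proposition: Morita adjunction one} with $\cC = Q\stperf_R$, which is cofibrant, obtaining a Morita adjunction from $Q\Fun(\EG, Q\stperf_R)$ to $Q\stperf_R$. Since both categories are cofibrant, the discussion preceding \autoref{theorem: Morita invariance} together with \autoref{cor: weak equivalences and cyclic nerves} produces a morphism
\[
|\Ncyc_\bullet(Q\Fun(\EG, Q\stperf_R))| \to |\Ncyc_\bullet(Q\stperf_R)|
\]
in the homotopy category of spectra with $G$-action.

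Next I would invoke the lifted version of \autoref{proposition: Morita adjunction two} described in the paragraph following it, which provides a Morita adjunction from $Q\stperf_R$ to $\cC_{QR}$. Since $Q\stperf_R$ is cofibrant and $\cC_{QR}$ is $G$-pointwise cofibrant (because $QR$ is cofibrant as a spectrum with $G$-action), this yields a second Morita map $|\Ncyc_\bullet(Q\stperf_R)| \to |\Ncyc_\bullet(\cC_{QR})|$ in the homotopy category. Finally, the weak equivalence $\cC_R \xrightarrow{\sim} \cC_{QR}$ between $G$-pointwise cofibrant $G$-spectral categories induces, via \autoref{prop-cof-bar-invariance}, a stable equivalence $|\Ncyc_\bullet(R)| = |\Ncyc_\bullet(\cC_R)| \xrightarrow{\sim} |\Ncyc_\bullet(\cC_{QR})|$, which I invert in the homotopy category. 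Composing the three maps gives the claimed Morita map.

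For the non-equivariant statement, I would check that each of the three constituent maps is a non-equivariant stable equivalence. For the first, non-equivariantly $\EG$ is equivalent to the terminal category (it is a groupoid with a unique isomorphism between any pair of objects), so the underlying non-equivariant spectral category of $Q\Fun(\EG, Q\stperf_R)$ is equivalent to that of $Q\stperf_R$ via evaluation; the Morita unit and counit of \autoref{proposition: Morita adjunction one} become weak equivalences in this case, so the induced map on cyclic nerves is a stable equivalence. For the second, the underlying Morita adjunction of \autoref{proposition: Morita adjunction two} is the classical Morita equivalence between $\stperf_R$ and $\cC_R$, whose induced map on topological Hochschild homology is a stable equivalence by the classical Dennis--Waldhausen--Morita argument (\autoref{cor:Dennis--Wald--Morita}) applied to the standard perfect-module Morita equivalence. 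The third map is already a weak equivalence. The main bookkeeping obstacle is verifying at each stage that the $G$-spectral categories in play satisfy either cofibrancy or $G$-pointwise cofibrancy so that the cyclic bar constructions are homotopically meaningful and so that each Morita map is defined in the correct homotopy category; this is precisely what the cofibrant replacements $Q$ built into the statement are designed to ensure.
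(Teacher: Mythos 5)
Your proposal is correct and follows essentially the same route as the paper: compose the Morita maps from \autoref{proposition: Morita adjunction one} (applied to $Q\stperf_R$) and the lifted version of \autoref{proposition: Morita adjunction two}, then identify $|\Ncyc_{\bullet}(QR)|$ with $|\Ncyc_{\bullet}(R)|$ via the preceding lemma and \autoref{prop-cof-bar-invariance}, observing that both adjunctions are Morita equivalences on underlying spectra. Your expanded justification of the underlying non-equivariant equivalences (contractibility of $\EG$ and the classical perfect-module Morita equivalence) is consistent with, and slightly more explicit than, the paper's one-line remark.
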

\begin{proof}
    Using \autoref{proposition: Morita adjunction one}, \autoref{proposition: Morita adjunction two}, and the discussion which follows we have Morita maps
    \[
        |\Ncyc_{\bullet}(Q\Fun(\EG,Q\stperf_R))|\to |\Ncyc_{\bullet}(Q\stperf_R)|
    \]
    and 
    \[
        |\Ncyc_{\bullet}(Q\stperf_R)|\to |\Ncyc_{\bullet}(QR)|
    \]
    and composition in the homotopy category gives the claimed map.  It is an isomorphism of underlying homotopy types because both Morita adjunctions are Morita equivalences on underlying spectra.

    To get to $|\Ncyc_{\bullet}(R)|$ we apply the lemma and \autoref{prop-cof-bar-invariance} to see there is an isomorphism $|\Ncyc_{\bullet}(QR)|\cong |\Ncyc_{\bullet}(R)|$ in the homotopy category.
\end{proof}

\section{Additivity for equivariant topological Hochschild homology}\label{sec:Additivity}

One of the fundamental theorems of topological Hochschild homology is the additivity theorem, which mimics Waldhausen's additivity theorem for algebraic $K$-theory.  In this section we give a version of this result for equivariant topological Hochschild homology which is used in the construction of the Dennis trace in \autoref{sec:DennisTrace}.

For any $1\leq j\leq n$ and $1\leq i_j\leq k_j$, there is a map $e_{i_j}\colon\mathcal{C}\to S_{k_j}\mathcal{C}$ which sends an object $c$ to the object $(A_{a,b})$ in $S_{k_j}\mathcal{C}$ with $A_{a,b}=c$ whenever $a<i_j$ and $b\geq i_j$, all other $A_{a,b}$ the zero objects, and all maps are either identities or zero maps.  For example, if $k_j =3$ and $i_j = 2$, $e_{2}(c)$ is the diagram
\[
    \begin{tikzcd}
        0 \ar[r] & 0 \ar[r] \ar[d]&c  \ar[r] \ar[d]& c\ar[d]\\
         &  0 \ar[r] & c\ar[r] \ar[d]& c \ar[d]\\
         & & 0 \ar[r]&  0\ar[d]\\
         & & & 0
    \end{tikzcd}
\]

Using degeneracy maps liberally, we obtain a functor $f_{i_j}\colon \mathcal{C}\to S_{k_j}\mathcal{C}\to S^{(n)}_{k_1,\dots,k_n}\mathcal{C}$.  Note that this functor is evidently equivariant. 

\begin{thm}[cf. {\cite[Theorem 7.6]{CLMPZ2}}] \label{thm-additivity-bar}
    The maps $f_{i_j}$ induce an equivalence of spectra with $G$-action
    \[
    \bigvee\limits_{\substack{1 \le i_j \le k_j\\ 1 \le j \le n}} |\Ncyc_{\bullet}(\mathcal{C})|\xrightarrow{\simeq} |\Ncyc_{\bullet}(w_{k_0}S^{(n)}_{k_1,\dots, k_n}\mathcal{C})| 
    \]
    for any spectral $G$-category $\cC$.
\end{thm}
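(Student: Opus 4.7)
The plan is to reduce the equivariant additivity statement to the non-equivariant additivity theorem of \cite{CLMPZ2} by passing to fixed points and checking the hypothesis subgroup-by-subgroup. The maps $f_{i_j}$ are $G$-equivariant by construction (they insert identities and zero objects, and neither of these operations interacts with the $G$-action), and the wedge indexing set $\{(i_1,\ldots,i_n) : 1\le i_j\le k_j\}$ carries a trivial $G$-action. Hence the displayed arrow is a map of spectra with $G$-action, and by the definition of weak equivalence in spectra with $G$-action, it suffices to show that it induces a weak equivalence on $H$-fixed points for each subgroup $H \le G$.

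To compute these fixed points, I will first apply \autoref{proposition: fixed points of cyclic nerve}, which provides a natural isomorphism $|\Ncyc_\bullet(\cD)|^H\cong |\Ncyc_\bullet(\cD^H)|$ for any $G$-spectral category $\cD$. Applying this to both the source and the target, together with the fact that fixed points commute with wedges indexed on a trivial $G$-set, reduces the claim to the existence of a natural isomorphism
\[
(w_{k_0} S^{(n)}_{k_1,\ldots,k_n}\cC)^H \cong w_{k_0} S^{(n)}_{k_1,\ldots,k_n}(\cC^H).
\]
This compatibility follows by unwinding definitions: for any small ordinary category $I$ (with trivial $G$-action), an $H$-fixed object of $\Fun(I,\cC)$ is exactly a functor $I \to \cC^H$, and the subcategory conditions defining the $w_\bullet$ and $S_\bullet$ constructions are determined pointwise on objects and on morphism spectra, hence are preserved under this identification.

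Under these identifications, the restriction of the equivariant functor $f_{i_j}$ to $H$-fixed points is precisely the non-equivariant $f_{i_j}$ for $\cC^H$, so the map on $H$-fixed points is the non-equivariant additivity map
\[
\bigvee_{(i_1,\ldots,i_n)} |\Ncyc_\bullet(\cC^H)| \longrightarrow |\Ncyc_\bullet(w_{k_0}S^{(n)}_{k_1,\ldots,k_n}\cC^H)|,
\]
which is a weak equivalence by \cite[Theorem 7.6]{CLMPZ2} applied to $\cC^H$. The main obstacle is the bookkeeping needed to verify that the $H$-fixed points functor genuinely commutes with the iterated $w_\bullet S_\bullet^{(n)}$ construction in a manner compatible with the maps $f_{i_j}$; this is essentially formal, but requires carefully tracking how the Waldhausen structure on a functor category interacts with the pointwise $G$-action.
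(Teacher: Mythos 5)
Your proposal is correct and follows essentially the same route as the paper's proof: reduce to $H$-fixed points, apply \autoref{proposition: fixed points of cyclic nerve}, commute fixed points past the $w_{k_0}S^{(n)}_{k_1,\dots,k_n}$ construction using that it sits inside a functor category, and invoke the non-equivariant additivity theorem of \cite[Theorem 7.6]{CLMPZ2} for $\cC^H$. No substantive differences.
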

\begin{proof}
    We need to show that the map in the theorem statement is an equivalence after taking $H$-fixed points for all $H\leq G$.  First, observe that the wedge on the left is a wedge of spectra with $G$-action, and hence we are considering the map
    \[
        \bigvee\limits_{\substack{1 \le i_j \le k_j\\ 1 \le j \le n}}  f_{i_j}^H\colon \bigvee\limits_{\substack{1 \le i_j \le k_j\\ 1 \le j \le n}} |\Ncyc_{\bullet}(\mathcal{C})|^H\xrightarrow{} |\Ncyc_{\bullet}(w_{k_0}S^{(n)}_{k_1,\dots, k_n}\mathcal{C})|^H.
    \]
    Since fixed points of finite group actions commute with geometric realizations and smash products, we apply \autoref{proposition: fixed points of cyclic nerve}, and see that this map is equivalent to
    \[
        \bigvee\limits_{\substack{1 \le i_j \le k_j\\ 1 \le j \le n}} |\Ncyc_{\bullet}(\mathcal{C}^H)|\xrightarrow{} |\Ncyc_{\bullet}((w_{k_0}S^{(n)}_{k_1,\dots, k_n}\mathcal{C})^H)|.
    \]
        Finally, since the $w_{k_0}S^{(n)}_{k_1,\dots,k_n}$ is a subcategory of a functor category, we can commute the fixed points inside the iterated $S_{\bullet}$ construction and the map becomes 
        \[
        \bigvee\limits_{\substack{1 \le i_j \le k_j\\ 1 \le j \le n}} |\Ncyc_{\bullet}(\mathcal{C}^H)|\xrightarrow{\simeq} |\Ncyc_{\bullet}((w_{k_0}S^{(n)}_{k_1,\dots, k_n}(\mathcal{C}^H)|.
    \]
    and this is an equivalence by the additivity theorem for the spectral Waldhausen category $\cC^H$; see \cite[Theorem 7.6]{CLMPZ2}.
\end{proof}
\begin{rem}\label{remark: equivalences of cyclotomic spectra}
    Since the map in \autoref{thm-additivity-bar} is induced by maps of spectral Waldhausen categories we obtain for free that it is a map of cyclotomic spectra.  In particular, it is a map of $S^1$-equivariant objects in spectra with $G$-action.  Moreover, if $H< S^1$ is any finite subgroup this map remains an equivalence upon taking $H$-fixed points since a map of cyclotomic spectra is an equivalence if and only if it is an equivalence on underlying spectra \cite[Proposition 5.5]{blumberg-mandell:cyclotomic}. 
\end{rem}

By changing universes, we can now deduce additivity for $\ETHH$.

\begin{cor}\label{cor-ETHH-additivity}
    If $\mathcal{C}$ is $G$-pointwise cofibrant, then the maps $\mathcal{I}_{\R^\infty} ^ \mathcal{U}f_{i_j}$ induce an equivalence of genuine orthogonal $G$-spectra
    $$\bigvee\limits_{\substack{1 \le i_j \le k_j\\ 1 \le j \le n}} \ETHH(\mathcal{C})\xrightarrow{\simeq} \ETHH(w_{k_0}S^{(n)}_{k_1,\dots, k_n}\mathcal{C}).
    $$
\end{cor}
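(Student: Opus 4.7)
The plan is to obtain this corollary as a direct consequence of \autoref{thm-additivity-bar} by a change-of-universe argument. Applying $\mathcal{I}_{\mathbb{R}^\infty}^{\mathcal{U}}$ to the equivalence of \autoref{thm-additivity-bar} and using that $\mathcal{I}_{\mathbb{R}^\infty}^{\mathcal{U}}$ is a strong monoidal left adjoint (so in particular commutes with finite wedges) yields a map
\[
\bigvee_{\substack{1 \le i_j \le k_j \\ 1\le j\le n}} \ETHH(\mathcal{C}) \to \ETHH(w_{k_0}S^{(n)}_{k_1,\dots,k_n}\mathcal{C})
\]
of genuine orthogonal $G\times S^1$-spectra, induced by the maps $\mathcal{I}_{\mathbb{R}^\infty}^{\mathcal{U}}f_{i_j}$ as required. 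To promote this to an equivalence I would invoke the earlier proposition stating that $\mathcal{I}_{\mathbb{R}^\infty}^{\mathcal{U}}$ is a left Quillen functor between stable model structures, so it preserves weak equivalences between cofibrant objects.

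The key technical input is therefore the cofibrancy of both sides in the stable model structure on spectra with $G$-action (viewed as $G\times S^1$-spectra on a trivial universe). For the source, finite wedges of cofibrant objects are cofibrant, and $|\Ncyc_\bullet(\mathcal{C})|$ is cofibrant whenever $\mathcal{C}$ is $G$-pointwise cofibrant by the proposition established in \autoref{sec-GSp}. For the target, I would first verify that $w_{k_0}S^{(n)}_{k_1,\dots,k_n}\mathcal{C}$ inherits $G$-pointwise cofibrancy from $\mathcal{C}$: its mapping spectra are finite limits of mapping spectra of $\mathcal{C}$ indexed by a finite category, and for an $H$-fixed pair of functors this limit is a finite limit of cofibrant spectra with $H$-action in a diagram whose structure maps are induced by the $H$-action on $\mathcal{C}$, hence remains cofibrant in the stable model structure. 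Then the cofibrancy of $|\Ncyc_\bullet(w_{k_0}S^{(n)}_{k_1,\dots,k_n}\mathcal{C})|$ follows from the same proposition applied to this pointwise-cofibrant $G$-spectral category.

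The main obstacle I anticipate is the cofibrancy check for the iterated $S_\bullet$–$w_\bullet$ construction at $H$-fixed pairs of objects, which is essentially bookkeeping but requires some care since $H$-fixed objects in $\Fun(I,\mathcal{C})$ are not functors from $I$ to $\mathcal{C}^H$ unless the $H$-action on $I$ is trivial. If this direct route proves delicate, an alternative is to bypass cofibrancy of the target by exploiting the flatness property noted in \autoref{sec-GSp}: smashing with such cyclic bar constructions is homotopical, which suffices for the levelwise formula defining $\mathcal{I}_{\mathbb{R}^\infty}^{\mathcal{U}}$ to preserve the equivalence produced by \autoref{thm-additivity-bar}.
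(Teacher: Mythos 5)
Your proposal is correct and follows essentially the same route as the paper: the paper's proof is exactly the observation that $\mathcal{I}_{\R^\infty}^{\mathcal{U}}$ preserves weak equivalences between cofibrant objects and that both sides of the equivalence in \autoref{thm-additivity-bar} are cofibrant when $\mathcal{C}$ is $G$-pointwise cofibrant. The only difference is that the paper asserts the cofibrancy of the target without spelling out the check for $w_{k_0}S^{(n)}_{k_1,\dots,k_n}\mathcal{C}$ that you (rightly) flag as the delicate point; your fallback via flatness of the cyclic bar construction is a reasonable hedge but is not what the paper does.
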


\begin{proof}
    The change of universe functor $\mathcal{I}_{\R^\infty} ^ \mathcal{U}$ preserves weak equivalences between cofibrant objects. If $\mathcal{C}$ is $G$-pointwise cofibrant, both sides of the equivalence in Theorem \ref{thm-additivity-bar} are cofibrant, and therefore the equivalence is preserved by the change of universe functor.
\end{proof}

We will also need a slightly more refined version of this theorem when $G= C_n =\langle \sigma \mid \sigma^n\rangle$ is a cyclic subgroup.  We write $\mathcal{F}_{\Delta_{C_n}}$ for the family of subgroups of $C_n\times S^1$ which contains all subgroups of the form $H\times 1$, $1\times C_m$, and $K\leq \Delta_{C_n}$, where $\Delta_{C_n}$ is the diagonal copy of $C_n$ in $C_n\times S^1$.  The family $\mathcal{F}_{\Delta_{C_n}}$ determines the $\mathcal{F}_{\Delta_{C_n}}$-stable model structure on orthogonal spectra with $(C_n\times S^1)$-action where weak equivalences are those maps which induces isomorphisms on $\pi^\Gamma_*$ for all subgroups $\Gamma\in \mathcal{F}_{\Delta_{C_n}}$ \cite[Theorem IV.6.5]{mandell-may}.  We call such a map an $\mathcal{F}_{\Delta_{C_n}}$-stable equivalence.

\begin{thm}\label{theorem: additivity with Delta fixed points}
    The maps $f_{i_j}$ induce an $\mathcal{F}_{\Delta_{C_n}}$-stable equivalence of spectra with $(C_n\times S^1)$-action
    \[
     \bigvee\limits_{\substack{1 \le i_j \le k_j\\ 1 \le j \le n}} |\Ncyc_{\bullet}(\mathcal{C})|\xrightarrow{\simeq} |\Ncyc_{\bullet}(w_{k_0}S^{(n)}_{k_1,\dots, k_n}\mathcal{C})| 
    \]
    for any $C_n$-spectral category $\cC$.  In particular, it induces a stable equivalence on fixed points with respect to any subgroup of $C_n\times S^1$ of the form $H\times 1$, $1\times C_m$, or $K\leq \Delta_{C_n}$.
\end{thm}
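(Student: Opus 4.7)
The plan is to verify that the map induces an isomorphism on $\pi^{\Gamma}_*$ separately for each of the three types of subgroups $\Gamma \in \mathcal{F}_{\Delta_{C_n}}$. The first two types will be handled by a direct appeal to \autoref{thm-additivity-bar} (and the remark following it), while the third requires new input.

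For subgroups of the form $H \times 1 \leq C_n \times 1$, taking $(H\times 1)$-fixed points depends only on the underlying $C_n$-action, so the required equivalence is exactly the content of \autoref{thm-additivity-bar}. For subgroups of the form $1 \times C_m \leq 1 \times S^1$, the $f_{i_j}$ come from maps of spectral Waldhausen $G$-categories and therefore induce maps of cyclotomic spectra, as noted in \autoref{remark: equivalences of cyclotomic spectra}; that same remark states that a map of cyclotomic spectra is an equivalence on $C_m$-fixed points precisely when it is an equivalence on underlying spectra, and the latter follows from the non-equivariant case of \autoref{thm-additivity-bar}.

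The new content is the case of diagonal subgroups $K \leq \Delta_{C_n}$. Writing $K = \Delta_{C_d}$ for some $d \mid n$, the plan is to establish a generalization of \autoref{prop: twisted fixed points of cyclic nerve} that identifies $i^*_{C_d}|\Ncyc_\bullet(\mathcal{C})|^{\Delta_{C_d}}$ with a twisted cyclic bar construction $|\Ncyc_\bullet(\mathcal{C};\mathcal{C}^{\sigma^{-n/d}})|$ of $\mathcal{C}$, viewed as a $C_d$-spectral category via the inclusion $C_d \leq C_n$. The argument would follow the template of \autoref{lemma:TCviaNorm4.4spectral} and \autoref{prop: twisted fixed points of cyclic nerve}, this time using $d$-fold (rather than $n$-fold) cyclic subdivision together with the norm-diagonal type identification of \autoref{prop:specCat norm diagonal}. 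Since the $f_{i_j}$ are $C_n$-equivariant, they are in particular $C_d$-equivariant and are compatible with twisting bimodules by powers of $\sigma$, so they induce a comparison map between the corresponding twisted bar constructions.

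Once this identification is in hand, the proof would proceed in parallel with \autoref{thm-additivity-bar}: apply \autoref{proposition: fixed points of cyclic nerve} to commute all remaining fixed points inside the cyclic nerve, and then invoke the non-equivariant additivity theorem of \cite{CLMPZ2}, suitably extended to cyclic bar constructions with coefficients in a twisted bimodule. The main obstacle I expect is verifying this additivity-with-twisted-coefficients statement together with the compatibility of the twist with the iterated $S_\bullet$-construction. Concretely, one must check that for a $C_d$-spectral category $\mathcal{D}$, the maps $f_{i_j}$ induce equivalences $\bigvee |\Ncyc_\bullet(\mathcal{D};\mathcal{D}^\tau)| \to |\Ncyc_\bullet(w_{k_0}S^{(n)}_{k_1,\dots,k_n}\mathcal{D};(w_{k_0}S^{(n)}_{k_1,\dots,k_n}\mathcal{D})^\tau)|$ for $\tau$ conjugation by a power of $\sigma$. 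Because the iterated $S_\bullet$-construction is natural in $C_n$-equivariant functors and the twist is such a functor, the twisted additivity should reduce formally to the untwisted case in \cite{CLMPZ2}, but this reduction is the step that must be carried out carefully.
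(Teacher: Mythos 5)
Your decomposition into the three families of subgroups, and the treatment of $H\times 1$ via \autoref{thm-additivity-bar} and of $1\times C_m$ via the cyclotomic structure of \autoref{remark: equivalences of cyclotomic spectra}, is exactly the paper's argument. For the diagonal subgroups the paper likewise applies \autoref{prop: twisted fixed points of cyclic nerve} to rewrite the $\Delta_{C_n}$-fixed points as a comparison of cyclic bar constructions with coefficients in the $\sigma^{-1}$-twisted bimodule (handling proper diagonal subgroups by the same argument), so your route is the intended one. The only substantive difference is the final input: you propose to establish additivity with twisted coefficients by a ``formal reduction'' to the untwisted additivity theorem, and correctly flag this as the delicate step. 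That reduction is not actually formal --- the twist enters the zeroth face map, so the simplicial contractions used in the untwisted proof must be rebuilt compatibly with the twist --- but the needed statement already exists in the literature as the twisted additivity theorem, \cite[Theorem 5.9]{CLMPZ}, and the paper simply cites it there rather than re-deriving it. With that citation in place of your proposed reduction, your proof is complete and coincides with the paper's.
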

\begin{proof}
    This map is an equivalence of spectra with $(C_n\times 1)$-action by the previous theorem.  It is an equivalence of spectra with $(1\times S^1)$-action by \autoref{remark: equivalences of cyclotomic spectra}.  Thus it suffices to check that this map remains an equivalence upon taking fixed points with respect to subgroups of the diagonal subgroup. We will check that it remains an equivalence after taking $\Delta_{C_n}$-fixed points.  Since every subgroup of the diagonal is also diagonal the case of proper subgroups of $\Delta_{C_n}$ is essentially the same.

    When we take $\Delta_{C_n}$ fixed points we obtain, via \autoref{prop: twisted fixed points of cyclic nerve}, the map
    \[
         \bigvee\limits_{\substack{1 \le i_j \le k_j\\ 1 \le j \le n}} |\Ncyc_{\bullet}(\mathcal{C},\cC^{\sigma^{-1}})|\xrightarrow{\simeq} |\Ncyc_{\bullet}(w_{k_0}S^{(n)}_{k_1,\dots, k_n}\mathcal{C},(w_{k_0}S^{(n)}_{k_1,\dots, k_n}\mathcal{C})^{\sigma^{-1}})| 
    \]
    which, is an equivalence by the twisted additivity theorem \cite[Theorem 5.9]{CLMPZ}.

\end{proof}

\section{An equivariant Dennis trace}\label{sec:DennisTrace}
In this section, we will define a Dennis trace map from equivariant algebraic $K$-theory to $\eTHH$, where $\eTHH(\mathcal{C}) = |\Ncyc_{\bullet}(\mathcal{C})|$. We first recall the definition of equivariant algebraic $K$-theory. If $R$ is a $G$-ring orthogonal spectrum, we consider the Waldhausen $G$-category $\perf_R$ of perfect right  $R$-modules, as in \autoref{ex-perf-action}.

\begin{defn}[\cite{MM1}]
    The equivariant algebraic $K$-theory of a $G$-ring spectrum $R$ is the genuine orthogonal $G$-spectrum $K_G(R)$ obtained in \cite{MM1} as the equivariant delooping of the $G$-infinite loop space
    $
       \Omega|wS_{\bullet} \Fun(\EG,\perf_R)|.
    $
\end{defn}

We will prove that the underlying spectrum with $G$-action of $K_G(R)$ has a trace map to  $ \eTHH(R) = |\Ncyc_{\bullet}(R)|$. More generally,

\begin{thm}\label{thm: Dennis trace for G spec Wald cats}
    For any spectral Waldhausen $G$-category $(\mathcal{C},\mathcal{C}_0)$ there is an equivariant Dennis trace map in the homotopy category of orthogonal spectra with $G$-action
    \[
        \mathrm{tr}\colon \mathcal{I}_{\mathcal{V}} ^{\R^\infty} K_G(\mathcal{C}_0)\to \eTHH(Q\Fun(\mathcal{E}G, \mathcal{C}))
    \]
    which recovers the usual Dennis trace when $G=e$. 
\end{thm}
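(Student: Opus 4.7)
The plan is to run the non-equivariant Dennis trace of \cite{CLMPZ,CLMPZ2} on the spectral Waldhausen $G$-category $\Fun(\EG,\mathcal{C})$ and verify that every step carries a compatible $G$-action, so that the output lands in the homotopy category of spectra with $G$-action. The essential input is that all of the categorical building blocks ($S_\bullet$, $w_\bullet$, the cyclic nerve, geometric realization, and the various levelwise unit and composition maps that define the CLMPZ trace) are functorial in spectral Waldhausen functors, hence functorial in the $G$-action on $\Fun(\EG,\mathcal{C})$.

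First, I would recall the symmetric-spectrum model of equivariant algebraic $K$-theory. After applying $\mathcal{I}_{\mathcal{V}}^{\R^\infty}$, the spectrum-with-$G$-action underlying $K_G(\mathcal{C}_0)$ is equivalent to the symmetric spectrum with $G$-action whose $n$-th space is the diagonal realization $|\mathrm{diag}(w_\bullet S^{(n)}_{\bullet,\dots,\bullet}\Fun(\EG,\mathcal{C}_0))|$, with $G$-action inherited from the $G$-action on $\Fun(\EG,\mathcal{C}_0)$ in \autoref{example: Fun(EG,C)}. This rests on comparing the Guillou--May delooping of the infinite loop $G$-space $\Omega|wS_\bullet\Fun(\EG,\mathcal{C}_0)|$ from \cite{MM1} with the iterated $S_\bullet$-delooping after restriction of universe, and this comparison is an equivalence at the level of underlying spaces-with-$G$-action because the two infinite loop $G$-space structures agree.

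Next I would construct the trace levelwise. For each multi-index $(k_0,\dots,k_n)$, an object of $w_{k_0}S^{(n)}_{k_1,\dots,k_n}\Fun(\EG,\mathcal{C}_0)$ gives, via the unit maps of the enrichment, a $0$-simplex in $\Ncyc_\bullet(w_{k_0}S^{(n)}_{k_1,\dots,k_n}\Fun(\EG,\mathcal{C}))$; assembling these produces $G$-equivariant maps of based spaces
\[
    |\mathrm{diag}(w_\bullet S^{(n)}_{\bullet,\dots,\bullet}\Fun(\EG,\mathcal{C}_0))| \longrightarrow \Omega^\infty \eTHH(w_\bullet S^{(n)}_{\bullet,\dots,\bullet}\Fun(\EG,\mathcal{C})).
\]
After replacing $\Fun(\EG,\mathcal{C})$ by its cofibrant replacement $Q\Fun(\EG,\mathcal{C})$ (allowed by \autoref{prop-cof-bar-invariance}), the iterated equivariant additivity theorem \autoref{thm-additivity-bar} identifies $\eTHH(w_\bullet S^{(n)}_{\bullet,\dots,\bullet}Q\Fun(\EG,\mathcal{C}))$ with the $n$-fold suspension of $\eTHH(Q\Fun(\EG,\mathcal{C}))$ in the homotopy category of spectra with $G$-action. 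The resulting collection of maps, indexed by $n$, assembles into a morphism of symmetric spectra with $G$-action that, by the Quillen equivalence of \cite{Malkiewich:Coassembly}, produces an orthogonal spectrum-with-$G$-action map $\mathcal{I}_{\mathcal{V}}^{\R^\infty}K_G(\mathcal{C}_0)\to\eTHH(Q\Fun(\EG,\mathcal{C}))$. Naturality in $G$ makes the specialization to $G=e$ agree with the ordinary CLMPZ trace.

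The main obstacle will be the identification in the first step: showing that the underlying symmetric spectrum with $G$-action of the Malkiewich--Merling genuine $G$-deloopings, restricted back to the trivial universe, really is equivalent to the naive iterated $S_\bullet$-delooping of $\Fun(\EG,\mathcal{C}_0)$ as a $G$-symmetric spectrum. Once this model-categorical comparison is in place, the rest of the argument is essentially formal: every remaining step is an application of a result already established in the paper (cofibrancy via \autoref{thm-model-str-G-spec-cat}, invariance via \autoref{cor: weak equivalences and cyclic nerves}, iterated equivariant additivity via \autoref{thm-additivity-bar}), with the equivariance following automatically from the functoriality of the constructions.
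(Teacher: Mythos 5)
Your proposal follows essentially the same route as the paper: model $\mathcal{I}_{\mathcal{V}}^{\R^\infty}K_G(\mathcal{C}_0)$ by the symmetric spectrum with $G$-action built from $|\mathrm{ob}\,w_\bullet S^{(n)}_\bullet\Fun(\EG,\mathcal{C}_0)|$ (the paper cites \cite{MM1}, Proposition 2.23 and Remark 2.24, for exactly the comparison you flag as the main obstacle), map in via the unit maps of the enrichment, invert the equivariant additivity equivalence of \autoref{thm-additivity-bar}, and prolong from symmetric to orthogonal spectra (the paper packages the last step via a category of ``bispectra'' as in \cite{CLMPZ2}, but this is the same device as your appeal to the symmetric-to-orthogonal Quillen equivalence).

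One step needs repair. You first build the unit maps into $\Ncyc_\bullet(w_{k_0}S^{(n)}_{k_1,\dots,k_n}\Fun(\EG,\mathcal{C}))$ and then invoke \autoref{prop-cof-bar-invariance} to pass to $Q\Fun(\EG,\mathcal{C})$. That proposition requires \emph{both} categories to be cofibrant or $G$-pointwise cofibrant, and $\Fun(\EG,\mathcal{C})$ will generally be neither; moreover the cofibrant replacement map induces a map of cyclic nerves in the wrong direction for post-composition with your already-constructed map. The fix, which is what the paper does, is to set $\mathcal{D}=Q\Fun(\EG,\mathcal{C})$ from the outset and define the unit maps $\Sigma^\infty\mathrm{ob}\,\mathcal{D}_0\to|\Ncyc_\bullet(\mathcal{D})|$ directly into the cyclic nerve of the cofibrant replacement (this is possible since weak equivalences of $G$-spectral categories are bijective on objects and every spectral category has its own unit maps); note also that \autoref{thm-additivity-bar} holds for arbitrary $G$-spectral categories, so the replacement is not needed for additivity but only so that the target of the trace is the homotopically well-behaved object $\eTHH(Q\Fun(\EG,\mathcal{C}))$ required downstream.
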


The construction of this trace map follows the strategy of Section 7 of \cite{CLMPZ2}.
\begin{proof}

Suppose that $(\mathcal{C},\mathcal{C}_0)$ is a spectral Waldhausen $G$-category. To reduce notational clutter we let $(\cD,\cD_0) = (Q\Fun(\EG,\cC),\Fun(\EG,\cC_0))$.  For any object $d\in \mathcal{D}_0$ there is a unit map
\[
    \mathbb{S}\xrightarrow{1_d} \mathcal{D}(d,d).
\]
Putting these maps together we obtain a map
\[
    \Sigma^{\infty}\mathrm{ob}\mathcal{D}_0 = \bigvee\limits_{d\in \mathcal{D}_0} \mathbb{S}\xrightarrow{\vee 1_d} \bigvee\limits_{d\in \mathcal{D}_0} \mathcal{D}(d,d) \to |\Ncyc_{\bullet}(\cD)|
\]
where the last map is the inclusion of the $0$-skeleton. Replacing $\mathcal{D}$ with $w_{\bullet}S^{(n)}_{\bullet}\mathcal{D}$ we obtain, for any $(n+1)$-tuple $(k_0;k_1,\dots, k_n)$, a map
\begin{equation}\label{eq: multisimplicial map}
    \Sigma^{\infty}\mathrm{ob}w_{k_0}S^{(n)}_{k_1,\dots, k_n}\mathcal{D}_0 \to |\mathrm{N}^{\cyc}_{\bullet}(w_{k_0}S^{(n)}_{k_1,\dots, k_n}\mathcal{D})|
\end{equation}
which assembles into a map of multisimplicial orthogonal spectra with $G$-action.  

Using Theorem \ref{thm-additivity-bar}, together with the map \eqref{eq: multisimplicial map}, we have a zigzag of orthogonal spectra with $G$-action
\[
    \Sigma^{\infty}\mathrm{ob}w_{k_0}S^{(n)}_{k_1,\dots, k_n}\mathcal{D}_0 \to |\mathrm{N}^{\cyc}_{\bullet}(w_{k_0}S^{(n)}_{k_1,\dots, k_n}\mathcal{D}) |\xleftarrow{\simeq} \bigvee\limits_{\substack{1 \le i_j \le k_j\\ 1 \le j \le n}} |\mathrm{N}^{\cyc}_{\bullet}(\mathcal{D})|
\]
which assembles into a zigzag of multisimplicial orthogonal spectra with $G$-action. Note that the right hand side is just the smash product of $|\Ncyc_{\bullet}(\mathcal{D})|$ with the $n$-fold smash product of the simplicial circle:
\[
    \bigvee\limits_{\substack{1 \le i_j \le k_j\\ 1 \le j \le n}} |\mathrm{N}^{\cyc}_{\bullet}(\mathcal{D})|\cong (S^1_{\bullet})^{\wedge n} \wedge |\mathrm{N}^{\cyc}_{\bullet}(\mathcal{D})|.
\]

Now fixing $n$, we take geometric realizations to obtain a zigzag
\begin{equation}\label{eq: final zigzag}
    |\Sigma^{\infty}\mathrm{ob}w_{\bullet}S^{(n)}_{\bullet}\Fun(\mathcal{E}G, \mathcal{C}_0)| \to |\Ncyc_{\bullet}(w_{\bullet}S^{(n)}_{\bullet}Q\Fun(\mathcal{E}G, \mathcal{C}))| \xleftarrow{\simeq} \Sigma^n |\Ncyc_{\bullet}(Q\Fun(\mathcal{E}G, \mathcal{C}))|
\end{equation}

of spectra with $G$-action. As $n$ varies this is a symmetric spectrum object in orthogonal spectra with $G$-action.  Such objects are called \emph{bispectra} in \cite{CLMPZ2}.  The salient feature that we need is that there are two Quillen equivalences
\[
    \begin{tikzcd}
       \mathrm{Sp}^G_{\R^\infty} \ar[r,shift left,"\Sigma^{\infty}_{Obi}"] &  \mathrm{Bispectra}_G \ar[r,shift left,"\mathbb{P}_{biO}"] \ar[l,shift left]& \mathrm{Sp}^G_{\R^\infty} \ar[l,shift left,"\mathrm{sh}"] 
    \end{tikzcd}
\]
and $\mathbb{P}_{biO} \circ \Sigma^\infty _{Obi} = id$. Denoting by $\mathrm{Sp}^G _{sym}$ the category of symmetric spectra with $G$-action, there is also a similar adjunction
\[
    \begin{tikzcd}
    \mathrm{Sp}^G_{sym}\ar[r,shift left,"\Sigma^{\infty}_{Sbi}"] &  \mathrm{Bispectra}_G. \ar[l,shift left]
    \end{tikzcd}
\]  
If we denote by $\mathbb{P}_{SO}$ the prolongation functor from symmetric spectra with $G$-action to orthogonal spectra with $G$-action, we have $\mathbb{P}_{biO} \circ \Sigma^\infty _{Sbi} = \mathbb{P}_{SO}$. In the zigzag \ref{eq: final zigzag}, as $n$ varies, the left hand term is $\Sigma^\infty _{Sbi} K_G ^{sym} (\mathcal{C}_0)$, where $K_G ^{sym} (\mathcal{C}_0)$ denotes the symmetric spectrum with $G$-action whose $n^{th}$ space is $|\mathrm{ob}w_{\bullet}S^{(n)}_{\bullet}\Fun(\mathcal{E}G, \mathcal{C}_0)|$. As $n$ varies, the right hand term of the zigzag \ref{eq: final zigzag} is $\Sigma^\infty _{Obi} |\Ncyc_{\bullet}(Q\Fun(\mathcal{E}G, \mathcal{C}))|$. Applying $\mathbb{P}_{biO}$, we thus obtain a trace map 

$$\mathbb{P}_{SO} K_G ^{sym} (\mathcal{C}_0) \to |\Ncyc_{\bullet}(Q\Fun(\mathcal{E}G, \mathcal{C}))|.$$

By Proposition 2.23 and Remark 2.24 of \cite{MM1}, $\mathbb{P}_{SO} K_G ^{sym} (\mathcal{C}_0) \simeq \mathcal{I}_{\mathcal{V}} ^{\R^\infty} K_G(\mathcal{C}_0)$ as orthogonal spectra with $G$-action. Therefore, passing to the homotopy category, we obtain a trace map
\[
    \mathrm{tr}\colon \mathcal{I}_{\mathcal{V}} ^{\R^\infty} K_G(\mathcal{C}_0)\to |\Ncyc_{\bullet}(Q\Fun(\mathcal{E}G, \mathcal{C}))|.
\]

When $G = e$, this restricts to the construction of the Dennis trace of \cite{CLMPZ2}, and therefore recovers the usual Dennis trace.

\end{proof}

\begin{rem}\label{rem-diagonal-subgroup}
    Suppose that $G = C_n$, and consider $\mathcal{I}_{\mathcal{V}} ^{\R^\infty} K_{C_n}(\mathcal{C}_0)$ as a $C_n \times S^1$-spectrum with trivial $S^1$-action. Let $\Delta_{C_n}$ denote the diagonal subgroup $\Delta_{C_n} \leq C_n \times C_n \leq C_n \times S^1$. Since the additivity map is a $\Delta_{C_n}$-equivariant equivalence, it follows that the trace map above is $\Delta_{C_n}$-equivariant. 
\end{rem}

\begin{cor}\label{cor-trace-R}
    If $R$ is a cofibrant $G$-ring spectrum, there is an equivariant Dennis trace map in the homotopy category of orthogonal spectra with $G$-action
    $$tr_R\colon \mathcal{I}_{\mathcal{V}} ^{\R^\infty} K_G(R) \to\eTHH(R)$$
    which recovers the usual Dennis trace when $G=e$.
\end{cor}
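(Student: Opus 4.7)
The plan is to obtain $tr_R$ by composing the Dennis trace of \autoref{thm: Dennis trace for G spec Wald cats} with the Morita map of \autoref{prop-morita-adj-R}. First, I would observe that $Q\stperf_R$ inherits the structure of a spectral Waldhausen $G$-category with base category $\perf_R$: the cofibrant replacement in the model structure of \autoref{thm-model-str-G-spec-cat} is the identity on objects and a stable equivalence on morphism spectra, so it does not disturb the Waldhausen structure on the base. I would then apply \autoref{thm: Dennis trace for G spec Wald cats} to this spectral Waldhausen $G$-category to obtain a map
\[
\mathcal{I}_{\mathcal{V}}^{\R^\infty} K_G(\perf_R) \to \eTHH(Q\Fun(\EG, Q\stperf_R))
\]
in the homotopy category of orthogonal spectra with $G$-action. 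By Malkiewich--Merling's definition of equivariant algebraic $K$-theory, $K_G(R) = K_G(\perf_R)$, so we may rewrite the source as $\mathcal{I}_{\mathcal{V}}^{\R^\infty} K_G(R)$.

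Second, I would compose this map with the Morita map from \autoref{prop-morita-adj-R},
\[
\eTHH(Q\Fun(\EG, Q\stperf_R)) \to \eTHH(R),
\]
which exists in the homotopy category of spectra with $G$-action. The composite is the desired equivariant Dennis trace $tr_R$.

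Finally, to verify the $G=e$ case, I would note that \autoref{thm: Dennis trace for G spec Wald cats} reduces to the construction of the usual Dennis trace of \cite{CLMPZ2} when $G$ is trivial, and that by \autoref{prop-morita-adj-R} the Morita map restricts to an isomorphism in the non-equivariant homotopy category. Hence the composite recovers the classical Dennis trace $K(R) \to \THH(R)$.

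The main technical point, which is really the bookkeeping obstacle, is confirming that $Q\stperf_R$ can be promoted to a spectral Waldhausen $G$-category so that \autoref{thm: Dennis trace for G spec Wald cats} can be applied with $\mathcal{C}_0 = \perf_R$, and that the target $\eTHH(Q\Fun(\EG, Q\stperf_R))$ produced by the theorem matches the source of the Morita map of \autoref{prop-morita-adj-R} on the nose (rather than merely up to further replacement). Beyond that, no new ideas are required, as both component maps have already been constructed in the requisite homotopy category.
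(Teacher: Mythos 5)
Your proposal matches the paper's proof essentially verbatim: the paper also takes $(\mathcal{C},\mathcal{C}_0)=(Q\stperf_R,\perf_R)$, applies \autoref{thm: Dennis trace for G spec Wald cats}, and postcomposes with the Morita map of \autoref{prop-morita-adj-R}, with the $G=e$ case following from the corresponding non-equivariant statements. Your extra remark about why the cofibrant replacement $Q\stperf_R$ still carries the spectral Waldhausen $G$-structure with base $\perf_R$ is a point the paper leaves implicit, but it does not change the argument.
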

\begin{proof}
    Take $(\mathcal{C}, \mathcal{C}_0)$ to be the spectral Waldhausen $G$-category $(Q\stperf_R, \perf_R)$. Then Theorem \ref{thm: Dennis trace for G spec Wald cats} gives the Dennis trace map $K_G(R) \to |\Ncyc_{\bullet} (Q\Fun(\mathcal{E}G, Q\stperf_R)|$ in the homotopy category of orthogonal spectra with $G$-action. To get $tr_R: \mathcal{I}_\mathcal{V} ^{\R^\infty} K_G(R) \to |\Ncyc_{\bullet} (R)|$, postcompose with the map $|\Ncyc_{\bullet} (Q\Fun(\mathcal{E}G, Q\stperf_R)| \to |\Ncyc_{\bullet} (R)| = \eTHH(R)$ given by the Morita map of Proposition \ref{prop-morita-adj-R}.
\end{proof}

We now show how our equivariant Dennis trace recovers the trace map $K_{C_n}(R)^{C_n} \to \THH_{C_n}(R)$ from \cite{aghkk:shadows} for a $C_n$-ring spectrum $R$.  To do this, recall the category of $G$-twisted $R$-modules.
\begin{defn}
    Let $(\cC,\otimes,1)$ be a symmetric monoidal category and let $R$ be a monoid in $\cC$ with left $G$-action.  That is, for all $g\in G$ there are monoid maps $g\colon R\to R$ in $\cC$ which are strictly associative and unital.  A $G$-twisted right $R$-module is a left $G$-object $M$ in $\cC$ together with a right $R$-module structure $\alpha\colon M\otimes R\to M$ such that the diagram
    \[
        \begin{tikzcd}
            M\otimes R \ar[r,"\alpha"] \ar[d,"g\otimes g^{-1}"] & M\ar[d,"g"]\\
            M\otimes R\ar[r,"\alpha"] & M
        \end{tikzcd}
    \]
    commutes.  We denote the category of $G$-twisted right $R$-modules by $\Mod(R)_G$.
\end{defn}
\begin{lem}\label{lem-hfp-modules}
    Let $(\cC,\otimes,1)$ be a symmetric monoidal category and let $R$ be a monoid in $\cC$ with left $G$-action.   Let $\Mod(R)$ denote the category of right $R$-modules.  Then there is an equivalence of categories between $\Fun(\EG,\Mod(R))^G$ and the category $\mathrm{Mod}(R)_G$.
\end{lem}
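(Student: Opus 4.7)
My plan is to construct explicit functors $\Phi$ and $\Psi$ in both directions and verify they are mutually inverse on the nose. Given $F \in \Fun(\EG, \Mod(R))^G$, I would set $M := F(1)$ and $\Phi(F) := M$. The strict fixed-point identity $F(1) = (g \cdot F)(1) = gF(g^{-1})$, combined with the convention $gN = N^{g^{-1}}$ of \autoref{example: modules as G-spec cat}, forces the on-the-nose identity $F(g) = M^{g^{-1}}$ for every $g \in G$. I would then take the isomorphisms $\phi_g := F(g_1) \colon M \to M^{g^{-1}}$ as giving (via their common underlying map of spectra) a $G$-action on $M$; associativity follows from functoriality $F((hg)_1) = F(h_g) \circ \phi_g$ combined with the fixed-point condition on morphisms, which identifies $F(h_g)$ with $\phi_h$ on underlying spectra. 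The $R$-linearity of $\phi_g$ as a morphism $M \to M^{g^{-1}}$ is precisely the compatibility diagram between the $G$-action and the $R$-action defining $\Mod(R)_G$.

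For the reverse functor, given $(M, \{\psi_g\}) \in \Mod(R)_G$, I will define $\Psi(M) \colon \EG \to \Mod(R)$ by $\Psi(M)(g) := M^{g^{-1}}$, and on the unique morphism $h_g \colon g \to hg$, set $\Psi(M)(h_g)$ to be the map of spectra $\psi_h$, reinterpreted as a morphism $M^{g^{-1}} \to M^{g^{-1} h^{-1}}$. The $G$-twisted compatibility axiom ensures this reinterpretation is $R$-linear, and associativity of the $G$-action yields functoriality of $\Psi(M)$. The strict $G$-fixedness of $\Psi(M)$ reduces on objects to the identification $(M^{(g^{-1}x)^{-1}})^{g^{-1}} = M^{x^{-1}}$, and on morphisms to the observation that both sides of $(g \cdot \Psi(M))(h_x) = \Psi(M)(h_x)$ share the same underlying spectrum map.

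The remaining verification — that $\Phi$ and $\Psi$ are mutually inverse on the nose — is then immediate: $\Phi \Psi(M) = M$ with its original $G$-action by construction, and $\Psi \Phi(F)$ agrees with $F$ via the identifications derived above. The main obstacle I anticipate is the careful bookkeeping for the several strict identifications involved: which power of the twist is applied where, how the $G$-action on $\EG$ acts on the morphism labels $h_x$ (in particular the conjugation that appears when passing from $h_x$ to the morphism between $g^{-1}x$ and $g^{-1}hx$), and the compatibility of all of this with the $G$-action on $\Mod(R)$. Once these conventions are pinned down, however, every required compatibility reduces directly to the $G$-twisted module axiom for $M$, so no deep argument beyond diagram-chasing is needed.
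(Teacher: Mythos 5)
Your proposal is correct and takes essentially the same route as the paper: both directions of the equivalence are given by $F\mapsto F(1)$ with the twisted $G$-action extracted from the structure isomorphisms $\phi_g\colon F(1)\to F(g)=F(1)^{g^{-1}}$ (the identity $F(g)=F(1)^{g^{-1}}$ being forced by strict fixedness exactly as you say), and conversely by $M\mapsto\bigl(g\mapsto M^{g^{-1}}\bigr)$ with morphisms given by the twisting maps. The only differences are cosmetic: the paper records the composite landing back in $\Fun(\EG,\Mod(R))^G$ as a natural isomorphism whose components are identities rather than asserting strict equality of functors, and the conjugation bookkeeping you flag (with the convention $(x\cdot F)(k)=xF(x^{-1}k)$, fixedness identifies the underlying map of $F(h_g)$ with that of $F((g^{-1}hg)_1)$ rather than $F(h_1)$, which affects whether the $\phi_g$ compose as a left or right action for nonabelian $G$) is precisely the point the paper's own proof also elides and which is harmless in the abelian case where the lemma is applied.
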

\begin{proof}
    An object $F\in \Fun(\EG,\Mod(R))$ consists of a collection of $R$-modules $F(g)$ for all $g\in G$ together with choices of $R$-module isomorphisms
    \[
        \phi_{h,g}\colon F(h)\xrightarrow{\cong} F(g)
    \]
    for all $g,h$. These $\phi_{h, g}$ satisfy the relations $\phi_{g,g} =\mathrm{id}_{F(g)}$ and $\phi_{g,k}\circ \phi_{h,g} = \phi_{h,k}$ for any $g,h,k\in G$. 
    
    Specializing to $h=1$ we obtain isomorphisms $\phi_{g} = \phi_{1,g}\colon F(1)\xrightarrow{\cong} F(g)$ such that the diagram
    \begin{equation}\label{equation: data of cofree module}
    \begin{tikzcd}
        F(1)\otimes R \ar[d,"\alpha_1"] \ar[r,"\phi_g\otimes 1"] & F(g)\otimes R \ar[d,"\alpha_g"]\\
        F(1) \ar[r,"\phi_g"] & F(g)
        \end{tikzcd}
    \end{equation}
    commutes, where $\alpha_g\colon F(g)\otimes R\to F(g)$ is the $R$-action map.
    
    The $G$-action is given by: $(x\cdot F)(g) = 
 F(x^{-1}g)^{x^{-1}}$ with structure morphisms 
 \[
    \phi_{x^{-1}g,x^{-1}h}^{x^{-1}}\colon F(x^{-1}h)^{x^{-1}}\to F(x^{-1}g)^{x^{-1}}.
\]  
If $F$ is $G$-fixed that means that for any $x,g\in G$ we have an equality of $R$-modules
    \[
        F(x^{-1}g)^{x^{-1}} = F(g)
    \]
    and hence, there is an equality of underlying objects in $\cC$, $F(x^{-1}g) = F(g)$. Taking $x=g$, this yields an equality of $R$-modules $F(1)^{g^{-1}} = F(g)$, hence the diagram
\begin{equation}\label{equation: data of fixed cofree module}
\begin{tikzcd}
	F(1)\otimes R &  F(g)\otimes R & {F(g)} \\
	{F(1)\otimes R} && {F(1)}
	\arrow["{=}", from=1-1, to=1-2]
	\arrow["{1\otimes g^{-1}}"', from=1-1, to=2-1]
	\arrow["{\alpha_g}", from=1-2, to=1-3]
	\arrow["{=}", from=1-3, to=2-3]
	\arrow["{\alpha_1}"', from=2-1, to=2-3]
\end{tikzcd}
\end{equation}
commutes.  

Define a $G$-action on $F(1)$, in $\cC$, by $F(1)\xrightarrow{\phi_{g}} F(g)=F(1)$.  We emphasize that this is an action in $\cC$, but not in the category of $R$-modules. Indeed, we claim that $F(1)$ is a $G$-twisted $R$-module. To see this, we need to show that the diagram

\[\begin{tikzcd}[column sep = large]
	{F(1)\otimes R} & {F(g)\otimes R} & {F(1)\otimes R} & {F(1)\otimes R} \\
	{F(1)} & {F(g)} & {F(1)} & {F(1)}
	\arrow["{\phi_{g}\otimes 1}", from=1-1, to=1-2]
	\arrow["{\alpha_1}"', from=1-1, to=2-1]
	\arrow["{=}", from=1-2, to=1-3]
	\arrow["{\alpha_{g}}", from=1-2, to=2-2]
	\arrow["{1\otimes g^{-1}}", from=1-3, to=1-4]
	\arrow["{\alpha_1}", from=1-4, to=2-4]
	\arrow["{\phi_{g}}"', from=2-1, to=2-2]
	\arrow["{=}"', from=2-2, to=2-3]
	\arrow["{=}"', from=2-3, to=2-4]
\end{tikzcd}\]
commutes. This follows from the commutativity of the diagrams \eqref{equation: data of cofree module} and \eqref{equation: data of fixed cofree module}.  Thus the assignment $F\mapsto F(1)$ is a function on objects
\[
    \Fun(\EG,\Mod(R))^G\to \Mod(R)_G
\]
and this is functorial in maps.  

We now show that  $\Psi(F) = F(1)$ is an equivalence of categories by constructing an inverse. Suppose that $M$ is a $G$-twisted $R$-module.  Define a functor $F_M\colon \EG\to \Mod(R)$ by $F_M(g) = M^{g^{-1}}$.  The structure map $\phi_{h,g}\colon M^{h^{-1}}\to M^{g^{-1}}$ is the map in $\cC$ given by acting by $gh^{-1}$.  This functor is $G$-fixed since, by definition, $F_M(x^{-1}g)^{x^{-1}} =F_M(1)^{g^{-1}} = F_M(g)$. 

Let $\Phi(M) =F_M$. Since $\Psi(F_M)(1) = M$, and since $\phi_{1,g}$ is multiplication by $g^{-1}$ we see that $\Psi\circ \Phi$ is the identity on $\Mod(R)_G$.  On the other hand, given an arbitrary $G$-fixed functor $B\colon \EG\to \Mod(R)$ observe that there is a natural isomorphism $F_{B(1)}\xrightarrow{\simeq} B$ which is given component-wise by the identity
\[  
    B(1)^{g^{-1}} = B(g^{-1}g)^{g^{-1}} =  B(g).
\]
Checking this is a well defined natural transformation is straightforward, hence $\Phi\circ \Psi$ is naturally isomorphic to the identity and these functors are mutually inverse equivalences.
\end{proof}
\begin{rem}\label{remark: homotopy fixed points and twisted modules}
    If $\cD\subset \Mod(R)$ is any full subcategory then the equivalence above restricts to an equivalence of categories between $\Fun(\EG,\cD)$ and $G$-twisted $R$-modules which are in objects in $\cD$.  This applies, in particular, when $R$ is a ring spectrum and $\cD$ is the category of perfect $R$-modules. Furthermore, in that case, $\Phi$ and $\Psi$ are both exact functors of Waldhausen categories, and therefore this equivalence of categories gives an equivalence on algebraic $K$-theory.
\end{rem}

\begin{thm}\label{thm-trace-to-twisted}
    Let $R$ be a cofibrant $C_n$-ring spectrum. Upon taking fixed points of the equivariant Dennis trace map of Corollary \ref{cor-trace-R}, we obtain the trace map 
    $$K_{C_n}(R)^{C_n} \to \THH_{C_n}(R)$$
    of \cite{aghkk:shadows}.
\end{thm}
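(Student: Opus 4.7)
The plan is to apply $\Delta_{C_n}$-categorical fixed points to $tr_R$ and identify the resulting source and target with those of the AGHKK trace. By \autoref{rem-diagonal-subgroup}, $tr_R$ is $\Delta_{C_n}$-equivariant as a map of $C_n \times S^1$-spectra (with trivial $S^1$-action on the source), so passing to $\Delta_{C_n}$-fixed points yields a well-defined map of spectra in the homotopy category.

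For the source, since the $S^1$-action on $\mathcal{I}_\mathcal{V}^{\R^\infty} K_{C_n}(R)$ is trivial, the $\Delta_{C_n}$-fixed points agree with the $C_n$-fixed points. I would then precompose with the canonical comparison map $K_{C_n}(R)^{C_n} \to (\mathcal{I}_\mathcal{V}^{\R^\infty} K_{C_n}(R))^{C_n}$ arising from the section of $\epsilon$ on fixed points, as discussed in the introduction after \autoref{tracethm}. By \autoref{lem-hfp-modules} and \autoref{remark: homotopy fixed points and twisted modules}, $K_{C_n}(R)^{C_n}$ is the $K$-theory of the Waldhausen category of $C_n$-twisted perfect $R$-modules, which is precisely the starting category of the AGHKK construction.

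For the target, I would apply \autoref{proposition: geometric fixed points and change of universe} with $H = \Delta_{C_n}$ and $G = C_n \times S^1$ to obtain
\[
\THH_{C_n}(R) \;=\; \Phi^{\Delta_{C_n}} \mathcal{I}^\mathcal{U}_{\R^\infty}\eTHH(R) \;\cong\; \mathcal{I}^{\mathcal{U}^{\Delta_{C_n}}}_{\R^\infty}\bigl(\eTHH(R)^{\Delta_{C_n}}\bigr),
\]
identifying $\eTHH(R)^{\Delta_{C_n}}$ with the underlying non-equivariant spectrum of $\THH_{C_n}(R)$. Consistency with the twisted cyclic bar model of twisted THH follows from \autoref{prop: twisted fixed points of cyclic nerve} applied to the one-object $C_n$-spectral category $\mathcal{C}_R$, giving $|\Ncyc_\bullet(R)|^{\Delta_{C_n}} \cong |\Ncyc_\bullet(R; R^{\sigma^{-1}})|$.

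The remaining step is to check that the resulting composite coincides with the AGHKK trace. Unpacking $tr_R$ from \autoref{thm: Dennis trace for G spec Wald cats} and \autoref{cor-trace-R}, it is assembled from (i) the additivity equivalence of \autoref{thm-additivity-bar}, (ii) the inclusion of unit maps into the cyclic bar construction of $Q\Fun(\mathcal{E}G, Q\stperf_R)$, and (iii) the Morita map of \autoref{prop-morita-adj-R}. On $\Delta_{C_n}$-fixed points, step (i) becomes the twisted additivity of \autoref{theorem: additivity with Delta fixed points}, matching the twisted additivity used by AGHKK; step (ii) becomes the inclusion of identities into the twisted cyclic bar construction of the category of $C_n$-twisted modules; and step (iii) becomes a Morita comparison with the twisted bimodule $R^{\sigma^{-1}}$. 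The main obstacle I anticipate is carefully matching the Morita step on $\Delta_{C_n}$-fixed points to the Morita input of AGHKK on the nose, which requires tracking the interaction between the twisted mapping-spectrum identifications of \autoref{lemma: right twisted mapping set} and \autoref{lemma: left twisted mapping set} and the $\Delta_{C_n}$-fixed point structure of the bar resolutions in \autoref{proposition: Morita adjunction one} and \autoref{proposition: Morita adjunction two}.
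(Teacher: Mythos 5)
Your proposal follows essentially the same route as the paper's proof: take $\Delta_{C_n}$-fixed points of the defining zigzag, identify the source with the $K$-theory of $C_n$-twisted perfect modules via \autoref{lem-hfp-modules}, identify the $\Delta_{C_n}$-fixed points of the cyclic bar construction with the twisted cyclic bar construction via \autoref{prop: twisted fixed points of cyclic nerve}, and match the additivity and Morita steps with their twisted counterparts so that the resulting zigzag is the twisted Dennis trace of \cite{CLMPZ}, which is how the map of \cite{aghkk:shadows} is defined. One small simplification over your source identification: $K_{C_n}(R)^{C_n} = (\mathcal{I}_{\mathcal{V}}^{\R^\infty} K_{C_n}(R))^{C_n}$ holds on the nose because the change of universe to the trivial universe is a right adjoint and commutes with categorical fixed points, so there is no need to route through the section of $\epsilon$.
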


\begin{proof}
 Since the change of universe to a smaller universe is a right adjoint, it commutes with limits, and thus $K_{C_n}(R)^{C_n} = (\mathcal{I}_{\mathcal{V}} ^{\R^\infty} K_{C_n}(R))^{C_n}$. Considering $\mathcal{I}_{\mathcal{V}} ^{\R^\infty} K_{C_n}(R)$ as a $C_n \times S^1$-spectrum with trivial $S^1$-action, we have $(\mathcal{I}_{\mathcal{V}} ^{\R^\infty} K_{C_n}(R))^{C_n} = (\mathcal{I}_{\mathcal{V}} ^{\R^\infty} K_{C_n}(R))^{\Delta_{C_n}} $, which is equivalent to $(\mathbb{P}_{SO} K_{C_n} ^{sym}(R))^{\Delta_{C_n}}$. The prolongation functor $\mathbb{P}_{biO}$ commutes with fixed points of finite group actions (one can check this using the formula above Proposition A.7 in \cite{CLMPZ2}), therefore the trace map on the fixed points comes from the zigzag 

\[
    \begin{tikzcd}[column sep = large]
\abs{\Sigma^{\infty}\mathrm{ob}w_{\bullet}S^{(m)}_{\bullet}\Fun(\mathcal{E}C_n, \perf_R)}^{\Delta_{C_n}} \ar[r] & \abs{\Ncyc_{\bullet}(w_{\bullet}S^{(m)}_{\bullet}Q\Fun(\mathcal{E}C_n, Q\stperf_R))}^{\Delta_{C_n}} \\       
& (\Sigma^m \abs{\Ncyc_{\bullet}(Q\Fun(\mathcal{E}C_n, Q\stperf_R))})^{\Delta_{C_n}}_.\ar[u,"\simeq"] 
    \end{tikzcd}
\]

Identifying the $\Delta_{C_n}$-fixed points of the cyclic bar construction with the twisted cyclic bar construction as in \autoref{prop: twisted fixed points of cyclic nerve}, and commuting the fixed points past the suspension spectrum and the $w_\bullet S_\bullet$-construction, we obtain
\[
\begin{tikzcd}
    \abs{\Sigma^{\infty}\mathrm{ob}w_{\bullet}S^{(m)}_{\bullet}(\Fun(\mathcal{E}C_n, \perf_R)^{C_n})} \ar[r] & \abs{\Ncyc_{\bullet} {}^{,C_n}(w_{\bullet}S^{(m)}_{\bullet}Q\Fun(\mathcal{E}C_n, Q\stperf_R))}\\
    & \Sigma^m \abs{\Ncyc_{\bullet} {}^{,C_n}(Q\Fun(\mathcal{E}C_n, Q\stperf_R))}.\ar[u,"\simeq"]
\end{tikzcd}
\]

As in Proposition \ref{prop-morita-adj-R}, the Morita map $|\Ncyc_{\bullet}(Q\Fun(\mathcal{E}C_n,Q\mathrm{stPerf}_R))|\to |\Ncyc_{\bullet}(R)|$ is an equivalence of spectra, and the same is true for the twisted cyclic bar constructions. Therefore we can replace  $Q\Fun(\mathcal{E}C_n, Q\stperf_R)$ above with $R$. Identifying $\Fun(\mathcal{E}C_n, \perf_R)^{C_n}$ with the category $\perf(R)_{C_n}$ of $C_n$-twisted perfect $R$-modules as in Lemma \ref{lem-hfp-modules} or Proposition 4.8 of \cite{Merling}, we obtain 

\begin{equation*}
    |\Sigma^{\infty}\mathrm{ob}w_{\bullet}S^{(m)}_{\bullet}(\perf(R)_{C_n})| \to |\Ncyc_{\bullet} {}^{,C_n}(w_{\bullet}S^{(m)}_{\bullet}Q\Fun(\mathcal{E}C_n, Q\stperf_R)| \xleftarrow{\simeq} \Sigma^m |\Ncyc_{\bullet} {}^{,C_n}(R))|
\end{equation*}

Applying prolongation to this zigzag recovers the twisted Dennis trace 
\[
    K(\perf(R)_{C_n}) \to |\Ncyc_{\bullet} {}^{,C_n}(R)|
\] 
of Definition 6.16 of \cite{CLMPZ}. This is how the trace map $K_{C_n}(R)^{C_n} \to \THH_{C_n}(R)$ in \cite{aghkk:shadows} was defined, which concludes the proof.  

\end{proof}

Interestingly enough, if $R$ is a genuine $G$-spectrum the trace map of Corollary \ref{cor-trace-R} does \textit{not} refine in general to a trace map of genuine $G$-spectra $K_G (R) \to \ETHH(R)$. Note that our trace map of spectra with $G$-action in the homotopy category gives an honest trace map from the cofibrant replacement $(I_{\mathcal{V}} ^{\R^\infty} K_G(R))^c$ to the cyclic bar construction, and therefore (after changing to a complete universe) a trace map
\begin{equation}\label{equation: genuine trace}
\mathcal{I}^{\mathcal{U}}_{\mathbb{R}^{\infty}} (\mathcal{I}_{\mathcal{V}} ^{\R^\infty} K_G(R))^c = \mathcal{I}_{\mathcal{V}} ^\mathcal{U} \mathcal{I}^{\mathcal{V}}_{\mathbb{R}^{\infty}} (\mathcal{I}_{\mathcal{V}} ^{\R^\infty} K_G(R))^c \to \ETHH(R)
\end{equation}
but the map 
$$\mathcal{I}^{\mathcal{V}}_{\mathbb{R}^{\infty}} (\mathcal{I}_{\mathcal{V}} ^{\R^\infty} K_G(R))^c \to \mathcal{I}^{\mathcal{V}}_{\mathbb{R}^{\infty}} \mathcal{I}_{\mathcal{V}} ^{\R^\infty} K_G(R)  = K_G(R)$$
does not admit a section, as the following example shows.

\begin{exmp}\label{example: no section}
    We consider an example where the map of genuine $G$-spectra \[\epsilon\colon \mathcal{I}^\mathcal{V}_{\mathbb{R}^{\infty}}((\mathcal{I}_\mathcal{V}^{\mathbb{R}^{\infty}}K_G(R))^c)\to K_G(R)\] cannot admit a section.  We will do this by showing that the map of $C_2$-Mackey functors induced by $\epsilon$ admits no section.  Let $G=C_2$ and $R = \mathbb{F}_4$, with its standard Galois action. 

    First, note that because change of universe preserves equivalences of underlying spectra we have that $\epsilon$ is an underlying equivalence. Thus any section of $\epsilon$ would necessarily be an underlying equivalence.  We claim there is no map of $C_2$-spectra which runs anti-parallel to $\epsilon$ and is an underlying equivalence. 

    To prove this claim, we compute homotopy Mackey functors.  By \cite[Theorem 1.2 (4)]{Merling}  we have 
    \begin{align*}
        \pi_0^{C_2}(K_{C_2}(\mathbb{F}_4)) \cong K_0(\mathbb{F}_2) & \cong \mathbb{Z}\\
        \pi_0^e(K_{C_2}(\mathbb{F}_4))  \cong K_0(\mathbb{F}_4)& \cong \mathbb{Z}
    \end{align*}
    and this is precisely the coefficient system $M$ considered in \autoref{example: free Mackey functor}.  By \autoref{proposition: computing change of universe on pi zero} we can compute the source of $\epsilon$ to be the Mackey functor $j_!(M)$ from \autoref{example: free Mackey functor}. On $\pi_0$, any section of $\epsilon$ would be a map of Mackey functors of the form
\[\begin{tikzcd}
	\Z && {\Z^2} \\
	\Z && \Z
	\arrow["f", from=1-1, to=1-3]
	\arrow["1"', shift right, from=1-1, to=2-1]
	\arrow["{(1,2)}"', shift right, from=1-3, to=2-3]
	\arrow["2"', shift right, from=2-1, to=1-1]
	\arrow["{\pm 1}"', from=2-1, to=2-3]
	\arrow["i_2"', shift right, from=2-3, to=1-3]
\end{tikzcd}\]
where $f$ is some group homomorphism and $i_2\colon \mathbb{Z}\to \mathbb{Z}^2$ is the inclusion of the second component. But this would require $f(2) = (0,\pm1)$, which is impossible since $1$ is not $2$-divisible in $\mathbb{Z}$.
\end{exmp}
\begin{rem}\label{remark: trace on genuine fixed points}
    As noted in \autoref{remark: fixed points are retract of change universe fixed points}, there is a map
    \[
        K_{G}(R)^G\cong (\mathcal{I}_{\mathcal{V}}^{\R^{\infty}}K_{G}(R)^c)^G \to (\mathcal{I}^{\mathcal{V}}_{\R^{\infty}}(\mathcal{I}_{\mathcal{V}}^{\R^{\infty}}K_G(R))^c)^G
    \]
    in the homotopy category of spectra which is the inclusion of a retract.  Extending along the map \eqref{equation: genuine trace}, we obtain a trace map
    \[
        K_{G}(R)^G\to \ETHH(R)^G
    \]
    on the level of fixed point spectra for any genuine cofibrant $G$-ring spectrum.
\end{rem}

We end with applications of the equivariant Dennis trace to equivariant $A$-theory.

\subsection{Applications to equivariant $A$-theory}

We now show how our trace map gives a map from the fixed points of equivariant $A$-theory to the free loop space. Malkiewich--Merling \cite{MM1} define two versions of equivariant $A$-theory: coarse equivariant $A$-theory $A_G^{\mathrm{coarse}}(X)$, whose fixed points are related to bivariant $A$-theory, and genuine equivariant $A$-theory $A_G(X)$, which is related to the equivariant parametrized stable $h$-cobordism theorem. In forthcoming work of the first-named author, Calle, Chedalavada, and Mejia, it is shown that the coarse $A$-theory spectrum of a $G$-space $X$ is equivalent to the equivariant algebraic $K$-theory of $\Sigma^{\infty}_G(\Omega X)_+$.

\begin{prop}[\cite{CCCM}]
    For any finite group $G$ and any $G$-connected based $G$-space $X$ there is an isomorphism in the equivariant stable homotopy category
    \[
        A^{\mathrm{coarse}}_G(X)\cong K_G(\Sigma^{\infty}_G(\Omega X)_+).
    \]
\end{prop}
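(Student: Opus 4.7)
The strategy is to equivariantize Waldhausen's classical identification $A(X)\simeq K(\Sigma^{\infty}(\Omega X)_+)$ for connected based spaces, using the equivariant $K$-theory machinery of Malkiewich--Merling. First I would unpack the definition of coarse equivariant $A$-theory: following \cite{MM1}, $A^{\mathrm{coarse}}_G(X)$ is the equivariant algebraic $K$-theory $K_G(\mathcal{R}_G(X))$ of a suitable Waldhausen $G$-category of finite retractive $G$-CW complexes $(Y,r,s)$ over $X$, with the \emph{coarse} $G$-action induced only by the action on the base $X$ (the action on the total space $Y$ being ignored). On the other side, $K_G(\Sigma^{\infty}_G(\Omega X)_+)$ is the equivariant $K$-theory of the Waldhausen $G$-category $\perf_{\Sigma^{\infty}_G(\Omega X)_+}$ of perfect right modules over the genuine $G$-ring spectrum $\Sigma^{\infty}_G(\Omega X)_+$.

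Next, using that $X$ is pointed and $G$-connected (so that we may choose a $G$-fixed basepoint and every fixed-point subspace $X^H$ is connected), I would construct an exact functor of Waldhausen $G$-categories
\[
\Phi\colon \mathcal{R}_G(X)\longrightarrow \perf_{\Sigma^{\infty}_G(\Omega X)_+}
\]
by the equivariant Moore-path construction: send a retractive space $(Y,r,s)$ to the suspension $G$-spectrum of the homotopy fiber of $r$ over the basepoint, regarded as a right module over $\Sigma^{\infty}_G(\Omega X)_+$ via concatenation of Moore paths. Equivariance of $\Phi$ with respect to the \emph{coarse} action is automatic because the $G$-action on the retractive total space is discarded, while the basepoint and hence the fiber functor are genuinely $G$-equivariant.

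Then I would prove that $K_G(\Phi)$ is an equivalence in the equivariant stable homotopy category by checking it on $H$-fixed points for every $H\leq G$. By Malkiewich--Merling, the $H$-fixed points of $K_G$ of a Waldhausen $G$-category are computed from an appropriate homotopy-fixed-point Waldhausen category $\Fun(\mathcal{E}H,-)^H$; these identify $K_G(\mathcal{R}_G(X))^H$ with the $K$-theory of finite retractive spaces over $X^H$ equipped with twisted $H$-actions, and analogously $K_G(\Sigma^{\infty}_G(\Omega X)_+)^H$ with the $K$-theory of perfect modules over $\Sigma^{\infty}(\Omega X^H)_+$ (using that genuine fixed points of $\Sigma^{\infty}_G(\Omega X)_+$ can be identified with $\Sigma^{\infty}(\Omega X^H)_+$ up to terms that drop out under the Morita reductions of \autoref{sec: Morita}). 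With both sides so identified, the required equivalence on each fixed-point level is precisely Waldhausen's classical theorem for the connected based space $X^H$.

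The main obstacle is the fixed-point identification in the final step. Comparing $K_G(\Phi)^H$ to the classical linearization map on $X^H$ requires a careful matching of two different recipes: the Guillou--May delooping of $\Fun(\mathcal{E}H,\mathcal{R}_G(X))^H$ on one side, and the passage between modules over the $G$-ring spectrum $\Sigma^{\infty}_G(\Omega X)_+$ and modules over its $H$-fixed points on the other. Making this comparison coherent as $H$ varies, and checking compatibility with restriction and transfer so that the pointwise equivalences assemble into a map of genuine $G$-spectra, will be the technical heart of the argument.
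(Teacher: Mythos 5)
The paper does not prove this proposition: it is quoted verbatim from the forthcoming work \cite{CCCM}, so there is no internal argument to compare against. Judged on its own terms, your overall skeleton --- realize $A_G^{\mathrm{coarse}}(X)$ as $K_G$ of the Waldhausen $G$-category of (non-equivariant) homotopy finite retractive spaces over $X$ with the action induced from the base, build an exact functor of Waldhausen $G$-categories to $\perf_{\Sigma^{\infty}_G(\Omega X)_+}$ by taking homotopy fibers over the $G$-fixed basepoint with their Moore-loop module structure, and then check the induced map of genuine $G$-spectra on $H$-fixed points --- is the natural equivariantization of Waldhausen's theorem and is a sensible plan.

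The step that would fail is your identification of the fixed points. For the Malkiewich--Merling machine, $K_G(\mathcal{C})^H$ is the $K$-theory of $\Fun(\mathcal{E}G,\mathcal{C})^H$, whose objects are objects of $\mathcal{C}$ equipped with a twisted (semilinear) $H$-action. For the coarse retractive-space category this means retractive spaces over \emph{all of} $X$ together with an $H$-action on the total space covering the $H$-action on $X$ --- this is why the paper describes these fixed points via Williams' bivariant $A$-theory of the Borel construction, not via $A(X^H)$. Likewise $K_G(\Sigma^{\infty}_G(\Omega X)_+)^H$ is the $K$-theory of $H$-twisted perfect modules over the underlying ring $\Sigma^{\infty}(\Omega X)_+$ (exactly as in \autoref{lem-hfp-modules}), not of perfect modules over $\Sigma^{\infty}(\Omega X^H)_+$. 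Your parenthetical claim that the genuine fixed points of $\Sigma^{\infty}_G(\Omega X)_+$ are $\Sigma^{\infty}(\Omega X^H)_+$ is also false (the tom Dieck splitting gives a wedge over conjugacy classes), and in any case the fixed points of $K_G(R)$ are never computed from the fixed-point ring $R^H$ in this framework. Consequently the fixed-point comparison is not ``Waldhausen's classical theorem for $X^H$'' but a semilinear, Borel-type statement: one must show that the homotopy-fiber functor induces a $K$-theory equivalence from retractive spaces over $X$ with twisted $H$-action to $H$-twisted perfect $\Sigma^{\infty}(\Omega X)_+$-modules, e.g.\ by identifying both sides with invariants of the crossed product $\Sigma^{\infty}(\Omega X)_+\rtimes H\simeq \Sigma^{\infty}\Omega(X\times_H EH)_+$, using $G$-connectivity to control $\pi_0$. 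That twisted comparison is the actual content of the theorem, and it is absent from your sketch.
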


In particular, our equivariant Dennis trace yields a map in the homotopy category of spectra with $G$-action 
\[
    \mathcal{I}_{\mathcal{V}} ^{\R^\infty} A^{\mathrm{coarse}}_G(X)\to \eTHH(\Sigma^{\infty}(\Omega X)_+)\simeq \Sigma^{\infty}(\mathcal{L}X)_+.
\]
Non-equivariantly this map plays an important role in computations of $A$-theory, as well as the proof of a piece of the parametrized stable $h$-cobordism theorem \cite{WaldhausenJahrenRognes,BHM}.  Taking fixed points, we obtain for every $H \leq G$ a map in the homotopy category of spectra
\[
   (A^{\mathrm{coarse}}_G(X))^H =  (I_{\mathcal{V}} ^{\R^\infty} A^{\mathrm{coarse}}_G(X))^H\to (\Sigma^{\infty}(\mathcal{L}X)_+)^H = \Sigma^{\infty}(\mathcal{L}X^H)_+.
\]
In particular, since the fixed points of coarse $A$-theory give bivariant $A$-theory, we obtain a trace map from bivariant $A$-theory to the free loop space.

With an eye toward the equivariant parametrized stable $h$-cobordism theorem, it is natural to wonder about such a map for genuine $A$-theory. In \cite{MM1}, it is shown that there is a comparison map
\[
    A_G(X)\to A^{\mathrm{coarse}}_G(X).
\]
On fixed points, postcomposition with our Dennis trace yields
\[
    A_G(X)^H\to \Sigma^{\infty}(\mathcal{L}X^H)_+.
\]

To summarize,

\begin{prop}\label{prop-trace-A-loop}
    Let $H \leq G$. The equivariant Dennis trace map of Theorem \ref{cor-trace-R} gives trace maps
    \[
   (A^{\mathrm{coarse}}_G(X))^H \to \Sigma^{\infty}(\mathcal{L}X^H)_+\hspace{.5cm} \textrm{and}\hspace{.5cm} A_G(X)^H\to \Sigma^{\infty}(\mathcal{L}X^H)_+
\] 
in the homotopy category of spectra.
\end{prop}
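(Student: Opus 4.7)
The plan is to specialize the equivariant Dennis trace of \autoref{cor-trace-R} to the $G$-ring spectrum $R = \Sigma^{\infty}_G(\Omega X)_+$ (cofibrantly replaced if needed), then identify source and target via the two structural equivalences already available in the paper. For the source, I would apply the CCCM identification $A^{\mathrm{coarse}}_G(X)\simeq K_G(\Sigma^{\infty}_G(\Omega X)_+)$ stated just above. For the target, I would use the computation of \autoref{example: free loop space}, namely $\ETHH(\Sigma^{\infty}_G(\Omega X)_+)\simeq \Sigma^{\infty}_{G\times S^1}\mathcal{L}(X)_+$, which follows from the strong monoidality of the suspension spectrum functor and its compatibility with the cyclic bar construction; restricting the $(G\times S^1)$-action to the $G$-factor gives the corresponding equivalence for $\eTHH$ viewed as a spectrum with $G$-action.

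Together, the trace map of \autoref{cor-trace-R} then becomes a map
\[
\mathcal{I}_{\mathcal{V}}^{\R^\infty} A^{\mathrm{coarse}}_G(X)\to \Sigma^{\infty}(\mathcal{L}X)_+
\]
in the homotopy category of spectra with $G$-action. To get to genuine $H$-fixed points for a subgroup $H\leq G$, I would then take $(-)^H$ on both sides. On the left, as in the proof of \autoref{thm-trace-to-twisted}, the change of universe $\mathcal{I}_{\mathcal{V}}^{\R^\infty}$ is a right adjoint and hence commutes with limits, so $(\mathcal{I}_{\mathcal{V}}^{\R^\infty} K_G(R))^H\simeq K_G(R)^H$. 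On the right, the $H$-action on $\mathcal{L}X = \mathrm{Map}(S^1,X)$ coming from the $G$-action on $X$ is pointwise, so the fixed-point space is exactly $\mathrm{Map}(S^1,X^H)=\mathcal{L}(X^H)$; moreover, fixed points commute with the non-equivariant suspension spectrum of a $G$-space (since the suspension coordinates carry trivial $G$-action), giving $(\Sigma^{\infty}(\mathcal{L}X)_+)^H\simeq \Sigma^{\infty}(\mathcal{L}X^H)_+$. Composing yields the first claimed map.

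For the second map, I would precompose with the comparison map $A_G(X)\to A^{\mathrm{coarse}}_G(X)$ constructed by Malkiewich--Merling \cite{MM1} and then pass to $H$-fixed points, producing $A_G(X)^H\to (A^{\mathrm{coarse}}_G(X))^H\to \Sigma^{\infty}(\mathcal{L}X^H)_+$ in the homotopy category of spectra.

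The only genuinely delicate point is ensuring that the CCCM equivalence and the free-loop-space identification of $\eTHH$ are compatible with the natural $G$-actions being used, so that after restricting from spectra with $(G\times S^1)$-action to spectra with $G$-action and then taking $H$-fixed points, the composite is well-defined and agrees with the naive description. Both identifications are natural in $X$ and are made as $G$-equivariant (indeed $(G\times S^1)$-equivariant) equivalences in the respective cited sources, so this compatibility is a bookkeeping check rather than a new obstruction; everything else is a direct application of results already proved in the paper.
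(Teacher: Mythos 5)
Your proposal is correct and follows essentially the same route as the paper: specialize the trace of \autoref{cor-trace-R} to $R=\Sigma^{\infty}_G(\Omega X)_+$, identify the source via the CCCM equivalence and the target via $\eTHH(\Sigma^{\infty}(\Omega X)_+)\simeq\Sigma^{\infty}(\mathcal{L}X)_+$, take $H$-fixed points (using that $\mathcal{I}_{\mathcal{V}}^{\R^\infty}$ commutes with fixed points and that $(\mathcal{L}X)^H=\mathcal{L}(X^H)$), and precompose with the Malkiewich--Merling comparison map for the genuine $A$-theory statement. The extra care you take with cofibrant replacement and with checking equivariance of the identifications is reasonable bookkeeping that the paper leaves implicit.
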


\begin{rem}\label{rem-A-to-ETHH}
We conjecture that the trace map $A_G(X)^H \to \Sigma^{\infty}(\mathcal{L}X^H)_+$ arises from a genuine equivariant trace map $A_G(X) \to \ETHH(\Sigma^{\infty}_G(\Omega X)_+) \simeq \Sigma^{\infty} _{G \times S^1}(\mathcal{L}X)_+$. Indeed, by the tom Dieck splitting, for any $H \leq G$,
$$(\Sigma^{\infty}_{G \times S^1}(\mathcal{L}X)_+)^H \simeq \bigvee_{(H') \leq H} \Sigma^{\infty}_+(\mathcal{L}X)^{H'}_{hW_H(H')}$$
whose $H' = H$ summand is $\Sigma^{\infty}(\mathcal{L}X)^H_+ = \Sigma^{\infty}(\mathcal{L}X^H)_+$.
\end{rem}

\bibliographystyle{plain}
\bibliography{bib}
\end{document}